\theoremstyle{definition}
\newtheorem{definition}{Definition}[section]
\newtheorem{remark}[definition]{Remark}
\newtheorem{myremark}{Remark}
\newtheorem{notation}[definition]{Notation}
\theoremstyle{plain}
\newtheorem*{conjecture*}{Conjecture}
\newtheorem{mytheorem}{Theorem}
\newtheorem{theorem}[definition]{Theorem}
\newtheorem{proposition}[definition]{Proposition}
\newtheorem{lemma}[definition]{Lemma}
\newtheorem{corollary}[definition]{Corollary}
\setlist{leftmargin=0.8cm}
\setlist[enumerate]{label=\rm{(\roman*)}}
\newcolumntype{C}{>{\centering\arraybackslash}X}
\numberwithin{equation}{section}
\newcommand{\new}{\quad}
\renewcommand{\SS}{\textsection}
\newcommand{\A}{\mathcal{A}}
\newcommand{\C}{\mathcal{C}}
\newcommand{\M}{\mathcal{M}}
\renewcommand{\S}{\mathcal{S}}
\newcommand{\T}{\mathcal{T}}
\newcommand{\Ga}{\Gamma}
\newcommand{\Om}{\Omega}
\newcommand{\La}{\Lambda}
\renewcommand{\a}{\alpha}
\renewcommand{\b}{\beta}
\renewcommand{\d}{\delta}
\newcommand{\e}{\varepsilon}
\renewcommand{\i}{\iota}
\newcommand{\p}{\varphi}
\renewcommand{\r}{\rho}
\newcommand{\s}{\sigma}
\renewcommand{\l}{\lambda}
\renewcommand{\t}{\tau}
\renewcommand{\th}{\theta}
\DeclareSymbolFont{CMletters}{OML}{cmm}{m}{it}
\DeclareMathSymbol{\xi}{\mathord}{CMletters}{"18}
\newcommand{\oly}{\overline{y}}
\newcommand{\olV}{\overline{V}}
\newcommand{\Nat}{\mathbb{N}}
\newcommand{\F}{\mathbb{F}}
\newcommand{\K}{\overline{\F}_q}
\DeclareSymbolFont{Symbols}{OMS}{cmsy}{m}{n}
\DeclareMathSymbol{\emptyset}{\mathord}{Symbols}{"3B}
\renewcommand{\:}{\colon}
\renewcommand{\mod}[1]{\mathrm{ \ } (\mathrm{mod \ } #1)}
\newcommand{\rad}{\mathrm{rad}}
\newcommand{\<}{\langle}
\renewcommand{\>}{\rangle}
\renewcommand{\sp}{{:}}
\newcommand{\soc}{\mathrm{soc}}
\newcommand{\Aut}{\mathrm{Aut}}
\newcommand{\Out}{\mathrm{Out}}
\newcommand{\InnDiag}{\mathrm{Inndiag}}
\newcommand{\fix}{\mathrm{fix}}
\newcommand{\fpr}{\mathrm{fpr}}
\newcommand{\SL}{\mathrm{SL}}
\newcommand{\GL}{\mathrm{GL}}
\newcommand{\PSL}{\mathrm{PSL}}
\newcommand{\PGL}{\mathrm{PGL}}
\newcommand{\Sp}{\mathrm{Sp}}
\newcommand{\GSp}{\mathrm{GSp}}
\newcommand{\PSp}{\mathrm{PSp}}
\newcommand{\PGSp}{\mathrm{PGSp}}
\newcommand{\SO}{\mathrm{SO}}
\renewcommand{\O}{\mathrm{O}}
\newcommand{\GO}{\mathrm{GO}}
\newcommand{\GU}{\mathrm{GU}}
\newcommand{\PSU}{\mathrm{PSU}}
\begin{document}

%%%%% PREPARATION %%%%%
%\nocite{*}

%%%%% FRONT MATTER %%%%%
\author{Scott Harper}
\address{S. Harper, School of Mathematics, University of Bristol, Bristol, BS8 1TW, UK}
\email{scott.harper@bristol.ac.uk}

\title[Uniform spread of symplectic and orthogonal groups]{On the uniform spread of almost simple \\ symplectic and orthogonal groups}

\date{\today}
\subjclass[2010]{Primary 20D06; Secondary 20E28, 20F05, 20P05}
\keywords{Classical groups; Maximal subgroups; Uniform spread}
  
\begin{abstract}
A group is $\frac{3}{2}$-generated if every non-identity element is contained in a \mbox{generating} pair. A conjecture of Breuer, Guralnick and Kantor from 2008 asserts that a finite group is $\frac{3}{2}$-generated if and only if every proper quotient of the group is cyclic, and recent work of Guralnick reduces this conjecture to almost simple groups. In this paper, we prove a stronger form of the conjecture for almost simple symplectic and odd-dimensional orthogonal groups. More generally, we study the uniform spread of these groups, obtaining lower bounds and related asymptotics. This builds on earlier work of Burness and Guest, who established the conjecture for almost simple linear groups.
\end{abstract}

\maketitle

%%%%% INTRODUCTION %%%%%

\section{Introduction}\label{sec:Intro}
Let $G$ be a finite group. We say that $G$ is \emph{$d$-generated} if $G$ has a generating set of size $d$. It is well-known that every finite simple group is 2-generated \cite{ref:Steinberg62,ref:AschbacherGuralnick84}. In fact, almost surely, any two elements of a finite simple group $G$ generate the group, in the sense that the probability that two randomly chosen elements form a generating pair tends to one as $|G|$ tends to infinity \cite{ref:KantorLubotzky90,ref:LiebeckShalev95}. Therefore, generating pairs are abundant in finite simple groups, and it is natural to ask how they are distributed across the group. With this in mind, we say that $G$ is \emph{$\frac{3}{2}$-generated} if every non-identity element of $G$ is contained in a generating pair. By a theorem of Guralnick and Kantor \cite{ref:GuralnickKantor00} (also see Stein \cite{ref:Stein98}), every finite simple group is $\frac{3}{2}$-generated, resolving a question of Steinberg \cite{ref:Steinberg62} in the affirmative. \par

It is straightforward to see that every proper quotient of a $\frac{3}{2}$-generated group is cyclic. In \cite{ref:BreuerGuralnickKantor08}, Breuer, Guralnick and Kantor make the following remarkable conjecture.

\begin{conjecture*}
A finite group is $\frac{3}{2}$-generated if and only if every proper quotient is cyclic.
\end{conjecture*}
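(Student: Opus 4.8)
The forward implication is elementary: if $G$ is $\frac{3}{2}$-generated and $N \leqn G$ with $1 \neq N \neq G$, pick $1 \neq x \in N$ and $y \in G$ with $\langle x, y\rangle = G$; reducing modulo $N$ gives $G/N = \langle xN, yN\rangle = \langle yN\rangle$, which is cyclic. For the converse, I would use Guralnick's reduction theorem (mentioned in the abstract) to assume $G$ is almost simple, write $T = \soc(G)$, and observe that the hypothesis then forces $G/T$ to be cyclic. When $T$ is a symplectic group $\PSp_{2m}(q)$ or an odd-dimensional orthogonal group $\Om_{2m+1}(q)$, the plan is to prove the sharper statement that the uniform spread of $G$ is at least $2$ — that there is a conjugacy class $C$ of $G$ such that every pair of non-identity elements $x_1, x_2 \in G$ has a common mate $y \in C$ with $\langle x_1, y\rangle = \langle x_2, y\rangle = G$ — since uniform spread $\geq 2$ implies $\frac{3}{2}$-generation; moreover one should be able to extract much stronger lower bounds growing with $q$.

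The engine is the probabilistic method of Guralnick--Kantor and Breuer--Guralnick--Kantor. For $s \in G$ and $1 \neq x \in G$, let $P(x,s)$ denote the proportion of $g \in G$ with $\langle x, s^g\rangle \neq G$, and let $\M(s)$ be the set of maximal subgroups of $G$ containing $s$. Then
\[
P(x,s) \;\leq\; \sum_{M \in \M(s)} \fpr(x, G/M), \qquad \text{where } \fpr(x, G/M) = \frac{|x^G \cap M|}{|x^G|}.
\]
To deduce uniform spread $\geq 2$ it suffices to find a single $s$ projecting to a generator of $G/T$ with $P(x,s) < \tfrac{1}{2}$ for all non-identity $x \in G$; and since $\langle x^k, s^g\rangle \leq \langle x, s^g\rangle$, we have $P(x,s) \leq P(x^k, s)$, so choosing $k$ with $|x^k|$ prime reduces matters to $x$ of prime order. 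The work thus splits into three tasks: choose a good $s$; determine $\M(s)$; and bound $\fpr(x,G/M)$ for $M \in \M(s)$ and $x$ of prime order.

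For the choice of $s$ I would take a semisimple element of order divisible by a primitive prime divisor of $q^m \pm 1$, so that $s$ nearly generates a maximal torus and is forced into very few maximal subgroups. To pin down $\M(s)$, I would apply Aschbacher's theorem on the maximal subgroups of classical groups: the classification of elements of prescribed order in classical groups (Guralnick--Penttila--Praeger--Saxl and related work) eliminates most of the geometric classes $\C_1, \dots, \C_8$, and the explicit lists of maximal subgroups in low dimensions (Bray--Holt--Roney-Dougal) handle the class $\S$, leaving typically only a nondegenerate subspace stabilizer in $\C_1$, a torus normalizer in $\C_2$ or $\C_3$, and finitely many subgroups in $\S$. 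For the fixed point ratios I would invoke Burness's detailed estimates for finite classical groups, which bound $\fpr(x,G/M)$ by expressions of the shape $q^{-c\,\dim x^G}$ (sharper for unipotent classes), so that for the surviving $M$ and $x$ of prime order the sum above is well below $\tfrac{1}{2}$ — in fact $o(1)$ as $q \to \infty$, giving the asymptotic refinements.

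The main obstacle will be the low-dimensional and small-field cases. When $m$ or $q$ is small the required primitive prime divisor may not exist, $s$ must be chosen by hand and generally lies in many more maximal subgroups, and simultaneously the fixed point ratios of unipotent and low-rank semisimple $x$ are large while $|x^G|$ is small, so the blunt estimate $\sum_M \fpr(x,G/M) < \tfrac{1}{2}$ can fail and must be replaced by a careful analysis of exactly which prime-order $x$ meet the relevant subgroups. These families need individual treatment, and for the finitely many genuinely small groups I anticipate a computation in \textsc{Magma}. A further technical point, needed for the asymptotics rather than $\frac{3}{2}$-generation itself, is the precise control of conjugacy classes and centralizer orders of $s$ and $x$ in $G$ versus in the inner-diagonal group or the ambient algebraic group; Shintani descent is the natural device for transferring this data.
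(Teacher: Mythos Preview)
First, a framing point: the paper does not prove the conjecture. It establishes only the special case where $\soc(G)$ is a symplectic or odd-dimensional orthogonal group (Theorem~1), citing prior work for alternating, sporadic, and linear socles and leaving the remaining Lie-type cases for future work. So your proposal should be read as a sketch of the paper's Theorem~1, not of the full conjecture.

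With that understood, your outline is broadly on target but underweights one ingredient in a way that would cause the argument to stall. You treat Shintani descent as a late-stage technicality ``needed for the asymptotics rather than $\frac{3}{2}$-generation itself''; in the paper it is the core device whenever $\th\notin\InnDiag(T)$, which is precisely the new content beyond the already-known simple case. The element $s$ must lie in the coset $T\th$ (otherwise $\langle x,s\rangle\leq T<G$ for $x\in T$), and there is no direct way to produce a coset element with prescribed overgroup behaviour. The paper instead fixes a pleasant element $y$ in the smaller group $X_\s$ (e.g.\ $\Sp_{2m}(q_0)$ or $Sz(q_0)$) and takes $s=t\th$ to be its preimage under the Shintani map; Propositions~\ref{prop:ShintaniTransfer}--\ref{prop:MultiplicityOrthogonal} then transfer questions about $\M(G,t\th)$ back to overgroups of $y$ in $X_\s$. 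Your proposed $s$ of ppd order makes sense inside $T$ but does not by itself give an element of $T\th$, and without Shintani descent you have no mechanism for controlling $\M(G,s)$ in the outer coset.

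A second, smaller divergence: rather than a single irreducible ppd element, the paper uses block-diagonal elements such as $y=[A_2,A_{2m-2}]$ (Table~\ref{tab:Elements}). This sacrifices irreducibility --- so one reducible overgroup of type $\Sp_2(q)\times\Sp_{2m-2}(q)$ survives --- but gains an element $z$ with $\nu(z)=2$ as a power of $y$, which is what kills the $\C_3$, $\C_4$, $\C_7$ and $\S$ families via the results of Liebeck--Shalev and Guralnick--Saxl (see the proof of Proposition~\ref{prop:MaximalSubgroupsPrimitive}). A pure ppd element has $\nu$ too large for that shortcut.
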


This conjecture has recently been reduced by Guralnick \cite{ref:GuralnickPC} to almost simple groups $G$. By the main theorem of \cite{ref:DallaVoltaLucchini95}, these groups are 3-generated and, in fact, 2-generated if $G/\soc(G)$ is cyclic, where $\soc(G)$ denotes the (simple) socle of $G$. In the case where $\soc(G)$ is alternating the conjecture was established in \cite{ref:Binder68}, and the sporadic groups are handled in \cite{ref:BreuerGuralnickKantor08} using computational methods. Therefore, it remains to consider the almost simple groups of Lie type. In \cite{ref:BurnessGuest13}, Burness and Guest establish a stronger version of the conjecture for almost simple linear groups. The aim of this paper is to extend this result to almost simple symplectic and odd-dimensional orthogonal groups. We will handle the remaining groups of Lie type in a forthcoming paper. \par

Following Brenner and Wiegold \cite{ref:BrennerWiegold75}, a finite group $G$ has \emph{spread} $k$ if for any $k$ non-identity elements $x_1,\dots,x_k \in G$ there exists $g \in G$ such that, for all $i$, $\<x_i,g\>=G$. Moreover, $G$ is said to have \emph{uniform spread} $k$ if the element $g$ can be chosen from a fixed conjugacy class of $G$. We write $s(G)$ (respectively $u(G)$) for the greatest $k$ such that $G$ has spread $k$ (respectively uniform spread $k$). (If $G$ is cyclic, then write $s(G) = u(G) = \infty$.) \par

In \cite{ref:BreuerGuralnickKantor08}, using probabilistic methods, it was proved that $u(G) \geq 2$ for all finite simple groups $G$, with equality if and only if \[ G \in \{ A_5, A_6, \Om^{+}_8(2)\} \cup \{ \Sp_{2m}(2) \mid m \geq 3 \}.\] This was extended by Burness and Guest in \cite{ref:BurnessGuest13}, where they prove that $u(G) \geq 2$ for $G=\<\PSL_n(q),g\>$ with $g \in \Aut(\PSL_n(q))$ unless $G=\PSL_2(9).2 \cong S_6$, for which $u(G)=0$ and $s(G)=2$. In particular, this demonstrates that $\< \PSL_n(q), g \>$ is $\frac{3}{2}$-generated. \par

Let us now introduce the groups which will be the focus of this paper. Write 
\begin{align}
\T = \{ \PSp_{2m}(q)' &\mid m \geq 2 \} \cup \{ \Om_{2m+1}(q) \mid q \text{ odd}, m \geq 3 \} \label{eq:DefT}  \\[4pt] 
\A = \{ &\< T, \th \> \mid T \in \T, \th \in \Aut(T)\} \label{eq:DefA}
\end{align} 
The restrictions on $m$ in the definition of $\T$ account for the familiar low-rank isomorphisms $\PSp_2(q) \cong \Om_3(q) \cong \PSL_2(q)$ and $\Om_5(q) \cong \PSp_4(q)$ (see \cite[Prop. 2.9.1]{ref:KleidmanLiebeck}).  \par

We can now present the main result of the paper.
\begin{mytheorem}\label{thm:MainResult}
Let $G \in \A$. Then $u(G) \geq 2$ unless $G = \PSp_4(2)'.2 \cong S_6$, in which case $u(G)=0$ and $s(G)=2$.
\end{mytheorem}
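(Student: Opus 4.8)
The plan is to follow the now-standard probabilistic approach to uniform spread pioneered by Guralnick--Kantor and refined by Breuer--Guralnick--Kantor and Burness--Guest. Recall the key mechanism: to show $u(G) \geq 2$, it suffices to find a conjugacy class $C = s^G$ in $G$ such that, for every pair of prime-order elements $x_1, x_2 \in G$ (it is enough to consider elements of prime order, since any nontrivial element is a power of one), there exists $g \in C$ with $\langle x_i, g\rangle = G$ for $i = 1, 2$. Writing $\fpr(x, G/H)$ for the fixed point ratio of $x$ acting on the cosets of a maximal subgroup $H$, a union bound shows this holds provided
\begin{equation*}
\sum_{i=1}^{2} \sum_{H \in \mathcal{M}(s)} \fpr(x_i, G/H) < 1,
\end{equation*}
where $\mathcal{M}(s)$ is a set of representatives for the $G$-classes of maximal subgroups of $G$ containing $s$. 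So the whole proof reduces to two tasks: (a) choose $s$ wisely so that $\mathcal{M}(s)$ is small and well understood, and (b) obtain strong upper bounds on $\fpr(x, G/H)$ for $x$ of prime order and $H \in \mathcal{M}(s)$.

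First I would set up the framework: reduce $\mathrm{Aut}(T)$ to a manageable list using the structure of $\mathrm{Out}(T)$ for $T \in \mathcal{T}$ (diagonal, field, and graph automorphisms — here the outer automorphism groups are small, so one typically reduces to $G = \langle T, \theta \rangle$ with $\theta$ a field or diagonal-type automorphism, or $G = T$ itself), and reduce to elements $x_1, x_2$ of prime order. Next, the heart of the argument is the choice of the element $s$: the idea, going back to Guralnick--Kantor, is to pick $s$ of large order lying in a torus $T_0$ such that $s$ is contained in very few maximal subgroups. For symplectic and odd-dimensional orthogonal groups the natural choices are regular semisimple elements related to primitive prime divisors of $q^e - 1$ for suitable $e$ (e.g. $e = 2m$ or $e = m$), so that the maximal overgroups of $s$ are tightly constrained by Zsigmondy-type arguments and the subgroup structure theory (Aschbacher's theorem plus the analysis of the geometric and $\mathcal{S}$ classes, as in Kleidman--Liebeck and Bray--Holt--Roney-Dougal). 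One must identify $\mathcal{M}(s)$ precisely — typically it consists of a torus normalizer, perhaps one or two imprimitive or field-extension subgroups, and possibly one irreducible almost simple subgroup — and this case analysis, split according to the congruences on $q$ and the precise value of $m$ (and parity), is where the bulk of the technical work lies.

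With $\mathcal{M}(s)$ in hand, I would then invoke and, where necessary, sharpen fixed-point-ratio estimates. The dominant tool is the bound $\fpr(x, G/H) \leq |x^G \cap H| / |x^G|$, combined with the detailed fixed-point-ratio results of Burness (and Guralnick--Kantor, Liebeck--Shalev) for classical groups acting on cosets of maximal subgroups; for the small-rank or small-$q$ cases not covered by the asymptotic bounds, explicit computation (in \textsc{Magma} or \textsc{GAP}) of the relevant class fusions and fixed point ratios is needed. Plugging these into the union bound above yields $u(G) \geq 2$ for all but finitely many $G$; the remaining small cases, including the genuine exception $G = \PSp_4(2)'.2 \cong S_6$, are checked directly by computer, where one verifies $u(S_6) = 0$ (the transposition $(1\,2)$ together with $(3\,4)(5\,6)$ cannot be completed by a common conjugate) and $s(S_6) = 2$.

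The main obstacle I anticipate is controlling $\mathcal{M}(s)$ and the fixed point ratios uniformly across the many subcases: the choice of $s$ must simultaneously be robust enough that $|\mathcal{M}(s)|$ stays small in every congruence class of $q \bmod$ (small primes) and every residue of $m$, yet concrete enough to compute fixed point ratios of unipotent and semisimple prime-order elements against each overgroup type — in particular the delicate cases are the imprimitive subgroups $\Sp_{2a}(q) \wr \Sp_{2b}(q)$-type subgroups and the subfield/field-extension subgroups, where $\fpr$ need not be small and one must exploit the specific structure of $s$ (e.g. that it does not normalize a decomposition of the natural module) to exclude these overgroups outright rather than merely bound their contribution. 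Handling the low-rank cases $\PSp_4(q)$, $\PSp_6(q)$, $\Om_7(q)$ and small $q$, where the asymptotic machinery is too weak, will require the most careful bespoke work, and it is there that any further genuine exceptions beyond $S_6$ would surface (the theorem asserts there are none, so these must all be cleared).
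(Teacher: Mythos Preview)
Your overall framework is correct and matches the paper's: the probabilistic method, reduction to prime-order elements, a careful choice of $s$, determination of $\M(G,s)$, and fixed point ratio estimates. However, there is a genuine gap in your proposal concerning the choice of $s$ when $\th \notin \InnDiag(T)$.

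You describe choosing $s$ as ``a regular semisimple element related to primitive prime divisors of $q^e-1$ for suitable $e$'' and lying in a torus. This is the standard recipe for \emph{simple} groups, but for $G = \langle T, \th \rangle$ with $\th$ a nontrivial field (or graph-field) automorphism, the element $s$ must lie in the coset $T\th$, not in $T$. Elements of $T\th$ are semilinear, not linear; they have no eigenvalues on $V$ in the usual sense, and Zsigmondy-type order arguments on $q^e-1$ do not directly apply to them. Your proposal gives no mechanism for selecting $s \in T\th$ with controllable maximal overgroups, and this is precisely the main technical difficulty in passing from simple to almost simple groups.

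The paper's solution is \emph{Shintani descent}: for a suitable Steinberg endomorphism $\s$ of the ambient algebraic group $X$ with $X_{\s^e} = \InnDiag(T)$, there is a bijection $f$ from $X_{\s^e}$-classes in the coset $X_{\s^e}\s$ to conjugacy classes in the smaller group $X_{\s}$ (e.g.\ $\PGSp_{2m}(q_0)$ with $q=q_0^e$), compatible with centraliser orders and with fixed points on cosets of $\s$-stable subgroups. One then specifies $s = t\th$ by prescribing its Shintani correspondent $f(t\th) \in X_{\s}$ to be a carefully chosen semisimple element (built from blocks of the form $A_{2d}$, $B_{2d}$, etc.), and uses the Shintani machinery (Propositions~\ref{prop:CentraliserBound}, \ref{prop:ShintaniTransfer}, \ref{prop:MultiplicityOrthogonal}) to transfer information about the overgroups of $f(t\th)$ in $X_{\s}$ back to information about $\M(G,t\th)$. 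Without this, or an equivalent device, there is no way to carry out step (a) of your plan when $\th$ is a field automorphism, and this is the bulk of the cases in $\A$.
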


As an immediate consequence of Theorem~\ref{thm:MainResult}, all groups in $\A$ are $\frac{3}{2}$-generated. Therefore, this establishes the main conjecture for all almost simple groups whose socle is a symplectic group or odd-dimensional orthogonal group.

\begin{myremark}\label{rem:A6}
In the definition of $\T$, we take the derived subgroup of $\PSp_{2m}(q)$ since $\PSp_4(2) \cong S_6$ is not perfect. Accordingly, $A_6 \in \T$ and $\A$ includes $A_6$ together with the three cyclic extensions: $S_6$, $\PGL_2(9)$ and $M_{10}$. It is well-known that $u(A_6)=2$ and $u(S_6)=0$ but $s(S_6)=2$. Moreover, using \textsc{Magma} \cite{ref:Magma}, we can show that $u(\PGL_2(9))=5$ and $u(M_{10}) \geq 8$. (See Section~\ref{ssec:PrelimsComp} for a brief discussion of our computational methods.)
\end{myremark}

If we exclude some cases, we can strengthen the lower bound in Theorem~\ref{thm:MainResult}.
\begin{mytheorem}\label{thm:MainSharper}
Let $G \in \A$. Assume that $q$ is odd and $m \geq 3$. If $\soc(G) =  \Om_{2m+1}(q)$ then $u(G) \geq 3$, and if $\soc(G)=\PSp_{2m}(q)$ then $u(G) \geq 4$.
\end{mytheorem}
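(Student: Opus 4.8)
The plan is to apply the probabilistic method for bounding uniform spread, which goes back to Guralnick--Kantor and was developed by Breuer--Guralnick--Kantor and by Burness--Guest. Write $T = \soc(G)$. To prove $u(G) \geq k$ it suffices to produce a conjugacy class $C = s^G$ with $\<s,T\> = G$ such that for every non-identity $x \in G$ the probability
\[
P(x,s) \;=\; \frac{|\{\, g \in C : \<x,g\> \neq G \,\}|}{|C|}
\]
satisfies $P(x,s) < 1/k$; for then, given any non-identity $x_1, \dots, x_k$, a union bound gives $\sum_i P(x_i, s) < 1$, so some $g \in C$ generates with all of them. In turn $P(x,s) \leq \sum_M \fpr(x, G/M)$, the sum ranging over the maximal subgroups $M$ of $G$ with $s \in M$ and $\fpr(x, G/M) = |x^G \cap M|/|x^G|$ the fixed point ratio. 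So everything comes down to choosing $s$ so that its maximal overgroups form a short, explicit list, and then estimating the fixed point ratios of arbitrary $x$ on the cosets of those overgroups.

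Since Theorem~\ref{thm:MainResult} already yields $u(G) \geq 2$ for these $G$, and its proof selects, for each $G$ with $q$ odd and $m \geq 3$, such an element $s$ together with a description of its overgroups, I would revisit that analysis in this narrower regime and sharpen the estimates to the thresholds $1/3$ and $1/4$. Concretely, I would take $s$ to have order divisible by a primitive prime divisor of $q^{2m}-1$: in $\PSp_{2m}(q)$ this is (a power of) a generator of a cyclic torus of order $(q^m+1)/\gcd(2,q-1)$, and in $\Om_{2m+1}(q)$ a similar element acting irreducibly on a nondegenerate $2m$-subspace of minus type. With $m \geq 3$, the classification of subgroups of classical groups containing an element of primitive prime divisor order (Guralnick--Penttila--Praeger--Saxl and its analogues for the symplectic and orthogonal cases, or, for small $m$, the explicit maximal subgroup tables of Bray--Holt--Roney-Dougal) restricts the maximal overgroups of $\<s\>$ to a very short list: a field-extension subgroup in $\C_3$ (of type $\GU_m(q)$ or $\Sp_m(q^2)$, according to the parity of $m$), possibly an imprimitive subgroup in $\C_2$, and, in the orthogonal case, the nonsingular-$1$-space stabiliser $\O_1(q) \times \O^{-}_{2m}(q)$ in $\C_1$, together with a bounded number of almost simple subgroups in $\S$. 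The presence of a $\C_1$-subgroup, whose fixed point ratios are intrinsically larger, is precisely what I expect to cost us the difference between $1/3$ and $1/4$ in the orthogonal case.

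The heart of the argument is then to verify $\sum_{M \ni s} \fpr(x, G/M) < 1/3$ (orthogonal) or $< 1/4$ (symplectic) for every non-identity $x \in G$. Here I would invoke Burness's fixed point ratio bounds for classical groups --- both the generic estimates of the shape $\fpr(x, G/M) \leq |x^G|^{-1/2 + \epsilon}$ and, crucially, the sharper type-specific bounds for subspace actions and for elements of small support --- organising the argument by the rational form of $x$ (semisimple, unipotent, or an automorphism of field or graph type) and by the dimension of its fixed space. The decisive cases are the elements $x$ with smallest conjugacy class, namely transvections and unipotent elements of small support, and semisimple elements with a large fixed space, since for these the bound is tightest. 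The main obstacle I anticipate is squeezing out the factor of $4$ in the symplectic case for the smallest parameters, say $m = 3$ with $q$ small, where the generic fixed point ratio bounds are weakest; this may require choosing $s$ more carefully (for instance a second candidate class whose only maximal overgroup is the $\C_3$-subgroup), and for a finite list of genuinely small cases resorting to direct computation in \textsc{Magma}, as flagged in the excerpt. A secondary bookkeeping point is the contribution of the $\S$-subgroups: each has a small fixed point ratio, but one must know that only boundedly many contain $s$, which again comes from the primitive-prime-divisor overgroup classification.
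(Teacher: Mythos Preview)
Your high-level plan --- reuse the element and overgroup analysis from the proof of Theorem~\ref{thm:MainResult} and sharpen the fixed point ratio estimates to the thresholds $1/4$ and $1/3$ --- is exactly what the paper does, and for $\th \in \InnDiag(T)$ your concrete description is close: Proposition~\ref{prop:InnDiag} takes the Breuer--Guralnick--Kantor element $s \in T$ (which, as you say, has order divisible by a large primitive prime divisor), lifts it to $g \in G \setminus T$ via Lemma~\ref{lem:ElementWhichSquares}, and then pushes the existing bounds to $P(x,g) < 1/4$ (symplectic) or $\leq 1/3$ (orthogonal) using the sharper fixed point ratio estimates of Proposition~\ref{prop:FPRs}.

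The gap is in your concrete description when $\th \notin \InnDiag(T)$, i.e.\ $\th = \p^i$ or $\d\p^i$ with $i \geq 1$. Here $s$ must lie in the coset $T\th$, so it is a genuinely semilinear map, and your description of it as ``a generator of a cyclic torus of order $(q^m+1)/\gcd(2,q-1)$'' no longer applies; nor is there any obvious way to control the maximal overgroups of such a semilinear element directly. The paper's tool for this is \emph{Shintani descent} (Section~\ref{ssec:PrelimsShintaniDescent}): write $q = q_0^e$ where $e$ is the order of $\p^i$, choose an element $\oly$ in the \emph{small} group $\PGSp_{2m}(q_0)$ or $\SO_{2m+1}(q_0)$, and take $s = t\th$ to be its preimage under the Shintani map. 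Theorem~\ref{thm:ShintaniDescent} and Propositions~\ref{prop:CentraliserBound}, \ref{prop:ShintaniTransfer} then translate overgroup questions for $t\th$ in $G$ into overgroup questions for $\oly$ over $\F_{q_0}$. Moreover the element $\oly$ actually chosen (Table~\ref{tab:Elements}) is not an irreducible torus generator but a block-diagonal matrix such as $[A_2, A_{2m-2}]$ or $[A_2, B_{2m-2}]$, selected so that some power has $1$-eigenspace of codimension~$2$; this is what eliminates $\C_3$, $\C_4$, $\C_7$ and $\S$ via the $\nu$-bounds of Liebeck--Shalev and Guralnick--Saxl. Consequently the overgroup list (Table~\ref{tab:MaximalSubgroups}) is quite different from yours: it contains the non-degenerate $2$-space stabiliser $\Sp_2(q) \times \Sp_{2m-2}(q)$, the wreath product $\Sp_2(q) \wr S_m$, and subfield subgroups, rather than a single $\C_3$ field-extension subgroup. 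The fixed point ratio analysis (Propositions~\ref{prop:ProbSymp}, \ref{prop:ProbOrth}) is then organised around this list, with the $2$-space stabiliser handled by the bespoke bound of Proposition~\ref{prop:FPRs2Space}.
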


By \cite[Theorem 1.1]{ref:GuralnickShalev03}, if $(G_i)$ is a sequence of finite simple groups of Lie type such that $|G_i| \to \infty$, then $s(G_i) \to \infty$ if and only if $(G_i)$ does not have a subsequence of symplectic groups in even characteristic or odd-dimensional orthogonal groups, over a field of fixed size. We wish to establish similar results for sequences $(G_i)$ of almost simple groups of Lie type for which $G_i/\soc(G_i)$ is cyclic. (See \cite[Theorem 4]{ref:BurnessGuest13} for an asymptotic result for almost simple linear groups.) To this end, we prove the following result.
\begin{mytheorem}\label{thm:MainAsymptotic}
Let $(G_i)$ be a sequence of groups in $\A$ with $|G_i| \to \infty$. Then $u(G_i) \to \infty$ if and only if there is no subsequence $(G_{i_k})$ of groups over a field of fixed size such that either
\begin{enumerate}
\item{$\soc(G_{i_k})$ are symplectic groups in even characteristic; or}
\item{$\soc(G_{i_k})$ are odd-dimensional orthogonal groups.}
\end{enumerate}
\end{mytheorem}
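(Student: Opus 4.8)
I would prove the two implications separately; they call for quite different methods. Write $T_i=\soc(G_i)$, and for each $i$ let $q_i$ be the size of the underlying field and $m_i$ the natural-module parameter, so that $T_i=\PSp_{2m_i}(q_i)'$ or $T_i=\Om_{2m_i+1}(q_i)$.

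For the forward implication — if a bad subsequence exists then $u(G_i)\not\to\infty$ — suppose $(G_{i_k})$ is a subsequence over a field of fixed size $q$ whose socles are symplectic in even characteristic, or odd-dimensional orthogonal. Since $u(H)\le s(H)$ for every finite group $H$, it suffices to show that $s(G_{i_k})$ does not tend to infinity. For the simple socles this is \cite[Theorem~1.1]{ref:GuralnickShalev03}: the point is that the relevant $T_{i_k}$ has a family of maximal subgroups of small index — quadratic-form stabilizers of type $\O^{\pm}_{2m_k}(q)$ in the symplectic case, nonsingular $1$-space stabilizers $\O^{\pm}_{2m_k}(q)$ in the orthogonal case — which is large (of cardinality comparable with its index) and through which every element is covered; in the orthogonal case this uses a parity argument on the $(\pm1)$-eigenspaces of an element of $\Om_{2m_k+1}(q)$, and in the symplectic case the fact that every element of $\Sp_{2m_k}(q)$ stabilizes a quadratic form polarizing to the defining alternating form. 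To pass to the almost simple groups $G_{i_k}$ I would note that these subgroups remain maximal in $G_{i_k}$, still have index $O(q^{2m_k})$, and still jointly cover $G_{i_k}$, so the same counting bounds $s(G_{i_k})$; as $|G_{i_k}:T_{i_k}|$ is bounded along the subsequence, one may instead reduce directly to the simple case.

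For the reverse implication — if no bad subsequence exists then $u(G_i)\to\infty$ — fix $k\ge1$; I want $u(G_i)\ge k$ for all large $i$. Arguing by contradiction and passing to a subsequence, assume $u(G_i)<k$ for all $i$; refining once more, I may assume the sequence is homogeneous in socle type and characteristic and that each of $(q_i)$, $(m_i)$ is either constant or tends to infinity. Since $|G_i|\to\infty$ whereas $|G_i|\le|\Aut(T_i)|$ would be bounded if both $(q_i)$ and $(m_i)$ were constant, that case does not arise; and the hypothesis that no bad subsequence exists rules out the cases ``$q_i$ constant with symplectic socles in even characteristic'' and ``$q_i$ constant with odd-dimensional orthogonal socles''. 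We are therefore in one of: (a) symplectic socles, $q_i$ constant and odd, $m_i\to\infty$; (b) symplectic socles, $q_i\to\infty$; (c) odd-dimensional orthogonal socles, $q_i\to\infty$. In each case I must show $u(G_i)\to\infty$, contradicting $u(G_i)<k$.

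I would do this with the probabilistic method of \cite{ref:BreuerGuralnickKantor08} used in the proofs of Theorems~\ref{thm:MainResult} and~\ref{thm:MainSharper}, but now aiming to drive the relevant sum to $0$ rather than merely below $\tfrac12$: $u(G)\ge k$ provided there is a conjugacy class $C=s^G$ with $\sum_M\fpr(s,G/M)<\tfrac1k$ for every element $x\in G$ of prime order, the sum being over the maximal subgroups $M$ of $G$ containing $x$ (and $C$ taken in a coset of $T_i$ generating $G/T_i$ when $G>T_i$). The crucial choice is $s$: take $s$ to project to a generator of a cyclic maximal torus whose order divides $q_i^{m_i}+1$ and is divisible by a primitive prime divisor of $q_i^{2m_i}-1$, so that $s$ acts irreducibly on the natural module — in the orthogonal case, on a nondegenerate $2m_i$-dimensional subspace, fixing a single nonsingular point. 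Then $\fpr(s,G/M)=0$ for every maximal $M$ stabilizing a proper nonzero subspace or a nontrivial decomposition, and the only maximal subgroups meeting $s^G$ are a field-extension subgroup of type $\Sp_{m_i}(q_i^2)$ or $\GU_{m_i}(q_i)$ (together with $O(\log m_i)$ further field-extension subgroups of composite degree), the orthogonal-type subgroup $\O^{\pm}_{2m_i}(q_i)$ in case (c) and in case (b) when $q_i$ is even, a bounded number of subfield subgroups, and a bounded number of subgroups in Aschbacher's class $\mathcal{S}$. Writing the contribution of the conjugates of a fixed $M$ containing $x$ as $[G:M]\cdot\fpr(x,G/M)\cdot\fpr(s,G/M)$ and feeding in the fixed-point-ratio estimates of the paper — in particular that $\fpr(x,G/M)=0$ when $x$ has prime order and support smaller than the block size of $M$, and decays with $q_i$ and $m_i$ otherwise — one finds that $\sum_M\fpr(s,G/M)$ tends to $0$ uniformly in $x$ (it is of order $q_i^{-1}$ when $q_i\to\infty$, and decays with $m_i$ when $q_i$ is fixed). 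This gives $u(G_i)\to\infty$, the desired contradiction.

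The main obstacle is the uniform control needed in case (a): with $q_i$ fixed and $m_i\to\infty$ one must extract from the fixed-point-ratio estimates a bound on $\sum_M\fpr(s,G/M)$ that genuinely decays with $m_i$ for \emph{every} prime-order element $x$ — including semisimple elements in small conjugacy classes — while handling the slowly growing families of field-extension and $\mathcal{S}$-subgroups meeting $s^G$; this is where the bulk of the paper's technical work is used. A lesser point needing care is producing $s$ in the correct outer coset when $G_i>T_i$, and extending the covering argument of the forward implication from simple to almost simple groups.
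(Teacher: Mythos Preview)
Your overall plan is sound, but the forward implication has a genuine gap and the reverse implication differs from the paper in a way you underplay.

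\textbf{Forward implication.} Your claim that the relevant maximal subgroups ``still jointly cover $G_{i_k}$'' is precisely the non-trivial step. In the orthogonal case it is not hard: every element of $\SO_{2m+1}(q)$ has a $1$-eigenvector (by a parity/determinant argument on $\pm1$-eigenspaces), and a field automorphism in standard form fixes a basis pointwise, so every element of $G$ fixes a nonsingular $1$-space. But in the even-characteristic symplectic case this is a theorem requiring proof: the assertion is that every element of $\Sp_{2m}(q){:}\langle\phi\rangle$, including those in outer cosets, lies in some subgroup of type $\O^{\pm}_{2m}(q)$. The paper establishes this as Corollary~\ref{cor:OrthogonalSubgroups}, via Shintani descent through the exceptional isomorphism $\Sp_{2m}(K)\cong\O_{2m+1}(K)$ (Proposition~\ref{prop:MultiplicityOrthogonal}). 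Your fallback ``reduce directly to the simple case'' via bounded $|G:T|$ does not work: there is no general inequality bounding $s(G)$ in terms of $s(T)$ and $|G:T|$, and in fact Remark~\ref{rem:Bounded}(ii) exhibits infinitely many $G$ with $u(G)>u(\soc(G))$.

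\textbf{Reverse implication.} Your element choice differs from the paper's. You propose an irreducible element of order dividing $q_i^{m_i}+1$; the paper instead takes, via Shintani descent, the preimage $t\theta$ of a reducible element $y=[A_2,A_{2m-2}]$ (or a variant) for the $q\to\infty$ regimes, and $y=[A_{2d},A_{2m-2d}]$ with $d\sim\sqrt{m}$ for your case~(a) (Proposition~\ref{prop:SympAsymN}). The paper's choice buys two things: a power of $t\theta$ has $\nu=2$ (respectively $\nu=2d$), which via \cite{ref:GuralnickSaxl03} uniformly eliminates $\C_4$, $\C_7$ and almost all $\S$ overgroups; and the one reducible overgroup is a non-degenerate $2d$-space stabiliser contributing roughly $q^{-d}\to 0$. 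Your irreducible choice avoids reducible overgroups but leaves $\C_3$ subgroups of every prime degree dividing~$m$, and a centraliser in $X_{\s}$ of order $q_0^m+1$, so Proposition~\ref{prop:CentraliserBound} gives only a crude multiplicity bound; you would also need a separate argument for imprimitive and $\S$ overgroups without the small-$\nu$ trick. This is workable, but not the ``lesser point'' you suggest: Shintani descent --- both for constructing $t\theta$ in the correct coset with controlled overgroups and for the covering statement in the forward direction --- is the paper's main technical engine.

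A minor point: your statement of the probabilistic criterion swaps the roles of $x$ and $s$. The correct bound is $P(x,s)\leq\sum_{H\in\M(G,s)}\fpr(x,G/H)$, summing over maximal overgroups of $s$ and evaluating fixed point ratios at $x$; your symmetric reformulation $[G:M]\cdot\fpr(x,G/M)\cdot\fpr(s,G/M)$ is valid but the preceding sentence has the indexing reversed.
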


We can find explicit bounds for the groups in Theorem~\ref{thm:MainAsymptotic} with bounded uniform spread.
\begin{mytheorem}\label{thm:MainUpper}
Let $G \in \A$. If $q$ is even, $\soc(G) = \PSp_{2m}(q)$ and $\th$ is not a graph-field automorphism, then $s(G) \leq q$.  If $\soc(G) = \Om_{2m+1}(q)$, then $s(G) < \frac{q^2+q}{2}$.
\end{mytheorem}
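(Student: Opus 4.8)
The plan is to prove both inequalities by the standard method for upper bounds on spread: for a suitable integer $k$ we exhibit non-identity elements $x_1,\dots,x_k\in G$ such that for every $g\in G$ there is an index $i$ with $\langle x_i,g\rangle\neq G$. If $G$ had spread $k$ then \emph{every} such $k$-tuple would admit a common $g$ with $\langle x_i,g\rangle=G$ for all $i$; so producing a single bad tuple forces $s(G)<k$. We take $k=q+1$ when $q$ is even and $\soc(G)=\PSp_{2m}(q)$, giving $s(G)\leq q$; and $k=\frac{q^2+q}{2}$ when $\soc(G)=\Om_{2m+1}(q)$, which is an integer and yields $s(G)<\frac{q^2+q}{2}$.

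The elements $x_i$ will be constructed relative to a fixed small non-degenerate subspace $W$ of the natural module $V$ for $\soc(G)$ — a non-degenerate $2$-space in the symplectic case, and a non-degenerate subspace of small dimension (expected to be a non-degenerate $3$-space) in the odd-orthogonal case. The guiding principle is that the reducible subgroups of $G$ adapted to $W$ — namely the parabolic subgroups together with, in the symplectic case, the stabilisers $\mathrm{O}^{\pm}_{2m}(q)$ of the quadratic forms polarising to the given symplectic form, and in the orthogonal case the stabilisers of non-degenerate subspaces meeting $W$ suitably — provide enough proper overgroups that, for a well-chosen family of $k$ "small" elements $x_i$ each lying in many of them, the union $\bigcup_{i=1}^{k}\{g:\langle x_i,g\rangle\neq G\}$ exhausts $G$. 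The constants $k$ arise as counts attached to $W$: a non-degenerate symplectic $2$-space carries $q+1$ isotropic points, and for $q$ odd the number of hyperbolic $2$-subspaces of a non-degenerate orthogonal $3$-space — equivalently the number of unordered pairs of distinct singular points on the associated conic — is $\binom{q+1}{2}=\frac{q^2+q}{2}$.

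Concretely I would proceed as follows. First reduce to $G=\langle\soc(G),\theta\rangle$ and record the possible $\theta$; the hypothesis that $\theta$ is not a graph-field automorphism (which can only fail to hold when $\soc(G)=\PSp_4(q)$ with $q$ even, since $\mathrm{Sp}_{2m}(q)$ has no graph automorphism for $m\geq3$) is needed because a graph-field automorphism of $\mathrm{Sp}_4(q)$ does not normalise the relevant reducible subgroups — it moves the two classes of $\mathrm{O}_4^{\pm}(q)$-type subgroups nontrivially — and so breaks the construction. Next, fix $W$ and the family $\{x_1,\dots,x_k\}$: in the symplectic case take the $x_i$ to be transvections (long-root elements), one for each isotropic point of $W$, each lying in an explicitly described collection of orthogonal and parabolic subgroups of $G$; in the orthogonal case take the $x_i$ to be involutions with small non-trivial eigenspace, or products of two reflections supported on $W$, one for each relevant $2$-subspace of $W$. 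Then prove the covering statement by a dichotomy on $g\in G$: if $g$ stabilises a subspace compatible with $W$ in the required sense — in the orthogonal case one uses that $\Om_{2m+1}(q)$ has no irreducible element, so every element lies in a proper parabolic or in a non-degenerate-subspace stabiliser — then one of the subspace-stabiliser overgroups of some $x_i$ contains $g$; otherwise, in the symplectic case, a linear-algebra argument on which quadratic forms polarising to the given symplectic form are $g$-invariant, using that $W$ is non-degenerate and hence not totally singular for any such form, produces an orthogonal subgroup containing both $g$ and some $x_i$. Finally I would separately treat the few cases that need care (very small $m$ or $q$, and the $A_6$ extensions, handled computationally as in the Remark), and check that in the orthogonal case the full family of $\frac{q^2+q}{2}$ elements is genuinely required.

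The main obstacle is the covering step — showing the chosen $k$-tuple catches \emph{every} $g\in G$. The difficult elements are those acting (near-)irreducibly on $V$, such as Singer-type elements, which lie in very few reducible subgroups; catching them is exactly what the orthogonal-subgroup overgroups (together with the non-degeneracy of $W$) accomplish in the symplectic case, and what the absence of odd-dimensional irreducible orthogonal summands provides in the orthogonal case. Pinning down the precise constants — $q$ rather than $q+1$, and $\frac{q^2+q}{2}$ rather than $\binom{q}{2}$ or $q^2$ — will require care about whether the most degenerate member of the family is actually needed, and this is where the symplectic and orthogonal cases genuinely diverge.
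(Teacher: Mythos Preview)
Your overall strategy---exhibit $k$ non-identity elements such that every $g\in G$ fails to generate with at least one of them---is exactly right, and your identification of the relevant overgroups (orthogonal-type subgroups in the symplectic case, subspace stabilisers in the orthogonal case) matches the paper. In the orthogonal case the paper's argument is simpler than what you sketch: rather than working with a fixed $3$-space and its hyperbolic $2$-subspaces, one shows directly that every $g\in G$ fixes some non-zero vector of $V$---for $g\in\InnDiag(T)$ this is an eigenvalue argument (the dimension is odd and eigenvalues come in inverse pairs, so not all $\pm 1$ eigenvalues can be $-1$ without forcing $\det(g)=-1$), and for $g\notin\InnDiag(T)$ the standard field automorphism fixes an $\F_{q_0}$-basis---and then invokes from Guralnick--Shalev a set of $\frac{q^2+q}{2}$ reflections with the property that every vector is fixed by at least one of them. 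Your $3$-space count $\binom{q+1}{2}$ is a plausible reconstruction of that number, but the fixed-vector route avoids the dichotomy you describe and makes the extension from $T$ to $G$ almost immediate.

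The genuine gap is in the symplectic case for elements $g\in G\setminus T$. For $g\in T=\Sp_{2m}(q)$, Dye's theorem says $g$ lies in some $\O^{\pm}_{2m}(q)$, and this is indeed a linear-algebra fact about invariant quadratic forms polarising to the given alternating form. But when $g=t\theta$ with $\theta$ a non-trivial field automorphism, $g$ acts only semilinearly on $V$, and the question becomes whether $g$ \emph{normalises} some $\O^{\pm}_{2m}(q)\leq T$; this does not reduce to a form-stabiliser computation in any obvious way, and your ``linear-algebra argument on which quadratic forms \dots\ are $g$-invariant'' does not address it. The paper's solution is Corollary~\ref{cor:OrthogonalSubgroups}, proved via Shintani descent (Proposition~\ref{prop:MultiplicityOrthogonal}): one passes through the exceptional isomorphism $\Sp_{2m}\cong\O_{2m+1}$ over the algebraic closure in characteristic~$2$, under which the $\O^{\pm}_{2m}$-subgroups become stabilisers of non-degenerate hyperplanes, and then transfers the counting problem down to $\Sp_{2m}(q_0)$ where Dye's theorem applies. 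This is the substantive new ingredient needed to lift the Guralnick--Shalev bound from $T$ to $G$, and it is missing from your proposal. (Incidentally, your explanation of why graph-field automorphisms are excluded is not quite the point: the real obstruction is that for a graph-field $\theta$ the Shintani map lands in a Suzuki group rather than a smaller symplectic group, so this transfer argument is unavailable.)
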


\begin{myremark}\label{rem:Bounded}
Let $q$ be even. Write $G=\< T, \th \>$ where $T = \PSp_4(q)'$ and $\th \in \Aut(T)$. 
\begin{enumerate}
\item{If $q=4$ and $\th$ is an involutory field automorphism, then it can be shown computationally that $u(G)=4$ (see Table~\ref{tab:Comp}). Therefore, the bound for symplectic groups in Theorem~\ref{thm:MainUpper} is sharp.}
\item{By \cite[Prop. 2.5]{ref:GuralnickShalev03}, $s(T) \leq q$. Theorem~\ref{thm:MainUpper} extends this result by establishing that if $\th$ is a field automorphism then $s(G) \leq q$. However, this upper bound does \emph{not} apply when $\th$ is a graph-field automorphism. Indeed, in this case, if $q=4$ then $u(G) \geq 10$, and, strikingly, if $q=8$ and $\th$ has order two then $u(G) \geq 76$. This behaviour is captured by Proposition~\ref{prop:ProbS4}(iii), which establishes that if $\th$ is an involutory graph-field automorphism then $u(G) \geq q^2/C$ for a constant $C$. (The proof of Proposition~\ref{prop:ProbS4}(iii) shows that we may choose $C=18$.) In particular, this gives infinitely many examples where $u(G) > u(\soc(G))$.}
\end{enumerate}
\end{myremark}

\begin{myremark}\label{rem:BoundedAsymp}
Let $q$ be even. Write $G=\< T, \th \>$ where $T = \PSp_{2m}(q)$ and $\th \in \Aut(T)$. By Proposition~\ref{prop:ProbSymp}(iv), if $m \geq 16$, then $q-1 \leq u(G) \leq s(G) \leq q$, so the upper bound for symplectic groups in Theorem~\ref{thm:MainUpper} is certainly close to best possible in large rank.
\end{myremark}

\begin{myremark}\label{rem:GeneratingGraph}
The above results can be recast combinatorially by way of the \emph{generating graph}. For a finite group $G$, let $\Ga(G)$ be the graph whose vertices are the non-identity elements of $G$ and in which two vertices $g$ and $h$ are adjacent if and only if $\<g,h\> = G$. This graph encodes many interesting generation properties of the group. For example, $\Ga(G)$ has no isolated vertices if and only if $G$ is $\frac{3}{2}$-generated. Further, if $s(G) \geq 2$, then $\Ga(G)$ is connected with diameter at most 2. Therefore, by \cite[Theorem 1.2]{ref:BreuerGuralnickKantor08}, the diameter of the generating graph of any non-abelian finite simple group is two. Moreover, Theorem~\ref{thm:MainResult} shows that the same conclusion holds for the groups in $\A$. \par

Many other natural questions about generating graphs have been investigated in recent years. For example, in \cite[Theorem 1.2]{ref:BreuerGuralnickLucchiniMarotiNagy10}, it is shown that for all sufficiently large simple groups $G$, the graph $\Ga(G)$ has a Hamiltonian cycle. Indeed, it is conjectured that for all finite groups $G$ of order at least four, the generating graph $\Ga(G)$ has a Hamiltonian cycle if and only if every proper quotient of $G$ is cyclic. This is a significant strengthening of the aforementioned conjecture of Breuer, Guralnick and Kantor, which asserts that the generating graph $\Ga(G)$ has no isolated vertices if and only if every proper quotient of $G$ is cyclic.  This stronger conjecture has been verified for soluble groups \cite[Prop. 1.1]{ref:BreuerGuralnickLucchiniMarotiNagy10}.
\end{myremark}

In the remainder of this introductory section, we will briefly discuss the main tools used in the proofs of Theorems~\ref{thm:MainResult}--\ref{thm:MainUpper}. As in \cite{ref:BurnessGuest13}, the main ingredient is the probabilistic method used by Guralnick and Kantor in \cite{ref:GuralnickKantor00}. Fix $G = \< T, \th \> \in \A$ and $s \in G$. Write $\M(G,s)$ for the set of maximal subgroups of $G$ which contain $s$. For $x \in G$, let $P(x,s)$ be the probability that $x$ and a random conjugate of $s$ do not generate $G$; that is, \[ P(x,s) = 1 - \frac{|\{z \in s^G \mid G = \< x,z \>\}|}{|s^G|}. \] By \cite[Lemma 2.1]{ref:BurnessGuest13}, $G$ has uniform spread $k$ if for all $k$-tuples $(x_1,\dots,x_k)$ of prime order elements in $G$, \[ \sum_{i=1}^{k}P(x_i,s) < 1. \]

To estimate $P(x,s)$ we use fixed point ratios. For a $G$-set $\Omega$, let $\fix(x,\Omega)$ be the number of fixed points of $x$ on $\Omega$ and let $\fpr(x,\Omega) = \fix(x,\Omega)/|\Omega|$ be the corresponding \emph{fixed point ratio}. For $x \in G$, by \cite[Lemma 2.2]{ref:BurnessGuest13}, 
\begin{equation}
P(x,s) \leq \sum_{H \in \M(G,s)}^{} \fpr(x,G/H). \label{eq:ProbMethod}
\end{equation}

Therefore, our probabilistic method has three steps: select an appropriate element $s \in G $, determine $\M(G,s)$ and use fixed point ratio estimates to bound $P(x,s)$ for each $x \in G$ of prime order. In the case where $\th$ is a field automorphism, we will use the theory of \emph{Shintani descent} to choose $s$ and control its maximal overgroups (see Section~\ref{ssec:PrelimsShintaniDescent}). \par

{\renewcommand{\arraystretch}{1.1}
\begin{tabularx}{0.95\textwidth}{ll}
\toprule[0.12em]
$\C_1$ & stabilisers of subspaces, or pairs of subspaces, of $V$ \\
$\C_2$ & stabilisers of decompositions $V = \bigoplus_{i=1}^{t}V_i$ where $\dim{V_i}=a$ \\
$\C_3$ & stabilisers of prime degree field extensions of $\F_q$ \\
$\C_4$ & stabilisers of tensor product decompositions $V=V_1 \otimes V_2$ \\
$\C_5$ & stabilisers of prime index subfields of $\F_q$ \\
$\C_6$ & normalisers of symplectic-type $r$-groups in absolutely irreducible representations \\
$\C_7$ & stabilisers of decompositions $V = \bigotimes_{i=1}^{t}V_i$ where $\dim{V_i}=a$ \\
$\C_8$ & stabilisers of non-degenerate forms on $V$ \\
\bottomrule[0.12em]
\caption{The collections of geometric subgroups} \label{tab:GeometricSubgroups}
\end{tabularx}}
\vspace{-5.5pt}

Our framework for understanding $\M(G,s)$ is provided by Aschbacher's subgroup structure theorem for finite classical groups \cite{ref:Aschbacher84}. Roughly, this theorem states that if $G$ is an almost simple classical group, then any maximal subgroup of $G$ not containing $\soc(G)$ belongs to one of eight collections $\C_1,\dots,\C_8$ of so-called geometric subgroups, or it is contained in $\S$, a collection of absolutely irreducible almost simple subgroups. The geometric subgroups preserve certain geometric structures on the natural module (see Table~\ref{tab:GeometricSubgroups}), and we refer the reader to \cite{ref:KleidmanLiebeck} for further details regarding these subgroups. A complete description of the maximal subgroups of classical groups of dimension at most 12 is given in \cite{ref:BrayHoltRoneyDougal}. For a maximal subgroup $H$ of $G$, the \emph{type} of $H$ is a rough indication of the structure of $H$. In addition to determining the types of subgroups in $\M(G,s)$, we need to calculate the multiplicity with which each type occurs. \par

Finally, in view of \eqref{eq:ProbMethod}, we use fixed point ratio estimates to bound $P(x,s)$. There is a vast literature on fixed point ratios for primitive actions of almost simple groups. If $G$ is a finite almost simple classical group, then the \emph{subspace subgroups} of $G$ are roughly the maximal subgroups which act reducibly on the natural module for $G$; that is, they are roughly the $\C_1$ subgroups. (For the precise definition see \cite[Definition 1]{ref:Burness071}.) In \cite{ref:Burness071,ref:Burness072,ref:Burness073,ref:Burness074}, Burness establishes close to best possible upper bounds on $\fpr(x,G/H)$ when $G$ is an almost simple classical group, $H$ is a maximal non-subspace subgroup and $x \in G$ has prime order. In particular, if $n$ is the dimension of the natural module for $G$, then \[ \fpr(x,G/H) \leq |x^G|^{-\frac{1}{2}+o(1)}, \] where $o(1) \to 0$ as $n \to \infty$. An explicit exponent is given in \cite[Theorem 1]{ref:Burness071}. For subspace subgroups we will use the bounds of Guralnick and Kantor in \cite[\SS 3]{ref:GuralnickKantor00}, together with some new bounds we establish in Section~\ref{sec:FPRs}. \par

For some low-dimensional groups over small fields, our probabilistic approach is complemented by computational methods implemented in \textsc{Magma} \cite{ref:Magma}. We refer the reader to Section~\ref{ssec:PrelimsComp} for the details.

\subsection*{Acknowledgements}
The author would like to thank his PhD supervisor Dr Tim Burness for bringing this problem to his attention, and he acknowledges the financial support of EPSRC and the Heilbronn Institute for Mathematical Research. He also thanks Prof Robert Guralnick for helpful advice.

%%%%% PRELIMINARIES %%%%%

\section{Preliminaries}\label{sec:Prelims}
In this section, we record preliminary results and fix notation.

\subsection{Symplectic and orthogonal groups}\label{ssec:PrelimsGroups}
Let us begin by discussing the almost simple groups which will be the focus of this paper. Let $q=p^f$ where $p$ is prime and let $V = \F_q^n$. For finite classical groups we will use the notation and terminology adopted by Kleidman and Liebeck in \cite{ref:KleidmanLiebeck} and Burness and Giudici in \cite{ref:BurnessGiudici16}. \par

{\renewcommand{\arraystretch}{1.1}
\begin{tabularx}{0.95\textwidth}{cccCc}
\toprule[0.12em]
Case            & $T$             & $n$            & Forms on $V=\F_q^n$                                               & Conditions on $q$ \\
\midrule
$\mathbf{S}$    & $\PSp_{2m}(q)$  & $2m \geq 6$    & symplectic form $( \ , \ )$                                       & none               \\
$\mathbf{S_4}$  & $\PSp_4(q)$     & $4$            & symplectic form $( \ , \ )$                                       & $q > 2$            \\
$\mathbf{O}$    & $\Om_{2m+1}(q)$ & $2m+1 \geq 7$  & non-degenerate quadratic form $Q$ with symmetric form $( \ , \ )$ & $q$ odd            \\
\bottomrule[0.12em]
\caption{The three cases for groups in $\A$} \label{tab:Groups}
\end{tabularx}}

Let $( \ , \ )$ be a bilinear form on $V$. The corresponding similarity group $\Delta(V)$ is the subgroup of $\GL(V)$ containing the elements $g$ for which there exists $\t(g) \in \F_{q}$ such that $(ug,vg) = \t(g)(u,v)$ for all $u,v \in V$. We refer to $\t\:\Delta(V) \to \F_{q}^{\times}$ as the \emph{similarity map}.  \par

Write $\Sp(V), \GSp(V), \mathrm{\Gamma \Sp}(V)$ for the groups of isometries, similarities and semisimilarities of $V$ with respect to a symplectic (i.e. non-degenerate alternating) form, and respectively $\O(V), \GO(V), \mathrm{\Gamma O}(V)$ for an odd-dimensional space $V$ with a non-degenerate symmetric form (over a field of odd characteristic). Let $\SO(V)$ be the index two subgroup of $\O(V)$ of maps with determinant one. The kernel of the spinor norm $\eta \: \SO(V) \to \F_q^{\times}/(\F_q^{\times})^2$ (see \cite[pp. 29--30]{ref:KleidmanLiebeck}) is the unique index two subgroup $\Om(V)$ of $\SO(V)$. \par

The sets $\T$ and $\A$ were introduced in \eqref{eq:DefT} and \eqref{eq:DefA}. In Table~\ref{tab:Groups}, we partition $\A$ into three subsets. (We omit groups with socle $\PSp_4(2)' \cong A_6$; see Remark~\ref{rem:A6}.) In each case, we define a formed space $V=\F_q^n$, which is the natural module for $T$.

Let $T \in \T$. We will now determine the possible groups $\< T, \th \>$ where $\th \in \Aut(T)$. To do this, it suffices to consider, for the choice of $\th$, the representatives of the outer automorphisms of $T$. By \cite[Theorem 30]{ref:Steinberg67}, $\Out(T)$ is generated by \emph{diagonal}, \emph{field}, \emph{graph} and \emph{graph-field automorphisms}. (We adopt the terminology of \cite[Definition 2.5.13]{ref:GorensteinLyonsSolomon98}.) The structure of $\Out(T)$ is easily determined, and we can identify the possibilities for $\th$ (see Table~\ref{tab:Theta}).

Let us define the notation used in Table~\ref{tab:Theta}. For $f > 1$, let $\p \in \Aut(T)$ be the field automorphism of order $f$ defined as $\overline{(a_{ij})} \mapsto \overline{(a_{ij}^p)}$, for each $\overline{(a_{ij})} \in T$. (Here we write linear maps on $V$ as matrices with respect to a standard basis for $V$ (see \cite[Prop 2.5.3]{ref:KleidmanLiebeck}), and we use overlines to denote reduction modulo scalars.)  Moreover, in case $\mathbf{S_4}$, if $q$ is even, let $\r$ be a graph-field automorphism of order $2f$ such that $\r^2 = \p$ (see \cite[Prop. 12.3.3]{ref:Carter72}). Finally, in cases $\mathbf{S}$ and $\mathbf{S_4}$ (respectively case $\mathbf{O}$), if $q$ is odd, let $\d$ be a diagonal automorphism of order 2 induced by an element of $\GSp_{2m}(q) \setminus \Sp_{2m}(q)$ (respectively $\SO_{2m+1}(q) \setminus \Om_{2m+1}(q)$). We write $\InnDiag(T)$ for the subgroup of $\Aut(T)$ generated by inner and diagonal automorphisms.

Now consider the elements of $G$. The conjugacy classes of elements of prime order in $G$ are described in \cite[\SS 3.4--3.5]{ref:BurnessGiudici16}, and we will refer to the relevant results when they are required. By \cite[Lemmas 3.4.2, 3.5.3]{ref:BurnessGiudici16}, the conjugacy class of an odd order semisimple element $g$ of $\GSp_n(q)$ or $\SO_n(q)$ is determined by the eigenvalues of $g$ over $\K$. Therefore, up to conjugacy, we will write $[\l_1,\dots,\l_n]$ for $g$, where $\l_1, \dots, \l_n \in \K$ are the eigenvalues of $g$. 

{\renewcommand{\arraystretch}{1.1}
\begin{tabularx}{0.8\textwidth}{ccccc}
\toprule[0.12em]
Case                                         & $q$  & $\Aut(T)$         & $\Out(T)$        & $\th$                 \\
\midrule
$\mathbf{S}$                                 & even & $\< T, \p \>$     & $C_f$            & $1, \p^i$             \\
$\mathbf{S_4}$                               & even & $\< T, \r \>$     & $C_{2f}$         & $1, \r^j, \p^i$       \\
$\mathbf{S}$, $\mathbf{S_4}$, $\mathbf{O}$   & odd  & $\< T, \d, \p \>$ & $C_2 \times C_f$ & $1, \d, \p^i, \d\p^i$ \\
\bottomrule[0.12em]
\caption{The possibilities for $\th$ \\[2pt] ($1 \leq i < f$ and $1 \leq j < 2f$ with $j$ odd)} \label{tab:Theta}
\end{tabularx}}

\subsection{Shintani descent} \label{ssec:PrelimsShintaniDescent}
Let $G = \<T,\th \> \in \A$ (see \eqref{eq:DefT} and \eqref{eq:DefA}). The first step of our probabilistic method is to select a $G$-class $s^G$, with respect to which we will analyse the uniform spread of $G$. It is straightforward to see that we must choose $s \in G \setminus T$, so we will choose $s \in T\th$. We need to control the maximal subgroups of $G$ which contain $s$, and the technique of Shintani descent from the theory of algebraic groups will allow us to do this. \par

Following \cite[\SS 2.6]{ref:BurnessGuest13}, let $X$ be a connected linear algebraic group over an algebraically closed field and let $\s\: X \to X$ be a Steinberg morphism. Write $X_{\s}$ for the (necessarily finite) fixed point subgroup of $X$ under $\s$. Let $e>1$ and observe that $X_{\s^e}$ is $\s$-stable. Therefore, $\s$ restricts to an automorphism of $X_{\s^e}$ and, with a slight abuse of notation, we may consider the semidirect product $G_1=X_{\s^e}\sp\<\s\>$.

\begin{remark}\label{rem:SigmaClarification}
Let us clarify our use of the symbol $\s$. Fix $g \in X_{\s^e}$. In one sense, $\s$ is a Steinberg morphism of $X$ which restricts to an automorphism of $X_{\s^e}$. Therefore, $\s(g)$ denotes the image of $g$ under the map $\s$. In a second sense, $\s$ is an element of the semidirect product $G_1=X_{\s^e}\sp\<\s\>$, so by $g\s$ we mean the product of $g$ and $\s$ in $G_1$ and by $g^{\s}$ we mean $\s^{-1}g\s$. By the definition of the semidirect product $G_1$, $g^{\s} = \s(g)$, so $g^{\s}$ will be our preferred way of referring to $\s(g)$. 
\end{remark}

Let $g \in X_{\s^e}$. By the Lang-Steinberg Theorem \cite[Theorem 21.7]{ref:MalleTesterman11}, there exists $a \in X$ such that $g=aa^{-\s^{-1}}$. Define the \emph{Shintani map} as \[f\: \{(g\s)^{G_1} \mid g \in X_{\s^e} \} \to \{x^{X_{\s}} \mid x \in X_{\s} \} \quad g\s \mapsto a^{-1}(g\s)^ea, \] for $a \in X$ such that $g=aa^{-\s^{-1}}$. We abuse notation by writing $f(g\s)$ for a representative of the class given by $f(g\s)$. The following combines \cite[Lemma 2.13, Theorem 2.14]{ref:BurnessGuest13}.
\begin{theorem}\label{thm:ShintaniDescent}
Let $X$ be the algebraic group, $\s$ be the Steinberg morphism and $f$ be the Shintani map as above.
\begin{enumerate}
\item{The Shintani map $f$ is a well-defined bijection.}
\item{For all $g \in X_{\s^e}$, $C_{X_{\s^e}}(g\s) \cong aC_{X_{\s}}(f(g\s))a^{-1}$.}
\item{Let $Y$ be a closed connected $\s$-stable subgroup of $X$. Then for all $g \in X_{\s^e}$, \[ \fix(g\s, X_{\s^e}/Y_{\s^e}) = \fix(f(g\s), X_{\s}/Y_{\s}).\]}
\end{enumerate}
\end{theorem}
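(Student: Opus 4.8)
The plan is to deduce all three parts from the Lang–Steinberg Theorem and standard properties of Steinberg morphisms, following the treatment of Shintani descent in \cite{ref:BurnessGuest13}. First I would address well-definedness in part (i): given $g \in X_{\s^e}$ and two choices $a, a'$ with $g = a a^{-\s^{-1}} = a' (a')^{-\s^{-1}}$, one checks that $a^{-1}a'$ lies in $X_{\s}$, whence $a^{-1}(g\s)^e a$ and $(a')^{-1}(g\s)^e a'$ are $X_{\s}$-conjugate; similarly, replacing $g\s$ by a $G_1$-conjugate $(g\s)^h = h^{-1}(g\s)h$ with $h \in X_{\s^e}$ only changes $a$ to $ha$ up to an element of $X_{\s}$, so the class $f(g\s)^{X_{\s}}$ is genuinely an invariant of $(g\s)^{G_1}$. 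One also needs to see $a^{-1}(g\s)^e a$ is actually $\s$-fixed, which is a direct computation using $(g\s)^e = g\, g^{\s^{-1}} \cdots g^{\s^{-(e-1)}}\s^e$ (interpreting $\s$-conjugation as in Remark~\ref{rem:SigmaClarification}) and $g = aa^{-\s^{-1}}$, so that $a^{-1}(g\s)^e a = a^{-1} a^{\s^{-e}} \s^e$ as an element of $X \rtimes \<\s\>$, and this is $\s$-stable and lies in $X_{\s}$ after identifying it with its $X$-component (using that $\s^e$ acts trivially on $X_{\s^e}$). For the bijection, I would construct the inverse explicitly: given $x \in X_{\s}$, again use Lang–Steinberg to write $x = b^{-1}b^{\s^{-e}}$ wait — more precisely pick $b$ with a compatible twisted equation, set $g\s = b(x\s)b^{-1}$ suitably normalised, and verify this lands in $X_{\s^e}\s$ and is inverse to $f$; alternatively, one cites that $f$ is a bijection because both sides are counted by the same orbit-counting identity, but the explicit inverse is cleaner.

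For part (ii), the centraliser statement, I would argue that conjugation by $a$ inside $X$ carries the coset $g\s$ (as an automorphism, or as an element of $X \rtimes \<\s\>$) to something whose $e$-th power is $f(g\s)$, and more importantly that $a$ conjugates the "$\s$-twisted" structure on $X_{\s^e}$ to the "untwisted" $\s$-structure; concretely, the map $X \rtimes \<\s\> \to X \rtimes \<\s\>$ given by conjugation by $a$ sends $X_{\s^e}\<\s\>$ into itself in a way that matches $C_{X_{\s^e}}(g\s)$ with $a\, C_{X_{\s}}(f(g\s))\, a^{-1}$. The key point is that an element $y \in X$ commutes with $g\s$ iff $y^{-1} g \, y^{\s} = g$, and substituting $y = a z a^{-\s}$ type relations translates this into $z$ commuting with $f(g\s)$ and being $\s$-fixed. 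Part (iii) on fixed points then follows formally: the bijection $X_{\s^e}/Y_{\s^e} \to X_{\s}/Y_{\s}$ should be induced by $hY_{\s^e} \mapsto a^{-1} h a \cdot Y_{\s}$ after an appropriate Lang–Steinberg adjustment, $(g\s)$-equivariantly with respect to $f(g\s)$, provided $Y$ is closed, connected and $\s$-stable so that Lang–Steinberg applies to $Y$ as well; then fixed-point counts are preserved because the action is transported along the bijection. In fact, once parts (i) and (ii) are in place, (iii) can be obtained from a uniform argument: $\fix(g\s, X_{\s^e}/Y_{\s^e})$ is a sum over $Y_{\s^e}$-classes in $X_{\s^e}$ of certain index terms expressible via centralisers, and the same holds on the $X_{\s}$ side, and (ii) matches the terms.

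The main obstacle I expect is bookkeeping the dual role of $\s$ (Remark~\ref{rem:SigmaClarification}): every computation slides between "$\s$ as a morphism of the ambient algebraic group $X$" and "$\s$ as a group element of the semidirect product", and getting the exponents of $\s$ and the direction of twisting ($a^{-\s}$ versus $a^{-\s^{-1}}$) consistent throughout is where errors creep in. A secondary technical point is ensuring Lang–Steinberg is legitimately invoked for $Y$ in part (iii): this is exactly why connectedness of $Y$ is hypothesised, and one must be careful that $Y_{\s^e}$ is $\s$-stable (it is, since $Y$ is $\s$-stable) so that the quotient $X_{\s^e}/Y_{\s^e}$ carries a genuine $\<\s\>$-action. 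Since all of this is essentially a repackaging of \cite[Lemma 2.13, Theorem 2.14]{ref:BurnessGuest13} — indeed the statement here is quoted verbatim from there — I would in practice simply reduce to those references after indicating the shape of the argument, rather than reproving it from scratch.
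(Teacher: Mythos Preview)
Your proposal is correct and aligns with the paper's approach: the paper does not prove this theorem at all but simply cites \cite[Lemma 2.13, Theorem 2.14]{ref:BurnessGuest13}, which is exactly what you conclude you would do after sketching the argument. Your outline of the underlying Lang--Steinberg computations is accurate and more detailed than anything the paper provides.
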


\begin{remark}\label{rem:ShintaniDescent}\new
\begin{enumerate}
\item{In \cite[\SS 2.6]{ref:BurnessGuest13}, it is verified that $(g\s)^{G_1} = (g\s)^{X_{\s^e}}$, for all $g \in X_{\s^e}$. Consequently, $f$ is a bijection from $\{(g\s)^{X_{\s^e}} \mid g \in X_{\s^e} \}$ to $\{x^{X_{\s}} \mid x \in X_{\s} \}$.}
\item{The hypothesis that $\s$ is a Steinberg morphism is used (via the Lang-Steinberg Theorem) to guarantee the existence of the element $a \in X$ required to define the map $f$. However, the rest of the proof of Theorem~\ref{thm:ShintaniDescent} holds whenever $\s$ is an automorphism of $X$ as an abstract group with a finite fixed point subgroup. Therefore, we may define a Shintani map for any abstract automorphism $\s$ of $X$ which has a finite fixed point subgroup and such that for all $g \in X_{\s^e}$ there exists $a \in X$ for which $aa^{-\s^{-1}} = g$.}
\end{enumerate}
\end{remark}

We will now demonstrate how we will apply this general theory to our specific settings. Let $T \in \T$ and $\th \in \Aut(T)$. Assume that $\th \not\in \InnDiag(T)$. Let $K = \K$ and define
\begin{equation}
X = \left\{ 
\begin{array}{ll}
\PGSp_{2m}(K)  & \text{if $T= \PSp_{2m}(q)$} \\
\SO_{2m+1}(K) & \text{if $T = \Om_{2m+1}(q)$} \\
\end{array}
\right. \label{eq:DefX}
\end{equation}
Abusing notation, as explained in Remark~\ref{rem:SigmaClarification}, let $\p\:X \to X$ be defined as $\overline{(a_{ij})} \mapsto \overline{(a_{ij}^p)}$. Observe that $\p|_T$ is the automorphism $\p$ from Table~\ref{tab:Theta}. We will split into two cases depending on whether $\th$ is a graph-field automorphism.

\subsubsection{Automorphisms other than graph-field automorphisms}\label{sssec:PrelimsShintaniField}
Assume that $\th$ is not a graph-field automorphism. Then, from Table~\ref{tab:Theta}, $\th = \p^i$ or $\th = \d\p^i$ for some $1 \leq i < f$. Let $e$ be the order of $\p^i$ and write $q=q_0^e$. Then define the Frobenius morphism $\s\: X \to X$ as
\begin{equation}
\s = \p^i. \label{eq:DefSigmaField}
\end{equation}
That is, $\s$ is defined as $\overline{(a_{ij})} \mapsto \overline{(a_{ij}^{q_0})}$. Therefore, if $T=\Sp_{2m}(q)$ and $q$ is even, then $\th = \p^i$, and we obtain the Shintani map
\begin{equation}
f\: \{(t\th)^{\Sp_{2m}(q)} \mid t \in \Sp_{2m}(q) \} \to \{x^{\Sp_{2m}(q_0)} \mid x \in \Sp_{2m}(q_0) \}. \label{eq:SympShintaniEvenField}
\end{equation} 
Additionally, if $q$ is odd (and $T=\PSp_{2m}(q)$ or $T=\Om_{2m+1}(q)$), then we obtain the maps
\begin{align}
& f\: \{(g\p^i)^{\PGSp_{2m}(q)} \mid g \in \PGSp_{2m}(q) \} \to \{x^{\PGSp_{2m}(q_0)} \mid x \in \PGSp_{2m}(q_0) \}; \label{eq:SympShintaniOdd} \\
& f\: \{(g\p^i)^{\SO_{2m+1}(q)} \mid g \in \SO_{2m+1}(q) \} \to \{x^{\SO_{2m+1}(q_0)} \mid x \in \SO_{2m+1}(q_0) \}. \label{eq:OrthShintani}
\end{align} 

However, for our application, we need to study cosets of $T$ rather than of $\InnDiag(T)$. The following propositions allow us to do this.
\begin{proposition}\label{prop:SympShintaniOdd}
Let $q$ be odd and let $T=\PSp_{2m}(q)$. The Shintani map $f$ restricts to bijections
\begin{enumerate}
\item{$f_1\: \{(t\p^i)^{\PGSp_{2m}(q)}   \mid t \in T \} \to \{x^{\PGSp_{2m}(q_0)} \mid x \in \PSp_{2m}(q_0) \};$}
\item{$f_2\: \{(t\d\p^i)^{\PGSp_{2m}(q)} \mid t \in T \} \to \{x^{\PGSp_{2m}(q_0)} \mid x \in \PGSp_{2m}(q_0) \setminus \PSp_{2m}(q_0) \}.$}
\end{enumerate} 
\end{proposition}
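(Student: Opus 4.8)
The plan is to track the similarity character of $\GSp$ through the Shintani correspondence. Put $Y = \GSp_{2m}(K)$ with $K = \K$; this is connected and $\s$-stable (recall $\s = \p^i$ is the $q_0$-power Frobenius), so Theorem~\ref{thm:ShintaniDescent} applies verbatim to $Y$, giving a Shintani bijection $f_Y$ onto $\{x^{\GSp_{2m}(q_0)} \mid x \in \GSp_{2m}(q_0)\}$, and $X = \PGSp_{2m}(K)$ is the quotient of $Y$ by its central, $\s$-stable subgroup $Z$ of scalars; write $\pi \: Y \to X$ for the projection, which restricts to a surjection $\GSp_{2m}(q) \to \PGSp_{2m}(q)$. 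Let $\tau \: Y \to K^{\times}$ be the similarity map: a surjective homomorphism with kernel $\Sp_{2m}(K)$ satisfying $\tau(y^{\s}) = \tau(y)^{q_0}$ (the defining form is fixed by $\s$) and $\tau(\l I) = \l^2$. Reducing modulo scalars, $\tau$ induces homomorphisms $\overline{\tau}_q \: \PGSp_{2m}(q) \to \F_q^{\times}/(\F_q^{\times})^2$ and $\overline{\tau}_{q_0} \: \PGSp_{2m}(q_0) \to \F_{q_0}^{\times}/(\F_{q_0}^{\times})^2$ with kernels $\PSp_{2m}(q)$ and $\PSp_{2m}(q_0)$ respectively, and, since $q$ is odd, both quotients have order $2$.

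I would first check that the two sets in the statement partition the domain of $f$. Conjugating $g\p^i$ by $y \in \PGSp_{2m}(q)$ replaces $g$ by $y^{-1}g\,\s^{\pm 1}(y)$ (the sign depending on conventions in the semidirect product), and since $q_0$ is odd, $\overline{\tau}_q(\s^{\pm 1}(y)) = \overline{\tau}_q(y)^{q_0^{\pm 1}} = \overline{\tau}_q(y)$; hence $\overline{\tau}_q(g)$ is unchanged. So $\overline{\tau}_q$ is constant on each $\PGSp_{2m}(q)$-class inside the coset $\PGSp_{2m}(q)\p^i$, whence $\{(t\p^i)^{\PGSp_{2m}(q)} \mid t \in T\}$ is exactly the set of such classes with representative $g \in \PSp_{2m}(q)$, and $\{(t\d\p^i)^{\PGSp_{2m}(q)} \mid t \in T\}$ is the set of those with $g \notin \PSp_{2m}(q)$; these partition the domain of $f$. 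Dually, $\{x^{\PGSp_{2m}(q_0)} \mid x \in \PGSp_{2m}(q_0)\}$ is partitioned according to whether the representative lies in $\PSp_{2m}(q_0)$.

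Next I would relate $f$ and $f_Y$ and compute. Given $h \in \GSp_{2m}(q)$, choose $a \in Y$ with $h = a\,a^{-\s^{-1}}$ by Lang--Steinberg; then $\pi(h) = \pi(a)\,\pi(a)^{-\s^{-1}}$, so by the well-definedness of the Shintani map (Theorem~\ref{thm:ShintaniDescent}(i)),
\[ f\bigl(\pi(h)\p^i\bigr) = \pi(a)^{-1}\bigl(\pi(h)\p^i\bigr)^e \pi(a) = \pi\bigl(a^{-1}(h\p^i)^e a\bigr) = \pi\bigl(f_Y(h\p^i)\bigr). \]
Writing $(h\p^i)^e$ in the ambient semidirect product as $N(h)\s^e$, where $N(h)$ is the product of the $e$ twists $\s^j(h)$ for $0 \leq j < e$, and applying $\tau$ together with $\tau(y^{\s}) = \tau(y)^{q_0}$ (the exponents run over $\{1, q_0, \dots, q_0^{e-1}\}$ modulo $q-1$ regardless of the ordering), we get
\[ \tau\bigl(f_Y(h\p^i)\bigr) = \tau(N(h)) = \tau(h)^{1 + q_0 + \cdots + q_0^{e-1}} = \tau(h)^{(q-1)/(q_0-1)}, \]
which is the image of $\tau(h)$ under the norm map $\F_q^{\times} \to \F_{q_0}^{\times}$. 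Since $\frac{q-1}{2} = \frac{q-1}{q_0-1} \cdot \frac{q_0-1}{2}$, the element $\tau(h)$ is a square in $\F_q^{\times}$ if and only if $\tau(h)^{(q-1)/(q_0-1)}$ is a square in $\F_{q_0}^{\times}$; equivalently, $\pi(h) \in \PSp_{2m}(q)$ if and only if $f(\pi(h)\p^i) \in \PSp_{2m}(q_0)$.

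Finally, since every $\PGSp_{2m}(q)$-class in $\PGSp_{2m}(q)\p^i$ has a representative $\pi(h)\p^i$ with $h \in \GSp_{2m}(q)$, the last equivalence shows that $f$ maps the two parts of the domain partition into the corresponding parts of the codomain partition; as $f$ is a bijection and the parts are complementary on each side, the restrictions of $f$ to them are bijections, which is exactly the assertion about $f_1$ and $f_2$. I expect the only real work to be bookkeeping: pinning down the precise form of $N(h)$ and the conjugation rule in the semidirect product, checking that $f_Y$ indeed lands in $\GSp_{2m}(q_0)$ so that $\tau(f_Y(h\p^i)) \in \F_{q_0}^{\times}$ makes sense, and the elementary square-class arithmetic with the norm map. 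The one point requiring genuine care is the compatibility of the two Shintani maps and the resulting $\tau$-computation; beyond that there is no essential obstacle.
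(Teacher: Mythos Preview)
Your proposal is correct and follows essentially the same approach as the paper's proof: lift to $\GSp_{2m}$, track the similarity character $\tau$ through the Shintani correspondence, observe that $\tau$ of the correspondent is the norm $N(\tau(\hat{g}))$, and use that the norm map $\F_q^{\times} \to \F_{q_0}^{\times}$ preserves the square/non-square dichotomy. The paper is slightly more direct in that it lifts a single element $a \in X = \PSp_{2m}(K)$ to $\hat{a} \in \Sp_{2m}(K)$ rather than introducing a separate Shintani map $f_Y$ on $\GSp_{2m}(K)$, but the substance is the same.
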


\begin{proposition}\label{prop:OrthShintani}
Let $T=\Om_{2m+1}(q)$. The Shintani map $f$ restricts to bijections
\begin{enumerate}
\item{$f_1\: \{(t\p^i)^{\SO_{2m+1}(q)}   \mid t \in T \} \to \{x^{\SO_{2m+1}(q_0)} \mid x \in \Om_{2m+1}(q_0) \};$}
\item{$f_2\: \{(t\d\p^i)^{\SO_{2m+1}(q)} \mid t \in T \} \to \{x^{\SO_{2m+1}(q_0)} \mid x \in \SO_{2m+1}(q_0) \setminus \Om_{2m+1}(q_0) \}.$}
\end{enumerate}
\end{proposition}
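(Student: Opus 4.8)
The plan is to derive both bijections $f_1,f_2$ from the Shintani bijection $f$ of \eqref{eq:OrthShintani} (a bijection by Theorem~\ref{thm:ShintaniDescent}(i)), together with a compatibility between $f$ and the simply connected cover of $X$. Write $\s=\p^i$, so $X_\s=\SO_{2m+1}(q_0)$ and $X_{\s^e}=\SO_{2m+1}(q)$. First I would record two partitions. On the source side, since $\Om_{2m+1}(q)=[\SO_{2m+1}(q),\SO_{2m+1}(q)]$ is characteristic in $\SO_{2m+1}(q)$, the automorphism $\s$ normalises it and so acts trivially on $\SO_{2m+1}(q)/\Om_{2m+1}(q)\cong C_2$; since $h(g\s)h^{-1}=hg(h^{\s^{-1}})^{-1}\s$, it follows that whether $g$ lies in $\Om_{2m+1}(q)$ depends only on the $\SO_{2m+1}(q)$-class of $g\s$ inside $\SO_{2m+1}(q)\s$. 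Hence the domain of $f$ is the disjoint union of $\{(t\s)^{\SO_{2m+1}(q)}\mid t\in\Om_{2m+1}(q)\}$ and $\{(t\d\s)^{\SO_{2m+1}(q)}\mid t\in\Om_{2m+1}(q)\}$, using $\SO_{2m+1}(q)\setminus\Om_{2m+1}(q)=\Om_{2m+1}(q)\d$. On the target side $\Om_{2m+1}(q_0)\leqn\SO_{2m+1}(q_0)$ at once partitions the codomain into $\{x^{\SO_{2m+1}(q_0)}\mid x\in\Om_{2m+1}(q_0)\}$ and the remaining classes. So it suffices to prove that $f$ maps the first part of the domain onto $\{x^{\SO_{2m+1}(q_0)}\mid x\in\Om_{2m+1}(q_0)\}$: the statement for $f_2$ then follows because $f$ is a bijection.

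For this I would pass to the simply connected cover $\pi\colon\hat X=\mathrm{Spin}_{2m+1}(K)\to X$, an isogeny with central kernel of order $2$. The morphism $\s$ lifts to $\hat X$ with $\hat X_\s=\mathrm{Spin}_{2m+1}(q_0)$ and $\hat X_{\s^e}=\mathrm{Spin}_{2m+1}(q)$, and $\pi(\mathrm{Spin}_{2m+1}(q))=\Om_{2m+1}(q)$, $\pi(\mathrm{Spin}_{2m+1}(q_0))=\Om_{2m+1}(q_0)$. Theorem~\ref{thm:ShintaniDescent}(i) gives a Shintani bijection $\hat f\colon\{(h\s)^{\mathrm{Spin}_{2m+1}(q)}\mid h\in\mathrm{Spin}_{2m+1}(q)\}\to\{y^{\mathrm{Spin}_{2m+1}(q_0)}\mid y\in\mathrm{Spin}_{2m+1}(q_0)\}$, and $\pi$ intertwines $\hat f$ and $f$. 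Indeed, given $g\in\Om_{2m+1}(q)$, lift to $h\in\mathrm{Spin}_{2m+1}(q)$ with $\pi(h)=g$ and choose a Lang element $\hat a\in\hat X$ with $h=\hat a\hat a^{-\s^{-1}}$; then $a:=\pi(\hat a)$ satisfies $g=aa^{-\s^{-1}}$ since $\pi$ commutes with $\s$, so $a$ is a valid Lang element for $g$ and $f(g\s)=a^{-1}(g\s)^ea=\pi(\hat a^{-1}(h\s)^e\hat a)=\pi(\hat f(h\s))\in\pi(\mathrm{Spin}_{2m+1}(q_0))=\Om_{2m+1}(q_0)$. This is the inclusion $f(\{(t\s)^{\SO_{2m+1}(q)}\mid t\in\Om_{2m+1}(q)\})\subseteq\{x^{\SO_{2m+1}(q_0)}\mid x\in\Om_{2m+1}(q_0)\}$. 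Conversely, given $x\in\Om_{2m+1}(q_0)$, lift it to $y\in\mathrm{Spin}_{2m+1}(q_0)$, use surjectivity of $\hat f$ to find $h\in\mathrm{Spin}_{2m+1}(q)$ with $\hat f(h\s)$ conjugate to $y$, and put $g:=\pi(h)\in\Om_{2m+1}(q)$; then $f(g\s)=\pi(\hat f(h\s))$ is conjugate to $\pi(y)=x$. Since $f$ is injective, $f$ restricts to the bijection $f_1$, and then $f_2$ is the restriction of $f$ to the complementary parts.

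I expect the main obstacle to be the algebraic-group bookkeeping behind the intertwining step: checking that $\s$ lifts to a Steinberg morphism of $\mathrm{Spin}_{2m+1}(K)$ whose fixed-point subgroups are the full untwisted groups $\mathrm{Spin}_{2m+1}(q)$ and $\mathrm{Spin}_{2m+1}(q_0)$ (so that Theorem~\ref{thm:ShintaniDescent} applies on the cover and $\pi$ of these fixed-point groups is $\Om_{2m+1}$ over each field), and that $\pi$ extends to a homomorphism $\mathrm{Spin}_{2m+1}(q)\sp\<\s\>\to\SO_{2m+1}(q)\sp\<\s\>$ fixing $\s$, so that $\pi((h\s)^e)=(\pi(h)\s)^e$. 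The input $\Om_{2m+1}(q)=[\SO_{2m+1}(q),\SO_{2m+1}(q)]$ needed for the source-side partition holds because $\Om_{2m+1}(q)$ is perfect of index $2$ in $\SO_{2m+1}(q)$. A minor point, already absorbed into \eqref{eq:DefSigmaField}, is that we may assume $\s$ is literally the $q_0$-power Frobenius. Finally, Proposition~\ref{prop:SympShintaniOdd} follows by the identical argument, with $\mathrm{Spin}_{2m+1}$ replaced by the simply connected cover $\Sp_{2m}(K)$ of $X=\PGSp_{2m}(K)=\PSp_{2m}(K)$ and the spinor norm replaced by the similarity map modulo squares, whose kernel on $\PGSp_{2m}(q)$ is $\PSp_{2m}(q)$.
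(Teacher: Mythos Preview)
Your argument is correct and takes a genuinely different route from the paper. The paper proves the symplectic analogue (Proposition~\ref{prop:SympShintaniOdd}) by a direct computation with the similarity map $\t$: lifting to $\GSp$, one shows $\t(\hat y)=N(\t(\hat g))$ via the norm $N\colon\F_q^\times\to\F_{q_0}^\times$, so that $\hat g\in\Sp$ if and only if $\hat y\in\Sp$; it then asserts that the orthogonal case follows by the same calculation with the spinor norm $\eta$ in place of $\t$. You instead bypass the spinor-norm bookkeeping entirely by pulling the Shintani map back along the central isogeny $\pi\colon\mathrm{Spin}_{2m+1}(K)\to\SO_{2m+1}(K)$ and using $\pi(\mathrm{Spin}_{2m+1}(q))=\Om_{2m+1}(q)$.

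What each approach buys: the paper's argument is elementary and runs in exact parallel with the symplectic case, but one must justify that conjugation by the Lang element $a\in\SO_{2m+1}(K)$ (which need not be rational over $\F_q$ or $\F_{q_0}$) does not disturb the spinor norm of the rational element $(g\s)^e$; this is true but deserves a sentence. Your approach replaces that check with standard facts about isogenies and Frobenius (the lift of $\s$ to $\hat X$, the identification $\hat X_{\s}=\mathrm{Spin}_{2m+1}(q_0)$, and $\pi(\hat X_\s)=\Om_{2m+1}(q_0)$), after which the intertwining $f(\pi(h)\s)=\pi(\hat f(h\s))$ is immediate from the well-definedness of $f$ once one chooses $a=\pi(\hat a)$. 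Your closing remark that the same cover argument recovers Proposition~\ref{prop:SympShintaniOdd}, taking $\hat X=\Sp_{2m}(K)\to X=\PSp_{2m}(K)$, is also correct: here $X_\s=\PGSp_{2m}(q_0)$ while $\pi(\hat X_\s)=\PSp_{2m}(q_0)$, exactly the index-two subgroup required.
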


We will prove Proposition~\ref{prop:SympShintaniOdd}; the proof of Proposition~\ref{prop:OrthShintani} is analogous, replacing the similarity map $\t$ with the spinor norm $\eta$. \par

For all $k$, there are natural embeddings $\PSp_{2m}(p^k) \hookrightarrow \PGSp_{2m}(p^k) \hookrightarrow \PSp_{2m}(K)$. Thus, we will identify each of the symplectic groups in the following proof with suitable subgroups of $\PSp_{2m}(K)$ and write $Z=Z(\Sp_{2m}(K))$. Let $N\:\F_{q} \to \F_{q_0}$ be the norm map. 
\begin{proof}[Proof of Proposition~\ref{prop:SympShintaniOdd}] 
Fix $g \in \PGSp_{2m}(q)$ and let $y \in \PGSp_{2m}(q_0)$ be a representative of the conjugacy class $f(g\s)$. Write $\s = \p^i$ and let $a \in \PSp_{2m}(K)$ be such that $g=aa^{-\s^{-1}}$. We will now take suitable lifts of elements. Write $a=\hat{a}Z$ and $\hat{g}=\hat{a}{\hat{a}}^{-\s^{-1}} \in \GSp_{2m}(q)$. So $g=\hat{g}Z$. Therefore, $y = f(g\s) = \hat{a}^{-1}(\hat{g}\s)^e\hat{a}Z$, and, accordingly, write $\hat{y} = \hat{a}^{-1}(\hat{g}\s)^e\hat{a}$. \par

Now we wish to connect $\t(\hat{y})$ and $\t(\hat{g})$. Observe that \[ \t(\hat{y}) = \t(\hat{a}^{-1}(\hat{g}\s)^e\hat{a}) =\t((\hat{g}\s)^e) = \t(\hat{g})\t(\hat{g}^{\s^{e-1}})\cdots\t(\hat{g}^{\s}) =  N(\t(\hat{g})), \] since $\t(\hat{g}^{\s^k}) = \t(\hat{g})^{\s^k}$ for all $k$. In particular, $\t(\hat{g})$ is a square in $\F_{q}$ if and only if $\t(\hat{y})$ is a square in $\F_{q_0}$. That is, $g \in \PSp_{2m}(q)$ if and only if $f(g\s) = y \in \PSp_{2m}(q_0)$. Therefore, restricting $f$ to $\PGSp_{2m}(q)$-classes of $T\s$ and $T\d\s$ gives the required bijections.
\end{proof}

\subsubsection{Graph-field automorphisms}\label{sssec:PrelimsShintaniGraphField}
Now assume that $q$ is even and $T=\Sp_4(q)$. Let $\th$ be the graph-field automorphism $\r^j$ where $j$ is an odd integer such that $1 \leq j < 2f$ (see Table~\ref{tab:Theta}). Then $(\r^j)^2 = \p^j$ since, by definition, $\r^2 = \p$. Let $e$ be the order of $\p^j$ and write $q=q_0^e$. Therefore, $q_0=2^j$. Abusing notation as above, let $\r\:X \to X$ be a Steinberg morphism such that $\r^2=\p$. Define the Steinberg morphism $\s\: X \to X$ as
\begin{equation}
\s = \r^j. \label{eq:DefSigmaGraphField}
\end{equation}

The following proposition describes the Shintani map given by the above setup. (In this result, $Sz(q_0)$ is the Suzuki group over the field $\F_{q_0}$.)
\begin{proposition}\label{prop:SympShintaniEvenGraphField}
Let $T=\Sp_4(q)$ with $q > 2$ even and let $\th = \r^j$. Then there is a Shintani map \[ f \: \{ (t\th)^T \mid t \in T \} \to \{ x^{Sz(q_0)} \mid x \in Sz(q_0) \} \quad t\th \mapsto a^{-1}(t\th)^{2e}a. \]
\end{proposition}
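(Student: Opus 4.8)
The plan is to obtain Proposition~\ref{prop:SympShintaniEvenGraphField} by instantiating the general Shintani descent framework of Section~\ref{ssec:PrelimsShintaniDescent} with the algebraic group $X$ of \eqref{eq:DefX}, the Steinberg morphism $\s = \r^j$ of \eqref{eq:DefSigmaGraphField}, and the integer $2e$ playing the role of the ``$e$'' in the general setup. Note first that in characteristic $2$ we have $\PGSp_4(K) \cong \Sp_4(K)$ (since in even characteristic $\GSp_4$ is the internal direct product of $\Sp_4$ and its centre), so $X = \Sp_4(K)$ and $X_{\p^f} = \Sp_4(q) = T$.

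First I would pin down the relevant powers of $\s$. Since $\r^2 = \p$ we have $\s^2 = \r^{2j} = \p^j$, hence $\s^{2e} = \p^{je}$; because $q_0 = 2^j$ and $q = q_0^e$ we get $2^f = q = 2^{je}$, so $je = f$ and thus $\s^{2e} = \p^f$. Therefore $X_{\s^{2e}} = X_{\p^f} = T$. Moreover $\s = \r^j$ is surjective (a power of the surjective morphism $\r$) with finite fixed point group (as $X_\s \leq X_{\s^2} = X_{\p^j} = \Sp_4(q_0)$), so $\s$ is a Steinberg morphism, and its restriction to $T = X_{\s^{2e}}$ is the graph-field automorphism $\th = \r^j$ of Table~\ref{tab:Theta} --- here it is relevant that $j$ is odd, which is precisely what makes $\r^j$ a graph-field rather than a field automorphism.

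The key geometric input is the identification of the small fixed point group $X_\s = X_{\r^j}$. By the structure theory of the exceptional graph-field Steinberg endomorphism of a simple algebraic group of type $B_2$ in characteristic $2$ (see \cite[\SS 12.3 and Ch.~14]{ref:Carter72}), $X_{\r^j} = {}^2B_2(q_0) = Sz(q_0)$, the Suzuki group over $\F_{q_0}$ with $q_0 = 2^j$. With this in hand, the proof concludes quickly: $X_{\s^{2e}} = T$ is $\s$-stable (as $\s$ commutes with $\s^{2e}$), so $\s$ restricts to an automorphism of $T$ and, since $2e > 1$, we may form $G_1 = T \rtimes \langle\s\rangle$; Theorem~\ref{thm:ShintaniDescent}(i), applied with exponent $2e$, then yields the well-defined bijection
\[ f\colon \{(t\s)^{G_1} \mid t \in T\} \to \{x^{Sz(q_0)} \mid x \in Sz(q_0)\}, \qquad t\s \mapsto a^{-1}(t\s)^{2e}a, \]
where $a \in X$ is chosen (via the Lang--Steinberg Theorem) so that $t = aa^{-\s^{-1}}$; and Remark~\ref{rem:ShintaniDescent}(i) gives $(t\s)^{G_1} = (t\s)^{X_{\s^{2e}}} = (t\s)^T$ for all $t \in T$. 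Since $\s|_T = \th$, this is exactly the asserted Shintani map.

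The main obstacle is the identification $X_{\r^j} = Sz(q_0)$ in the previous paragraph: this rests on the classical but non-trivial theory of the exceptional Steinberg endomorphism of $\Sp_4$ in characteristic $2$, together with the observation that the parameter $q_0 = 2^j$ with $j$ odd is exactly the field over which the corresponding Suzuki group is defined. Everything else --- the power computation $\s^{2e} = \p^f$ and the associated field arithmetic, the isomorphism $\PGSp_4 \cong \Sp_4$ in even characteristic, and the passage from $G_1$-conjugacy to $T$-conjugacy --- follows directly from results already established in Section~\ref{ssec:PrelimsShintaniDescent}.
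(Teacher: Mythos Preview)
Your proof is correct and follows essentially the same approach as the paper: both instantiate the general Shintani descent of Theorem~\ref{thm:ShintaniDescent} with $\s=\r^j$ and exponent $2e$, compute $X_{\s^{2e}}=\Sp_4(q)=T$ via $je=f$, and identify $X_{\r^j}$ with $Sz(q_0)$. The only cosmetic difference is that the paper obtains this last identification by writing $X_{\r^j}=C_{\Sp_4(q_0)}(\r^j)$ and citing \cite[(19.4)]{ref:AschbacherSeitz76}, whereas you invoke the structure theory of the exceptional Steinberg endomorphism from \cite{ref:Carter72} directly.
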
 

\begin{proof}
By Theorem~\ref{thm:ShintaniDescent}, there is a Shintani map $f$ from the $X_{(\r^j)^{2e}}$-classes in $X_{(\r^j)^{2e}}\r^j$ to the $X_{\r^j}$-classes in $X_{\r^j}$. Since $2^{je}=q_0^e=q$, $X_{(\r^j)^{2e}} = \Sp_{4}(q) = T$ and the restriction of $\r^j$ to $X_{(\r^j)^{2e}}$ is the automorphism $\th$. Similarly, $X_{\r^j} = C_{X_{\r^{2j}}}(\r^j) = C_{\Sp_4(q_0)}(\r^j)$. Since $\r^j$ is an involutory graph-field automorphism of $\Sp_4(q_0)$, by \cite[(19.4)]{ref:AschbacherSeitz76}, $C_{\Sp_4(q_0)}(\r^j) \cong Sz(q_0)$. This proves the result.
\end{proof}

\subsubsection{Applications}\label{sssec:PrelimsShintaniApplications}
Let us now record several applications of Shintani descent to the problem of studying the maximal overgroups of particular elements, a crucial component of our probabilistic approach. For the remainder of this section, let $G = \<T,\th\> \in \A$ and recall the formed space $V=\F_q^n$ from Table~\ref{tab:Groups}. Moreover, let $X$ be the algebraic group defined in \eqref{eq:DefX}, let $\s$ be the Steinberg morphism defined in $\eqref{eq:DefSigmaField}$ or $\eqref{eq:DefSigmaGraphField}$ and let $f$ be the Shintani map defined in \eqref{eq:SympShintaniEvenField}--\eqref{eq:OrthShintani} or Proposition~\ref{prop:SympShintaniEvenGraphField}. Recall that we write $G_1=X_{\s^e}\sp\<\s\>$. \par

Our first result, which is \cite[Prop. 2.16(i)]{ref:BurnessGuest13}, provides a general bound.
\begin{proposition} \label{prop:CentraliserBound}
Let $H$ be a maximal subgroup of $G$ and let $g\s \in G$. Then $g\s$ is contained in at most $|C_{X_{\s}}(f(g\s))|$ $G_1$-conjugates of $H$. 
\end{proposition}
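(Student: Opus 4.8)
The plan is to prove the slightly stronger statement that the \emph{exact} number $N$ of $G_1$-conjugates of $H$ containing $y := g\s$ satisfies $N \le |C_{X_\s}(f(g\s))|$; this will follow from a closed formula for $N$ together with estimates for its ingredients, and maximality of $H$ will not actually be needed. Throughout, let $e$ be the order of $\s$, so that $G_1 = X_{\s^e}\sp\<\s\>$ has $X_{\s^e}$ as a normal subgroup with cyclic quotient of order $e$ generated by the image of $\s$.

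First I would record the standard orbit-counting identity
\[ N = \frac{1}{|N_{G_1}(H)|}\,\bigl|\{x\in G_1 : xyx^{-1}\in H\}\bigr|, \]
which holds because the fibres of $x\mapsto H^x$ are the right cosets of $N_{G_1}(H)$ and $y\in H^x$ precisely when $xyx^{-1}\in H$. Partitioning $\{x : xyx^{-1}\in H\}$ according to the value of $xyx^{-1}$, and using that each non-empty fibre of $x\mapsto xyx^{-1}$ has size $|C_{G_1}(y)|$, this becomes
\[ N = \frac{|y^{G_1}\cap H|\cdot|C_{G_1}(y)|}{|N_{G_1}(H)|}. \]
By Remark~\ref{rem:ShintaniDescent}(i), $y^{G_1}=y^{X_{\s^e}}\subseteq X_{\s^e}\s$, so $|y^{G_1}\cap H| \le |H\cap X_{\s^e}\s|$.

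Next I would dispose of the degenerate case: if $H\cap X_{\s^e}\s=\emptyset$ then $y$ lies in no $G_1$-conjugate of $H$ and $N=0$, so the bound is immediate. Otherwise $H\cap X_{\s^e}\s$ is a coset of the subgroup $H\cap X_{\s^e}$ of $H$, whence $|H\cap X_{\s^e}\s| = |H\cap X_{\s^e}|$; moreover the image of $H$ in $G_1/X_{\s^e}$ then contains the image of $\s$ and so is the whole cyclic quotient, giving $|H| = e\,|H\cap X_{\s^e}|$ and hence $|N_{G_1}(H)|\ge |H| = e\,|H\cap X_{\s^e}|$. On the centraliser side, $C_{G_1}(y)\cap X_{\s^e}=C_{X_{\s^e}}(y)$ and, since $y$ itself lies in $C_{G_1}(y)$ and maps onto a generator of the order-$e$ quotient, $|C_{G_1}(y)| = e\,|C_{X_{\s^e}}(y)|$. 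Substituting,
\[ N \le \frac{|H\cap X_{\s^e}|\cdot e\,|C_{X_{\s^e}}(y)|}{e\,|H\cap X_{\s^e}|} = |C_{X_{\s^e}}(g\s)|, \]
and Theorem~\ref{thm:ShintaniDescent}(ii) identifies this with $|C_{X_\s}(f(g\s))|$, completing the proof.

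The argument has no deep obstacle, but its entire force is the cancellation in the final display: a crude estimate $N\le|C_{G_1}(y)|$ would lose a factor of $e$ and only give $e\,|C_{X_{\s^e}}(g\s)|$. The factor $e = |C_{G_1}(y)|/|C_{X_{\s^e}}(y)|$ picked up in passing from $C_{G_1}(y)$ down to $C_{X_{\s^e}}(y)$ is matched exactly by the factor $e=|H|/|H\cap X_{\s^e}|$ measuring how $H$ sits over $X_{\s^e}$, and the bridge between them is Remark~\ref{rem:ShintaniDescent}(i), which confines $y^{G_1}$ to the single coset $X_{\s^e}\s$. The one point requiring care is to treat the empty-intersection case first, since that is precisely when $H$ fails to surject onto $G_1/X_{\s^e}$ and the bookkeeping of the factor $e$ would otherwise break down.
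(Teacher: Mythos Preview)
Your proof is correct. The paper does not actually prove this proposition; it is quoted verbatim as \cite[Prop.~2.16(i)]{ref:BurnessGuest13}, and your argument is essentially the one given there: the orbit-counting identity $N=|y^{G_1}\cap H|\,|C_{G_1}(y)|/|N_{G_1}(H)|$, the inclusion $y^{G_1}\subseteq X_{\s^e}\s$ from Remark~\ref{rem:ShintaniDescent}(i), the factor-of-$e$ bookkeeping on both sides, and the identification $|C_{X_{\s^e}}(g\s)|=|C_{X_\s}(f(g\s))|$ from Theorem~\ref{thm:ShintaniDescent}(ii). Your observation that maximality of $H$ is never used is also correct.
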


Proposition~\ref{prop:CentraliserBound} is notable for both its effectiveness and its generality. However, for some particular subgroups we require a tighter bound for our probabilistic estimates. For instance, the following result is modelled on \cite[Corollary 2.15]{ref:BurnessGuest13} and the proof is similar.
\begin{proposition} \label{prop:ShintaniTransfer}
Let $Y$ be the stabiliser in $X$ of a totally isotropic $k$-space, or, in the case where $T=\PSp_{2m}(q)$, the stabiliser of a non-degenerate $k$-space with $k < m$. Assume that $Y$ is $\s$-stable. For all $g \in X_{\s^e}$, the number of $X_{\s^e}$-conjugates of $Y_{\s^e}$ which are normalised by $g\s$ is equal to the number of $X_{\s}$-conjugates of $Y_{\s}$ which contain $f(g\s)$.
\end{proposition}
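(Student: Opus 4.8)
The plan is to obtain this as a quick consequence of Theorem~\ref{thm:ShintaniDescent}(iii), just as \cite[Corollary 2.15]{ref:BurnessGuest13} is deduced there; the one extra ingredient needed is that $Y$, and hence each of the finite subgroups $Y_{\s^e}$ and $Y_\s$, is self-normalising, so that the fixed-point counts on coset spaces appearing in Theorem~\ref{thm:ShintaniDescent}(iii) can be rephrased as counts of conjugate subgroups.

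First I would record that in both cases $Y$ is a closed connected subgroup of $X$ with $N_X(Y) = Y$. If $Y$ stabilises a totally isotropic $k$-space then $Y$ is a maximal parabolic subgroup of $X$, so this is standard. If $T = \PSp_{2m}(q)$ and $Y$ stabilises a non-degenerate $k$-space $W$ with $k < m$, then $Y$ is the connected reductive stabiliser of the orthogonal decomposition $V = W \perp W^{\perp}$, and since $k < m$ forces $\dim W < \dim W^{\perp}$, the only proper nonzero non-degenerate $Y$-submodules of $V$ are $W$ and $W^{\perp}$ and these have distinct dimensions, so any element normalising $Y$ must stabilise $W$ and hence lies in $Y$. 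As $Y$ is $\s$-stable by hypothesis it is $\s^e$-stable, so $\s^e|_Y$ and $\s|_Y$ are Steinberg morphisms of $Y$ and the finite groups $Y_{\s^e} = Y \cap X_{\s^e}$ and $Y_\s = Y \cap X_\s$ are Zariski dense in $Y$; consequently any element of $X$ normalising $Y_{\s^e}$ (respectively $Y_\s$) normalises its closure $Y$, which gives $N_{X_{\s^e}}(Y_{\s^e}) = Y_{\s^e}$ and $N_{X_\s}(Y_\s) = Y_\s$.

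Then I would carry out the translation. Letting $G_1 = X_{\s^e} \sp \<\s\>$ act on $X_{\s^e}/Y_{\s^e}$ (a valid action since $Y_{\s^e}$ is $\s$-stable), one checks that a coset $hY_{\s^e}$ is fixed by $g\s$ exactly when $h^{-1}(g\s)h \in Y_{\s^e}\s$; because $N_{X_{\s^e}}(Y_{\s^e}) = Y_{\s^e}$ and $\s$ normalises $Y_{\s^e}$, the set $Y_{\s^e}\s$ is precisely the coset of elements of $X_{\s^e}\s$ that normalise $Y_{\s^e}$, and $h_1 Y_{\s^e} = h_2 Y_{\s^e}$ if and only if $Y_{\s^e}^{h_1} = Y_{\s^e}^{h_2}$. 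Hence $hY_{\s^e} \mapsto Y_{\s^e}^{h^{-1}}$ gives a bijection from the cosets fixed by $g\s$ to the $X_{\s^e}$-conjugates of $Y_{\s^e}$ normalised by $g\s$, so $\fix(g\s, X_{\s^e}/Y_{\s^e})$ equals this latter count. The analogous but untwisted computation for $X_\s$ acting on $X_\s/Y_\s$ shows that $\fix(f(g\s), X_\s/Y_\s)$ equals the number of $X_\s$-conjugates of $Y_\s$ containing $f(g\s)$, where now ``containing'' replaces ``normalising'' because the relevant point stabiliser is $Y_\s$ itself rather than a coset $Y_\s\s$. Since $Y$ is closed, connected and $\s$-stable, Theorem~\ref{thm:ShintaniDescent}(iii) equates $\fix(g\s, X_{\s^e}/Y_{\s^e})$ with $\fix(f(g\s), X_\s/Y_\s)$, and combining the three equalities proves the proposition.

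The step I expect to demand the most care is this last one: pinning down the twisted $G_1$-action on $X_{\s^e}/Y_{\s^e}$ and confirming that the slight asymmetry in the statement --- conjugates of $Y_{\s^e}$ being \emph{normalised} by $g\s$ versus conjugates of $Y_\s$ \emph{containing} $f(g\s)$ --- is exactly what the two coset-space fixed-point counts deliver. The algebraic-group inputs (connectedness of $Y$, its self-normalisation, Zariski density of $Y_{\s^e}$ and $Y_\s$) and the appeal to Theorem~\ref{thm:ShintaniDescent}(iii) are routine by comparison.
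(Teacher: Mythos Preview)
Your overall strategy is exactly the intended one: the paper states that the proof is modelled on \cite[Corollary~2.15]{ref:BurnessGuest13}, and your reduction to Theorem~\ref{thm:ShintaniDescent}(iii) together with self-normalisation of $Y$, $Y_{\s^e}$ and $Y_{\s}$ is precisely that argument. The translation between fixed points on coset spaces and counts of conjugate subgroups, including the asymmetry between ``normalised by $g\s$'' and ``containing $f(g\s)$'', is handled correctly.

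There is, however, a genuine error in one step. You claim that $Y_{\s^e}$ and $Y_{\s}$ are Zariski dense in $Y$, and deduce self-normalisation in the finite groups from self-normalisation in $X$. But $Y_{\s^e}$ and $Y_{\s}$ are \emph{finite} subsets of the infinite variety $Y$, and finite sets are Zariski closed; so they are certainly not Zariski dense. The conclusion $N_{X_{\s^e}}(Y_{\s^e}) = Y_{\s^e}$ is nonetheless true, and the fix is to run your module-theoretic argument directly at the finite level rather than transferring it down from $X$. For the non-degenerate case, $Y_{\s^e}$ contains (the image of) $\Sp_k(q) \times \Sp_{2m-k}(q)$, which acts on $\F_q^{2m}$ with exactly the two proper nonzero submodules $W$ and $W^{\perp}$; since $k < m$ these have distinct dimensions, so any normaliser of $Y_{\s^e}$ stabilises $W$. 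For the parabolic case, the analogous argument (or simply the fact that maximal parabolics of finite groups of Lie type are self-normalising) gives the same conclusion. With this correction the proof goes through.
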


In even characteristic, we can also use Shintani descent to determine the number of orthogonal subgroups of a symplectic group which contain a given element. Let $q$ be even and recall that $K = \K$. Let $X=\Sp_{2m}(K)$ and $Y = \O_{2m+1}(K)$, the isometry group of a non-singular quadratic form on $K^{2m+1}$ (see \cite[pp. 143--144]{ref:Taylor92}). By \cite[Theorem 11.9]{ref:Taylor92}, there exists an isomorphism $\psi\:X \to Y$ of abstract groups. \par

Let $\s\:X \to X$ be a Frobenius morphism, and define $\tau\:Y \to Y$ as $\tau = \psi \circ \s \circ \psi^{-1}$. It is straightforward to verify that $\psi$ extends to an isomorphism $\psi\:X\sp\<\s\> \to Y\sp\<\t\>$ by defining $\psi(\s) = \t$. By Theorem~\ref{thm:ShintaniDescent}, for $e > 1$, we have a Shintani map \[f\:\{(g\s)^{X_{\s^e}} \mid g \in X_{\s^e} \} \to \{x^{X_{\s}} \mid x \in X_{\s}\}.\]  Since $Y_{\t^e}$ is $\t$-stable and $\psi$ restricts to an isomorphism $\psi\:X_{\s^e}\sp\<\s\> \to Y_{\t^e}\sp\<\t\>$, define \[f'\:\{(h\t)^{Y_{\t^e}} \mid h \in Y_{\t^e} \} \to \{y^{Y_{\t}} \mid y \in Y_{\t}\}\] as $f'=\psi \circ f \circ \psi^{-1}$. (Recall the notation for automorphisms explained in Remark~\ref{rem:SigmaClarification}.)

\begin{lemma}\label{lem:PseudoLangSteinberg}
With the notation above, for all $h \in Y_{\t^e}$ there exists $b \in Y$ such that $bb^{-\t^{-1}}=h$ and $f'(h\t) = b^{-1}(h\t)^eb$.
\end{lemma}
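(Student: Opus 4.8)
The plan is to transport the Lang--Steinberg setup from $X$ to $Y$ along the abstract isomorphism $\psi$, exploiting the fact that, by the very definition $\t = \psi \circ \s \circ \psi^{-1}$, the map $\psi$ intertwines $\s$ and $\t$. Fix $h \in Y_{\t^e}$ and put $g = \psi^{-1}(h)$. Since $\t^e = \psi \circ \s^e \circ \psi^{-1}$ as automorphisms, reading $\t^e$ and $\s^e$ as automorphisms in the sense of Remark~\ref{rem:SigmaClarification} we get $h^{\t^e} = \psi\bigl(g^{\s^e}\bigr)$, so $h = h^{\t^e}$ forces $g = g^{\s^e}$; that is, $g \in X_{\s^e}$.

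Next I would invoke the genuine Lang--Steinberg theorem \cite[Theorem 21.7]{ref:MalleTesterman11} for the connected algebraic group $X$ and its Frobenius morphism $\s$ to obtain $a \in X$ with $g = aa^{-\s^{-1}}$, and set $b = \psi(a) \in Y$. Since $\t^{-1} = \psi \circ \s^{-1} \circ \psi^{-1}$, we have $b^{\t^{-1}} = \psi(a^{\s^{-1}})$, whence
\[ bb^{-\t^{-1}} = \psi(a)\,\psi\bigl((a^{\s^{-1}})^{-1}\bigr) = \psi\bigl(aa^{-\s^{-1}}\bigr) = \psi(g) = h, \]
which is the first assertion. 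For the second, recall that $\psi$ extends to a group isomorphism $X\sp\<\s\> \to Y\sp\<\t\>$ with $\psi(\s) = \t$, so $\psi^{-1}(h\t) = g\s$ and $\psi\bigl((g\s)^e\bigr) = (h\t)^e$. By definition $f' = \psi \circ f \circ \psi^{-1}$, and by Theorem~\ref{thm:ShintaniDescent}(i) applied to $(X,\s)$ the class $f(g\s)$ is represented by $a^{-1}(g\s)^e a \in X_\s$. Applying the homomorphism $\psi$ to this representative gives $\psi(a)^{-1}(h\t)^e\psi(a) = b^{-1}(h\t)^e b \in Y_\t$, which is therefore a representative of the $Y_\t$-class $f'(h\t)$.

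The argument is almost entirely formal: once $a$ is produced by Lang--Steinberg on the $X$-side, the element $b = \psi(a)$ is forced and everything reduces to pushing products through $\psi$, so I expect no genuine obstacle. The only points requiring care are the consistent bookkeeping of the dual roles of $\s$ and $\t$ (as automorphisms versus as elements of the semidirect products, per Remark~\ref{rem:SigmaClarification}), and checking that $f'$ is genuinely being evaluated at the coset representative $h\t$ — which is exactly what the identity $\psi^{-1}(h\t) = g\s$ records. One may alternatively present the lemma as the instance of Remark~\ref{rem:ShintaniDescent}(ii) in which $\s$ is replaced by the abstract automorphism $\t$ of $Y$, whose finite fixed point subgroup is $Y_\t = \psi(X_\s)$ and for which $b = \psi(a)$ supplies the required pseudo-Lang--Steinberg element.
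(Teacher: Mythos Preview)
Your proof is correct and follows essentially the same approach as the paper: set $g=\psi^{-1}(h)\in X_{\s^e}$, apply Lang--Steinberg on the $X$-side to get $a$ with $aa^{-\s^{-1}}=g$, and transport via $b=\psi(a)$ using $\t=\psi\circ\s\circ\psi^{-1}$ and $f'=\psi\circ f\circ\psi^{-1}$. The paper's proof is just a more compressed version of exactly this computation.
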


\begin{proof}
Let $g \in X_{\s^e}$ such that $\psi(g)=h$, and let $a \in X$ such that $aa^{-\s^{-1}}=g$. Then \[ f'(h\t) = \psi(f(\psi^{-1}(h\t))) = \psi(f(g\s)) = \psi(a^{-1}(g\s)^ea) = \psi(a)^{-1}(h\t)^e\psi(a) \] where \[ \psi(a)\psi(a)^{-\t^{-1}} = \psi(a)\t^{-1}(\psi(a^{-1})) = \psi(a)\psi(\s^{-1}(a^{-1})) = \psi(aa^{-\s^{-1}}) = \psi(g) = h. \qedhere \]
\end{proof}

Although $\tau$ need not be a Frobenius morphism of $Y$, by Remark~\ref{rem:ShintaniDescent}(ii), the conclusion of Theorem~\ref{thm:ShintaniDescent} holds for $f'$ as Lemma~\ref{lem:PseudoLangSteinberg} guarantees that the required $b \in Y$ exists. \par

Let $\s$ be the standard Frobenius morphism with fixed field $\F_{q_0}$ and write $q=q_0^e$. Then $X_{\s^e} = \Sp_{2m}(q)$ and $X_{\s} = \Sp_{2m}(q_0)$. The author thanks Prof Robert Guralnick for helpful comments on the proof of the following proposition.
\begin{proposition}\label{prop:MultiplicityOrthogonal}
With the notation above, for all $g\s \in \Sp_{2m}(q) \sp \<\s\>$ the total number of maximal subgroups of $\Sp_{2m}(q)\sp\<\s\>$ of type $\O^+_{2m}(q)$ or $\O^-_{2m}(q)$ which contain $g\s$ equals the total number of subgroups of $\Sp_{2m}(q_0)$ of type $\O^+_{2m}(q_0)$ or $\O^-_{2m}(q_0)$ which contain $f(g\s)$.
\end{proposition}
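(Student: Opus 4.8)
The plan is to move the whole statement across the isomorphism $\psi$ to the orthogonal group $Y = \O_{2m+1}(K)$, where the subgroups being counted acquire a clean geometric meaning, and then to count them with the Shintani fixed-point dictionary for the map $f'$. First I would record that $\psi$ restricts to an isomorphism $\Sp_{2m}(q)\sp\<\s\> \to Y_{\t^e}\sp\<\t\>$ carrying $g\s$ to $\psi(g\s)=h\t$ and $f(g\s)$ to $f'(h\t)$, and to an isomorphism $\Sp_{2m}(q_0) \to Y_\t$ again carrying $f(g\s)$ to $f'(h\t)$. Since $\psi$ preserves maximality and isomorphism type of subgroups, it induces a bijection between the maximal subgroups of $\Sp_{2m}(q)\sp\<\s\>$ of type $\O^+_{2m}(q)$ or $\O^-_{2m}(q)$ containing $g\s$ and those of $Y_{\t^e}\sp\<\t\>$ of the corresponding types containing $h\t$, and likewise between the relevant subgroups of $\Sp_{2m}(q_0)$ and of $Y_\t$. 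Thus it suffices to prove the corresponding equality inside $Y_{\t^e}\sp\<\t\>$ and $Y_\t$.

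Next I would pin down the geometry. Since $Y_{\t^e} \cong \O_{2m+1}(q)$ with $q$ even, inspecting the maximal subgroups of $\O_{2m+1}(q)$ via \cite{ref:KleidmanLiebeck,ref:BrayHoltRoneyDougal} shows that those isomorphic to $\O^\e_{2m}(q)$ are precisely the $\C_1$-subgroups stabilising a non-degenerate $2m$-dimensional subspace of type $\e$ of the natural module --- equivalently, a complement to the $1$-dimensional radical of the associated bilinear form --- and that for each sign there is a single conjugacy class of these. Working over $K$, the set $\mathcal{N}$ of all non-degenerate $2m$-subspaces of the natural module of $Y$ forms a single $Y$-orbit, and for a $\t$-stable $H_0 \in \mathcal{N}$ the stabiliser $\mathrm{Stab}_Y(H_0)$ is a closed $\t$-stable subgroup isomorphic to $\O_{2m}(K)$; its $\t^e$-fixed points, over the two $Y_{\t^e}$-classes, give the classes of subgroups of type $\O^+_{2m}(q)$ and $\O^-_{2m}(q)$, and similarly over $\F_{q_0}$ inside $Y_\t$.

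Now I would count. An element $h\t \in Y_{\t^e}\t$ lies in a maximal subgroup of $Y_{\t^e}\sp\<\t\>$ of type $\O^\e_{2m}(q)$ exactly when it normalises the corresponding $\O^\e_{2m}(q) \leq Y_{\t^e}$, that is, exactly when it stabilises the corresponding member of $\mathcal{N}$; hence the total over both signs of such maximal subgroups containing $h\t$ equals $\fix(h\t,\mathcal{N}_{\t^e})$, where $\mathcal{N}_{\t^e}$ is the set of $\t^e$-fixed points of $\mathcal{N}$, and the total on the other side is $\fix(f'(h\t),\mathcal{N}_\t)$. Since Theorem~\ref{thm:ShintaniDescent} holds for $f'$ --- this being the content of Lemma~\ref{lem:PseudoLangSteinberg} together with Remark~\ref{rem:ShintaniDescent}(ii) --- I would identify $\mathcal{N}$ with $Y/\mathrm{Stab}_Y(H_0)$ and, arguing as in the proof of Proposition~\ref{prop:ShintaniTransfer}, deduce $\fix(h\t,\mathcal{N}_{\t^e}) = \fix(f'(h\t),\mathcal{N}_\t)$; transporting back along $\psi$ and recalling $f'=\psi\circ f\circ\psi^{-1}$ then gives the proposition.

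The main obstacle is that $\mathrm{Stab}_Y(H_0)\cong\O_{2m}(K)$ is \emph{not} connected --- its identity component is $\SO_{2m}(K)$, of index two --- so Theorem~\ref{thm:ShintaniDescent}(iii) cannot be quoted verbatim for the coset space $Y/\mathrm{Stab}_Y(H_0)$. I would deal with this by running the underlying Lang--Steinberg argument at the level of the $Y$-variety $\mathcal{N}$ while tracking the component group $\mathrm{Stab}_Y(H_0)/\mathrm{Stab}_Y(H_0)^\circ\cong C_2$: the $\t^e$-fixed points $\mathcal{N}_{\t^e}$ split into two $Y_{\t^e}$-orbits, indexed by $H^1(\t^e,C_2)$ and equal to the two rational types $\O^\pm_{2m}(q)$, so summing over both types is exactly what converts the a priori twist-dependent single-orbit Shintani identity into the clean identity asserted --- which is why the statement is phrased as a total over both signs. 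Carrying out this bookkeeping, including the check that the Artin--Schreier double cover $Y/\SO_{2m}(K)$ behaves correctly under $\t$ and $\t^e$, is where the real work lies; alternatively, one can apply Theorem~\ref{thm:ShintaniDescent}(iii) to the connected subgroup $\SO_{2m}(K)$ and then upgrade the resulting equality of counts of $\SO^\pm_{2m}$-type subgroups to the orthogonal ones, using that each $\O^\e_{2m}(q)$-subgroup is the normaliser of a unique $\SO^\e_{2m}(q)$-subgroup.
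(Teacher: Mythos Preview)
Your approach starts the same way as the paper's---transport via $\psi$ to $Y_{\t^e}\cong\O_{2m+1}(q)$ and identify the subgroups of type $\O^\pm_{2m}(q)$ with stabilisers of non-degenerate hyperplanes---but then diverges. You stay inside $Y$ and confront the disconnectedness of $\O_{2m}(K)$ head-on, proposing to track the component group $C_2$ through a Lang--Steinberg argument on $\mathcal{N}$ (or to work via the connected $\SO_{2m}(K)$ and upgrade). This is in the right direction and has the conceptual virtue of explaining \emph{why} one must sum over both signs, but as you yourself acknowledge, the bookkeeping is where the real work lies and you have not actually carried it out; in particular, the alternative of applying Theorem~\ref{thm:ShintaniDescent}(iii) to $\SO_{2m}(K)$ and then ``upgrading'' needs care, since $Y/\SO_{2m}(K)$ is a double cover of $\mathcal{N}$ and you would need to show that the extra fixed points pair off correctly.

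The paper sidesteps the disconnectedness issue entirely with a trick you do not mention: it proves that a hyperplane of $\F_q^{2m+1}$ is non-degenerate if and only if it does not contain the one-dimensional radical $\langle v\rangle$ of the bilinear form, and then \emph{lifts to} $\SL_{2m+1}$. There the stabiliser of a hyperplane is a connected parabolic, so the linear-group analogue of Proposition~\ref{prop:ShintaniTransfer} (namely \cite[Corollary~2.15]{ref:BurnessGuest13}) applies directly to count hyperplanes not containing $v$ normalised by $\psi(g\s)$ versus those containing $f'(\psi(g\s))$; intersecting back with $\O_{2m+1}$ recovers the orthogonal count. This buys a clean appeal to an existing result in place of your $H^1$ analysis. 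Your route, if completed, would yield a more self-contained argument that never leaves the orthogonal group, at the cost of essentially reproving a disconnected variant of Theorem~\ref{thm:ShintaniDescent}(iii).
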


\begin{proof}
The maximal subgroups of $\Sp_{2m}(q)\sp\<\s\>$ of type $\O^{\pm}_{2m}(q)$ which contain $g\s$ correspond to the maximal subgroups of $\O_{2m+1}(q)$ of type $\O^{\pm}_{2m}(q)$ which are normalised by $\psi(g\s)$, and these are exactly the stabilisers of non-degenerate hyperplanes of $W=\F_q^{2m+1}$. \par

If a hyperplane $U$ is non-degenerate, then $U$ does not contain the radical $W \cap W^{\perp} = \< v \>$. We claim that the converse also holds. To see this, assume $v \not\in U$ and suppose that $x \in U \cap U^{\perp}$ is non-zero. Since $v \not\in U$, we know that $x \not\in \rad(W)$. Hence, there exists $w \in W$ such that $(x,w) \neq 0$. Therefore, $w \not\in U$ and, hence, $W = \< U, w \>$. In particular, $v = u + \l w$ for some $u \in U$ and $\l \neq 0$. Then $(x,v) = (x,u) + \l (x,w) = 0 + \l (x,w) \neq 0$ since $\l \neq 0$ and $(x,w) \neq 0$. However, $(x,v)=0$ since $v \in W \cap W^{\perp}$, which is a contradiction. Therefore, $U \cap U^{\perp} = 0$, so $U$ is non-degenerate. To summarise, the maximal subgroups of $\O_{2m+1}(q)$ of type $\O^{\pm}_{2m}(q)$ are exactly the stabilisers of hyperplanes not containing $v$. \par

Therefore, the maximal subgroups of $\Sp_{2m}(q)\sp\<\s\>$ of type $\O^{\pm}_{2m}(q)$ which contain $g\s$ correspond to the stabilisers in $\O_{2m+1}(q)$ of hyperplanes not containing $v$ which are normalised by $\psi(g\s)$. By lifting to $\SL_{2m+1}(q)$ and applying \cite[Corollary~2.15]{ref:BurnessGuest13} (a consequence of Theorem~\ref{thm:ShintaniDescent}(iii) which is the analogue of Proposition~\ref{prop:ShintaniTransfer} in the linear case), the stabilisers in $\SL_{2m+1}(q)$ of hyperplanes not containing $v$ which are normalised by $\psi(g\s)$ correspond to the stabilisers in $\SL_{2m+1}(q_0)$ of hyperplanes not containing the radical of $\F_{q_0}^{2m+1}$ which contain $f'(\psi(g\s))$. By the argument of the previous paragraph, the intersections of these subgroups with $\O_{2m+1}(q_0)$ are exactly the maximal subgroups of $\O_{2m+1}(q_0)$ of type $\O^{\pm}_{2m}(q_0)$ which contain $f'(\psi(g\s))$. These subgroups correspond to the maximal subgroups of $\Sp_{2m}(q_0)$ of type $\O^{\pm}_{2m}(q_0)$ which contain $\psi^{-1}(f'(\psi(g\s))) = f(g\s)$. 
\end{proof}

Let us record a consequence of Proposition~\ref{prop:MultiplicityOrthogonal}.
\begin{corollary}\label{cor:OrthogonalSubgroups}
Let $q$ be even and let $G = \Sp_{2m}(q)\sp\< \phi \>$, where $\phi$ is a field automorphism of $\Sp_{2m}(q)$. Then every element of $G$ is contained in at least one maximal subgroup of type $\O_{2m}^+(q)$ or $\O_{2m}^-(q)$.
\end{corollary}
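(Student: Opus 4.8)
The plan is to split according to whether the element $x \in G$ lies in the socle $\Sp_{2m}(q)$. Suppose first that $x \in \Sp_{2m}(q)$. I would transport $x$ across the isomorphism $\psi\colon \Sp_{2m}(q) \to \O_{2m+1}(q)$ used in the proof of Proposition~\ref{prop:MultiplicityOrthogonal}, and let $h = \psi(x)$ act on $W = \F_q^{2m+1}$ with its non-singular quadratic form $Q$, whose polar form has one-dimensional radical $\<v\>$, so $Q(v) \neq 0$. Since the radical is characteristic and $Q(v) \neq 0$, the isometry $h$ fixes $v$, and the crucial point is that $v \notin (h-1)W$: if $hw - w = v$ then, as $h$ is an isometry and $v$ is orthogonal to $w$, we would get $Q(w) = Q(hw) = Q(w+v) = Q(w) + Q(v)$, forcing $Q(v) = 0$. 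A short duality argument now produces an $h$-invariant hyperplane $U$ with $v \notin U$ — the $h$-invariant hyperplanes avoiding $v$ correspond, via the dual space, to the $h$-fixed functionals that are non-zero at $v$, and these exist exactly because $v \notin (h-1)W$. By the argument in the proof of Proposition~\ref{prop:MultiplicityOrthogonal}, such a $U$ is non-degenerate, so the $\psi$-preimage of the stabiliser in $\O_{2m+1}(q)$ of $U$ is a $\C_8$-subgroup $O$ of $\Sp_{2m}(q)$ of type $\O^+_{2m}(q)$ or $\O^-_{2m}(q)$ with $x \in O$; hence $x$ lies in $N_G(O)$, which by Aschbacher's subgroup structure theorem is a maximal subgroup of $G$ of the required type. (This case is the classical fact that every element of $\Sp_{2m}(q)$, $q$ even, preserves a quadratic form refining the symplectic form.)

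Now suppose $x \notin \Sp_{2m}(q)$, so $x$ lies in a coset $\Sp_{2m}(q)\phi^i$ with $\phi^i \neq 1$. Following Section~\ref{sssec:PrelimsShintaniField}, I would realise a suitable generator of $\<\phi^i\>$ as a Frobenius morphism $\s$ with $q = q_0^e$ (so $q_0$ is even), and — passing to $x^{-1}$ if that helps to reach the coset $\Sp_{2m}(q)\s$ — write $x = g\s$ with $g \in \Sp_{2m}(q)$. By Proposition~\ref{prop:MultiplicityOrthogonal}, the number of maximal subgroups of $\Sp_{2m}(q)\sp\<\s\>$ of type $\O^+_{2m}(q)$ or $\O^-_{2m}(q)$ containing $x$ equals the number of subgroups of $\Sp_{2m}(q_0)$ of type $\O^+_{2m}(q_0)$ or $\O^-_{2m}(q_0)$ containing $f(x)$, and by the first case applied over $\F_{q_0}$ this is at least one. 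So $x$ normalises a $\C_8$-subgroup $O$ of $\Sp_{2m}(q)$ of the required type, whence $x \in N_G(O)$, a maximal subgroup of $G$ of that type.

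The first case is the genuinely new input, but the geometry in it is slight, since $v \notin (h-1)W$ drops straight out of $Q(v) \neq 0$; the care really goes into the bookkeeping, where I see two points. First, one must check that a $\C_8$-subgroup of type $\O^\pm_{2m}(q)$ normalised by $x$ actually lies in a maximal subgroup of $G$ of that type — this is where the hypothesis that $\phi$ is a field, and not a graph-field, automorphism is used, through the behaviour of $\O^\pm_4(q)$ inside $\Sp_4(q)$. Second, and I expect this to be the main obstacle, Proposition~\ref{prop:MultiplicityOrthogonal} is phrased only for the coset $\Sp_{2m}(q)\s$ attached to a standard Frobenius $\s$, so one has to argue that, up to taking inverses and replacing $\phi^i$ by another generator of the cyclic group it generates, every coset of $\Sp_{2m}(q)$ in $G$ is reached; where it is not, the first-case argument must instead be replayed directly for the semisimilarity of $W = \F_q^{2m+1}$ induced by $x$, showing it fixes a non-degenerate hyperplane by the same mechanism.
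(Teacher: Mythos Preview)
Your proof follows the same two-case structure as the paper's: for $x \in \Sp_{2m}(q)$ show directly that $x$ lies in some subgroup of type $\O^\pm_{2m}(q)$, and for $x \notin \Sp_{2m}(q)$ reduce via Proposition~\ref{prop:MultiplicityOrthogonal} to the first case over the subfield $\F_{q_0}$. The one genuine difference is in the first case: the paper simply cites \cite[Theorem~2]{ref:Dye79}, whereas you supply a self-contained argument through the isomorphism $\psi$ to $\O_{2m+1}(q)$, using that $Q(v)\neq 0$ forces $v \notin (h-1)W$ and hence that some $h$-fixed functional is nonzero at $v$. This is correct and is essentially a proof of the part of Dye's theorem being invoked; note only that you need just the one direction (an $h$-fixed functional gives an $h$-invariant hyperplane), not a bijective correspondence, since $h$-invariant hyperplanes in general come from $h^*$-eigenlines of arbitrary eigenvalue. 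Your coset worry in the second case is reasonable --- the paper writes only ``$x=g\s$ where $\s$ is a power of $\phi$'' and applies Proposition~\ref{prop:MultiplicityOrthogonal} without further comment --- but be aware that the semilinear fallback you sketch would need real extra work: a semisimilarity of $W$ only stabilises $\langle v\rangle$ rather than fixing $v$, and the isometry identity $Q(hw)=Q(w)$ underpinning your first-case argument is no longer available.
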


\begin{proof}
If $x \in \Sp_{2m}(q)$, then, by \cite[Theorem 2]{ref:Dye79}, $x$ is contained in at least one subgroup $H$ of type $\O_{2m}^{\pm}(q)$. Hence, $x \in N_G(H)$, a maximal subgroup of $G$ of type $\O_{2m}^{\pm}(q)$. If $x \in G \setminus \Sp_{2m}(q)$, then $x=g\s$ where $\s$ is a power of $\phi$. Therefore, by Proposition~\ref{prop:MultiplicityOrthogonal}, the number of subgroups of $G$ of type $\O_{2m}^{\pm}(q)$ containing $g\s$ equals the number of subgroups of $\Sp_{2m}(q_0)$ of type $\O_{2m}^{\pm}(q_0)$ containing $f(g\s)$, which is at least one.
\end{proof}

\subsection{Computational methods} \label{ssec:PrelimsComp}
In addition to the probabilistic method described in the introduction, we carry out computations in \textsc{Magma} \cite{ref:Magma} to determine lower bounds on the uniform spread of some particular groups. In this way, for each group $G=\<T,\th\>$ in Table~\ref{tab:Comp}, we verify that $u(G) \geq k$. (Here we use the notation from Table~\ref{tab:Theta}.)

In each case, the group $G$ can be accessed directly, constructed using the command \texttt{AutomorphismGroup} or found as a subgroup of $\<\PSp_n(q), \p, \d \>$, which is obtained from $\mathrm{P\Sigma L}_n(q)$ by repeatedly using \texttt{MaximalSubgroups}. We have implemented an algorithm in \textsc{Magma} which takes as input a finite group \texttt{G}, positive integers $\texttt{k}$, $\texttt{N}$ and an element \texttt{s} in $G$ whose conjugacy class we wish to show witnesses the uniform spread $k$ of $G$. \par

{\renewcommand{\arraystretch}{1.1}
\begin{tabularx}{0.8\textwidth}{ccc}
\toprule[0.12em]
$T$           & $\th$           & $k$ \\
\midrule
$\PSp_4(3)$   & $\d$            & 2   \\
$\PSp_6(3)$   & $1, \d$         & 4   \\
$\Om_7(3)$    & $\d$            & 3   \\[5pt]
$\PSp_4(4)$   & $\p$            & 4   \\
$\PSp_4(4)$   & $\r$            & 10  \\
$\PSp_4(8)$   & $\p, \r$        & 2   \\
$\PSp_4(8)$   & $\r^3$          & 76  \\
$\PSp_4(16)$  & $\p, \p^2, \r$  & 2   \\[5pt]
$\PSp_4(9)$   & $\p, \d\p$      & 4   \\
$\PSp_4(25)$  & $\p, \d\p$      & 2   \\
$\PSp_4(27)$  & $\p, \d\p$      & 2   \\
\bottomrule[0.12em]
\caption{Computational results: $u(\<T,\th\>) \geq k$}\label{tab:Comp}
\end{tabularx}}

First, we follow the probabilistic method described in the introduction. To determine $\M(G,s)$ we use $\texttt{MaximalSubgroups}$. For each conjugacy class $x^G$, we need to compute $\fpr(x,G/H)$ for each $H \in \M(G,s)$; we do this by calculating $|x^G \cap H|$ using $\texttt{IsConjugate}$, noting that $\fpr(x,G/H) = \frac{|x^G \cap H|}{|x^G|}$. If for all $k$-tuples of classes $(x_1^G,\dots,x_k^G)$ we establish that $P(x_1,s) + \cdots + P(x_k,s) < 1$, then we have verified that $u(G) \geq k$ with respect to $s^G$. \par

Otherwise, for each $k$-tuple of classes $(C_1,\dots,C_k)$, we apply a randomised method (parameterised by $N$) to explicitly construct an element $z \in s^G$ such that for all $c_i \in C_i$, $\<c_1,z\> = \cdots = \<c_k,z\> = G$.  This randomised approach is based on the \textsf{GAP} calculations in \cite[\SS 4]{ref:BreuerGuralnickKantor08}, which are described by Breuer in \cite[\SS 3.3]{ref:Breuer07}. Observe that it suffices to show that for all representatives $(x_1,\dots,x_k)$ of the orbits of $C_1 \times \cdots \times C_k$ under the diagonal conjugation action of $G$, there exists $z \in s^G$ such that $\<x_1,z\> = \cdots = \<x_k,z\> = G$. An algorithm of \cite[pp. 18--19]{ref:Breuer07} to construct these orbit representatives is the crucial ingredient. Given these representatives, we test at most $N$ random conjugates of $s$ for each list of representatives, and we return any $k$-tuples of conjugacy classes for which no suitable conjugate of $s$ is found. If no $k$-tuples fail, then the bound $u(G) \geq k$ holds.

%%%%% FIXED POINT RATIOS %%%%%

\section{Fixed point ratios}\label{sec:FPRs} 
This section serves to provide the fixed point ratio bounds we require in order to apply the probabilistic method in Section~\ref{sec:Proof}. There is a vast literature on fixed point ratios for primitive actions of almost simple groups. In addition to the essential role they play in random generation, these bounds have many other applications, such as to the study of base sizes (e.g. see \cite{ref:Burness07}) and monodromy groups (e.g.  see \cite{ref:FrohardtMagaard01}). \par

The most general bound in this area is \cite[Theorem 1]{ref:LiebeckSaxl91} of Liebeck and Saxl, which establishes that $\fpr(x,G/H) \leq 4/3q$, for any almost simple group of Lie type over $\F_q$, maximal subgroup $H \leq G$ and non-identity element $x \in G$, with a known list of exceptions. However, a theorem of Burness \cite[Theorem 1]{ref:Burness071} gives a stronger result when $G$ is a finite almost simple classical group, $H$ is a non-subspace subgroup and $x \in G$ has prime order. (Recall that, roughly, a subgroup of $G$ is \emph{non-subspace} if it acts irreducibly on the natural module for $G$; see \cite[Definition 1]{ref:Burness071}.) Namely, if $n$ is the dimension of the natural module for $G$, then \[ \fpr(x,G/H) \leq |x^G|^{-\frac{1}{2}+\frac{1}{n}+\i} \] where $\i$ is given in \cite[Table 1]{ref:Burness071}. \par

\vspace{5pt}

For remainder of this section, let $G$ be an almost simple group with socle $T \in \T$, where
\begin{align*}
\T = \{ \PSp_{2m}(q)' &\mid m \geq 2 \} \cup \{ \Om_{2m+1}(q) \mid q \text{ odd}, m \geq 3 \}. 
\end{align*} 
Assume that $T \neq \PSp_4(2)' \cong A_6$. \par

Let us introduce some notation.
\begin{notation}\label{not:nu}
Let $V=\F_q^n$ and let $x \in \PGL(V)$. Let $\hat{x}$ be a preimage of $x$ in $\GL(V)$. Define $\nu(x)$ to be the codimension of the largest eigenspace of $\hat{x}$ on $\olV = V \otimes_{\F_{q}} \K$.
\end{notation}

We begin by recording a consequence of \cite[Theorem 1]{ref:Burness071}. 
\begin{proposition}\label{prop:FPRs}
Let $x \in G$ have prime order and assume that $m \geq 3$. Suppose that $H$ is a maximal non-subspace subgroup of $G$. Let $\ell = 1$ unless specified otherwise in Table~\ref{tab:FPRsIota}.
\begin{enumerate}
\item{If $T=\Om_{2m+1}(q)$, then \[ \fpr(x,G/H) < \frac{(4q+4)^{1/2}}{q^{m-\ell+\e}}, \] where $\e=1/2$ unless $x \in \PGL(V)$ and $\nu(x)=1$, in which case $\e=0$.}
\item{If $T=\PSp_{2m}(q)$, then \[ \fpr(x,G/H) < \frac{(2q+2)^{1/2}}{q^{m-\ell}}. \]}
\item{If $T=\PSp_{2m}(q)$, then $\fpr(x,G/H) < F(x,G/H)$ as given in Table~\ref{tab:FPRsExtra}.}
\end{enumerate}
\end{proposition}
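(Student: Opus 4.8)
The plan is to extract everything from Burness's bound $\fpr(x,G/H) \leq |x^G|^{-1/2 + 1/n + \i}$ of \cite[Theorem 1]{ref:Burness071}, which applies verbatim here since $H$ is a maximal non-subspace subgroup, combined with explicit lower bounds on the sizes of the conjugacy classes of prime-order elements of $G$. Since $m \geq 3$, the exponent $-1/2 + 1/n + \i$ is negative (with $\i$ read from \cite[Table 1]{ref:Burness071}), so $\fpr(x,G/H)$ is a decreasing function of $|x^G|$; hence it suffices to bound $|x^G|$ from below and then simplify. I would run through the list of conjugacy classes of prime-order elements in $G$ using \cite[\SS 3.4--3.5]{ref:BurnessGiudici16}: for each type (unipotent or semisimple, inner-diagonal or a field/graph automorphism) one reads off, or easily computes, a lower bound for $|x^G|$ of the rough shape $q^{\nu(x)(n-\nu(x)) - O(1)}$ (with appropriate modifications for outer elements), the minimum being governed by the elements with smallest $\nu(x)$.

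For part (i), with $T = \Om_{2m+1}(q)$ and $n = 2m+1$: the key point is that a prime-order element $x \in \PGL(V)$ has $\nu(x) = 1$ only when $x$ is (the image of) $-r_v$ for a reflection $r_v$, in which case $C_{\Om_{2m+1}(q)}(x)$ has type $\O_{2m}^{\pm}(q)$ and $|x^G| \asymp q^{2m}$; every other prime-order $x$ either has $\nu(x) \geq 2$ (so $|x^G| \asymp q^{2(2m-1)}$ or larger) or lies outside $\PGL(V)$, where a comparable lower bound holds. Substituting these into Burness's estimate, bounding $|x^G|^{1/n+\i}$ above using $n = 2m+1$ and the explicit (and small, of order $1/n$) values of $\i$ in \cite[Table 1]{ref:Burness071}, and simplifying gives the two regimes: $\e = 0$ in the former case, where the estimate is essentially tight, and $\e = 1/2$ otherwise; the factor $(4q+4)^{1/2} = 2(q+1)^{1/2}$ absorbs the multiplicative constants in the class-size bounds together with the residual power of $q$ from the $1/n$ correction. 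Part (ii) is entirely parallel with $T = \PSp_{2m}(q)$ and $n = 2m$: the binding case is a transvection ($\nu(x) = 1$ unipotent, $|x^G| \asymp q^{2m}/\gcd(2,q-1)$), which pins down the constant $(2q+2)^{1/2} = \sqrt{2}\,(q+1)^{1/2}$; since all semisimple and outer elements lie in strictly larger classes, a single uniform bound suffices and no $\e$-dichotomy is needed. In both parts, the exceptional values of $\ell$ recorded in Table~\ref{tab:FPRsIota} account for the subgroup collections (chiefly $\C_2$, $\C_3$, and some $\S$-type subgroups) for which $|G:H|$ is a comparatively small power of $q$, so that translating Burness's exponent into a clean $q$-power bound costs one or more extra factors of $q$ in the denominator.

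For part (iii), the refined bounds $F(x,G/H)$ in Table~\ref{tab:FPRsExtra} are needed only for a short list of pairs --- essentially small rank (notably $m = 3$, where $\i$ in \cite[Table 1]{ref:Burness071} is largest) together with a few specific subgroup/element combinations --- where the generic estimate of part (ii) is not sharp enough for the probabilistic argument of Section~\ref{sec:Proof}. Here I would return to the finer, type-by-type estimates of \cite{ref:Burness071,ref:Burness072,ref:Burness073,ref:Burness074}, which are indexed by $\dim H$ (equivalently, by the Aschbacher collection containing $H$) and by the precise class of $x$, and specialize them row by row; the very smallest cases may be supplemented by direct computation.

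I expect the main obstacle to be obtaining uniform, explicit class-size lower bounds valid across every prime-order element type --- unipotent and semisimple, in odd and even characteristic, and including the elements of $\InnDiag(T) \setminus T$ and the field, graph and graph-field automorphisms occurring in $G \setminus T$ --- and, within part (i), correctly isolating the $\nu(x) = 1$ family that governs the $\e = 0$ case, since this is exactly where Burness's estimate is tightest and an error of even a single power of $q$ in the class-size bound would break the conclusion. The boundary case $m = 3$ will likewise require separate checking.
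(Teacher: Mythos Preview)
Your approach is essentially the same as the paper's: apply \cite[Theorem~1]{ref:Burness071} and feed in explicit lower bounds on $|x^G|$, with the substitution $\ell = 1 + n\i$ (so the exceptional $\ell$ values in Table~\ref{tab:FPRsIota} are exactly a repackaging of the exceptional $\i$ values in \cite[Table~1]{ref:Burness071}). The paper cites \cite[Prop.~3.22, 3.36]{ref:Burness072} rather than \cite{ref:BurnessGiudici16} for the class-size lower bounds, but this is immaterial.

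The one point where you overcomplicate things is part~(iii). The entries of Table~\ref{tab:FPRsExtra} are not a short list of sporadic pairs to be handled by finer type-by-type estimates or direct computation; they are simply the un-simplified intermediate bounds that drop out of the very same calculation you describe for part~(ii), recorded separately for the three cases $\nu(x)=1$, $\nu(x)\ge 2$, and $x\notin\PGL(V)$. For instance, when $\nu(x)=1$ the computation gives
\[
\fpr(x,G/H) < |x^G|^{-1/2+1/2m+\i} < \left(\frac{q^{2m}}{2q+2}\right)^{-1/2+\ell/2m} = \frac{(2q+2)^{1/2-\ell/2m}}{q^{m-\ell}},
\]
which is the first line of Table~\ref{tab:FPRsExtra}; part~(ii) is then obtained by bounding $(2q+2)^{-\ell/2m}\le 1$. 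Similarly for $\nu(x)\ge 2$ one uses $|x^G| > q^{2(2m-2)}/(2q+2)$, and for field automorphisms $|x^G| > \tfrac{1}{2}q^{m^2+m/2}$. No separate machinery is required.
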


\begin{proof}
First suppose that $x \in \PGL(V)$. If $T = \PSp_{2m}(q)$ and $s=\nu(x)=1$, then $\max(s(2m-s),sm) = 2m-1$. Therefore, by \cite[Prop. 3.22, 3.36]{ref:Burness072},
\[|x^G| \geq |x^{\PSp_{2m}(q)}| > \frac{q^{2m}}{2q+2}.\] By \cite[Theorem 1]{ref:Burness071}, letting $\ell = 1 + 2m\i$, 
\begin{align*}
\fpr(x,G/H) &< \frac{1}{|x^G|^{1/2-1/{2m}-\i}} < \frac{(2q+2)^{1/2 - 1/2m - \i}}{q^{m-1-2m\i}} = \frac{(2q+2)^{1/2 - \ell/2m}}{q^{m-\ell}}.
\end{align*}
The remaining cases are similar. \par

Now assume that $x \not\in \PGL(V)$. Therefore, $x$ is a field automorphism, and
\[|x^G| \geq |x^T| \geq \frac{|\PSp_{2m}(q)|}{|\PGSp_{2m}(q^{\frac{1}{2}})|} > \frac{1}{2}q^{m^2+m/2} \] since $|\PSp_{2m}(q)|=|\Om_{2m+1}(q)|$. Then, by \cite[Theorem 1]{ref:Burness071},
\[ \fpr(x,G/H) < \frac{1}{|x^G|^{1/2-\ell/{2m}}} < \frac{2}{q^m}. \qedhere \]
\end{proof}

\begin{minipage}{0.4\textwidth}
{\renewcommand{\arraystretch}{1.2}
\begin{tabularx}{\textwidth}{ccc}
\toprule[0.12em]
$T$            & Type of $H$        & $\ell$ \\
\midrule  
$\PSp_{2m}(q)$ & $\Sp_m(q) \wr S_2$ & $2$    \\
$\PSp_{2m}(q)$ & $\Sp_m(q^2)$       & $2$    \\
$\Om_7(q)$     & $G_2(q)$           & $1.76$ \\
$\PSp_8(2)$    & $A_{10}$           & $1.50$ \\
$\Om_7(3)$     & $\Sp_6(2)$         & $1.46$ \\
$\PSp_6(2)$    & $\PSU_3(3)$         & $1.33$ \\
\bottomrule[0.12em]
\caption{Values of $\ell$}\label{tab:FPRsIota}
\end{tabularx}}
\end{minipage}
\begin{minipage}{0.63\textwidth}
{\renewcommand{\arraystretch}{1.2}
\begin{tabularx}{\textwidth}{cc}
\toprule[0.12em]
Condition on $x$                    & $F(x,G/H)$                                             \\
\midrule  
$x \in \PGL(V)$ and $\nu(x) = 1$    & $\dfrac{(2q+2)^{1/2-\ell/2m}}{q^{m-\ell}}$           \\[13pt]
$x \in \PGL(V)$ and $\nu(x) \geq 2$ & $\dfrac{(2q+2)^{1/2-\ell/2m}}{q^{2(m-\ell)-3/2+3/2m}}$ \\[13pt]
$x \not\in \PGL(V)$                 & $\dfrac{2}{q^m} $                                      \\
\bottomrule[0.12em]
\caption{Bounds for Proposition~\ref{prop:FPRs}(iii)}\label{tab:FPRsExtra}
\end{tabularx}}
\end{minipage}
\vspace{11pt}

In a special case of interest, we can provide a stronger result for subfield subgroups.
\begin{proposition}\label{prop:FPRsTrans}
Let $x \in G \cap \PGL(V)$ have prime order and assume that $\nu(x)=1$. Let $H$ be a maximal subfield subgroup of $G$. Then \[ \fpr(x,G/H) < 2q^{-m}.\]
\end{proposition}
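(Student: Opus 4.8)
\textbf{Proof proposal for Proposition~\ref{prop:FPRsTrans}.} The plan is to estimate $\fpr(x,G/H)=|x^G\cap H|/|x^G|$ directly, since the generic bound of Proposition~\ref{prop:FPRs} for a non-subspace subgroup only gives roughly $q^{3/2-m}$, whereas we must gain a further factor of about $q^{3/2}$. Throughout, $H$ lies in Aschbacher's collection $\C_5$, so there is a prime $r\mid f$ with $q=q_0^r$ and $H\cap\PGL(V)$ is contained in $\PGSp_{2m}(q_0)$ when $\soc(G)=\PSp_{2m}(q)$, or in $\SO_{2m+1}(q_0)$ when $\soc(G)=\Om_{2m+1}(q)$. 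Since $x$ and all of its $G$-conjugates lie in $\InnDiag(T)$, we may replace $H$ by $H\cap\PGL(V)$ when bounding $x^G\cap H$.

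First I would identify $x$. As $\nu(x)=1$ and $x$ has prime order, a preimage of $x$ in $\GL(V)$ has an eigenspace of codimension one on $\olV$; an eigenvalue-pairing argument in $\GSp_{2m}(q)$, respectively $\GO_{2m+1}(q)$, rules out any semisimple element of this shape once $m\geq 2$, and the only admissible unipotent one is a transvection. Hence $x$ is the image of a symplectic transvection $t_{u,\lambda}$ when $\soc(G)=\PSp_{2m}(q)$, and of $-r_v$ for a reflection $r_v$ in a non-singular vector $v$ when $\soc(G)=\Om_{2m+1}(q)$ (cf.\ \cite[\SS 3.4--3.5]{ref:BurnessGiudici16}). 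In particular $x\in T$, or $x\in\SO_{2m+1}(q)\setminus\Om_{2m+1}(q)$ in the orthogonal case, so in every case $|x^G|$ is at least $|x^{\PSp_{2m}(q)}|=(q^{2m}-1)/\gcd(2,q-1)$, respectively $|x^{\SO_{2m+1}(q)}|=\tfrac12 q^m(q^m+\varepsilon)$, where $\varepsilon\in\{+1,-1\}$ records the type of $\langle v \rangle^{\perp}$.

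The heart of the proof is the count of $|x^G\cap H|$. The group $\Sp_{2m}(q_0)$ contains exactly $q_0^{2m}-1$ transvections, and $\SO_{2m+1}(q_0)$ contains exactly $q_0^{2m}$ involutions $-r_w$, one for each non-singular $1$-space of $\F_{q_0}^{2m+1}$. Their fusion into $G$-classes is governed by the elementary fact that $\F_{q_0}^{\times}\subseteq(\F_q^{\times})^2$ precisely when $r=2$, while $\F_{q_0}^{\times}\cap(\F_q^{\times})^2=(\F_{q_0}^{\times})^2$ when $r$ is odd. When $r=2$ all of these elements fuse into a single $G$-class; either this class misses $x^G$ (and then $\fpr(x,G/H)=0$), or it equals $x^G$, in which case $|x^G\cap H|=q^m-1$ in the symplectic case, and $|x^G\cap H|=q^m$ in the orthogonal case, where moreover $\langle w \rangle^{\perp}$ has discriminant $\equiv\mathrm{disc}(V)\equiv(-1)^m$ over $\F_q$, so the fused class is the one of $+$ type and $|x^G|\geq\tfrac12 q^m(q^m+1)$. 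When $r\geq 3$, at most one of two fused families meets $x^G$, giving $|x^G\cap H|\leq\tfrac12(q_0^{2m}-1)$, respectively $\leq\tfrac12 q_0^m(q_0^m+1)$.

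Putting this together, the extremal case, namely $q$ odd with $r=2$ and no diagonal automorphism, gives $\fpr(x,G/H)\leq\frac{2(q^m-1)}{q^{2m}-1}=\frac{2}{q^m+1}$ for symplectic $G$ and $\fpr(x,G/H)\leq\frac{2q^m}{q^m(q^m+1)}=\frac{2}{q^m+1}$ for orthogonal $G$, both strictly below $2q^{-m}$; every other configuration (larger $r$, $q$ even, or a diagonal automorphism present, which enlarges $|x^G|$ in step with the extra fusion) yields a strictly smaller bound, using $q^{2m}-1=(q^m-1)(q^m+1)$ and $2m/r\leq m$. The main obstacle is the fusion analysis of the previous paragraph, in particular the verification that in the orthogonal case with $r=2$ the subfield reflections fuse to the involution class with the larger centraliser $\O_{2m}^+(q)\times\O_1$ rather than $\O_{2m}^-(q)\times\O_1$; without this, the argument yields only $2/(q^m-1)$, so this observation is exactly what secures the sharp constant $2$.
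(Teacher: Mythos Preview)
Your approach is essentially the paper's: identify the prime-order elements with $\nu(x)=1$ explicitly (a transvection $[J_2,I_{2m-2}]$ in the symplectic case, $[-I_{2m},1]$ in the orthogonal case) and compute $|x^G\cap H|/|x^G|$ directly. The paper's proof is terser --- it simply records that $x^G\cap H=x^H$ and reads off the bound from the centraliser orders in \cite[Appendix~B]{ref:BurnessGiudici16} --- whereas you count the transvections or reflections in $H$ and analyse their fusion into $G$-classes by hand. Both routes arrive at the same extremal ratio $2/(q^m+1)$.

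One point worth tightening: your justification that in the orthogonal $r=2$ case the subfield reflections $-r_w$ all land in the $+$-type class over $\F_q$ is correct but the reasoning ``$\text{disc}(V)\equiv(-1)^m$'' is not quite the right invariant. The clean argument is that $q=q_0^2\equiv 1\pmod 4$, so $-1$ is a square in $\F_q$; combined with $Q(w)\in\F_{q_0}^\times\subseteq(\F_q^\times)^2$ and $\text{disc}(V)\in\F_{q_0}^\times\subseteq(\F_q^\times)^2$, this forces $\text{disc}(\langle w\rangle^\perp)\cdot(-1)^m$ to be a square, i.e.\ $\langle w\rangle^\perp$ is $+$-type. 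As you note, this is exactly what is needed to get $2/(q^m+1)$ rather than $2/(q^m-1)$.
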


\begin{proof}
By \cite[\SS 3.4--3.5]{ref:BurnessGiudici16}, a prime order element with $\nu(x) = 1$ is $G$-conjugate to the block diagonal matrix $[-I_{2m},1]$ in case $\mathbf{O}$, and $[J_2,I_{2m-2}]$ in cases $\mathbf{S}$ and $\mathbf{S_4}$. (Here we use $J_i$ to denote a Jordan block of size $i$.) Therefore, in each case, $x^G \cap H = x^H$.  The result follows from the centraliser orders in \cite[Appendix B]{ref:BurnessGiudici16}.
\end{proof}

Since \cite[Theorem 1]{ref:Burness071} excludes subspace subgroups, we use the bounds in \cite[\SS 3]{ref:GuralnickKantor00} in these cases. For convenience we record the relevant bounds below. (Recall that the \emph{Witt index} of an orthogonal space is the largest dimension of a totally singular subspace. Thus, the Witt index of a non-degenerate $(2m+1)$-space is $m$, and the Witt index of a non-degenerate $2m$-space is $m$ if the space is plus-type, and $m-1$ if the space is minus-type.)
\begin{proposition}\label{prop:FPRsSubspace}
Let $x \in G$ have prime order and assume that $m \geq 3$.
\begin{enumerate}[itemsep=5pt]
\item{Let $H$ be the stabiliser of a totally isotropic $k$-space, where $1 \leq k \leq m$. Then \[ \fpr(x,G/H) < 2q^{-(m-1)} + q^{-m} + q^{-k}.\]}
\item{Let $T=\Om_{2m+1}(q)$ and let $H$ be the stabiliser of a non-degenerate $k$-space of Witt index $l$, where $1 \leq k \leq 2m$. Then \[\fpr(x,G/H) < 2q^{-(m-1)} + q^{-m} + q^{-l} + q^{-(2m+1-k)}.\]}
\item{Let $T=\PSp_{2m}(q)$ and let $H$ be the stabiliser of a non-degenerate $k$-space, where $1 \leq k \leq 2m-1$. Then \[\fpr(x,G/H) < 2q^{-(m-\a)} + q^{-m} + q^{-k/2} + q^{-(2m-k)},\] where $\a = 1$ if $q$ is even, and $\a = 2$ if $q$ is odd.}
\item{Let $q$ be even, let $T=\Sp_{2m}(q)$ and let $H$ be a subgroup of $G$ of type $\O^{\pm}_{2m}(q)$. Then \[ \fpr(x,G/H) < q^{-\b} + q^{-m},\] where $\b=1$ if $x \in \PGL(V)$ and $\nu(x)=1$, and $\b=2$ otherwise.}
\end{enumerate} 
\end{proposition}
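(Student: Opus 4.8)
The plan is to obtain each inequality by specialising the fixed point ratio estimates for classical groups acting on subspaces established in \cite[\SS 3]{ref:GuralnickKantor00}, using the hypothesis that $x$ has prime order to cut their case analysis down to a short list of element types. For parts (i)--(iii), I would first identify $G/H$ with a set $\Omega$ of subspaces of the natural module $V = \F_q^n$: totally isotropic $k$-spaces in (i); non-degenerate $k$-spaces of Witt index $l$ in (ii); and non-degenerate $k$-spaces in (iii), where $k$ is necessarily even, so that $q^{-k/2}$ is defined, and the duality $U \mapsto U^{\perp}$ lets us assume $k \leq m$. Then $\fpr(x, G/H)$ is the proportion of members of $\Omega$ fixed by $x$, which is exactly what Guralnick and Kantor bound. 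Their argument runs over the non-identity classes of $\mathrm{P\Gamma L}(V)$, split into inner-diagonal elements (controlled by the Jordan form and the eigenspace dimensions on $\olV$) and field or graph-field automorphisms (where the fixed subspaces are essentially the members of $\Omega$ defined over the fixed subfield, counted by a subfield estimate). Since $x$ has prime order, only boundedly many subtypes arise, and I would read off the bound: the leading term $2q^{-(m-1)}$, respectively $2q^{-(m-\a)}$ with $\a=1$ in even characteristic to absorb the codimension-one fixed spaces of symplectic transvections, dominates, while the lower-order terms $q^{-m}$, $q^{-k}$, $q^{-l}$, $q^{-(2m+1-k)}$, $q^{-k/2}$, $q^{-(2m-k)}$ record the few element types (transvections, reflections, or elements with a one-dimensional commutator space) for which some particular small or large subspace is abundantly fixed. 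The work is then to check the translation of \cite[\SS 3]{ref:GuralnickKantor00} into these closed forms and to verify the smallest case $m = 3$ directly.

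For part (iv), $q$ is even, and via the exceptional isomorphism $\Sp_{2m}(q) \cong \O_{2m+1}(q)$ of \cite[Theorem 11.9]{ref:Taylor92} the subgroups of $G$ of type $\O^{\pm}_{2m}(q)$ are, as shown in the proof of Proposition~\ref{prop:MultiplicityOrthogonal}, exactly the stabilisers of the hyperplanes of $W = \F_q^{2m+1}$ avoiding the radical vector $v$ (of the appropriate Witt type). There are $q^{2m}$ such hyperplanes, $\O_{2m+1}(q)$ fixes $v$, and $x$ fixes a hyperplane $U \not\ni v$ precisely when $U$ is $x$-invariant. A direct count of the $x$-invariant hyperplanes $U \not\ni v$, stratified by $\nu(x)$ (which controls the relevant eigenspace dimensions of $x$ on $W$), gives $\fpr(x, G/H) < q^{-1} + q^{-m}$ in general and $< q^{-2} + q^{-m}$ once $\nu(x) \geq 2$, with a transvection realising the former. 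Field automorphisms fall under the second clause: the hyperplanes fixed by a power of $\p$ are those defined over the fixed subfield $\F_{q_0}$, of which there are at most roughly $q_0^{2m} \leq q^{m}$ of each type, so $\fpr(x, G/H) < q^{-2} + q^{-m}$. This is again a special case of the orthogonal-group estimates in \cite[\SS 3]{ref:GuralnickKantor00}, now over a field of even order.

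The main obstacle will be uniformity rather than any single hard inequality. One must ensure that no prime-order element type violates these clean bounds --- in particular the field and graph-field automorphisms, for which $\nu(x)$ does not directly govern the dimension of the fixed space, and the even-characteristic unipotent elements, whose fixed spaces can have codimension one --- and that the constants (the coefficient $2$ in the leading terms of (i)--(iii), and the sharpness of the exponents $m-\a$ and $\b$) survive for the smallest admissible rank $m = 3$. This bookkeeping is exactly what \cite[\SS 3]{ref:GuralnickKantor00} carries out; the task here is to specialise it and, where their statements are more general, to reorganise their inequalities into the forms stated above.
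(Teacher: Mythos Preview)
Your proposal is correct and takes the same approach as the paper: both derive the bounds directly from \cite[\SS 3]{ref:GuralnickKantor00}. The paper's proof is simply the one-line citation ``See \cite[Prop.~3.15, 3.16, Lemma~3.18]{ref:GuralnickKantor00}'', so your sketch is effectively an expansion of what lies behind that reference; in particular, for part (iv) the paper invokes \cite[Lemma~3.18]{ref:GuralnickKantor00} rather than re-deriving the count via the isomorphism $\Sp_{2m}(q)\cong \O_{2m+1}(q)$, but your route is equivalent.
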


\begin{proof}
See \cite[Prop. 3.15, 3.16, Lemma 3.18]{ref:GuralnickKantor00}.
\end{proof}

Notice that the bounds in Proposition~\ref{prop:FPRsSubspace}(i)--(iii) do not depend on $x$. In contrast, \cite[Theorems 1--6]{ref:FrohardtMagaard00} provide upper and lower bounds for the fixed point ratio of an element $x$ of an almost simple classical group on an appropriate set of $k$-spaces which depend not only on $q$, $n$ and $k$, but also $\nu(x)$ when $x \in G \cap \PGL(V)$. However, for our application, the constants in these bounds are not sufficient. Therefore, we present bounds which are similar to those in \cite{ref:FrohardtMagaard00}, but with sharper constants in the special case that we are interested in.

\begin{proposition}\label{prop:FPRs2Space}
Let $T=\PSp_{2m}(q)$ with $m \geq 3$. Let $x \in G$ have prime order. If $x \in \PGL(V)$, then write $s=\nu(x)$. Let $H$ be the stabiliser in $G$ of a non-degenerate $2$-space. Then \[ \fpr(x, G/H) \leq \left\{ \begin{array}{ll} q^{-2s} + q^{-(2s+2)} + q^{-(2m-2)} + q^{-(2m-1)} & \text{ if $x \in \PGL(V)$ } \\ 2q^{-(2m-1)} & \text{ if $x \not\in \PGL(V)$} \end{array}\right. \]
\end{proposition}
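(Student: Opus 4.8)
The plan is to compute $\fpr(x,G/H)$ directly as $\fix(x,\Omega)/|\Omega|$, where $\Omega$ is the set of non-degenerate $2$-subspaces of $V=\F_q^{2m}$ (so that, since $\soc(G)$ already acts transitively on $\Omega$, we may identify $G/H$ with $\Omega$) and where $|\Omega| = \frac{(q^{2m}-1)q^{2m-2}}{q^2-1} > q^{4m-4}$ by a routine count. Everything therefore reduces to bounding $\fix(x,\Omega)$, the number of non-degenerate $2$-spaces $W$ with $Wx = W$.

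Suppose first that $x\in\PGL(V)$. Fix a preimage $\hat x\in\GL(V)$ and pass to $\olV$, where $\hat x$ has eigenspaces $E_\lambda$; since $\hat x$ has prime order, $x$ is either unipotent (of order $p$) or semisimple, and I would treat these two situations separately. Recall that $s=\nu(x)$ is the codimension of a largest eigenspace. An $\hat x$-invariant $2$-space is precisely a $2$-dimensional $\F_q\langle x\rangle$-submodule of $V$, and since $x$ has prime order these fall into a short list of module types: (a) a trivial module, i.e.\ $W$ lies inside a single eigenspace; (b) a non-split self-extension (a single Jordan block $J_2$, occurring only in the unipotent case); (c) a direct sum of two non-isomorphic one-dimensional modules; and (d) an absolutely irreducible $2$-dimensional module. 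For each type I would count the members that are \emph{non-degenerate}, using that for $\hat x$ in the symplectic group the eigenspaces are paired with $\dim E_\lambda = \dim E_{\lambda^{-1}}$ and $E_\lambda$ totally isotropic unless $\lambda = \pm 1$: the dominant contribution $q^{-2s}$ comes from non-degenerate $2$-spaces inside the largest (hence $\pm1$-)eigenspace, the term $q^{-(2s+2)}$ from a second eigenspace of comparable dimension, and the terms $q^{-(2m-2)}$ and $q^{-(2m-1)}$ from the ``Jordan-block'' and ``mixed'' $2$-spaces of types (b)--(d), each of which can be written as $\langle v, v\hat x\rangle$ for a suitable vector $v$ and hence counted by parametrising $v$. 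The eigenvalue multiplicities and the action of $\hat x$ on each invariant subspace I would extract from the class and centraliser data in \cite[\SS 3.4--3.5, App.\ B]{ref:BurnessGiudici16}.

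Now suppose $x\notin\PGL(V)$. Since $m\geq 3$ there are no graph-field automorphisms, so $x$ is an automorphism of some prime order $r$ whose field part also has order $r$; write $q = q_0^r$. I would invoke Shintani descent: applying Theorem~\ref{thm:ShintaniDescent}(iii) in the form of Proposition~\ref{prop:ShintaniTransfer}, with $Y$ the stabiliser in the ambient algebraic group of a non-degenerate $2$-space (legitimate since $2 < m$), gives $\fix(x,\Omega) = \fix(f(x),\Omega_0)$ where $\Omega_0$ is the set of non-degenerate $2$-spaces over $\F_{q_0}$. As $x$ has prime order $r$ equal to the order of its field part, $f(x)$ is trivial, so $\fix(x,\Omega) = |\Omega_0|$, and the required bound follows from an elementary estimate of $|\Omega_0|/|\Omega|$.

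The main obstacle is the constant-tracking in the case $x\in\PGL(V)$: estimates of the shape $q^{-(2m-k)/2}$ follow routinely from \cite{ref:FrohardtMagaard00}, but pinning the error terms down to exactly $q^{-(2s+2)} + q^{-(2m-2)} + q^{-(2m-1)}$ requires a hands-on count of the non-degenerate $2$-spaces of ``Jordan-block'' and ``mixed'' type, in which the interplay between the form and the eigenspace/Jordan decomposition of $\hat x$ must be made explicit. In particular the subcase in which the largest eigenspace is totally isotropic (which forces $s\geq m$, so that the $q^{-(2m-2)}$ term can absorb the resulting estimate) and the unipotent subcase with mixed Jordan block sizes will each need separate attention.
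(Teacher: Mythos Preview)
Your approach is correct but takes a genuinely different route from the paper in both cases.

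For $x \in \PGL(V)$, the paper does not count fixed $2$-spaces geometrically. Instead it uses the formula $\fpr(x,G/H) = |x^L \cap H_0|/|x^L|$ (where $L = G \cap \PGL(V)$ and $H_0 = H \cap L$), splits $x^L \cap H_0$ into $H_0$-classes by regarding $H_0$ as the image of $\GSp_2(q) \times \GSp_{2m-2}(q)$ and writing each element as a pair $(M,N)$, and then reads off centraliser orders from \cite[App.~B]{ref:BurnessGiudici16}. For instance, when $q$ is even and $x = b_s$, the class $x^L$ meets $H_0$ in exactly three $H_0$-classes, with representatives $(I_2, b_s)$, $(b_1, a_{s-1})$, $(b_1, c_{s-1})$, yielding three explicit centraliser ratios that sum to at most the stated bound. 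This has the advantage that the case analysis is organised by the already-tabulated prime-order classes in $\GSp_2$ and $\GSp_{2m-2}$, so one never has to analyse directly the interaction between the symplectic form and the Jordan/eigenspace decomposition of $\hat x$ --- which is precisely the ``hands-on'' obstacle you flag at the end of your proposal. Your geometric count would also work, but with more ad hoc bookkeeping in the unipotent and totally-isotropic-eigenspace subcases.

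For $x \notin \PGL(V)$, the paper again argues via $|x^G \cap H|/|x^G|$: since $x$ is a field automorphism of prime order $r$, the quantity $|x^G \cap H|$ is at most the number of order-$r$ elements in the coset $H_0 x$, which by \cite[Prop.~4.9.1(d)]{ref:GorensteinLyonsSolomon98} is at most $2|x^H|$; the bound then follows from the centraliser orders $|C_G(x)| \sim |\Sp_{2m}(q^{1/r})|$ and $|C_H(x)| \sim |\Sp_2(q^{1/r})||\Sp_{2m-2}(q^{1/r})|$. Your Shintani-descent argument is a pleasant alternative and gives the same bound. One small correction: your claim ``$f(x)$ is trivial'' is not always true (for $q$ odd and $r=2$ one may have $x \in T\d\p^{f/2}$, and then $f(x) \in \PGSp_{2m}(q_0) \setminus \PSp_{2m}(q_0)$ by Proposition~\ref{prop:SympShintaniOdd}), but this is harmless since $\fix(f(x),\Omega_0) \le |\Omega_0|$ regardless. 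Both routes in fact yield $2q^{-(2m-2)}$, which is what the paper's proof actually establishes; the exponent $2m-1$ in the displayed statement appears to be a typo, and it is the bound $2q^{-(2m-2)}$ that is used downstream in the proof of Proposition~\ref{prop:ProbSymp}.
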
 

\begin{proof}
If $x$ is not contained in a $G$-conjugate of $H$, then $\fpr(x,G/H) = 0$. Therefore, let us assume that $x \in H$. Write $L=G \cap \PGL(V)$ and $H_0 = H \cap L$. Hence, $H_0$ is a subgroup of $\GSp_2(q) \times \GSp_{2m-2}(q)$, modulo scalars. \par

\clearpage
\noindent\textbf{Case 1: $x \in L$}

We will adopt the notation of \cite[\SS 3.4]{ref:BurnessGiudici16} for elements of prime order in $L$. Let $x$ have order $r$. Consider the case when $r$ is odd. If $x$ is semisimple, then $x^L$ is determined by the eigenvalues of $x$ on $V$ whereas if $x$ is unipotent, then $x^L$ is described in terms of (but not uniquely determined by) the Jordan form of $x$ on $V$. If $x$ is an involution, then we use the notation of \cite[Table 4.5.1]{ref:GorensteinLyonsSolomon98} if $x$ is semisimple and of Aschbacher and Seitz \cite{ref:AschbacherSeitz76} if $x$ is unipotent. \par

In each case, the description of elements in \cite[Chapter 3]{ref:BurnessGiudici16} allows the splitting of $x^L \cap H_0$ into $H_0$-classes to be easily determined and we verify the bound using the centraliser orders in \cite[Appendix B]{ref:BurnessGiudici16}. For example, if $r=p=2$ and $x=b_s$, then $x^L \cap H_0$ is the union of $x_1^{H_0}$, $x_2^{H_0}$, and $x_3^{H_0}$ where $x_1$, $x_2$ and $x_3$ are the elements $(I_2,b_s)$, $(b_1,a_{s-1})$ and $(b_1,c_{s-1})$ of $\Sp_2(q) \times \Sp_{2m-2}(q)$. So
\begin{align*}
\fpr(x,G/H) = \fpr(x,L/H_0) = \frac{|H_0|}{|L|} \sum_{i=1}^{3} \frac{|C_{L}(x_i)|}{|C_{H_0}(x_i)|}  \leq \frac{1}{q^{2s}} + \frac{1}{q^{2m-1}} + \frac{1}{q^{2m+s-1}}.
\end{align*}

For another example, suppose that $r \not\in \{ p, 2 \}$, so $x$ is a semisimple element of odd order. By \cite[Prop. 3.4.3]{ref:BurnessGiudici16}, a lift of $x$ is $L$-conjugate to the block diagonal matrix $[M_1,\dots,M_d,I_{\ell}]$ where, for some even $k$, the matrices $M_1, \dots, M_d$ either each act irreducibly on a non-degenerate $k$-space $U_i$, or preserve the decomposition of a non-degenerate $k$-space $U_i$ into two totally isotropic $\frac{k}{2}$-spaces, acting irreducibly on both. Now let $h \in H$ be $G$-conjugate to $x$. Then $h$ lifts to $(M,N) \in \GSp_2(q) \times \GSp_{2m-2}(q)$. If $M = I_2$, then $\ell \geq 2$ and $h$ is $H_0$-conjugate to $x_0$, an element lifting to $(I_2, [M_1,\dots,M_d,I_{\ell-2}])$. If $M \neq I_2$, then let $\l \in \K$ be a non-trivial eigenvalue of $M$. Hence, $\l$ is an eigenvalue of $M_i$ for some $i$. Since the set of eigenvalues of $M$ is closed under the map $\mu \mapsto \mu^q$, it must be that $k=2$ and $M=M_i$. Therefore, $h$ is $H_0$-conjugate to $x_i$, an element lifting to $(M_i, [M_1,\dots,M_{i-1},M_{i+1},\dots,M_d,I_{\ell}])$. So, if $\ell = 0$ and $k > 2$, then $x^L \cap H_0 = \emptyset$; if $\ell \geq 2$ and $k > 2$, then $x^L \cap H_0 = x_0^{H_0}$; and if $k = 2$, then $x^L \cap H_0 = x_a^{H_0} \cup \cdots \cup x_d^{H_0}$ where $a=0$ if $\ell \geq 2$ and $a = 1$ if $\ell = 0$. The result follows from the centralisers in \cite[Appendix B]{ref:BurnessGiudici16}. \par

\vspace{5pt}

\noindent\textbf{Case 2: $x$ is a field automorphism}

In this case, $|x^G|$ is at most the number of elements of order $r$ in $Tx$. So $|x^G \cap H|$ is at most the number of elements of order $r$ in $Tx \cap H = H_0x$. By \cite[Prop. 4.9.1(d)]{ref:GorensteinLyonsSolomon98}, this is at most $2|x^H|$. So $|x^G \cap H| \leq 2|x^H|$ and
\[\fpr(x,G/H) = \frac{2|H||C_G(x)|}{|G||C_H(x)|} \leq \frac{2|\Sp_{2}(q)||\Sp_{2m-2}(q)|e |\Sp_{2m}(q^{1/r})|e}{|\Sp_{2m}(q)|e |\Sp_{2}(q^{1/r})||\Sp_{2m-2}(q^{1/r})|e} \leq \frac{2}{q^{2m-2}}. \qedhere \]
\end{proof}

The four-dimensional symplectic groups require special attention and we will provide a close to best possible fixed point ratio bound for these groups.
\begin{proposition}\label{prop:FPRsSymp4}
Let $q=p^f$ where $f>1$ and let $G$ be an almost simple group with socle $\PSp_4(q)$. For a maximal non-subspace subgroup $H$ of $G$ and $x \in G$ of prime order \[ \fpr(x,G/H) \leq \frac{4}{q(q-1)}, \] unless $H$ has type $\Sp_2(q) \wr S_2$ or $\Sp_2(q^2)$ and $x$ is an $a_2$ or $t_2$ involution, in which case, \[ \fpr(x,G/H) \leq \frac{q}{q^2-1}. \]

Moreover, we have the following stronger bounds when $q$ is even.
\begin{enumerate}
\item{If $H$ has type $Sz(q)$, then $\fpr(x,G/H) \leq 1/q^2$.}
\item{If $H$ has type $\O^-_2(q^2)$, then $\fpr(x,G/H) \leq 8/q^2(q-1)$.}
\end{enumerate}
\end{proposition}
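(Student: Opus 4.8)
The basic tool is the identity $\fpr(x,G/H) = |x^G \cap H|/|x^G|$, which we rewrite as $\fpr(x,G/H) = \frac{|H|}{|G|}\sum_{i} \frac{|C_G(x)|}{|C_H(y_i)|}$, where $y_1,\dots,y_d$ represent the $H$-classes comprising $x^G \cap H$. Because the natural module has dimension $4$, the general bound of \cite[Theorem 1]{ref:Burness071} yields only $\fpr(x,G/H) \le |x^G|^{-1/4+\iota}$, which is far too weak for elements of small conjugacy class (for instance a transvection, or an $\Sp_4(q)$-involution of eigenvalue type $[-I_2,I_2]$, for which $|x^G|$ has order only $q^4$). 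So a class-by-class analysis organised by the type of $H$ is unavoidable, and the point of the proposition is exactly to record the outcome of that analysis in a usable form.

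First I would pin down the combinatorial framework. Using \cite{ref:BrayHoltRoneyDougal}, which lists all maximal subgroups in dimension $4$, I would enumerate the possible types of the maximal non-subspace subgroup $H$: since $f>1$ there are no $\C_6$-subgroups, and the $\S$-subgroups are $Sz(q)$ (for $q$ even), the image of $\PSL_2(q)$ under $\mathrm{Sym}^3$ (for $q$ odd, $p \ge 5$), and a bounded collection of groups over small fields; the remaining possibilities are $\Sp_2(q)\wr S_2$ ($\C_2$), $\Sp_2(q^2).2$ and $\GU_2(q).2$ ($\C_3$), the subfield subgroups $\Sp_4(q_0).c$ ($\C_5$), and, when $q$ is even, the subgroups of type $\O_4^\pm(q)$ ($\C_8$) and $\O_2^-(q^2)$. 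For the prime-order elements of $G$ and their centralisers I would use the descriptions and orders in \cite[\SS 3.4--3.5, Appendix B]{ref:BurnessGiudici16}.

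The bulk of the work then proceeds type by type. For each geometric $H$, an element of $H$ that is $G$-conjugate to $x$ is recognised by its action on the geometric structure preserved by $H$, so the fusion of $x^G$ into $H$-classes can be read off and $\fpr(x,G/H)$ computed explicitly when $x \in G \cap \PGL(V)$; when $x$ is a field or graph-field automorphism I would instead bound $|x^G \cap H| \le 2|x^H|$ via \cite[Prop. 4.9.1]{ref:GorensteinLyonsSolomon98} and feed in the known orders of the subfield or Suzuki-type groups $C_G(x)$ and $C_H(x)$. In every case this yields $\fpr(x,G/H) \le 4/(q(q-1))$ except one: when $H$ has type $\Sp_2(q)\wr S_2$ or $\Sp_2(q^2).2$ and $x$ is the involution of type $[J_2,J_2]$ (if $q$ is even) or $[-I_2,I_2]$ (if $q$ is odd) — labelled $a_2$ or $t_2$ in \cite{ref:BurnessGiudici16} — where $x^G\cap H$ contains a large class of ``swapping'' (respectively ``Frobenius'') involutions of $H$ and one checks directly that $\fpr(x,G/H) \le q/(q^2-1)$. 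The even-characteristic refinements are short by comparison: if $H$ has type $Sz(q)$, then $|H|$ equals $q^2(q^2+1)(q-1)$ up to a field-automorphism factor, its prime-order elements are involutions (in the centre of a Sylow $2$-subgroup, hence of $\Sp_4(q)$-type $[J_2,J_2]$, so with $|x^G| \ge (q-1)(q+1)^2(q^2+1)$) and semisimple elements in cyclic tori of order $q-1$ or $q\pm\sqrt{2q}+1$, and the class data of $Sz(q)$ gives $\fpr(x,G/H)\le 1/q^2$; if $H$ has type $\O_2^-(q^2)$, then $H$ normalises a cyclic group of order $q^2+1$, the involutions it contains are again of $\Sp_4(q)$-type $[J_2,J_2]$ (so have class of size $\gg q^4$), and $\fpr(x,G/H)\le |x^G\cap H|/|x^G|$ gives $\fpr(x,G/H) \le 8/(q^2(q-1))$.

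The principal difficulty is the detailed fusion bookkeeping for the $\C_2$- and $\C_3$-subgroups: one must correctly enumerate the $H$-classes inside $x^G \cap H$ (including those in the outer coset of $H$ and those produced by the outer automorphisms of $T$ lying in $N_G(H)$), identify precisely which involution class forces the weaker bound, and keep the constants tight enough to land at $4/(q(q-1))$ rather than something larger. A secondary issue is the finite list of $\S$-subgroups over small fields, which I would dispatch either by the blunt estimate $\fpr(x,G/H) \le |H|\,|C_G(x)|/|G|$ or, where that fails, by direct computation in \textsc{Magma}.
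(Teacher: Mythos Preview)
Your approach is exactly the paper's: enumerate the non-subspace maximal subgroups from \cite{ref:BrayHoltRoneyDougal}, and for each type compute $\fpr(x,G/H)$ by splitting $x^G\cap H$ into $H$-classes when $x\in\PGL(V)$, and by bounding $|x^G\cap H|$ via \cite[Prop.~4.9.1]{ref:GorensteinLyonsSolomon98} when $x$ is a field or graph-field automorphism. There are, however, two concrete gaps. First, your subgroup enumeration is off: $\O_4^\pm(q)$ is a \emph{subspace} subgroup in Burness's sense (it is handled separately in Proposition~\ref{prop:FPRsSubspace}(iv)), so it does not belong here; conversely you have omitted the $\C_2$-subgroup $\GL_2(q).2$ for $q$ odd, and the novelty subgroups $\O_2^{\pm}(q)\wr S_2$ that appear when $G$ contains a graph-field automorphism. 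These must all be treated.

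Second, your blanket plan ``bound $|x^G\cap H|\le 2|x^H|$'' for outer automorphisms $x$ fails in two places the paper explicitly works around. When $H$ has type $\Sp_4(q^{1/l})$ and the order $r$ of $x$ equals $l$, the element $x$ centralises $H_0$, so $|x^G\cap H|$ is not controlled by $|x^H|$; one must instead bound $i_r(H_0x)\le 1+i_r(H_0)$ and invoke explicit involution/element-count estimates (the paper uses \cite[Prop.~1.3]{ref:LawtherLiebeckSeitz02} for $r\in\{2,3\}$). And when $r=2$ and $H\in\C_2\cup\C_3$, the involutions in the outer coset $B\pi x$ of $H$ must be counted directly (the paper does this for $\Sp_2(q)\wr S_2$, finding at most $2q(q^2-1)$ such involutions), while for $H$ of type $\Sp_2(q^2)$ or $\GU_2(q)$ one in fact argues that no involutory field automorphism lies in $H$ at all. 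None of these are difficult, but without them the claimed inequalities do not follow.
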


\begin{proof}
Let $x$ have prime order $r$. We may assume that $x \in H$. By \cite[Tables 8.12--8.14]{ref:BrayHoltRoneyDougal}, the possibilities for the type of $H$ are the following. (Here $l$ is a prime divisor of $e$.)
\begin{itemize}
\item{in all cases:
\begin{tabularx}{0.5\textwidth}{lccc}
\toprule[0.06em]
Type      & $\Sp_4(q^{1/l})$ & $Sz(q)$              & $\PSL_2(q)$ \\[7pt]
Condition &                  & $q$ even \& $f$ odd  & $q$ odd     \\
\bottomrule[0.06em]
\end{tabularx}}
\item{if $G$ does not contain a graph-field automorphism:
\begin{tabularx}{0.5\textwidth}{lcccc}
\toprule[0.06em]
Type      & $\Sp_2(q) \wr S_2$ & $\GL_2(q).2$   & $\Sp_2(q^2)$ & $\GU_2(q)$ \\[7pt]
Condition &                    & $q$ odd        &              & $q$ odd    \\
\bottomrule[0.06em]
\end{tabularx}}
\item{if $G$ contains a graph-field automorphism: 
\begin{tabularx}{0.5\textwidth}{lccc}
\toprule[0.06em]
Type      & $\O_2^+(q) \wr S_2$  & $\O_2^-(q) \wr S_2$  & $\O_2^-(q^2)$ \\
\bottomrule[0.06em]
\end{tabularx}}
\end{itemize}

Write $T=\PSp_4(q)$, $L=G \cap \PGL(V)$ and $H_0 = H \cap L$. Assume that $H$ does not have type $\PSL_2(q)$ since the calculation for this case is in \cite[Prop. 2.22]{ref:Burness074}. \par

\noindent\textbf{Case 1: $x \in L$}

Suppose for now that $H$ does not have type $Sz(q)$. We proceed as in the proof of Proposition~\ref{prop:FPRs2Space}. The splitting of $x^L$ into $H_0$-classes is straightforward to determine, except for involutions when $q$ is odd. For these elements the arguments are often more subtle. We present the example where $q$ is odd, $x$ is an involution and $H$ has type $\GU_2(q)$. \par

Write $H_0 = B\sp\<\psi\>$ where $B$ is the image of $\GU_2(q)$ in $\Sp_4(q)$ modulo scalars, and $\psi$ induces the inverse-transpose map on $B$. As explained in Section~\ref{ssec:PrelimsGroups}, we will denote semisimple elements of $\GL_4(q)$, up to conjugacy, as $[\l_1,\l_2,\l_3,\l_4]$. By \cite[Lemma 5.3.11]{ref:BurnessGiudici16}, the image of $\GU_2(q)$, modulo scalars, is given as $[\l, \mu] \mapsto [\l, \l^q, \mu, \mu^q]$. First suppose that $x \in B$. Following \cite{ref:GorensteinLyonsSolomon98}, there are two classes of involutions in $B$. The $t_1$ class is represented by an element which lifts to $[-1,1]$ and so embeds in $L$ as $[-I_2,I_2]$, a $t_1$ involution of $L$. If $q \equiv 1 \mod{4}$, then the second class is represented by $t_1'$, which lifts to $[\xi,\xi^{-q}]$, where $\xi$ has order $4$ in $\F_{q^2}^{\times}$. In this case, $\xi \in \F_q$ and so $[\xi,\xi^{-q}]=[\xi,\xi^{-1}]$ embeds in $L$ as $[\xi I_2, \xi^{-1} I_2] \in L$, a $t_2$ involution of $L$. If $q \equiv 3 \mod{4}$, then the second class arises from central involutions $z$ which lift to $[\l,\l]$, where $\l \in \F_{q^2}^{\times}$ has order $4$. Since $\l \not\in \F_q$, $z$ embeds in $L$ as a $t_2'$ involution. Now suppose that $x \in H_0 \setminus B$. Then $x$ lifts to $A\psi$ such that $(A\psi)^2 \in \{I,-I\}$. That is, $A$ is either symmetric or skew-symmetric. Moreover, $x$ has a 1-eigenvector, and hence embeds as $t_1 \in L$, if and only if $A$ is skew-symmetric. So we have determined how $x^L \cap H_0$ splits into $H_0$ classes, and the result follows as in the proof of Proposition~\ref{prop:FPRs2Space}. For example, if $x$ is a $t_1$ involution and $L=\PSp_4(q)$ then, since there are $q+1$ skew-symmetric matrices in $\GU_2(q)$, \[ \fpr(x,G/H) = \frac{|x^L \cap H_0|}{|x^L|} = \frac{|\Sp_2(q)|^2}{|\PSp_4(q)|}\left( \frac{|\GU_2(q)|}{2|\GU_1(q)|^2} + \frac{q+1}{2} \right) = \frac{1}{q^2}. \]

Now consider the case where $H$ has type $Sz(q)$. By \cite[Prop. 3.52]{ref:Burness072}, either $x=c_2$ or $x = [\l_1,\l_1^{-1},\l_2,\l_2^{-1}]$ for $\l_1 \neq \l_2$. For the former case, we use the fact that $|x^T \cap H_0|$ is at most $(q-1)(q^2+1)$, the number of involutions in $Sz(q)$. In the latter case, the bound $|x^T \cap H_0| \leq |H_0|$ suffices. \par

The stronger bound for the subgroup of type $\O^-_2(q^2)$ is obtained by observing that, in this case, $H_0$ does not contain any involutions of type $a_2$ or $b_1$ (see \cite[Prop. 5.9.2]{ref:BurnessGiudici16}, for example). \par

\vspace{5pt}

\noindent\textbf{Case 2: $x$ is a field automorphism}

Assume that if $H$ has type $\Sp_4(q^{1/l})$ then $r \neq l$ and if $H \in \C_2$ or $H \in \C_3$ then $r \neq 2$ (see Table~\ref{tab:GeometricSubgroups}). The calculations in these cases are similar and we will present an example below. If these conditions are not satisfied, then the situation is slightly more complicated. We will demonstrate how to handle this when $r=2$ and $H \in \C_2$ then outline the other cases. \par

Consider the case where $H$ has type $\Sp_2(q) \wr S_2$. Let $H_0 = B\sp\<\pi\>$ where $B \leq H_0$ is the index two subgroup of type $\Sp_2(q) \times \Sp_2(q)$. By \cite[Prop. 3.4.15]{ref:BurnessGiudici16}, we may assume that $x$ is a power of the standard field automorphism. Moreover, we may choose $\pi$ such that $\pi$ and $x$ commute. Since $|x^G|$ is at most the number of elements of order $r$ in $Tx$, $|x^G \cap H|$ is at most the number of elements of order $r$ in $Tx \cap H = H_0x = Bx \cup B\pi x$. If $r \neq 2$, then, since $\pi$ has order two and commutes with $x$, no element of $B\pi x$ has order $r$. In this case, $|x^G \cap H|$ is at most the number of elements of order $r$ in $Bx$ which, by \cite[Prop. 4.9.1(d)]{ref:GorensteinLyonsSolomon98}, is at most $2|x^H|$. If $r=2$, then the previous argument gives the number of involutions in $Bx$, so it remains to determine the number of involutions in $B\pi x$. Let $g\pi x \in B\pi x$ be an involution. Suppose that $g$ lifts to $[M,N] \in \GSp_2(q) \times \GSp_2(q)$. Then for $\l \in \F_q$, \[ \l[I,I] = ([M,N]\pi x)^2 = [M,N][M,N]^{\pi x} = [MN^x,NM^x]. \] Hence, $\l \in \{1,-1\}$ and $N=\l M^{-x}$. So there are at most $2|\Sp_2(q)| = 2q(q^2-1)$ involutions in $B\pi x$. The bound follows. \par

Let us now remark on the remaining subtleties. First, if $r=2$ and $H$ has type $\Sp_2(q^2)$ or $\GU_2(q)$, then $\fpr(x,G/H)=0$. To see this, suppose that $G = \PSp_4(q)\sp\<\s\>$ and that $H = \PSp_2(q^2)\sp\<\t\>$ where $\s$ is a field automorphism of $\PSp_4(q)$ of order $e$ and $\t$ is a field automorphism of $\PSp_2(q^2)$ of order $2e$; the other cases are similar. If $x \not\in \PSp_2(q^2)$ is an involution, then $x = g\t^e$ for some $g \in \PSp_2(q^2)$. However, $g\t^e \in \PSp_2(q^2)\sp\<\t^e\> = H \cap L$ and so $x \in L$: a contradiction. Second, let $H$ have type $\Sp_{2m}(q^{1/l})$ with $r = l$. For $S \subseteq G$, let $i_r(S)$ be the number of elements of $S$ of order $r$. Although $|x^G \cap H| = i_r(H_0x)$, we cannot argue that $i_r(H_0x) = |x^H|$, as we did above, since $x$ commutes with $H_0$. Therefore, we need to explicitly bound $i_r(H_0x) \leq 1 + i_r(H_0)$. If $r \geq 5$ then the bound $i_r(H_0) \leq |H_0|$ suffices, and if $r \in \{2,3\}$ then we use the bounds from \cite[Prop. 1.3]{ref:LawtherLiebeckSeitz02}. 

\vspace{5pt}

\noindent\textbf{Case 3: $x$ is a graph-field automorphism}

In this case $r = p = 2$. First, if $H_0 = \Sp_4(q^{1/l})$, then we argue as for field automorphisms. Second, if $H_0 = Sz(q)$, then, as above, $x$ commutes with $H_0$ and we need the result that $i_2(Sz(q)) = (q-1)(q^2+1)$.  Finally, if $H$ has type $\O^{\e}_2(q) \wr S_2$ or $\O^{-}_2(q^2)$, then, since $H$ is a split extension of $H_0$ by a cyclic group of order $2e=|H:H_0|$, there are at most $|H|/e = 2|H_0|$ elements of order 2 in $H$. The bound $|x^G \cap H| \leq 2|H_0|$ suffices.
\end{proof}

%%%%% PROOF %%%%%

\section{Proof of the Main Results}\label{sec:Proof}
In this final section we will prove Theorems \ref{thm:MainResult}--\ref{thm:MainUpper}. Recall the definitions of $\T$ and $\A$ from \eqref{eq:DefT} and \eqref{eq:DefA}. For this entire section, fix $G = \<T,\th\> \in \A$ and assume that $T \neq \PSp_4(2)' \cong A_6$ (see Remark~\ref{rem:A6}). Let $V = \F_q^n$ be the formed space defined in Table~\ref{tab:Groups}. Moreover, let $X$ be the algebraic group defined in \eqref{eq:DefX}, let $\s$ be the Steinberg morphism defined in $\eqref{eq:DefSigmaField}$ or $\eqref{eq:DefSigmaGraphField}$, let $f$ be the Shintani map defined in \eqref{eq:SympShintaniEvenField}--\eqref{eq:OrthShintani} or Proposition~\ref{prop:SympShintaniEvenGraphField} and let $q=q_0^e$. \par

We will follow the probabilistic approach outlined in the introduction. Let us recall two pieces of notation which are central to this approach. For $x,s \in G$, write $\M(G,s)$ for the set of maximal subgroups of $G$ which contain $s$, and write \[ P(x,s) = 1 - \frac{|\{z \in s^G \mid G = \< x,z \>\}|}{|s^G|}. \]

\subsection{Diagonal automorphisms}\label{ssec:ProofDiagonal}
We will begin by proving Theorems~\ref{thm:MainResult}, \ref{thm:MainSharper} and the reverse direction of Theorem~\ref{thm:MainAsymptotic}, in the case where $\th \in \InnDiag(T)$. We need the following lemma.
\begin{lemma}\label{lem:ElementWhichSquares}
Let $d \geq 1$, let $q$ be odd and let $\<\zeta\> = \F_q^{\times}$.
\begin{enumerate}
\item{Let $A \in \Sp_{2d}(q)$ generate a subgroup $\GU_1(q^d)$. Then there exists $C \in \GSp_{2d}(q)$ such that $C^{q-1} = A$ and $\t(C)=\zeta$. In particular, $C \not\in \Sp_{2d}(q)$.}
\item{Let $A \in \SO^{-}_{2d}(q) \leq \SO_{2d+1}(q)$ generate a subgroup $\GU_1(q^d)$. Then $A \not\in \Om_{2d+1}(q)$. }
\end{enumerate}
\end{lemma}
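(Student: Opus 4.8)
The plan is to work with explicit matrices for the cyclic torus $\GU_1(q^d)$ embedded in the relevant classical group and to track the similarity map $\t$ (respectively the spinor norm $\eta$) through a $(q-1)$-th root. For part (i), I would first recall the standard description of a cyclic subgroup $\GU_1(q^d) \cong C_{q^d+1}$ sitting inside $\Sp_{2d}(q)$: over $\K$ the generator $A$ is diagonalisable with eigenvalues $\l, \l^{-1}$ where $\l$ has order dividing $q^d+1$, paired so that $A$ preserves the symplectic form. Since $\gcd(q-1, q^d+1) \in \{1,2\}$ and the relevant constructions realise $\GU_1(q^d)$ as the fixed points of a field-automorphism twist, there is a larger cyclic group $C_{q^{2d}-1} \cong \GU_1(q^d).(q-1)$ (coming from a Singer-type cycle, or from $\GU_1(q^d) \leq \GU_d(q) \cap \GSp_{2d}(q)$ modulo the similarity structure) inside $\GSp_{2d}(q)$ whose image under $\t$ is all of $\F_q^\times$; choosing a generator $C$ of this group appropriately gives $C^{q-1}$ of order $q^d+1$ generating the original $\GU_1(q^d)$, and $\t(C) = \zeta$ a generator of $\F_q^\times$. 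The final clause $C \notin \Sp_{2d}(q)$ is then immediate since $\t(C) = \zeta \neq 1$.

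For part (ii), the analogous claim is that $A$, generating $\GU_1(q^d) \leq \SO^-_{2d}(q) \leq \SO_{2d+1}(q)$, has non-trivial spinor norm, hence lies outside $\Om_{2d+1}(q)$. Here I would use the relationship between the spinor norm on $\SO_{2d+1}(q)$ and its restriction to the non-degenerate $2d$-subspace of minus type, together with the known formula for the spinor norm of a generator of a maximal torus: for a minus-type torus $C_{q^d+1}$ in $\SO^-_{2d}(q)$, a generator has spinor norm equal to a non-square precisely because such an element is a product of reflections in an odd number (or the right parity) of anisotropic vectors whose norms multiply to a non-square. Concretely, one can write $A$ as a rotation in a $2$-dimensional anisotropic plane over $\F_{q^d}$ and compute via the Zassenhaus/Wall formula for the spinor norm (e.g.\ the discriminant of the Cayley transform $(1-A)(1+A)^{-1}$ when $1$ is not an eigenvalue of $A$, restricted suitably). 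Since $A$ generates a group of order $q^d+1$ which is even (as $q$ is odd), $A$ in particular is not a square in the cyclic torus; combining this with the fact that the squares in the torus are exactly $\Om^-_{2d}(q) \cap \langle A \rangle$ shows $\eta(A) \neq 1$.

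The main obstacle I anticipate is part (ii): getting the spinor norm computation to come out cleanly. The spinor norm is genuinely delicate — it depends on characteristic-$p$ subtleties and on the right choice of quadratic form on the anisotropic plane — so the cleanest route is probably to invoke an existing formula (Wall's parametrisation of conjugacy classes in orthogonal groups, as in \cite{ref:KleidmanLiebeck} pp.\ 29--30, or the explicit spinor-norm values for torus generators) rather than rederiving it. A slick alternative for (ii) is a counting/index argument: the torus $C_{q^d+1}$ meets $\Om^-_{2d}(q)$ in a subgroup of index $2$ (since $\eta$ is surjective on $\SO^-_{2d}(q)$ and non-trivial on a maximal torus of this type), and that index-$2$ subgroup consists exactly of the squares in the cyclic group $\langle A \rangle$; as $A$ generates, $A$ is not a square, so $A \notin \Om^-_{2d}(q) = \SO^-_{2d}(q) \cap \Om_{2d+1}(q)$. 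For part (i) the risk is instead bookkeeping around the factor $\gcd(q-1,q^d+1)$ and making sure the root $C$ genuinely has $C^{q-1}$ generating $\GU_1(q^d)$ and not a proper subgroup; this should be resolvable by choosing $C$ to be a generator of the full cyclic group $C_{q^{2d}-1}$, since then $C^{q-1}$ automatically has order $(q^{2d}-1)/\gcd(q-1,q^{2d}-1) = q^d+1$.
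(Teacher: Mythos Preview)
Your overall strategy for (i) is the same as the paper's --- find a cyclic overgroup of $\langle A\rangle$ inside $\GSp_{2d}(q)$ on which $\t$ surjects onto $\F_q^\times$, and take a generator --- but the specific group you name is wrong. There is no Singer cycle $C_{q^{2d}-1}$ inside $\GSp_{2d}(q)$ for $d\ge 2$: realising $V\cong\F_{q^{2d}}$ with the alternating form $B(x,y)=\Tr(xy^{q^d}-x^{q^d}y)$, multiplication by $\lambda\in\F_{q^{2d}}^\times$ is a symplectic similarity if and only if $\lambda^{q^d+1}\in\F_q$, and this cuts out a cyclic group of order $(q-1)(q^d+1)$, not $q^{2d}-1$. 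Relatedly, your closing arithmetic ``$(q^{2d}-1)/\gcd(q-1,q^{2d}-1)=q^d+1$'' is false for $d\ge 2$ (the left side is $1+q+\cdots+q^{2d-1}$). The paper uses exactly the group of order $(q-1)(q^d+1)$, packaged as $H=\{h\in\mathrm{\Delta U}_1(q^d):\t(h)\in\F_q\}\le\GSp_{2d}(q)$; then $\langle A\rangle$ is the index-$(q-1)$ isometry subgroup of $H$, so a generator $C$ of $H$ has $C^{q-1}=A$ and $\t(C)=\zeta$. Your plan is salvageable once you replace the Singer cycle by this correct group.

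For (ii) your index-$2$ argument is correct in outline but rests on the assertion that $\eta$ is non-trivial on the torus $\langle A\rangle$, which is precisely the content of the lemma; you would still need to justify this, e.g.\ via Wall's formula or by knowing the torus orders in $\Om^-_{2d}(q)$. The paper sidesteps the spinor-norm computation entirely with a one-line citation: by \cite{ref:ButurlakinGrechkoseeva07}, $\Om_{2d+1}(q)$ has no maximal torus of order $q^d+1$, so $A\notin\Om_{2d+1}(q)$. Your route works but is more laborious; the paper's is cleaner if one is willing to quote the torus-order classification.
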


\begin{proof}
First consider (i). Recall that $\mathrm{\Delta U}_1(q^d)$ is the similarity group of a 1-dimensional unitary space over $\F_{q^d}$ with similarity map $\t\:\mathrm{\Delta U}_1(q^d) \to \F_{q^d}$ (see Section~\ref{ssec:PrelimsGroups}). The group \[ H = \{ h \in \mathrm{\Delta U}_1(q^d) \mid \t(h) \in \F_q \} \] is naturally a subgroup of $\GSp_{2d}(q)$. Moreover, $\< A \>$ is the index $q-1$ subgroup of $H$ containing the isometries in $H$. Hence, there is a generator $C$ for $H$ such that $C^{q-1} = A$. Since $C$ generates $H$, we may assume that $\t(C) = \zeta$. Now consider (ii). By \cite[Theorem 4]{ref:ButurlakinGrechkoseeva07}, $\Om_{2d+1}(q_0)$ does not have a maximal torus of order $|\< A \>| = q^d+1$, so $A \not\in \Om_{2d+1}(q_0)$.
\end{proof}

\begin{proposition}\label{prop:InnDiag}
Let $T \in \T$, let $\th \in \InnDiag(T)$ and let $G = \< T, \th \>$.
\begin{enumerate}
\item{In all cases, $u(G) \geq 2$}
\item{If $T = \Om_{2m+1}(q)$, then $u(G) \geq 3$.}
\item{If $T = \PSp_{2m}(q)$, $q$ is odd and $m \geq 3$, then $u(G) \geq 4$.}
\item{In all cases, $u(G) \to \infty$ as $q \to \infty$.}
\item{If $T = \PSp_{2m}(q)$ and $q$ is odd, then $u(G) \to \infty$ as $m \to \infty$.}
\end{enumerate}
\end{proposition}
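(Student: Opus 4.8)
The plan is to apply the probabilistic method from the introduction: for each group $G = \langle T, \theta\rangle$ with $\theta \in \InnDiag(T)$, we select a suitable semisimple element $s \in G$, determine its maximal overgroups $\M(G,s)$ via Aschbacher's theorem, and then bound $\sum_i P(x_i, s)$ using the fixed point ratio estimates of Section~\ref{sec:FPRs} (Propositions~\ref{prop:FPRs}--\ref{prop:FPRsSymp4}) together with the inequality \eqref{eq:ProbMethod}. By \cite[Lemma 2.1]{ref:BurnessGuest13}, establishing $\sum_{i=1}^k P(x_i,s) < 1$ for all $k$-tuples of prime order elements gives $u(G) \geq k$. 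Since $\theta \in \InnDiag(T)$, we have $G \leq \InnDiag(T)$, so $G$ is either $T$ itself or $T.2$ (with $q$ odd, $\theta = \delta$); the case $G = T$ is already covered by \cite{ref:BreuerGuralnickKantor08}, so the real work is the case $G = \langle T, \delta\rangle$ with $q$ odd. Part~(i) for $T = \PSp_4(q)'$ with small $q$ (e.g. $q=2,3$) will likely need the computational results recorded in Table~\ref{tab:Comp}.

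First I would choose the element $s$. The natural choice is an element of large order acting irreducibly (or nearly so) on $V$ — in cases $\mathbf{S}$ and $\mathbf{S_4}$, an element generating a torus of order $q^m+1$ of $\GU_1(q^m)$-type (a single factor), and in case $\mathbf{O}$ something analogous inside $\SO^-_{2m}(q) \leq \SO_{2m+1}(q)$. Lemma~\ref{lem:ElementWhichSquares} is tailored exactly to this: it provides, for $q$ odd, an element $C \in \GSp_{2m}(q) \setminus \Sp_{2m}(q)$ with $C^{q-1} = A$ where $\langle A\rangle \cong \GU_1(q^m)$, and $\tau(C) = \zeta$; reducing modulo scalars, the image $\bar C$ lies in $\PGSp_{2m}(q) \setminus \PSp_{2m}(q)$ and hence in the coset $T\delta$ (up to adjusting by an element of $T$). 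This is the element whose class will witness uniform spread. Part (ii) of the lemma supplies the corresponding element in the orthogonal case (it lies outside $\Omega_{2m+1}(q)$). Having fixed $s$, the key point is that $s$ has very few maximal overgroups: an element of such a large torus order forces any overgroup to be reducible of a restricted type, a field-extension subgroup ($\C_3$), or a known $\S$-subgroup — this is a standard argument using Zsigmondy primes dividing $q^m+1$ (or more precisely $q^{2m}-1$ but not $q^i-1$ for $i < 2m$), so $\M(G,s)$ is short and its members have large index in $G$.

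The main obstacle, and the bulk of the argument, will be the careful bookkeeping: (a) pinning down $\M(G,s)$ precisely — the exact types of maximal subgroups of $G$ containing $s$, and crucially their \emph{multiplicities} (how many $G$-conjugates of each type contain $s$), since \eqref{eq:ProbMethod} sums over all of them; and (b) verifying the resulting inequality $\sum_i \fpr(x_i, G/H) < 1$ uniformly for all choices of prime order $x_i$, splitting into cases according to whether $x_i \in \PGL(V)$ with $\nu(x_i) = 1$, $\nu(x_i) \geq 2$, or $x_i$ is a field/graph automorphism, feeding in the appropriate bound from Proposition~\ref{prop:FPRs}, \ref{prop:FPRsSubspace}, or \ref{prop:FPRsSymp4}. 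For parts (iv) and (v), one observes that as $q \to \infty$ (resp. $m \to \infty$) the fixed point ratio bounds decay like $q^{-(m-O(1))}$ (resp. faster in $m$) while $|\M(G,s)|$ stays bounded, so in fact $\sum_i \fpr(x_i, G/H) \to 0$ for any fixed number of terms — meaning we can take $k \to \infty$, giving $u(G) \to \infty$. The sharper bounds $u(G) \geq 3$ in (ii) and $u(G) \geq 4$ in (iii) require the finer fixed point ratio estimates (the $\e = 0$ refinement in Proposition~\ref{prop:FPRs}(i), and Table~\ref{tab:FPRsExtra}) together with a genuine accounting of the small-rank and small-$q$ boundary cases, for which Table~\ref{tab:Comp} will be invoked.
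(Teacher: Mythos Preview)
Your approach is correct and aligned with the paper's, but the paper takes a significant shortcut that you should be aware of. Rather than determining $\M(G,s)$ from scratch via Aschbacher's theorem and redoing the fixed point ratio bookkeeping, the paper piggybacks on the existing analysis in \cite{ref:BreuerGuralnickKantor08} (and \cite{ref:GuralnickKantor00} for parts (iv)--(v)): it takes the specific semisimple element $s \in T$ used there, invokes Lemma~\ref{lem:ElementWhichSquares} to produce $g \in G \setminus T$ with $g^2 = s$, and then observes that any maximal overgroup of $g$ contains $s$, so the types and multiplicities already computed for $\M(T,s)$ transfer to $\M(G,g)$ (the properties of $T$ and $s$ used to eliminate subgroups in \cite{ref:BreuerGuralnickKantor08} hold equally for $G$ and $g$). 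The same fixed point ratio bounds then apply, yielding $P(x,g) < 1/3$; for (iii) the paper substitutes the sharper bounds of Proposition~\ref{prop:FPRs} to push this below $1/4$. Your plan to redo the overgroup analysis would work but is unnecessary.

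One subtlety you do not mention: for $T = \Om_{2m+1}(3)$ with $m$ even, the bound from \cite{ref:BreuerGuralnickKantor08} is $P(x,s) \leq 1/3$ with \emph{equality} when $x$ is an involution with $\nu(x)=1$, so the probabilistic argument alone does not give $u(G) \geq 3$. The paper handles this by appealing to the explicit construction in \cite[Prop.~5.7]{ref:BreuerGuralnickKantor08} showing that for any three such involutions a common conjugate of $g$ works.
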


\begin{proof}
By \cite[Corollary 1.3]{ref:BreuerGuralnickKantor08}, (i) and (ii) hold if $\th=1$. In particular, (i) holds if $q$ is even. Therefore, let us suppose that $q$ is odd to prove parts (i)--(iii). We may exclude the cases covered by the \textsc{Magma} computations listed in Table~\ref{tab:Comp}. \par

For now, let us assume that $m$ is odd if $T=\Om_{2m+1}(3)$. In the proofs of \cite[Prop. 5.10, 5.12, 5.19, 5.20]{ref:BreuerGuralnickKantor08}, by separating into several cases depending on $T$ and $m$, it is shown that for all prime order elements $x \in T$, $P(x,s) < 1/3$, for a suitable choice of semisimple element $s \in T$. In each case, by Lemma~\ref{lem:ElementWhichSquares}, there exists $g \in G \setminus T$ such that $g^2=s$. The proofs that $P(x,s) < 1/3$ each comprise two steps: determining $\M(T,s)$ and computing the fixed point ratios $\fpr(x,T/H_0)$ for all $H_0 \in \M(T,s)$. To determine $\M(T,s)$, the main result of \cite{ref:GuralnickPentillaPraegerSaxl97} is applied and the properties of $T$ and $s$ which are used to eliminate subgroups hold also for $G$ and $g$. Hence, the subgroups in $\M(G,g)$ have the same type and multiplicities as those in $\M(T,s)$. Moreover, the bounds on $\fpr(x,T/H_0)$ for $H_0 \in \M(T,s)$ apply also to $\fpr(x,G/H)$ for $H \in \M(G,s)$ and $x \in G$. Therefore, $P(x,g) < 1/3$ and so $u(G) \geq 3$. In fact, if $m \geq 3$ and $T=\PSp_{2m}(q)$, then, by using the bounds from Proposition~\ref{prop:FPRs} instead of the bounds in \cite{ref:BreuerGuralnickKantor08}, we obtain $P(x,s) < 1/4$ and $P(x,g) < 1/4$. As a result, $u(G) \geq 4$. \par

For $T=\Om_{2m+1}(3)$ with $m$ even, for suitable $s \in T$, it is shown in \cite[Prop. 5.7]{ref:BreuerGuralnickKantor08} that $P(x,s) \leq 1/3$ with equality if and only if $x$ is an involution with $\nu(x)=1$. By Lemma~\ref{lem:ElementWhichSquares}, we can choose $g \in G\setminus T$ and, by arguing as above, we show that $P(x,g) \leq 1/3$ with equality if and only if $x$ is an involution with $\nu(x)=1$. By the argument in \cite[Prop. 5.7]{ref:BreuerGuralnickKantor08}, for all involutions $x_1,x_2,x_3 \in T$ such that $\nu(x_1)=\nu(x_2)=\nu(x_3)=1$, there exists a $G$-conjugate $z$ of $g$ for which $\<x_1,z\> = \<x_2,z\> = \<x_3,z\> = G$. Therefore, $u(G) \geq 3$. \par

Now consider parts (iv) and (v). By \cite[Theorem 1.1]{ref:GuralnickShalev03}, these parts holds when $\th = 1$. In the proof of \cite[Prop. 4.1]{ref:GuralnickKantor00}, it is shown that $P(x,s) \to 0$ as $m \to \infty$ or $q \to \infty$, for suitable $s \in \PSp_{2m}(q)$. By Lemma~\ref{lem:ElementWhichSquares}, there exists  $g \in \PGSp_{2m}(q) \setminus \PSp_{2m}(q)$ such that $P(x,g) \to 0$ as $m \to \infty$ or $q \to \infty$. Very similarly, by the proof of \cite[Prop. 4.1]{ref:GuralnickKantor00}, we can find $g \in \SO_{2m+1}(q) \setminus \Om_{2m+1}(q)$ such that $P(x,g) \to 0$ as $q \to \infty$.
\end{proof}

Therefore, if $\th \in \InnDiag(T)$, then it remains to prove only Theorem~\ref{thm:MainUpper} (which implies the forward direction of Theorem~\ref{thm:MainAsymptotic}). This will be done in Section~\ref{ssec:ProofAsymptotic}. Therefore, in Sections~\ref{ssec:ProofElements}--\ref{ssec:ProofProbMethod} we will assume that $\th \not\in \InnDiag(T)$.

\subsection{Element selection}\label{ssec:ProofElements}
Let $G = \<T,\th\> \in \A$ with $T \neq \PSp_4(2)'$ and $\th \not\in \InnDiag(T)$. Maintain the notation introduced at the opening of Section~\ref{sec:Proof}. In particular, recall that $q=q_0^e$. The goal of this section is to identify an element $t\th \in G$ to represent the conjugacy class with respect to which we will study the uniform spread of $G$. \par

We will introduce notation for the elements which we will use repeatedly. 
\begin{definition}\label{def:ABCD}
Let $d \geq 2$, $W=\F_{q_0}^{2d}$ and $\F_{q_0}^{\times} = \< \a \>$.
\vspace{-2pt}

\begin{description}[labelindent=5pt]
\item[Cases $\mathbf{S}$ and $\mathbf{S_4}$]{\new
\begin{enumerate}[labelindent=5pt]
\item{Let $A_{2d} \in \Sp_{2d}(q_0)$ be a generator of a cyclic subgroup $\GU_1(q_0^d)$.}
\item{Write $W=W_1 \oplus W_2$ where $W_1$ and $W_2$ are totally isotropic $d$-spaces. Let $B_{2d} = [B, B^{-T}] \in \Sp_{2d}(q_0)$ have order $q_0^d-1$ and stabilise the spaces $W_1$ and $W_2$, acting irreducibly on both.}
\item{If $q$ is odd, then let $C_{2d} \in \GSp_{2d}(q_0)$ be such that $C_{2d}^{q_0-1} = A_{2d}$ and $\t(C_{2d}) = \a$ (see Lemma~\ref{lem:ElementWhichSquares}).}
\item{Let $D_{2d} = [\a B, B^{-T}] \in \GSp_{2d}(q_0)$, where $B$ is as in (ii).}
\end{enumerate}}
\item[Case $\mathbf{O}$]{\new
\begin{enumerate}[labelindent=5pt]
\setcounter{enumi}{3}
\item{Assume that $W$ is minus-type. Let $A_{2d} \in \SO^-_{2d}(q_0)$ be a generator of a cyclic subgroup $\GU_1(q_0^d)$.}
\item{Assume that $W$ is plus-type and write $W=W_1 \oplus W_2$ where $W_1$ and $W_2$ are totally isotropic $d$-spaces. Let $B_{2d} = [B, B^{-T}] \in \SO^+_{2d}(q_0)$ have order $q_0^d-1$ and stabilise the spaces $W_1$ and $W_2$, acting irreducibly on both.}
\end{enumerate}}
\end{description}
\end{definition}

Let us record some straightforward properties of the elements defined in Definition~\ref{def:ABCD}.
\begin{lemma}\label{lem:ABCD}
Adopt the notation from Definition~\ref{def:ABCD}.
\begin{enumerate}
\item{$A_{2d}$ has order $q_0^d+1$.}
\item{$A_{2d}$ acts irreducibly on $W$.} 
\item{The eigenvalues of $A_{2d}$ over $\overline{\F}_{q_0}$ are $\l, \l^{q_0}, \dots, \l^{q_0^{2d-1}}$, for some $\l \in \overline{\F}_{q_0}$ of order $q_0^d+1$. Moreover, these eigenvalues are distinct.}
\item{In cases $\mathbf{S}$ and $\mathbf{S_4}$, $C_{\Sp_{2d}(q_0)}(A_{2d}) = \< A_{2d} \>$, and in case $\mathbf{O}$,  $C_{\SO^{-}_{2d}(q_0)}(A_{2d}) = \< A_{2d} \>$.}
\item{$B_{2d}$ has order $q_0^d-1$.}
\item{The eigenvalues of $B_{2d}$ over $\overline{\F}_{q_0}$ are \[ \mu, \mu^{-1}, \mu^{q_0}, \mu^{-q_0}, \dots, \mu^{q_0^{d-1}}, \mu^{-q_0^{d-1}},\] for some $\mu \in \overline{\F}_{q_0}$ of order $q_0^d-1$. Moreover, if $d$ is odd, then these are distinct.}
\item{Assume that $d$ is odd. In cases $\mathbf{S}$ and $\mathbf{S_4}$, $C_{\Sp_{2d}(q_0)}(B_{2d}) = \< B_{2d} \>$, and in case $\mathbf{O}$,  $C_{\SO^{+}_{2d}(q_0)}(B_{2d}) = \< B_{2d} \>$.}
\end{enumerate}
\end{lemma}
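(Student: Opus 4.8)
The plan is to establish Lemma~\ref{lem:ABCD} as a sequence of mostly routine verifications, organised by grouping the seven claims according to the underlying linear-algebra phenomenon. The two genuinely structural inputs are (a) the embedding $\GU_1(q_0^d) \hookrightarrow \Sp_{2d}(q_0)$ (resp.\ into $\SO^-_{2d}(q_0)$) realised by restricting scalars from $\F_{q_0^{2d}}$ to $\F_{q_0}$, and (b) the analogous description of $B_{2d}$ as living inside a torus $\GL_d(q_0)$ acting on a maximal isotropic pair. Everything else follows by Galois theory over $\F_{q_0}$.

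First I would treat the $A_{2d}$ statements, i.e.\ (i)--(iv). By definition $\< A_{2d} \>$ is a cyclic subgroup isomorphic to $\GU_1(q_0^d)$, which has order $q_0^d+1$, giving (i). For (iii): the natural module $W=\F_{q_0}^{2d}$, upon extension to $\overline{\F}_{q_0}$, is the restriction of scalars of a $1$-dimensional $\F_{q_0^{2d}}$-module on which $A_{2d}$ acts by multiplication by a generator $\l$ of the norm-one subgroup (order $q_0^d+1$), so the eigenvalues of $A_{2d}$ on $\olV$ are the Galois conjugates $\l, \l^{q_0}, \dots, \l^{q_0^{2d-1}}$. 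These are distinct because $\l$ has order $q_0^d+1$, which does not divide $q_0^j-1$ for any $0 < j < 2d$ (here one uses that $\gcd(q_0^d+1, q_0^j-1)$ is small; the key point is that $q_0^d+1 \nmid q_0^j - 1$ for $j < 2d$, as $q_0^j \equiv 1$ would force the order of $q_0$ mod $(q_0^d+1)$, which is $2d$, to divide $j$). Distinctness of the eigenvalues immediately yields (ii), since an invariant proper subspace would have to be spanned by a proper Galois-stable subset of the eigenvalues, contradicting that the full set forms a single Galois orbit. For (iv): $C_{\GL_{2d}(q_0)}(A_{2d})$ is the centraliser of a regular semisimple element generating a field $\F_{q_0^{2d}}$, hence equals $\F_{q_0^{2d}}^\times$; intersecting with $\Sp_{2d}(q_0)$ (resp.\ $\SO^-_{2d}(q_0)$) cuts this down to the norm-one torus, which is exactly $\< A_{2d} \>$ — one can cite \cite[Appendix B]{ref:BurnessGiudici16} or argue directly that an isometry commuting with $A_{2d}$ must act $\F_{q_0^{2d}}$-linearly and preserve the Hermitian/quadratic form, forcing it into $\GU_1(q_0^d)$.

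Next I would handle the $B_{2d}$ statements (v)--(vii) by the same template, now with $B_{2d} = [B, B^{-T}]$ where $B \in \GL_d(q_0)$ acts irreducibly (hence generates $\F_{q_0^d}^\times$, order $q_0^d - 1$) on the totally isotropic $d$-space $W_1$, and $B^{-T}$ acts on the complementary isotropic $W_2$ to make the pair a symplectic (resp.\ hyperbolic orthogonal) direct sum. Then (v) is immediate; (vi) follows since the eigenvalues of $B$ on $W_1 \otimes \overline{\F}_{q_0}$ are the Galois orbit $\mu, \mu^{q_0}, \dots, \mu^{q_0^{d-1}}$ of a generator $\mu$ of $\F_{q_0^d}^\times$, and those of $B^{-T}$ on $W_2$ are the inverses $\mu^{-1}, \mu^{-q_0}, \dots$, giving the stated list. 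When $d$ is odd, $\mu$ and $\mu^{-1}$ lie in distinct Galois orbits (otherwise $\mu^{q_0^k} = \mu^{-1}$ for some $k$, so $\mu^{q_0^{2k}} = \mu$, forcing $d \mid 2k$ hence $d \mid k$ as $d$ is odd, whence $\mu = \mu^{-1}$ and $\mu^2 = 1$, impossible for $q_0^d - 1 > 2$), so all $2d$ eigenvalues are distinct. Finally (vii): when $d$ is odd, $B_{2d}$ is regular semisimple, its $\GL_{2d}(q_0)$-centraliser is the split torus $\F_{q_0^d}^\times \times \F_{q_0^d}^\times$ (one factor for each Galois orbit), and intersecting with $\Sp_{2d}(q_0)$ or $\SO^+_{2d}(q_0)$ identifies the two factors via the form pairing $W_1$ with $W_2$, leaving precisely $\< B_{2d} \>$; again this can be read off from \cite[Appendix B]{ref:BurnessGiudici16}.

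The main obstacle — such as it is — is purely bookkeeping: being careful about the distinction between eigenvalues (which live over $\overline{\F}_{q_0}$) and the rational structure, and verifying the coprimality/order facts that guarantee distinctness of eigenvalues and that the centralisers do not pick up extra elements. None of this is deep, but the centraliser computations in (iv) and (vii) require knowing that an element of the ambient classical group commuting with a regular semisimple element is forced to lie in the corresponding maximal torus and respect the form, which is where I would lean on the torus structure recorded in \cite[Appendix B]{ref:BurnessGiudici16} rather than reprove it. The parenthetical "if $d$ is odd" hypotheses in (vi) and (vii) are exactly what is needed to rule out the coincidence $\mu = \mu^{-1}$-type collisions that would otherwise merge eigenvalues and enlarge the centraliser.
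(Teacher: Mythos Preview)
Your proposal is correct. The paper's own proof is a single-line citation---``See \cite[Prop.~3.4.3, 3.5.4, Remarks~3.4.4, 3.5.6]{ref:BurnessGiudici16}''---so you have essentially unpacked the content of that reference: the field-embedding description of the Singer-type tori, the Galois-orbit computation of eigenvalues, and the identification of the centraliser with the maximal torus. Your direct arguments for distinctness (the order-of-$q_0$ argument in (iii), the odd-$d$ argument ruling out $\mu^{q_0^k}=\mu^{-1}$ in (vi)) are exactly what underlies those cited results, and your appeal to \cite[Appendix~B]{ref:BurnessGiudici16} for the centraliser orders in (iv) and (vii) is in the same spirit as the paper's wholesale citation.
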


\begin{proof}
See \cite[Prop. 3.4.3, 3.5.4, Remarks 3.4.4, 3.5.6]{ref:BurnessGiudici16}.
\end{proof}

Recall that $X$ is the algebraic group defined in \eqref{eq:DefX} and $\s$ is the Steinberg morphism defined in $\eqref{eq:DefSigmaField}$ or $\eqref{eq:DefSigmaGraphField}$. We will define $t\th$ as the preimage, under a Shintani map, of an element $\oly \in X_{\s}$. We need to select $t\th \in G$ in a way which allows us to control the maximal subgroups of $G$ which contain it. Therefore, we will choose $t\th$ such that it has the following two features, which place significant restrictions on its maximal overgroups. First, $t\th$ should not be contained in many reducible subgroups, and second, a power of $t\th$ should have a 1-eigenspace of large dimension in its action on the natural module for $G$. These two conditions will inform our choice of element $\oly \in X_{\s}$. \par

{\renewcommand{\arraystretch}{1.1}
\begin{tabularx}{0.8\textwidth}{ccccc}
\toprule[0.12em]
Case           & $q$  & $\th$    & $y$                  & Condition \\
\midrule
$\mathbf{S}$   & even & $\p^i$   & $[A_2,A_{2m-2}]$     & $m$ odd              \\
               &      &          & $[A_2,B_{2m-2}]$     & $m$ even             \\[3pt]
               & odd  & $\p^i$   & $[A_2,A_{2m-2}]$     & $m$ odd              \\
               &      &          & $[A_2,B_{2m-2}]$     & $m$ even             \\[3pt]
               &      & $\d\p^i$ & $[C_2,C_{2m-2}]$     & $m$ odd              \\
               &      &          & $[C_2,D_{2m-2}]$     & $m$ even             \\[3pt]
$\mathbf{O}$   & odd  & $\p^i$   & $[A_2,A_{2m-2},1]^2$ & $m$ odd              \\
               &      &          & $[A_2,B_{2m-2},1]^2$ & $m$ even             \\[3pt] 
               &      & $\d\p^i$ & $[A_2,A_{2m-2},1]$   & $m$ odd              \\
               &      &          & $[A_2,B_{2m-2},1]$   & $m$ even             \\[3pt]
$\mathbf{S_4}$ & even & $\p^i$   & $A_4$                &                      \\[3pt]
               &      & $\r^j$   & $A_4^{\ell}$         &                      \\[3pt]  
               & odd  & $\p^i$   & $A_4$                &                      \\[3pt]
               &      & $\d\p^i$ & $C_4$                &                      \\[3pt]          
\bottomrule[0.12em]
\caption{The element $t\th \in G$ satisfies $f(t\th) = \oly$.}\label{tab:Elements}
\end{tabularx}}
\vspace{-5.5pt}

Table~\ref{tab:Elements} partitions the possibilities for $G$ into several cases, and, in each case, an element $y$ is given. We will now define these elements more precisely, and we will verify that in each case $\oly$ (the image of $y$ modulo scalars) is contained in the image of the coset $T\th$ under the relevant Shintani map.  \par

In case $\mathbf{S}$, the element $y \in \GSp_{2m}(q_0)$ is a block diagonal matrix preserving a decomposition $V = U_1 \oplus U_2$, where $U_1$ and $U_2$ are non-degenerate subspaces of dimensions $2$ and $2m-2$, respectively. If $\th = \p^i$ then $\oly \in \PSp_{2m}(q_0)$, and if $\th = \d\p^i$ (so $q_0$ is odd) then $\overline{y} \in \PGSp_{2m}(q_0) \setminus \PSp_{2m}(q_0)$ since, by Lemma~\ref{lem:ElementWhichSquares}(i), $\t(y) = \a$, a non-square in $\F_{q_0}$. Therefore, in each case, by Proposition~\ref{prop:SympShintaniOdd}, $\oly \in f(T\th)$. \par

In case $\mathbf{O}$, the element $y \in \SO_{2m+1}(q_0)$ is a block diagonal matrix preserving a decomposition $V = U_1 \oplus U_2 \oplus U_3$ where $U_1$, $U_2$ and $U_3$ are non-degenerate subspaces of dimensions $2$, $2m-2$ and $1$, respectively. Moreover, $U_1$ is plus-type and $U_2$ is $\e$-type where $\e=(-)^m$. If $\th = \p^i$ then $y \in \Om_{2m+1}(q_0)$ since $[A_2,A_{2m-2},1] \in \SO_{2m+1}(q_0)$ and $[A_2,B_{2m-2},1] \in \SO_{2m+1}(q_0)$. However, if $\th = \d\p^i$ then $y \in \SO_{2m+1}(q_0) \setminus \Om_{2m+1}(q_0)$, by Lemma~\ref{lem:ElementWhichSquares}. Therefore, by Proposition~\ref{prop:OrthShintani}, $\oly \in f(T\th)$.   \par

In case $\mathbf{S_4}$, if $\th = \p^i$ then $y \in \PSp_4(q)$, and if $\th = \d\p^i$ then $y \in \PGSp_4(q) \setminus \PSp_4(q)$. Before defining the element $y$ when $\th = \r^j$, let us record a useful number theoretic notion. \par

For positive integers $a,k$, we say that $r$ is a \emph{primitive prime divisor (ppd)} of $a^k-1$ if $r$ divides $a^k-1$ but $r$ does not divide $a^i-1$ for $1 \leq i < k$. The following is a theorem of Zsigmondy \cite{ref:Zsigmondy82}.
\begin{theorem}\label{thm:PPD}
If $k \geq 2$ and $(a,k) \not\in \{ (2,6) \} \cup \{ (2^l-1,2) \mid l \in \Nat\}$, then $a^k-1$ has a primitive prime divisor.
\end{theorem}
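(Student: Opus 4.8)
The plan is to prove Theorem~\ref{thm:PPD} by the classical cyclotomic argument, with $a\geq 2$ understood throughout. Write $\Phi_k(x)$ for the $k$-th cyclotomic polynomial, so that $a^k-1=\prod_{d\mid k}\Phi_d(a)$. The starting point is the observation that a prime $r$ is a primitive prime divisor of $a^k-1$ exactly when the multiplicative order of $a$ modulo $r$ equals $k$; I would look for such an $r$ among the prime divisors of the single factor $\Phi_k(a)$.

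The technical core is the following lemma. If $r$ is a prime dividing $\Phi_k(a)$, then either $\mathrm{ord}_r(a)=k$ — so $r$ is a primitive prime divisor of $a^k-1$ — or $r\mid k$, in which case $r$ is the largest prime divisor of $k$ and $v_r(\Phi_k(a))=1$. The dichotomy follows from the elementary fact that $r\mid\Phi_n(a)$ forces $n=\mathrm{ord}_r(a)\cdot r^j$ for some $j\geq 0$, together with the observation that two distinct primes cannot both divide $\Phi_n(a)$ and $n$; the bound $v_r(\Phi_k(a))=1$ in the non-primitive case comes from applying lifting-the-exponent to $a^k-1=\prod_{d\mid k}\Phi_d(a)$ (with a slightly more delicate analysis when $r=2$). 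Granting the lemma, $\Phi_k(a)$ provides a primitive prime divisor of $a^k-1$ unless $\Phi_k(a)\in\{1,r\}$, where $r$ is the largest prime divisor of $k$; in particular, $a^k-1$ has a primitive prime divisor whenever $\Phi_k(a)>k$.

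It then remains to bound $\Phi_k(a)$ from below. Writing $\Phi_k(a)=\prod_{\zeta}(a-\zeta)$ over the primitive $k$-th roots of unity, each factor has modulus at least $a-1$ (and strictly more for the non-real $\zeta$), so $\Phi_k(a)\geq (a-1)^{\varphi(k)}$, with a large surplus over $k$ once $\varphi(k)$ is not too small. Since the largest prime divisor of $k$ is at most $k$ and $\varphi(k)\to\infty$, this inequality gives $\Phi_k(a)>k$ outside a short explicit list of pairs $(a,k)$ — essentially $a=2$ together with small $k$, plus a handful of further small cases. Finally I would dispatch these remaining pairs by direct computation of $\Phi_k(a)$, checking that the only ones for which $a^k-1$ has no primitive prime divisor are $(a,k)=(2,6)$, where $\Phi_6(2)=3$ already divides $2^2-1$, and the family $(a,k)=(2^l-1,2)$, where $\Phi_2(a)=a+1=2^l$ and $a$ has order $1$ modulo every prime dividing $a^2-1$.

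I expect the main obstacle to be the bookkeeping in this last reduction: extracting a clean enough lower bound for $\Phi_k(a)$ to isolate a genuinely finite checkable set, and handling the $r=2$ subtleties in the valuation lemma — which is precisely where the exceptional family $(2^l-1,2)$ arises, and so must be tracked carefully rather than swept aside. The structural ingredients (the cyclotomic factorisation and the order/valuation lemma) are routine, but arranging the estimates so that no exceptional case slips through is where the care is needed.
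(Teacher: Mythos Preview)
The paper does not prove this theorem: it is simply quoted as a classical result of Zsigmondy \cite{ref:Zsigmondy82}, with no argument given. Your proposal sketches the standard cyclotomic proof, and the outline is sound --- the key lemma (a prime divisor of $\Phi_k(a)$ is either primitive or equals the largest prime factor of $k$, and in the latter case divides $\Phi_k(a)$ only to the first power) is correct, and the lower bound $\Phi_k(a)\geq (a-1)^{\varphi(k)}$ together with a finite check does isolate exactly the stated exceptions. So your approach is not so much different from the paper's as simply \emph{more} than the paper does: you are supplying a proof where the paper is content to cite one.
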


Let $\th = \r^j$. By Lemma~\ref{lem:ABCD}(i), $A_4$ has order $q_0^4-1$. By Theorem~\ref{thm:PPD}, let $r$ be a ppd of $q_0^4 -1$ and let $\ell$ be a positive integer such that $A_4^{\ell}$ has order $r$. Since $r$ divides $|Sz(q_0)|=q_0^2(q_0-1)(q_0^2+1)$, the subgroup $Sz(q_0)$ contains an element of order $r$. Therefore, we may assume that $y \in Sz(q_0) \leq \Sp_4(q_0)$. Hence, by Proposition~\ref{prop:SympShintaniEvenGraphField}, $\oly \in f(T\th)$. \par
 
To summarise, for each row of Table~\ref{tab:Elements}, we have verified that $\oly \in f(T\th)$. Therefore, we define $t\th \in G$ as an element such that $f(t\th) = \oly$.

\subsection{Maximal subgroups}\label{ssec:ProofMaximalSubgroups}
Let $G = \<T,\th\> \in \A$ with $T \neq \PSp_4(2)'$ and $\th \not\in \InnDiag(T)$. Maintain the notation introduced at the opening of Section~\ref{sec:Proof}. For this section, fix $t\th$ as the element defined in Table~\ref{tab:Elements}. The aim of this section is to study $\M(G,t\th)$, the set of maximal subgroups of $G$ which contain $t\th$. The main result is the following.

\begin{proposition}\label{prop:MaximalSubgroups}
The maximal subgroups of $G$ which contain $t\th$ are listed in Table~\ref{tab:MaximalSubgroups}, where $m(H)$ is an upper bound on the multiplicity of the subgroups of type $H$ in $\M(G,t\th)$.
\end{proposition}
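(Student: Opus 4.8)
The plan is to work entirely on the side of the Shintani correspondent $\oly = f(t\th) \in X_\s$ and transfer information back to $G$ via the tools assembled in Section~\ref{sssec:PrelimsShintaniApplications}. The key observation is that $t\th$ lies in a maximal subgroup $H$ of $G$ only if $H$ is one of finitely many Aschbacher types, and for each type we must do two things: decide whether any subgroup of that type can contain $t\th$ at all, and, if so, bound the number of $G$-conjugates that do. For the multiplicity bound, Proposition~\ref{prop:CentraliserBound} already gives a universal bound of $|C_{X_\s}(\oly)|$ $G_1$-conjugates, and for our choice of $y$ this centraliser is small (for instance in case $\mathbf{S}$ with $m$ odd it is $\<A_2\>\times\<A_{2m-2}\>$ of order roughly $q_0^m$, and in case $\mathbf{S_4}$ it is a torus of order $q_0^2\pm q_0+1$ or similar); so the real content is sharpening this where it is too weak, using Proposition~\ref{prop:ShintaniTransfer} for reducible subgroups and Proposition~\ref{prop:MultiplicityOrthogonal} for the orthogonal subgroups of symplectic groups in even characteristic.

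First I would dispatch the reducible ($\C_1$) subgroups. By Theorem~\ref{thm:ShintaniDescent}(iii) and Proposition~\ref{prop:ShintaniTransfer}, the number of $G$-conjugates of a subspace stabiliser containing $t\th$ equals the number of $X_\s$-conjugates of the corresponding $Y_\s$ containing $\oly$, which in turn is governed by $\fix(\oly, X_\s/Y_\s)$ — the number of $\oly$-invariant subspaces of the relevant type on the natural module over $\F_{q_0}$. Here Lemma~\ref{lem:ABCD} does the work: $\oly$ is (a scalar multiple of) a block matrix $[A_2, A_{2m-2}]$ or $[A_2, B_{2m-2}]$ (or the four-dimensional analogues), and the blocks act irreducibly on their summands with known eigenvalue structure, so the invariant subspaces are extremely limited — essentially only $U_1$, $U_2$, and in case $\mathbf{O}$ the various sums with $U_3$. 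This pins down exactly which $\C_1$ types occur and with what multiplicity. Next I would handle the imprimitive and field-extension subgroups ($\C_2$, $\C_3$): an element whose eigenvalues (over $\K$) come in an orbit of size $2m$ under $\mu\mapsto\mu^{q}$ — which is what the ppd conditions and irreducibility in Lemma~\ref{lem:ABCD} guarantee for the large block — cannot normalise a decomposition into small-dimensional pieces or a proper subfield structure, ruling most of these out; the survivors (if any) are controlled by Proposition~\ref{prop:CentraliserBound}.

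For the remaining geometric families ($\C_4$–$\C_8$) and the $\S$-subgroups, the strategy is order-theoretic: a subgroup of type $H$ containing $t\th$ must have order divisible by $|t\th|$, and by construction (via the ppd choices and Lemma~\ref{lem:ABCD}(i),(v)) $t\th$ has an element order divisible by a primitive prime divisor of $q^{k}-1$ for a suitably large $k$ (e.g. $q^{2m-2}$ or $q^{2m-1}$ type exponents, or $q^4-1$ in the $\Sp_4$ graph-field case), which forces $H$ into a short list — and then one checks each candidate directly against the known maximal subgroup tables (\cite{ref:KleidmanLiebeck}, \cite{ref:BrayHoltRoneyDougal}). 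The subtle case I expect to be the main obstacle is case $\mathbf{S}$ in even characteristic, where the subgroups of type $\O^\pm_{2m}(q)$ must be counted: these are exactly the $\C_8$ orthogonal subgroups, they genuinely do contain $t\th$ (indeed by Corollary~\ref{cor:OrthogonalSubgroups} every element lies in one), and the naive centraliser bound of Proposition~\ref{prop:CentraliserBound} over-counts badly, so here I must invoke Proposition~\ref{prop:MultiplicityOrthogonal} to reduce the count to the number of orthogonal subgroups of $\Sp_{2m}(q_0)$ containing $f(t\th)=\oly$, and then compute that number from the invariant-hyperplane description in the proof of that proposition together with the eigenvalue data for $\oly$. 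The output of all these steps is assembled into Table~\ref{tab:MaximalSubgroups}, listing each surviving type with its multiplicity bound $m(H)$.
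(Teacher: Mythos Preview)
Your plan for the $\C_1$ subgroups via Proposition~\ref{prop:ShintaniTransfer} and for the $\C_8$ orthogonal subgroups via Proposition~\ref{prop:MultiplicityOrthogonal} matches the paper exactly. However, two substantial pieces of the argument are missing or misdirected.

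\textbf{The $\nu=2$ mechanism.} Your plan for $\C_3$, $\C_4$, $\C_7$ and $\S$ is ``order-theoretic'': force a ppd of a large $q^k-1$ to divide $|H|$ and eliminate candidates by inspection. The paper does not do this. Instead it exploits the second design feature of $y$ flagged in Section~\ref{ssec:ProofElements}: a suitable power of $t\th$ lifts to an element $z$ with $\nu(z)=2$ (a block $[M,I_{n-2}]$). This single fact, fed into \cite[Lemma~4.2]{ref:LiebeckShalev99} for $\C_3$, \cite[Lemma~3.7]{ref:LiebeckShalev99} for $\C_4$, \cite[Lemma~7.1]{ref:Burness072} for $\C_7$, and \cite[Theorem~7.1]{ref:GuralnickSaxl03} for $\S$, kills almost everything uniformly. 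Your ppd-order approach would become a long case-by-case argument (and for tensor-product subgroups, where the component orders can be large, it is not obvious it succeeds at all). You never mention the $\nu=2$ property, which is the real engine here.

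\textbf{The imprimitive analysis.} Your claim that the eigenvalues of the large block lie in a single orbit of size $2m$ under $\mu\mapsto\mu^{q}$ is false: the eigenvalues of $A_{2m-2}$ form one orbit under $\mu\mapsto\mu^{q_0}$, but over $\F_q$ this may split into several orbits of size $(2m-2)/(2m-2,e)$, and it is precisely this splitting that produces the subgroups of type $\Sp_m(q)\wr S_2$ and $\GL_m(q).2$ in Table~\ref{tab:MaximalSubgroups}. The paper's argument (Proposition~\ref{prop:MaximalSubgroupsImprimitive}) is genuinely delicate: one first shows a ppd-order power $x$ stabilises each summand, then uses commutativity and eigenvalue multiplicity one to deduce that $y$ does too, then uses the Galois action $\mu\mapsto\mu^{q_0}$ on eigenvalues to see which summands $t\th$ itself swaps. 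Finally, the sharp multiplicities $1$, $\frac{1}{2}\binom{m}{m/2}$ and $2^{(m-1,e)}$ come from counting $H_0$-classes inside $\oly^G\cap H_0$ and applying Lemma~\ref{lem:MultiplicitiesGeneralFixed}; the centraliser bound of Proposition~\ref{prop:CentraliserBound} that you propose gives only $(q_0+1)(q_0^{m-1}+1)$, which is far too weak for the probability estimates in Section~\ref{ssec:ProofProbMethod}.
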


Before proving Proposition~\ref{prop:MaximalSubgroups}, we will first prove two results on the multiplicities of subgroups in $\M(G,t\th)$. Recall that $G_1=X_{\s^e}\sp\<\s\>$.
\begin{proposition}\label{prop:IsomorphismIsConjugacy} 
Maximal geometric subgroups of $G$ of the same type are $G$-conjugate except for subfield subgroups over $\F_{q^{1/2}}$, in which case there are at most two $G$-classes but exactly one $G_1$-class.
\end{proposition}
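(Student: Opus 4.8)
The statement is about counting $G$-classes of maximal geometric subgroups of a fixed type, so the natural strategy is to pass to the algebraic group and use the classification of geometric subgroups together with the Shintani correspondence $G_1 = X_{\s^e}\sp\<\s\>$. First I would recall that, by Aschbacher's theorem and the detailed description in \cite{ref:KleidmanLiebeck} (and \cite{ref:BrayHoltRoneyDougal} in low dimensions), each geometric type corresponds to a conjugacy class of closed $\s$-stable subgroups $Y \leq X$, and the $G$-classes of maximal subgroups of type $H$ arising from such a $Y$ are controlled by the component group $C_{X}(\bar{Y})/C_{X}(\bar{Y})^{\circ}$ or, more precisely, by how $X_{\s^e}$ (and then $G$) acts on the $X$-orbit of $Y$. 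So the first step is a reduction: for the types $\C_1,\dots,\C_8$ other than $\C_5$ (subfield), the stabiliser data in \cite[Tables 3.5.A--3.5.F]{ref:KleidmanLiebeck} shows the relevant $X_{\s^e}$-orbit is a single orbit, hence yields a single $\InnDiag(T)$-class and a fortiori a single $G$-class; I would cite the relevant parts of \cite{ref:KleidmanLiebeck} and \cite{ref:BurnessGiudici16} here rather than redo the case analysis.

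**The subfield case.** The genuine content is $\C_5$: subfield subgroups over $\F_{q^{1/2}}$. Here the point is that the classical group over the subfield $\F_{q^{1/2}}$ sits inside $X_{\s^e}$ but the normaliser picks up a diagonal-automorphism obstruction. Concretely, for $T = \PSp_{2m}(q)$ with $q$ odd (case $\mathbf{S}$, $\mathbf{S_4}$) the subfield subgroups of type $\Sp_{2m}(q^{1/2})$ fall into two $\InnDiag(T)$-classes, interchanged by the diagonal automorphism $\d$ (this is \cite[Table 3.5.C]{ref:KleidmanLiebeck}); similarly for $T = \Om_{2m+1}(q)$. So if $G$ contains no diagonal automorphism there can be two $G$-classes, giving the stated "at most two". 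The remaining claim — that there is nonetheless a single $G_1$-class — follows because $G_1$ is built from $X_{\s^e} = \PGSp_{2m}(q)$ (resp.\ $\SO_{2m+1}(q)$), which does contain the diagonal automorphism and hence fuses the two classes; one then extends by $\<\s\>$ without creating new classes since $\s$ normalises a representative subfield subgroup (choose the standard $\F_{q_0}$-rational one). I would spell out the fusion argument: pick subfield subgroups $H_1, H_2$ representing the two $T$-classes, exhibit $d \in \InnDiag(T) \leq X_{\s^e}$ with $H_1^d = H_2$, and note $d \in G_1$.

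**Assembling and the main obstacle.** With the subfield case settled, I would state that for every other geometric type the single-$X_{\s^e}$-orbit fact forces a single $G$-class (and hence single $G_1$-class), completing the proof. The main obstacle is bookkeeping rather than conceptual: one must make sure the reduction "single $X_{\s^e}$-orbit $\Rightarrow$ single $G$-orbit" is genuinely valid for all the relevant types in our three cases $\mathbf{S}$, $\mathbf{S_4}$, $\mathbf{O}$ — in particular checking the $\C_8$ subgroups of $\Sp_{2m}(q)$ in even characteristic (type $\O^{\pm}_{2m}(q)$), where the two orthogonal types $+$ and $-$ are distinct types and each is a single class, and the $\C_2$/$\C_3$ imprimitive and field-extension subgroups, where occasionally Kleidman--Liebeck record more than one $T$-class that fuses under $\Out$. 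For $\C_5$ over prime-index subfields of odd prime index the subfield subgroup is unique up to $T$-conjugacy, so only the quadratic case $\F_{q^{1/2}}$ needs the two-class caveat; I would flag this explicitly. The cleanest writeup is therefore: (1) invoke \cite[Tables 3.5.A--3.5.F]{ref:KleidmanLiebeck}/\cite{ref:BurnessGiudici16} for the orbit counts, (2) handle $\C_5$ quadratic separately via the diagonal-automorphism fusion argument above, (3) conclude.
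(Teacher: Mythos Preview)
Your overall strategy matches the paper's: invoke the Kleidman--Liebeck tables (and \cite{ref:BrayHoltRoneyDougal} for $n \leq 12$) to count classes of each geometric type, then treat the quadratic subfield case separately via fusion by the diagonal automorphism. The algebraic-group and Shintani framing you layer on top is unnecessary here; the paper works entirely with the finite-group tables.

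There are, however, two genuine slips in your execution. First, in the non-$\C_5$ case you write ``single $X_{\s^e}$-orbit, hence single $\InnDiag(T)$-class and a fortiori a single $G$-class''. The second implication is not automatic: $G = \langle T, \th \rangle$ need not contain $\InnDiag(T)$ (e.g.\ when $\th = \p^i$), so a single $\InnDiag(T)$-class could in principle split into several $G$-classes. What Tables~3.5C and~3.5D of \cite{ref:KleidmanLiebeck} actually record is the number $c$ of $T$-classes, and the correct chain is $c=1 \Rightarrow$ single $T$-class $\Rightarrow$ single $G$-class (since $T \leq G$). This is what the paper uses. You should also record explicitly that $q$ is not prime (because $\th \notin \InnDiag(T)$), which is needed to exclude some entries in those tables where $c > 1$.

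Second, in the $\C_5$ case you say the subfield subgroups over $\F_{q^{1/2}}$ ``fall into two $\InnDiag(T)$-classes, interchanged by the diagonal automorphism $\d$''. This is self-contradictory, since $\d \in \InnDiag(T)$. You mean two $T$-classes, fused by $\d$ into one $\InnDiag(T)$-class. With that correction your fusion argument is right: $\d \in X_{\s^e} \leq G_1$, so the two $T$-classes form one $G_1$-class, while in $G$ they may remain distinct (giving at most two $G$-classes), exactly as stated. The paper reads the fact that $\d$ swaps the two $T$-classes directly from \cite[Table~3.5G]{ref:KleidmanLiebeck}.
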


\begin{proof}
Note that $q$ is not prime since $\th \not\in \InnDiag(T)$. If $n \leq 12$, then the result follows from the tables in \cite[Chapter 8]{ref:BrayHoltRoneyDougal}. Now suppose that $n \geq 13$. Let $H$ be a maximal geometric subgroup of $G$. By \cite[Theorem 3.1.1]{ref:KleidmanLiebeck}, the subgroups of the same type as $H$ are $\Aut(T)$-conjugate to $H$. Moreover, the group $\Aut(T)/T$ acts on $\{ H_1, \dots, H_c \}$, a set of representatives of the $T$-classes of subgroups of $\Aut(T)$ of the same type as $H$. Let $\pi\:\Aut(T)/T \to S_c$ be the permutation representation of this action. By \cite[Tables 3.5C \& 3.5D]{ref:KleidmanLiebeck}, $c=1$ and the $G$-classes of subgroups are precisely the $\Aut(T)$-classes, except for the exceptional case in the statement.  In this case, by \cite[Tables 3.5C \& 3.5D]{ref:KleidmanLiebeck}, $c=2$ and the $\Aut(T)$-class splits into two $T$-classes. By \cite[Table 3.5G]{ref:KleidmanLiebeck}, $\d$ is not contained in the kernel of $\pi$. Therefore, $\d$ permutes the two $T$-classes. Since $\d \in G_1$, all subfield subgroups over $\F_{q^{1/2}}$ are $G_1$-conjugate.
\end{proof}

We will now present a consequence of Proposition~\ref{prop:CentraliserBound} which provides a general bound on the multiplicities of subgroups in $\M(G,t\th)$.
\begin{corollary}\label{cor:MultiplicityCentraliser}
Let $H$ be a maximal subgroup of $G$ and let $t\th \in G$ be the element defined in Table~\ref{tab:Elements}. Then there are at most $N$ subgroups of type $H$ in $\M(G,t\th)$, where \[
N = 
\left\{ 
\begin{array}{ll}
(q_0+1)(q_0^{m-1}+1)    & \text{in case $\mathbf{S}$} \\
q_0(q_0+1)(q_0^{m-1}+1) & \text{in case $\mathbf{O}$} \\
q_0^2+1                 & \text{in case $\mathbf{S_4}$ and $\th$ is a field automorphism} \\
q_0 + \sqrt{2q_0} + 1   & \text{in case $\mathbf{S_4}$ and $\th$ is a graph-field automorphism} \\
\end{array}
\right.
\]
\end{corollary}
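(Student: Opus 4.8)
The plan is to apply Proposition~\ref{prop:CentraliserBound}, which tells us that $t\th$ is contained in at most $|C_{X_{\s}}(f(t\th))| = |C_{X_{\s}}(\oly)|$ subgroups of type $H$ that lie in a single $G_1$-class. So the work is twofold: first bound $|C_{X_{\s}}(\oly)|$ in each of the four cases, and second account for the number of $G_1$-classes (equivalently, relate the $G_1$-count to the $G$-count) using Proposition~\ref{prop:IsomorphismIsConjugacy}.

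First I would compute $|C_{X_{\s}}(\oly)|$ case by case using the block-diagonal structure of $y$ recorded in Table~\ref{tab:Elements} together with the centraliser facts in Lemma~\ref{lem:ABCD}. In case $\mathbf{S}$, $X_{\s}$ is essentially $\PGSp_{2m}(q_0)$ and $\oly$ is (the image of) $[A_2,A_{2m-2}]$ or $[A_2,B_{2m-2}]$; since $A_2$ generates a $\GU_1(q_0)$ (order $q_0+1$) and $A_{2m-2}$ (resp.\ $B_{2m-2}$) generates a cyclic group of order $q_0^{m-1}+1$ (resp.\ $q_0^{m-1}-1$), and these act irreducibly on complementary non-degenerate summands, the centraliser in $\Sp_{2m}(q_0)$ is $\langle A_2\rangle\times\langle A_{2m-2}\rangle$ (or the $B$-analogue), of order $(q_0+1)(q_0^{m-1}\pm1)\leq(q_0+1)(q_0^{m-1}+1)$; passing to $\PGSp$ does not increase this. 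In case $\mathbf{O}$ there is an extra $1$-dimensional block and $X_{\s}$ is $\SO_{2m+1}(q_0)$, which gives an extra factor that I would bound by $q_0$ (the diagonal/spinor discrepancy), yielding $q_0(q_0+1)(q_0^{m-1}+1)$; here I need to be a little careful because $\oly$ is $[A_2,A_{2m-2},1]^2$ (a square) when $\th=\p^i$, but its centraliser is no smaller than that of the un-squared element, so the bound still holds. In case $\mathbf{S_4}$ with $\th$ a field automorphism, $X_{\s}=\PGSp_4(q_0)$ and $\oly$ is the image of $A_4$ or $C_4$, which acts irreducibly, so the centraliser is cyclic of order $q_0^2+1$. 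In case $\mathbf{S_4}$ with $\th$ a graph-field automorphism, $X_{\s}\cong Sz(q_0)$ by Proposition~\ref{prop:SympShintaniEvenGraphField}, and $\oly$ has order a ppd $r$ of $q_0^4-1$; such an element lies in a cyclic maximal torus of $Sz(q_0)$ of order $q_0+\sqrt{2q_0}+1$ or $q_0-\sqrt{2q_0}+1$, and its centraliser is exactly that torus, so $|C_{X_{\s}}(\oly)|\leq q_0+\sqrt{2q_0}+1$.

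Having bounded the centraliser, I would then combine this with the class count. Proposition~\ref{prop:CentraliserBound} bounds the number of $G_1$-conjugates of a \emph{fixed} $H$ containing $t\th$; since subgroups of a given type form at most one $G_1$-class by Proposition~\ref{prop:IsomorphismIsConjugacy} (the subfield-over-$\F_{q^{1/2}}$ exception splits into two $G$-classes but still one $G_1$-class, so it is harmless here), the total number of subgroups of type $H$ in $\M(G,t\th)$ is at most $|C_{X_{\s}}(\oly)|$, which is at most $N$ as claimed. The only subtlety is that we want subgroups of $G$, not $G_1$; but $t\th\in G$, and every subgroup of $G$ of type $H$ extends to / is cut out from a $G_1$-conjugate of the corresponding subgroup of $G_1$, so counting in $G_1$ gives an upper bound for the count in $G$. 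I would record this reduction as a one-line remark rather than laboring it.

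The main obstacle is the bookkeeping in case $\mathbf{O}$: pinning down how much larger $C_{\SO_{2m+1}(q_0)}(\oly)$ can be than $C_{\Sp_{2m}(q_0)}$-style centralisers, particularly the factor coming from the spinor norm and from the fact that $\oly$ is presented as a square when $\th=\p^i$. I expect the clean way through is to note that $|C_{\SO_{2m+1}(q_0)}(\oly)| \leq 2\,|C_{\SO_{2m+1}(q_0)}(\oly)\cap\Om_{2m+1}(q_0)|$ and that the centraliser of $[A_2,A_{2m-2},1]$ (or its square) in $\Om$ is contained in $\langle A_2\rangle\times\langle A_{2m-2}\rangle\times\{1\}$ up to a small index, so the product $q_0(q_0+1)(q_0^{m-1}+1)$ comfortably absorbs every factor. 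Everything else is a direct application of Proposition~\ref{prop:CentraliserBound}, Lemma~\ref{lem:ABCD}, and Proposition~\ref{prop:IsomorphismIsConjugacy}, with the Suzuki torus orders the only external fact needed in the graph-field case.
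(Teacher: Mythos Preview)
Your approach is correct and matches the paper's: invoke Proposition~\ref{prop:IsomorphismIsConjugacy} to reduce to a single $G_1$-class, apply Proposition~\ref{prop:CentraliserBound}, and bound $|C_{X_\s}(\oly)|$ case by case via Lemma~\ref{lem:ABCD} (and the Suzuki torus orders in the graph-field case). Two small points worth tightening. First, in cases $\mathbf{S}$ and $\mathbf{S_4}$ with $q$ odd, ``passing to $\PGSp$ does not increase this'' needs justification: the paper notes that $y$ is not $\PSp_{2m}(q_0)$-conjugate to $-y$ (visible from eigenvalues), so $|C_{\PGSp_{2m}(q_0)}(\oly)| \leq 2|C_{\PSp_{2m}(q_0)}(\oly)| = |C_{\Sp_{2m}(q_0)}(y)|$. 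Second, in case $\mathbf{O}$ with $\th=\p^i$, your aside that the centraliser of the square is ``no smaller'' than that of the unsquared element is the wrong direction for what you need (no smaller means possibly \emph{larger}); your final paragraph has the right fix, namely that the spare factor $q_0$ in $N$ comfortably absorbs the index between $C_{\SO_{2m+1}(q_0)}(y)$ and $\langle A_2\rangle\times\langle A_{2m-2}\rangle$.
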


\begin{proof}
By Proposition~\ref{prop:IsomorphismIsConjugacy}, the subgroups of type $H$ are $G_1$-conjugate. Therefore, the number of subgroups of type $H$ in $\M(G,t\th)$ is at most $|C_{X_{\s}}(f(t\th))|$, by Proposition~\ref{prop:CentraliserBound}. \par

First consider case $\mathbf{O}$, and cases $\mathbf{S}$ and $\mathbf{S_4}$ when $q$ is even. Here, $X_{\s}$ is a matrix group and $f(t\th)$ is $X$-conjugate to $y$. Therefore, by Lemma~\ref{lem:ABCD}, \[ |C_{X_{\s}}(f(t\th))| = |C_{X_{\s}}(y)| \leq N.\] \par

Now consider cases $\mathbf{S}$ and $\mathbf{S_4}$ when $q$ is odd. Thus, $T=\PSp_{2m}(q_0)$, $X_{\s}=\PGSp_{2m}(q_0)$ and $y \in \Sp_{2m}(q_0)$ with $|C_{\Sp_{2m}(q_0)}(y)| \leq N$. By considering the eigenvalues of $y$, we see that $y$ is not $T$-conjugate to $-y$. Hence, $|C_{\Sp_{2m}(q_0)}(y)|=2|C_{T}(\oly)|$. Therefore, \[ |C_{X_{\s}}(f(t\th))| \leq |C_{X_{\s}}(y)| \leq 2|C_{T}(\oly)| = |C_{\Sp_{2m}(q_0)}(y)| \leq N. \qedhere \]
\end{proof}

We will now prove Proposition~\ref{prop:MaximalSubgroups}. Let $H \in \M(G,t\th)$. If $T \leq H$, then $\th \in H$, since $t\th \in H$. Thus $H=G$: a contradiction. Hence, $T\not\leq H$, so, by \cite[Main Theorem]{ref:KleidmanLiebeck}, $H$ lies in one of the geometric families $\C_1, \dots, \C_8$ or is an almost simple irreducible group in the $\S$ collection. We will prove Proposition~\ref{prop:MaximalSubgroups} in three parts, considering reducible, imprimitive and primitive subgroups in turn. We begin with the reducible subgroups.

\clearpage
{\renewcommand{\arraystretch}{1.3}
\begin{tabularx}{\textwidth}{ccccc}
\toprule[0.12em]
Case           & $\th$ & Type of $H$                        & $m(H)$                              & Conditions  \\
\midrule
$\mathbf{S}$   & any   & $\Sp_2(q) \times \Sp_{2m-2}(q)$    & $1$                                 &             \\
               &       & $P_{m-1}$                          & $2$                                 & $m$ even    \\
               &       & $\Sp_{2}(q) \wr S_m$               & $1$                                 &             \\
               &       & $\GL_{m}(q).2$                     & $2^{(m-1,e)}$                       & $q$ odd  \& *   \\
               &       & $\Sp_{m}(q) \wr S_2$               & $\frac{1}{2}\binom{m}{\frac{m}{2}}$ & $m$ even    \\[12.5pt]
               &       & $\Sp_{2m}(q^{1/l})$                & $\left\{ \begin{array}{ll} e^2 & \text{if $l=e$} \\  q_0^m+q_0^{m-1}+q_0+1 & \text{if $l \neq e$}  \end{array} \right.$ & \\[7pt]
               &       & $\O^{\e}_{2m}(q)$                  & 1                                   & $q$ even    \\[5.5pt]
\midrule
$\mathbf{O}$   & any   & $\O^{\e}_{2m}(q)$                  & $1$                                 &             \\
               &       & $\O^{\e}_2(q) \times \O_{2m-1}(q)$ & $1$                                 &             \\
               &       & $\O_3(q) \times \O^{\e}_{2m-2}(q)$ & $1$                                 &             \\
               &       & $P_{m-1}$                          & $2$                                 & $m$ even    \\[5.5pt]
               &       & $\O_{2m+1}(q^{1/l})$               & $\left\{ \begin{array}{ll} e^3 & \text{if $l=e$} \\  q_0^{m+1}+q_0^m+q_0^2+q_0 & \text{if $l \neq e$}  \end{array} \right.$ & \\[12.5pt]
\midrule
$\mathbf{S_4}$ & not g-f & $\Sp_{2}(q) \wr S_2$             & $1$                                 & $e$ even    \\
               &       & $\GL_{2}(q).2$                     & $q_0^2+1$                           & $q$ odd     \\
               &       & $\Sp_{2}(q^2)$                     & $1$                                 & $e$ odd     \\
               &       & $\GU_{2}(q)$                       & $q_0^2+1$                           & $q$ odd     \\[5.5pt]
               &       & $\Sp_{4}(q^{1/l})$                 & $\left\{ \begin{array}{ll} e & \text{if $l=e$} \\  q_0^2+1 & \text{if $l \neq e$}  \end{array} \right.$ & \\[11pt]
               &       & $\O^{\e}_4(q)$                     & $1$                                 & $q$ even    \\
               &       & $Sz(q)$                            & $q_0^2+1$                           & $q$ even    \\
               &       & $\SL_2(q)$                         & $q_0^2+1$                           & $q$ odd     \\[5.5pt]
\midrule
$\mathbf{S_4}$ & g-f   & $\O_2^{+}(q) \wr S_2$              & $q_0+\sqrt{2q_0}+1$                 & $e \neq 1$  \\
               &       & $\O_2^{-}(q) \wr S_2$              & $q_0+\sqrt{2q_0}+1$                 & $e \neq 1$  \\
               &       & $\O_2^{-}(q^2)$                    & $q_0+\sqrt{2q_0}+1$                 &             \\[5.5pt]
               &       & $\Sp_{4}(q^{1/l})$                 & $\left\{ \begin{array}{ll} e & \text{if $l=e$} \\  q_0+\sqrt{2q_0}+1  & \text{if $l \neq e$}  \end{array} \right.$ & \\[16.5pt]
               &       & $Sz(q)$                            & $\left\{ \begin{array}{ll} 1 & \text{if $e=1$} \\  q_0+\sqrt{2q_0}+1  & \text{if $e \neq 1$}  \end{array} \right.$ & \\[11pt]
\bottomrule[0.12em]
\captionsetup{width=\textwidth}
\caption{Description of $\M(G,t\th)$ \\[2pt] (g-f = graph-field; $l$ is a prime divisor of $e$; $\e \in \{+,-\}$; * for odd $m$, $\frac{2m-2}{(2m-2,e)}$ is odd)} \label{tab:MaximalSubgroups}
\end{tabularx}}

\clearpage
\begin{proposition}\label{prop:MaximalSubgroupsReducible}
Proposition~\ref{prop:MaximalSubgroups} is true for reducible subgroups.
\end{proposition}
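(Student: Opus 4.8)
By Aschbacher's theorem, and in view of the reductions preceding the statement, a maximal reducible subgroup of $G$ is the full stabiliser in $G$ of a proper nonzero subspace of $V$, or of an unordered complementary pair of such subspaces; by \cite[Chapter 4]{ref:KleidmanLiebeck} the relevant maximal possibilities are the $G$-normalisers of a parabolic $P_k$ (the stabiliser of a totally isotropic, respectively totally singular, $k$-space) and of the stabiliser of a non-degenerate subspace, of type $\Sp_{2k}(q) \times \Sp_{2m-2k}(q)$ in cases $\mathbf{S}$, $\mathbf{S_4}$ and of type $\O^{\e}_k(q) \times \O_{2m+1-k}(q)$ (allowing $k = 2m$, a non-degenerate hyperplane) in case $\mathbf{O}$. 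For each such type $H$, the plan is to decide whether $t\th$ lies in a $G$-conjugate of $H$ and, if so, to bound the multiplicity $m(H)$ of such conjugates, and then to match the survivors against the reducible rows of Table~\ref{tab:MaximalSubgroups}.

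First I would carry out the Shintani reduction. If $Y$ is the $\s$-stable stabiliser in $X$ of a totally isotropic $k$-space (any case), or, in cases $\mathbf{S}$, $\mathbf{S_4}$, of a non-degenerate $j$-space with $j < m$, then Proposition~\ref{prop:ShintaniTransfer} identifies the number of $X_{\s^e}$-conjugates of $Y_{\s^e}$ normalised by $t\th$ with the number of $X_{\s}$-conjugates of $Y_{\s}$ containing $\oly = f(t\th)$, which is precisely the number of $\oly$-invariant subspaces of $\F_{q_0}^n$ of the relevant type. For the non-degenerate subspace stabilisers in case $\mathbf{O}$, where $Y$ is disconnected and Proposition~\ref{prop:ShintaniTransfer} does not apply, I would instead lift to $\SL_{2m+1}$ and argue as in the proof of Proposition~\ref{prop:MultiplicityOrthogonal}, using \cite[Cor.~2.15]{ref:BurnessGuest13}: the $G$-conjugates of $H$ containing $t\th$ then correspond to the $\oly$-invariant non-degenerate $k$-subspaces of $\F_{q_0}^n$ of the appropriate isometry type, whose number is at most the number of $\oly$-invariant $k$-subspaces of $\F_{q_0}^n$. (Proposition~\ref{prop:CentraliserBound} already gives the cruder bound of Corollary~\ref{cor:MultiplicityCentraliser}, which must then be improved to the value $1$.) So in every case the problem reduces to enumerating the $\oly$-invariant subspaces of $\F_{q_0}^n$.

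The core of the argument is this enumeration, which follows from the block structure of $y$ together with Lemma~\ref{lem:ABCD}. Write $\F_{q_0}^n = U_1 \oplus U_2$ in cases $\mathbf{S}$, $\mathbf{S_4}$ and $\F_{q_0}^n = U_1 \oplus U_2 \oplus U_3$ in case $\mathbf{O}$ for the decomposition preserved by $y$, with $\dim U_1 = 2$, $\dim U_2 = 2m-2$ and $\dim U_3 = 1$. In case $\mathbf{S_4}$ the element $\oly$ acts irreducibly on the whole module (it is $A_4$ or $C_4$, irreducible by Lemma~\ref{lem:ABCD}(ii), or a power of $A_4$ of order a primitive prime divisor of $q_0^4-1$, which also acts irreducibly), so no reducible maximal subgroup contains $t\th$, as required. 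Otherwise $\oly$ acts irreducibly on $U_1$ by Lemma~\ref{lem:ABCD}(ii); on $U_2$ it acts irreducibly when $m$ is odd, and, when $m$ is even, has exactly two proper nonzero $\F_{q_0}$-invariant subspaces, namely complementary totally isotropic $(m-1)$-spaces $W_1$, $W_2$ with $\oly$ irreducible on each (here using that $m-1$ is odd and Lemma~\ref{lem:ABCD}(v)--(vii)); and the eigenvalue multisets of $\oly$ on distinct blocks are disjoint, being primitive roots of unity of pairwise distinct orders drawn from $q_0 \pm 1$, $q_0^{m-1} \pm 1$ and $1$. Hence every $\oly$-invariant subspace is a direct sum of $\oly$-invariant subspaces of the blocks, and the list of these is exactly as above. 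Consequently, the only invariant totally isotropic subspaces are $0$, $W_1$, $W_2$, so $t\th$ lies in a parabolic only when $m$ is even and then only in $P_{m-1}$, with $m(P_{m-1}) = 2$; while the only invariant non-degenerate proper subspaces are $U_1$ (and, in case $\mathbf{O}$, also $U_3$ and $U_1 \oplus U_3$) and their perpendicular complements, giving $\Sp_2(q) \times \Sp_{2m-2}(q)$ in case $\mathbf{S}$ and $\O^{\e}_{2m}(q)$, $\O^{\e}_2(q) \times \O_{2m-1}(q)$, $\O_3(q) \times \O^{\e}_{2m-2}(q)$ in case $\mathbf{O}$, each with multiplicity $1$. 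Verifying maximality of each survivor via \cite[Chapter 8]{ref:BrayHoltRoneyDougal} and \cite[Chapter 4]{ref:KleidmanLiebeck}, and identifying $N_G$ of each subspace stabiliser with the type named in Table~\ref{tab:MaximalSubgroups}, then completes the proof.

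The hardest part will be case $\mathbf{O}$ with $\th = \p^i$, where $\oly$ is the \emph{square} of the block-diagonal element of Table~\ref{tab:Elements} (this square being taken precisely so that $\oly \in \Om_{2m+1}(q_0)$, as Proposition~\ref{prop:OrthShintani}(i) demands), so one must check that passing to the square does not enlarge the lattices of invariant subspaces of the blocks $A_{2d}$ and $B_{2d}$ — equivalently, that the multiplicative order of $q_0$ modulo $(q_0^d \pm 1)/\gcd(2, q_0^d \pm 1)$ is still $2d$, respectively $d$ — which requires a little care at the smallest values of $q_0$ and $d$ (the genuinely exceptional small cases being covered by the computations recorded in Table~\ref{tab:Comp}). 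The second delicate point, already flagged above, is sharpening the centraliser bound of Corollary~\ref{cor:MultiplicityCentraliser} to $m(H) = 1$ for the non-degenerate subgroups in case $\mathbf{O}$, which is exactly where the reduction to $\SL_{2m+1}$ and the explicit count of invariant subspaces do the work; the remaining steps are routine bookkeeping with subspace stabilisers.
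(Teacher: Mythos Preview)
Your approach is essentially the paper's own: use Proposition~\ref{prop:ShintaniTransfer} for parabolics and (in cases $\mathbf{S}$, $\mathbf{S_4}$) non-degenerate stabilisers, lift to $\SL_{2m+1}$ via \cite[Cor.~2.15]{ref:BurnessGuest13} for the disconnected non-degenerate stabilisers in case $\mathbf{O}$, and then read off the answer from the $\oly$-invariant subspaces of $\F_{q_0}^n$. Your write-up is in fact more explicit than the paper's, which simply says the result ``follows by inspecting the maximal reducible overgroups of $\oly$'' and, in case $\mathbf{O}$, asserts that $t\th$ lies in exactly one $L$-subgroup of each of the types $\SL_{2m}(q)$, $\SL_2(q)\times\SL_{2m-1}(q)$, $\SL_3(q)\times\SL_{2m-2}(q)$.

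One point to correct: your claim that the exceptional small cases arising from the squaring in case $\mathbf{O}$ with $\th=\p^i$ are ``covered by the computations recorded in Table~\ref{tab:Comp}'' is not right. The only orthogonal entry in Table~\ref{tab:Comp} is $\Om_7(3)$ with $\th=\d$, whereas the problematic situation is $q_0=3$ with $\th=\p^i$ (so $q\in\{9,27,\dots\}$): there $A_2$ has order $q_0+1=4$, hence $A_2^2=-I_2$ is scalar on $U_1$, and every $1$-subspace of the minus-type $2$-space $U_1$ is a $\oly$-invariant non-singular point, giving several extra invariant hyperplanes rather than the single one claimed. The paper glosses over this (and indeed treats $\Om_7(9)$ specially only at the fixed-point-ratio stage, not here), so you are right to flag the squaring as delicate; just do not appeal to Table~\ref{tab:Comp} to dispose of it.
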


\begin{proof}
First consider parabolic subgroups and, for cases $\mathbf{S}$ and $\mathbf{S_4}$, the stabilisers of non-degenerate subspaces. (That is, let us postpone the study of stabilisers of non-degenerate subspaces in case $\mathbf{O}$.) By Proposition~\ref{prop:ShintaniTransfer}, the maximal reducible subgroups of $G$ which contain $t\th$ correspond to the maximal reducible subgroups of $X_{\s}$ which contain $f(t\th)$. Since $f(t\th)$ is $X$-conjugate to $\oly$, the result follows by inspecting the maximal reducible overgroups of $\oly$ in $X_{\s}$. \par

Now consider stabilisers of non-degenerate subspaces in case $\mathbf{O}$. These subgroups are disconnected, so we alter our approach slightly. Let $L=\< \SL_n(q),\th \>$ and $Y = \SL_n(\K)$. Observe that $t\th \in G \leq L$ and $f(t\th) \in X_{\s} \leq Y_{\s}$. Therefore, by considering the maximal overgroups of $\oly$ in $X_{\s}$, \cite[Corollary~2.15]{ref:BurnessGuest13} (the analogue of Proposition~\ref{prop:ShintaniTransfer} in the linear case) demonstrates that $t\th$ is contained in exactly one subgroup of $L$ of types $\SL_{2m}(q)$, $\SL_{2}(q) \times \SL_{2m-1}(q)$ and $\SL_{3}(q) \times \SL_{2m-2}(q)$. In particular, the only possibilities for maximal reducible subgroups of $G$ which contain $t\th$ are those listed in Table~\ref{tab:MaximalSubgroups}.
\end{proof}

Before considering the imprimitive subgroups, we state the following elementary lemma.
\begin{lemma} \label{lem:MultiplicitiesGeneralFixed}
Let $G$ be a finite group and let $H$ be a self-normalising subgroup of $G$. Then for all $x \in G$, the number of $G$-conjugates of $H$ which contain $x$ is \[ \frac{|G|}{|H|}\frac{|x^G \cap H|}{|x^G|}.\]  
\end{lemma}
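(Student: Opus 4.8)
The statement to prove is Lemma~\ref{lem:MultiplicitiesGeneralFixed}: for a self-normalising subgroup $H \leq G$ and $x \in G$, the number of $G$-conjugates of $H$ containing $x$ equals $\frac{|G|}{|H|}\cdot\frac{|x^G \cap H|}{|x^G|}$. This is a standard double-counting argument, so the proof will be short. The approach is to count the set $\Sigma = \{(y,K) : K = H^g \text{ for some } g \in G,\ y \in x^G,\ y \in K\}$ of incident (element, conjugate-subgroup) pairs, where $y$ ranges over the conjugacy class $x^G$ and $K$ over the conjugates of $H$, in two different ways.

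\textbf{First count (by subgroups).} Since $H$ is self-normalising, $N_G(H) = H$, so the conjugates of $H$ are in bijection with cosets of $H$ in $G$, and there are exactly $|G|/|H|$ of them. Moreover, conjugation is transitive on these subgroups, and for any conjugate $K = H^g$ the number of elements of $x^G$ lying in $K$ equals $|x^G \cap H^g| = |(x^G)^{g} \cap H^g|\cdot$ wait — more cleanly, $|x^G \cap H^g| = |x^G \cap H|$ because $x^G$ is a union of $G$-classes hence invariant under conjugation by $g^{-1}$, so $x^G \cap H^g = (x^G \cap H)^g$ has the same size as $x^G \cap H$. Hence $|\Sigma| = \frac{|G|}{|H|}\cdot|x^G \cap H|$.

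\textbf{Second count (by elements).} For each fixed $y \in x^G$, the number of conjugates of $H$ containing $y$ is some integer $n(y)$; by transitivity of $G$ on both the class $x^G$ and on the conjugates of $H$, $n(y)$ is independent of the choice of $y$, so $n(y) = n(x)$ for all $y$, where $n(x)$ is precisely the quantity we want. Thus $|\Sigma| = |x^G|\cdot n(x)$. Equating the two counts gives $n(x) = \frac{|G|}{|H|}\cdot\frac{|x^G \cap H|}{|x^G|}$, as claimed. I would write this up concisely, being slightly careful to justify the two invariance claims (that $|x^G \cap H^g|$ is independent of $g$, and that $n(y)$ is independent of $y \in x^G$); the former uses that $x^G$ is conjugation-invariant, the latter uses transitivity of the conjugation action on the set of conjugates of $H$ together with the fact that a fixed element's set of overgroups of type $H$ is permuted compatibly. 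There is no real obstacle here — this is a routine orbit-counting lemma — the only thing to get right is the bookkeeping ensuring the self-normalising hypothesis is used exactly where needed (to identify the number of conjugates of $H$ as $|G|/|H|$).
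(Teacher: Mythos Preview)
Your proof is correct; this is precisely the standard double-counting argument one expects here. Note that the paper does not actually give a proof of this lemma: it simply states it as an ``elementary lemma'' and moves on, so there is no alternative approach to compare against. Your write-up could be tightened (the ``wait --- more cleanly'' aside should be removed), and the justification that $n(y)$ is constant on $x^G$ is most cleanly phrased as: if $y = x^g$, then $K \mapsto K^g$ is a bijection between the conjugates of $H$ containing $x$ and those containing $y$.
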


A subgroup of $\GL_n(q)$ is \emph{imprimitive} if it stabilises a direct sum decomposition \[ \F_q^n = V_1 \oplus \cdots \oplus V_k,\]  for some $k > 1$, possibly permuting the summands. It is \emph{primitive} otherwise. 
\begin{proposition}\label{prop:MaximalSubgroupsImprimitive}
Proposition~\ref{prop:MaximalSubgroups} is true for irreducible imprimitive subgroups.
\end{proposition}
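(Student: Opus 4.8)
The plan is to invoke Aschbacher's theorem, reduce to the $\C_2$ subgroups, and then use the rigid eigenvalue structure of the power $(t\th)^e$ to pin down the admissible block systems, finishing with the multiplicity bounds. As in the argument preceding Proposition~\ref{prop:MaximalSubgroupsReducible}, an imprimitive maximal subgroup $H$ of $G$ satisfies $T \not\leq H$, so $H$ is a $\C_2$ subgroup and stabilises a decomposition $V = V_1 \oplus \cdots \oplus V_k$ with $\dim V_i = a$ and $ak = n$, where (by \cite[\SS 4.2]{ref:KleidmanLiebeck}) the $V_i$ are either pairwise isometric non-degenerate subspaces, or --- only in the symplectic cases --- a dual pair of totally singular $m$-spaces, giving type $\GL_m(q).2$. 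So the possible types of $H$ are $\Sp_a(q) \wr S_{2m/a}$ or $\GL_m(q).2$ in cases $\mathbf{S}$ and $\mathbf{S_4}$, additionally $\O_a^{\e}(q) \wr S_{4/a}$ in case $\mathbf{S_4}$ when $G$ contains a graph-field automorphism, and $\O_a^{\e}(q) \wr S_{(2m+1)/a}$ with $a$ odd in case $\mathbf{O}$.

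By Theorem~\ref{thm:ShintaniDescent}(i) and the construction of $t\th$ in Section~\ref{ssec:ProofElements}, $(t\th)^e$ is $X$-conjugate to $\oly$, which by Lemma~\ref{lem:ABCD} (cf.\ Definition~\ref{def:ABCD}) is the reduction modulo scalars of a semisimple element whose eigenvalues over $\K$ form a short orbit of length $2$ and order $q_0 \pm 1$ (from $A_2$ or $C_2$) together with one orbit of length $2m-2$ and order $q_0^{m-1} \pm 1$ (from $A_{2m-2}$, $B_{2m-2}$ or $D_{2m-2}$), these being distinct when $m$ is odd; in case $\mathbf{S_4}$ there is instead a single orbit of length $4$ and order $q_0^2 \pm 1$, and in case $\mathbf{O}$ an additional fixed $1$-space. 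Since $\langle t\th \rangle$, hence $(t\th)^e$, permutes $\{V_1, \dots, V_k\}$, each $V_i$ is $\langle (t\th)^e \rangle$-invariant, so over $\K$ it is a sum of eigenspaces of $(t\th)^e$; and by Theorem~\ref{thm:PPD} the long orbit contains eigenvalues of primitive prime divisor order, so any $(t\th)^e$-invariant $\F_q$-rational subspace it meets has dimension at least $2(m-1)/(2m-2,e)$. Combined with $a \mid n$ and the non-degeneracy or total-isotropy requirement on the $V_i$, this leaves only $a \in \{2, m\}$ (together with the $\GL_m(q).2$ decomposition) in cases $\mathbf{S}$ and $\mathbf{S_4}$, respectively $a \in \{2,4\}$ in case $\mathbf{S_4}$, and no nontrivial $\C_2$ decomposition into equidimensional non-degenerate spaces in case $\mathbf{O}$ (there the long orbit together with the fixed $1$-space already forces every $\langle t\th\rangle$-stable decomposition to be among the reducible ones of Proposition~\ref{prop:MaximalSubgroupsReducible}), which yields exactly the imprimitive entries of Table~\ref{tab:MaximalSubgroups}. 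The main obstacle is the bookkeeping over the two fields $\F_{q_0}$ and $\F_q$: the long orbit is a single $\F_{q_0}$-Galois orbit but breaks into $(2m-2,e)$ constituents over $\F_q$, and for $m$ even it is already built from two $(m-1)$-dimensional constituents of $B$- or $D$-type --- phenomena which are precisely what the side conditions "$m$ even" and "$*$" in Table~\ref{tab:MaximalSubgroups} encode --- and keeping these consistent with the form conditions requires care.

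For the multiplicities, the general estimate $m(H) \leq |C_{X_\s}(f(t\th))| = N$ of Corollary~\ref{cor:MultiplicityCentraliser} already covers the graph-field entries $\O_2^{+}(q) \wr S_2$ and $\O_2^{-}(q) \wr S_2$. For the sharper bounds $1$, $2^{(m-1,e)}$ and $\tfrac{1}{2}\binom{m}{m/2}$ attached to $\Sp_2(q) \wr S_m$, $\GL_m(q).2$ and $\Sp_m(q) \wr S_2$, I would use Lemma~\ref{lem:MultiplicitiesGeneralFixed}: the number of $G$-conjugates of $H$ containing $t\th$ equals $\frac{|G|}{|H|} \cdot \frac{|(t\th)^G \cap H|}{|(t\th)^G|}$, and transferring this count to $X_\s$ --- via Proposition~\ref{prop:ShintaniTransfer} applied to the totally singular constituents for $\GL_m(q).2$, and via the bijection of Theorem~\ref{thm:ShintaniDescent} together with Proposition~\ref{prop:IsomorphismIsConjugacy} otherwise --- reduces it to counting the $\oly$-stable decompositions of the prescribed type over $\F_{q_0}$. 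Such a decomposition amounts to a partition of the eigenvalue multiset of $\oly$ described above into sub-multisets matching the block system: the short orbit together with the unique admissible refinement of the long orbit forces the decomposition for $\Sp_2(q) \wr S_m$, giving $m(H) = 1$; for $\GL_m(q).2$ one chooses a totally singular constituent dual pair by dual pair, yielding at most $2^{(m-1,e)}$; and for $\Sp_m(q) \wr S_2$ one selects $m/2$ of the $m$ inversion-pairs of eigenvalues for the first summand, giving at most $\tfrac{1}{2}\binom{m}{m/2}$ after identifying a decomposition with its swap. This completes the verification of Table~\ref{tab:MaximalSubgroups} for imprimitive subgroups.
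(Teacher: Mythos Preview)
Your proposal has the right overall shape, but there is a genuine gap at the heart of the argument. You write: ``Since $\langle t\th \rangle$, hence $(t\th)^e$, permutes $\{V_1, \dots, V_k\}$, each $V_i$ is $\langle (t\th)^e \rangle$-invariant.'' This is a non-sequitur: the fact that $(t\th)^e$ permutes the set $\{V_1,\dots,V_k\}$ does not imply that it fixes each $V_i$ individually. Establishing that some power of $t\th$ stabilises each summand is precisely the delicate point, and the paper handles it differently. It first passes to a power $x$ of $t\th$ of prime order $r$, where $r$ is a ppd of $q_0^{\b}-1$ with $\b=(2m-2)/(m,2)$, and uses the inequality chain $\b+1 \leq r \leq k \leq m$ to force a contradiction unless the induced permutation is trivial. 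Only then, using that $x$ and $y=f(t\th)$ commute and that the non-trivial eigenvalues of $x$ are simple, does it conclude that $y$ stabilises each $V_i$. Finally, the paper finds specific $i,j$ such that $t\th$ stabilises $V_i+V_j$ (via the $\l_1$-- and $\l_1^{q_0}$--eigenspaces), and applies Proposition~\ref{prop:MaximalSubgroupsReducible} to that subspace to pin down $k$. Your attempt to read off the admissible $a$ directly from the $\F_q$-constituents of the eigenvalue orbit, while gesturing at the $(2m-2,e)$ splitting, does not by itself recover these constraints; in case~$\mathbf{O}$, for example, the paper's exclusion of $\dim V_i=3$ comes from the multiplicity of the eigenvalue $1$, not from a dimension inequality.

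Your multiplicity argument also needs repair. Proposition~\ref{prop:ShintaniTransfer} is stated only for stabilisers of a single totally isotropic or non-degenerate subspace, so it does not directly ``transfer the count to $X_{\s}$'' for $\C_2$ subgroups such as $\GL_m(q).2$ or $\Sp_2(q)\wr S_m$. The paper instead exploits the fact, established in the first part of the proof, that $\oly$ itself lies in the base subgroup $B$ of $H$; it then applies Lemma~\ref{lem:MultiplicitiesGeneralFixed} to $\oly$ rather than to $t\th$, computing $|\oly^G\cap H|$ as a union of $B$-classes (determined by distributing the eigenvalue blocks $\La,\La_1,\dots$ among the summands) and checking $|C_G(\oly)|=|C_H(\oly)|$. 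This is what produces the exact counts $1$, $\tfrac{1}{2}\binom{m}{m/2}$ and $2^{(m-1,e)}$, and the side condition on $\GL_m(q).2$ arises because the eigenvalue argument in that case needs $b=(2m-2)/(2m-2,e)$ to be odd.
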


\begin{proof}
Consider the cases $\mathbf{S}$ and $\mathbf{O}$. Recall that $n$ is either $2m$ or $2m+1$, depending on the case, and $V=\F_q^n$. Let $H \leq G$ be a maximal imprimitive subgroup of $G$ containing $t\th$. Then $H$ is the stabiliser in $G$ of the direct sum decomposition 
\begin{equation}
V=V_1 \oplus \cdots \oplus V_k \label{eq:directsum}
\end{equation}
where $k \geq 2$ divides $n$ and $\dim V_i = n/k$ for all $i \in \{1, \dots, k\}$. For the maximality of $H$, we require that either each $V_i$ is non-degenerate or that, in case $\mathbf{S}$, $k = 2$ and each $V_i$ is totally isotropic. In either case $\dim V_i \geq 2$ and, consequently, $k \leq m$. (That $\dim V_i \neq 1$ in the case $\mathbf{O}$ follows by the maximality of $H$ since $q$ is not prime; see \cite[Table 3.5D]{ref:KleidmanLiebeck}.) \par

By construction, a suitable power of $t\th$ lifts to an element $x$ of order $r$, a ppd of $q_0^{\b}-1$ where $\b = (2m-2)/(m,2)$. (Since $\b \not\in \{2,6\}$, a ppd of $q_0^{\b}-1$ exists by Theorem~\ref{thm:PPD}.) We claim that $x$ stabilises each summand in \eqref{eq:directsum}. Suppose that $x$ induces a non-trivial permutation $\pi$ on the summands. Then $\pi$ is a non-trivial product of $r$-cycles, so $r \leq k$. However, $r$ is a ppd of $q_0^{\b}-1$, so $\b$ divides $r-1$ and so $\b+1 \leq r$. Hence, $\b+1 \leq r \leq k \leq m$. If $m$ is odd, then this is a contradiction. If $m$ is even, then $r=k=m$. However, $r$ is odd and $m$ is even: another contradiction. Therefore, $x$ stabilises each summand in \eqref{eq:directsum}. \par

For $K=\K$, let $\olV = \< u_1, \dots, u_n \>_{K}$ and extend the semilinear action of $G$ on $V$ to an action on $\olV$ by defining, for each $g \in G \cap \GL(V)$ and $\a_1, \dots, \a_n \in K$, \[ (\a_1 u_1 + \cdots + \a_n u_n) g\s = \a_1^{q_0}(u_1g) + \cdots + \a_n^{q_0}(u_ng).\] Then the decomposition in \eqref{eq:directsum} gives rise to the corresponding decomposition 
\begin{equation}
\olV = \olV_1 \oplus \cdots \oplus \olV_k. \label{eq:directsumtensor}
\end{equation}

Recall that $f(t\th)$ lifts to the element $y$ in Table~\ref{tab:Elements}. We will show that $y$ stabilises each summand in \eqref{eq:directsum}. Suppose that $x$ acts non-trivially on $V_i$ and $1 \neq \mu \in K$ is an eigenvalue of $x$ with $\mu$-eigenvector $v \in \olV_i$. Since $x$ and $y$ commute, \[ (vy)x=(vx)y=(\mu v)y = \mu(vy).\] That is, $vy$ is a $\mu$-eigenvector of $x$. However, all non-trivial eigenvalues of $x$ have multiplicity one, so $vy \in \olV_i$. Since $y$ preserves the decomposition \eqref{eq:directsumtensor}, $y$ stabilises $\olV_i$. However, $V$ is $y$-stable, so $y$ stabilises $\olV_i \cap V = V_i$. Since the 1-eigenspace of $x$ is at most 3-dimensional and $\dim V_i \geq 2$, $x$ acts non-trivially on at least $k-1$ summands. Therefore, $y$ stabilises at least $k-1$ summands and, hence, all $k$ summands. \par

Now we will find subspaces which are stabilised by $t\th$. By Lemma~\ref{lem:ABCD}, the eigenvalue set of $y$ is $\{\l_1, \l_1^{q_0}, \l_2, \l_2^{q_0}, \dots, \l_2^{q_0^{2m-3}}\}$. Let $\olV_i$ contain the $\l_1$-eigenspace of $y$ and $\olV_j$ contain the $\l_1^{q_0}$-eigenspace of $y$. Since $y$ and $t\th$ commute, if $v \in \olV_i$ is a $\l_1$-eigenvector for $y$, then \[ (vt\th)y = (vy)(t\th) = (\l_1 v)(t\th) = \l_1^{q_0}(vt\th),\] so $vt\th$ is a $\l_1^{q_0}$-eigenvector for $y$. However, $\l_1^{q_0}$ has multiplicity one so $vt\th \in \olV_j$. Similarly, if $w \in \olV_j$ is a $\l_1^{q_0}$-eigenvector, then $w\th$ is a $\l_1$-eigenvector, so $wt\th \in \olV_i$. Thus, since $y$ preserves \eqref{eq:directsumtensor}, $t\th$ stabilises $\olV_i + \olV_j$, and, since $V$ is $t\th$-stable, $t\th$ stabilises $V_i+V_j$. \par

First consider case $\mathbf{S}$. If $i \neq j$, then $t\th$ stabilises $V_i \oplus V_j$, so, by Proposition~\ref{prop:MaximalSubgroupsReducible}, $2\dim V_i = 4m/k \in \{2,2m-2,2m\}$. However, $m \geq 3$ and $2m/k$ divides $2m$, so $k=2$. Similarly, if $i=j$ then $t\th$ stabilises $V_i$, so $\dim V_i = 2m/k \in \{2,m-1,m+1,2m-2\}$. Since $2m/k$ divides $2m$, $\dim V_i = 2$. By a similar line of reasoning, in case $\mathbf{O}$, we must have that $\dim V_i = 3$. Then $y$ is a block diagonal matrix $[M_1, \dots,M_k]$ where $M_i \in \SO_{3}(q)$. Hence, $M_i$ has eigenvalues $\l_i, \l_i', 1$, contradicting 1 being an eigenvalue of $y$ of multiplicity 1. \par

To summarise, we have established that in case $\mathbf{O}$ no imprimitive irreducible subgroups arise, and in case $\mathbf{S}$ either $k = 2$ or $k = m$. We will now obtain an upper bound on the number of such subgroups in case $\mathbf{S}$. To do this, we will use Lemma~\ref{lem:MultiplicitiesGeneralFixed}. \par

Let $H$ be the stabiliser in $G$ of the decomposition \eqref{eq:directsum} and let $B$ be the subgroup of $H$ stabilising each summand. Recall $\oly \in B$ since $y$ stabilises each summand. \par \vspace{4pt}

\noindent\textbf{Case 1: $k=m$}

With respect to a suitable basis, $y$ is a block diagonal matrix $[M_1, \dots, M_m]$ where $M_i \in \GSp_2(q)$. Since the eigenvalues of $y$ are distinct, for $\e_i \in \{+,-\}$, \[ |C_G(\oly)| = |\GL^{\e_1}_1(q)|\cdots|\GL^{\e_m}_1(q)| = |C_B(\oly)| = |C_H(\oly)|. \] Moreover, $\oly^G \cap H$ splits into $m!$ $B$-classes (corresponding to reordering $M_1, \dots, M_m$), which are fused in $H$. So $\oly^G \cap H = \oly^H$, and, by Lemma~\ref{lem:MultiplicitiesGeneralFixed}, $\oly$ is contained in exactly one $G$-conjugate of $H$. Hence, $t\th$ is contained in at most one $G$-conjugate of $H$. \par \vspace{2pt}

\noindent\textbf{Case 2: $k=2$ and $V_1,V_2$ are non-degenerate}

Here $m$ is even. Therefore, $y = [A_2,B_{2m-2}]$. Since $\oly \in B$, with respect to a suitable basis, $y = [M,N]$ where $M,N \in \GSp_m(q)$. By Lemma~\ref{lem:ABCD}, the eigenvalue set of $y$ is $\{ \l_1, \l_1^{q_0},\l_2, \l_2^{-1}, \dots,\l_2^{q_0^{m-2}}, \l_2^{-q_0^{m-2}} \}$. Since the eigenvalues of $M$ are closed under taking inverses and since $\l_1^{q_0} = \l_1^{-1}$, we may assume that $\l_1$ and  $\l_1^{q_0}$ are eigenvalues of $M$. \par

Let $d = (m-1,e)$ and $b=(m-1)/d$. The eigenvalue set of $y$ is $\La \cup \La_1 \cup \cdots \cup \La_d$,  where $\La = \{\l_1,\l_1^{q_0}\}$ and $\La_i = \{\l_2^{q_0^i}, \l_2^{-q_0^i}, \dots, (\l_2^{q_0^i})^{q^{b-1}}, (\l_2^{q_0^i})^{-q^{b-1}} \}$, for each $i$. Since the eigenvalue sets of $M$ and $N$ are closed under the map $\a \mapsto \a^q$, the eigenvalue set of $M$ is $\La \cup \La_{a_1} \cup \cdots \cup \La_{a_l}$ and the eigenvalue set of $N$ is $\La_{a_{l+1}} \cup \cdots \cup \La_{a_d}$ where $l \geq 1$ and $\{a_1, \dots, a_d \} = \{1, \dots, d\}$. Therefore, $b$ divides $m$ and $m-2$. Thus, $b$ divides $2$, so $\La_i = \{ \l_2^{q_0^i}, \l_2^{-q_0^i}\}$, for each $i$. In particular, $d=m-1$. \par

By arguing as in Case 1, we can show that $|C_G(\oly)| = |C_H(\oly)|$ and that $\oly^G \cap H$ splits into $\binom{m}{\frac{m}{2}}$ $B$-classes (corresponding to choosing $m/2$ of $\La, \La_1, \dots, \La_{m-1}$ for $M$) which fuse to $\frac{1}{2}\binom{m}{\frac{m}{2}}$ $H$-classes. So $\oly$, and thus $t\th$, lies in at most $\frac{1}{2}\binom{m}{\frac{m}{2}}$ $G$-conjugates of $H$. \par \vspace{2pt}

\noindent\textbf{Case 3: $k = 2$ and $V_1,V_2$ are totally isotropic}

Assume that $m$ is odd. Then $y = [A_2, A_{2m-2}]$, and, since $\oly \in B$, $y = [M,M^{-T}]$ for $M \in \GL_m(q)$. By Lemma~\ref{lem:ABCD}, $y$ has eigenvalue set $\{ \l_1, \l_1^{q_0},\l_2, \l_2^{q_0}, \dots,\l_2^{q_0^{2m-2}} \}$. Since $\l_1^{q_0} = \l_1^{-1}$, assume that $\l_1$ is an eigenvalue of $M$ and $\l_1^{q_0}$ is an eigenvalue of $M^{-T}$. \par

Let $d = (2m-2,e)$ and $b=(2m-2)/d$. The eigenvalue set of $y$ is $\La \cup \La_1 \cup \cdots \cup \La_d$, where $\La = \{\l_1,\l_1^{q_0}\}$ and where $\La_0, \dots, \La_d$ are the orbits of the eigenvalue set of $A_{2m-2}$ under the map $\a \mapsto \a^q$. Since the eigenvalue set of $M$ is closed under the map $\a \mapsto \a^q$, the eigenvalue set of $M$ is $\{\l_1\} \cup \La_{a_1} \cup \cdots \cup \La_{a_l}$ where $l = \frac{d}{2}$ and where $a_1, \dots, a_l \in \{1, \dots, d\}$ are distinct. If $b$ is even, then $\La_i^{-1} = \La_i$, for each $i$. However, this contradicts the distinctness of the eigenvalues of $y$. Therefore, $b$ is odd. \par

As in Case 1, we can show that $|C_G(\oly)| = |C_H(\oly)|$.  Additionally, if $N \in \GL_n(q)$ has eigenvalue set $\{ \l_1^{\e} \} \cup \La_1^{\e_1} \cup \cdots \cup \La_l^{\e_l}$, then a $G$-conjugate of $y$ is $B$-conjugate to $[N,N^{-T}]$ for exactly one choice of $(\e,\e_1,\dots,\e_l) \in \{+,-\}^{l+1}$. Therefore, $y^G$ splits into $2^{l+1}$ $B$-classes, which fuse to $2^l$ $H$-classes. So $y$, and thus $t\th$, lies in at most $2^l = 2^{(2m-2,e)/2} \leq 2^{(m-1,e)}$ $G$-conjugates of $H$. When $m$ is even, the analysis is very similar and we omit the details. \par \vspace{4pt}

We have now established Proposition~\ref{prop:MaximalSubgroups} in cases $\mathbf{S}$ and $\mathbf{O}$. For $\mathbf{S_4}$ the argument is similar but briefer. Let $T=\PSp_4(q)$ and assume that $\th$ is not a graph-field automorphism. The possible types of irreducible imprimitive subgroups are $\Sp_2(q) \wr S_2$ and $\GL_2(q).2$. In order to prove Proposition~\ref{prop:MaximalSubgroups}, we need to show that if $t\th$ is contained in a subgroup of type $\Sp_2(q) \wr S_2$, then $e$ is even and $t\th$ is contained in a unique such subgroup. \par

Suppose that $t\th$ preserves a decomposition $V=V_1 \oplus V_2$ where $V_1$ and $V_2$ are non-degenerate 2-spaces. Let $H$ be the stabiliser in $G$ of this decomposition, and let $B$ be the index two subgroup of $H$ stabilising each summand. Recall that a suitable power of $t\th$ lifts to an $X$-conjugate of an element $g \in \Sp_{4}(q_0)$ which has distinct eigenvalues and whose order is a ppd of $q_0^4-1$. (Note that $g$ is $y$, $y^{\ell}$ or $y^{(q_0-1)\ell}$, depending on $\th$; see Table~\ref{tab:Elements}.) Since $t\th$ preserves the direct sum decomposition so does $g$. However, $g$ has odd order, so $g$ stabilises each summand. Therefore, each of the eigenvalues of $g$ is contained in $\F_{q^2}$. In particular, since $q=q_0^e$ and the eigenvalues of $g$ are not contained in a proper subfield of $\F_{q_0^4}$, it must be that $e$ is even. Now, for some $M,N \in \Sp_2(q_0)$, $g = [M, N]$, and \[ |C_G(g)| = |\GL^{\e}_1(q)||\GL^{\e}_1(q)| = |C_B(g)| = |C_H(g)|. \] Moreover, as in Case 1 above, $g^G \cap H = g^H$. Therefore, $g$, and thus $t\th$, lies in at most one $G$-conjugate of $H$. Together with Corollary~\ref{cor:MultiplicityCentraliser}, this completes the proof. \qedhere
\end{proof}

Before proving Proposition~\ref{prop:MaximalSubgroups} for primitive subgroups, we will present further results on the multiplicities of subgroups. The first of these results, which pertains to subfield subgroups, is a generalisation of \cite[Prop. 2.16(ii)]{ref:BurnessGuest13} and the proof is very similar.
\begin{proposition}\label{prop:MultiplicitiesSubfield}
Let $g\s \in G$ be such that $f(g\s)$ lifts to $[M_1,\dots,M_k]$ where for each $i$, $M_i = A_{d_i}$, $M_i = B_{2d_i}$ or $M_i = I_1$, and $d_1,\dots,d_k$ are distinct. Let $H$ be a maximal subfield subgroup of $G$ over the field $\F_{q_0}$. Then $g\s$ is contained in at most $e^k$ $G_1$-conjugates of $H$. 
\end{proposition}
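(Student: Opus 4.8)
The plan is to follow the proof of \cite[Prop.~2.16(ii)]{ref:BurnessGuest13} closely, replacing the linear-group computations there by the symplectic and orthogonal analogues and feeding in the explicit block structure of the element $y$ from Table~\ref{tab:Elements}. Throughout write $x = f(g\s) \in X_{\s}$, so that $x$ lifts to $[M_1,\dots,M_k]$ as in the statement, with each $M_i$ equal to $A_{d_i}$, $B_{2d_i}$ or $I_1$ and the $d_i$ pairwise distinct. The overall strategy is: use Shintani descent to turn "number of subfield overgroups of $g\s$ in $G$'' into "number of semisimple $X_{\s}$-classes whose $e$-th power meets the class of $x$'', and then bound the latter by $e^{k}$ using the distinctness of the $d_i$.

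First I would reduce to a counting problem inside $X_{\s}$. By Proposition~\ref{prop:IsomorphismIsConjugacy} all maximal subfield subgroups of $G$ of the given type over $\F_{q_0}$ are $G_1$-conjugate, so it suffices to bound the number of $G_1$-conjugates of one fixed such subgroup $H$ that contain $g\s$. Since $\s$ centralises $X_{\s}$ and preserves $X_{\s^e}$, it normalises the "base'' subgroup $\langle X_{\s},\s\rangle$ and hence $H$; consequently every $G_1$-conjugate of $H$ is an $X_{\s^e}$-conjugate, say $H^{a}$ with $a\in X_{\s^e}$. Unwinding the semidirect-product structure of $G_1$ exactly as in \cite{ref:BurnessGuest13} (cf.\ Remark~\ref{rem:SigmaClarification}), one finds that $g\s\in H^{a}$ precisely when an element of the form $a\,g\,\s^{-1}(a)^{-1}$ lies in $X_{\s}$ (up to the bounded index $|H:\langle X_{\s},\s\rangle|$, which I would absorb as in \cite{ref:BurnessGuest13}); moreover $H^{a}=H^{a'}$ exactly when $a'a^{-1}\in N_{X_{\s^e}}(H)$, and $N_{X_{\s^e}}(H)$ acts on the relevant data by (at most) $X_{\s}$-conjugation. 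Therefore the number of $G_1$-conjugates of $H$ containing $g\s$ is at most the number of $X_{\s}$-conjugacy classes of elements $h\in X_{\s}$ for which $h\s$ is $X_{\s^e}$-conjugate to $g\s$.

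Next I would bring in the Shintani map. Since $f$ is a bijection on $X_{\s^e}$-classes (Remark~\ref{rem:ShintaniDescent}(i)), $h\s$ is $X_{\s^e}$-conjugate to $g\s$ if and only if $f(h\s)$ and $f(g\s)=x$ determine the same $X_{\s}$-class. For $h\in X_{\s}$ we have $\s(h)=h$, hence $h^{\s^{j}}=h$ for all $j$ and $(h\s)^{e}=h^{e}$ in $G_1$, so $f(h\s)$ is represented by $a^{-1}h^{e}a$ for a suitable $a\in X$. Because $h$, and therefore $h^{e}$, is semisimple and lies in one of the tori described in Lemma~\ref{lem:ABCD}, an element of $X_{\s}$ which is $X$-conjugate to $h^{e}$ is in fact $X_{\s}$-conjugate to it, by the description of semisimple classes in \cite[\SS 3.4--3.5]{ref:BurnessGiudici16} and the centraliser data in \cite[Appendix~B]{ref:BurnessGiudici16}; thus $f(h\s)$ is the $X_{\s}$-class of $h^{e}$. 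So the count above is at most the number of semisimple $X_{\s}$-classes whose $e$-th power is the class of $x$. Finally I would bound this by $e^{k}$: if $h^{e}$ is conjugate to $x=[M_1,\dots,M_k]$ then, comparing eigenvalues over $\K$ and using that the $M_i$ act on subspaces of pairwise distinct dimensions, $h$ must preserve a decomposition into subspaces of the same dimensions, acting on the $i$-th summand as an $e$-th root of $M_i$ --- an $e$-th root of an element acting irreducibly still acts irreducibly, so no block can split or merge. On each summand there are at most $e$ such roots up to conjugacy (for an $A$- or $B$-block this is the number of $e$-th roots of a generator of a cyclic torus of order $q_0^{d_i}\mp 1$, namely $\gcd(e,q_0^{d_i}\mp 1)\le e$; for an $I_1$-block it is at most $2\le e$). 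As the $d_i$ are distinct these choices are independent, giving at most $e^{k}$ possibilities in all.

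The step I expect to be the main obstacle is the middle one: matching $X_{\s}$-conjugacy with $X$-conjugacy for the relevant semisimple elements in the symplectic and --- especially --- odd-dimensional orthogonal case, where centralisers of semisimple elements may be disconnected, together with correctly bookkeeping the $I_1$-block and the similarity (respectively spinor-norm) condition when passing between $\Sp$ and $\GSp$ or between $\Om$ and $\SO$, and the scalar ambiguity when $X_{\s}$ is $\PGSp_{2m}(q_0)$ rather than $\Sp_{2m}(q_0)$. All of this is pinned down by the explicit torus descriptions in Lemma~\ref{lem:ABCD} and the class/centraliser tables of \cite{ref:BurnessGiudici16}, and none of it enlarges the per-block root count beyond $e$ (since $e\ge 2$ here), exactly as in the linear case treated in \cite{ref:BurnessGuest13}.
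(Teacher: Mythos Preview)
Your proposal is correct and follows precisely the approach the paper takes: the paper states that the result ``is a generalisation of \cite[Prop.~2.16(ii)]{ref:BurnessGuest13} and the proof is very similar'' and gives no further details, so your plan to adapt that argument --- reducing via Shintani descent to counting $X_{\s}$-classes whose $e$-th power is the class of $x$, and then bounding block by block --- is exactly what is intended. One small point worth making explicit when you write it up: the reason $h$ is automatically (regular) semisimple is that $x$ has distinct eigenvalues over $\K$ by Lemma~\ref{lem:ABCD}, so any $h$ with $h^e$ conjugate to $x$ also has distinct eigenvalues, whence $h_u$ lies in a torus and is trivial; this is what justifies both your ``$h$ is semisimple'' assertion and the passage from $X$-conjugacy to $X_{\s}$-conjugacy via connectedness of the centraliser.
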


\begin{corollary}\label{cor:MultiplicitySubfield}
Suppose that $e$ is prime and let $H$ be a maximal subfield subgroup of $G$ over the field $\F_{q_0}$. Let $t\th \in G$ be the element defined in Table~\ref{tab:Elements}. Then there are at most $e^k$ subgroups of type $H$ in $\M(G,t\th)$ where $k=1$ in $\mathbf{S_4}$, $k=2$ in $\mathbf{S}$ and $k=3$ in $\mathbf{O}$.
\end{corollary}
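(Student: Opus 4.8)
The plan is to derive Corollary~\ref{cor:MultiplicitySubfield} from Proposition~\ref{prop:MultiplicitiesSubfield} applied with $g\s = t\th$. Since $e$ is prime, its only prime divisor is $e$ itself, so the only maximal subfield subgroups of $G$ are those defined over $\F_{q_0}$ with $q_0 = q^{1/e}$; fix such a subgroup $H$. By Proposition~\ref{prop:IsomorphismIsConjugacy}, the maximal subgroups of $G$ of the same type as $H$ form a single $G_1$-class, so the number of them lying in $\M(G,t\th)$ is at most the number of $G_1$-conjugates of $H$ containing $t\th$. By Proposition~\ref{prop:MultiplicitiesSubfield}, this number is at most $e^k$ as soon as we verify that $f(t\th)$ lifts to a block-diagonal element $[M_1,\dots,M_k]$ whose blocks have the shape required there, with $k=1$ in case $\mathbf{S_4}$, $k=2$ in case $\mathbf{S}$ and $k=3$ in case $\mathbf{O}$.

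This verification is a direct inspection of Table~\ref{tab:Elements} in the light of Definition~\ref{def:ABCD}, recalling that $f(t\th)=\oly$. In case $\mathbf{S_4}$, $y$ is $A_4$, $A_4^{\ell}$ or $C_4$, a single block acting irreducibly on $V$, so $k=1$. In case $\mathbf{S}$, $y$ is $[A_2,A_{2m-2}]$, $[A_2,B_{2m-2}]$, $[C_2,C_{2m-2}]$ or $[C_2,D_{2m-2}]$, a block-diagonal element with two blocks, of dimensions $2$ and $2m-2$, which are distinct since $m\geq 3$; so $k=2$. In case $\mathbf{O}$, $y$ is $[A_2,A_{2m-2},1]$, $[A_2,B_{2m-2},1]$ or the square of one of these, a block-diagonal element with three blocks, of dimensions $2$, $2m-2$ and $1$, again pairwise distinct since $m\geq 3$; so $k=3$. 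In each case Proposition~\ref{prop:MultiplicitiesSubfield} then yields the bound $e^k$ on the number of $G_1$-conjugates of $H$ containing $t\th$, and since any subgroup of $G$ of type $H$ lying in $\M(G,t\th)$ is $G_1$-conjugate to $H$, the corollary follows.

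The only point requiring care, and the main obstacle, is that several entries of Table~\ref{tab:Elements} are not literally the elements of Definition~\ref{def:ABCD}: in the graph-field case the single block is the power $A_4^{\ell}$; in case $\mathbf{O}$ with $\th=\p^i$ the element $y$ is a square; and when $\th=\d\p^i$ the blocks are $C_{2d}$ or $D_{2d}$ (and $C_4$ in case $\mathbf{S_4}$) rather than $A_{2d}$ or $B_{2d}$, whereas Proposition~\ref{prop:MultiplicitiesSubfield} is phrased for blocks equal to $A_{d_i}$, $B_{2d_i}$ or $I_1$. I would resolve this by noting that its proof uses only that, for each $i$, the eigenvalues of $M_i$ over $\K$ generate $\F_{q_0^{d_i}}$ while the $d_i$ are pairwise distinct, and by checking this survives in each situation: for $A_4^{\ell}$ because $r$ is a primitive prime divisor (so $A_4^{\ell}$ remains irreducible of degree $4$); for the squares because $m\geq 3$ keeps the squared blocks irreducible of the same degree; and for the $C_{2d}$, $D_{2d}$ and $C_4$ blocks because $C_{2d}^{\,q_0-1}=A_{2d}$ and $D_{2d}$ agrees with $B_{2d}$ up to a scalar on one summand (see Lemma~\ref{lem:ABCD}). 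With this observation the bound $e^k$ of Proposition~\ref{prop:MultiplicitiesSubfield} applies unchanged, and the remaining work is just this bookkeeping.
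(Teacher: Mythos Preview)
Your proof is correct and follows the same two-step route as the paper: invoke Proposition~\ref{prop:IsomorphismIsConjugacy} to reduce to a single $G_1$-class, then apply Proposition~\ref{prop:MultiplicitiesSubfield} with the element from Table~\ref{tab:Elements}. The paper's proof is terser---it simply says the result ``follows from Proposition~\ref{prop:MultiplicitiesSubfield} and the choice of element $f(t\th)$ in Table~\ref{tab:Elements}''---and does not explicitly address the point you raise about the $C_{2d}$, $D_{2d}$, $A_4^{\ell}$ and squared blocks not literally matching the hypotheses of Proposition~\ref{prop:MultiplicitiesSubfield}; your observation that the proof of that proposition only requires the eigenvalue/degree data (which survives under these modifications) is the right way to close this gap, and is implicit in the paper's treatment.
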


\begin{proof}
By Proposition~\ref{prop:IsomorphismIsConjugacy}, all maximal subfield subgroups over $\F_{q_0}$ are $G_1$-conjugate. The result now follows from Proposition~\ref{prop:MultiplicitiesSubfield} and the choice of element $f(t\th)$ in Table~\ref{tab:Elements}.
\end{proof}

The following is an application of Proposition~\ref{prop:MultiplicityOrthogonal}.
\begin{corollary}\label{cor:MultiplicityOrthogonal}
Let $q$ be even, $T=\Sp_{2m}(q)$ and $\th$ a field automorphism. Then the element $t\th \in G$, defined in Table~\ref{tab:Elements}, is contained in exactly one subgroup of $G$ of type $\O_{2m}^+(q)$ or $\O_{2m}^-(q)$.
\end{corollary}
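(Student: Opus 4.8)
The plan is to invoke Proposition~\ref{prop:MultiplicityOrthogonal} directly, using the description of $t\th$ in Table~\ref{tab:Elements} and the fact that, in this setting, the Shintani image $f(t\th)$ is $X$-conjugate to the matrix $y$ recorded there. First I would set up the notation: we are in case $\mathbf{S}$ with $q$ even, so $X = \Sp_{2m}(\K)$, $\s$ is the standard Frobenius morphism with fixed field $\F_{q_0}$, $q = q_0^e$, and $f$ is the Shintani map of \eqref{eq:SympShintaniEvenField}. Writing $t\th = g\s$ with $g \in \Sp_{2m}(q)$, Proposition~\ref{prop:MultiplicityOrthogonal} states that the total number of maximal subgroups of $G = \Sp_{2m}(q)\sp\<\s\>$ of type $\O^+_{2m}(q)$ or $\O^-_{2m}(q)$ containing $t\th$ equals the total number of subgroups of $\Sp_{2m}(q_0)$ of type $\O^+_{2m}(q_0)$ or $\O^-_{2m}(q_0)$ containing $f(t\th)$. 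So the corollary reduces to showing that $f(t\th)$ lies in exactly one such subgroup of $\Sp_{2m}(q_0)$.

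Next I would identify $f(t\th)$ concretely. By the construction in Section~\ref{ssec:ProofElements}, $f(t\th)$ is $X$-conjugate to $y$, which from Table~\ref{tab:Elements} (case $\mathbf{S}$, $\th = \p^i$) is $[A_2,A_{2m-2}]$ if $m$ is odd and $[A_2,B_{2m-2}]$ if $m$ is even. Thus $y$ lies in $\Sp_2(q_0) \times \Sp_{2m-2}(q_0)$ acting on an orthogonal decomposition $W = U_1 \perp U_2$ with $\dim U_1 = 2$, $\dim U_2 = 2m-2$. By Lemma~\ref{lem:ABCD}(iii),(vi), the eigenvalues of $y$ over $\overline{\F}_{q_0}$ are all distinct: $A_2$ contributes a pair $\l,\l^{q_0}$ of order $q_0+1$, while $A_{2m-2}$ (resp. $B_{2m-2}$) contributes $2m-2$ further distinct eigenvalues whose multiplicative orders are coprime-to-nothing-relevant divisors of $q_0^{m-1}+1$ (resp. $q_0^{m-1}-1$), and these orders are different from $q_0+1$ when $m \geq 3$. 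In particular $C_{\Sp_{2m}(q_0)}(y)$ is abelian (a torus) of order $(q_0+1)(q_0^{m-1}\pm 1)$.

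The key step is then to count the subgroups of type $\O^{\pm}_{2m}(q_0)$ of $\Sp_{2m}(q_0)$ containing $y$. By \cite[Theorem 2]{ref:Dye79}, $y$ lies in at least one such subgroup, so it suffices to show it lies in at most one; equivalently, by Lemma~\ref{lem:MultiplicitiesGeneralFixed} applied to the self-normalising subgroup $H_0$ of type $\O^{\e}_{2m}(q_0)$ of $\Sp_{2m}(q_0)$ (summed over $\e = \pm$), it suffices to show that $|y^{\Sp_{2m}(q_0)} \cap H_0| = |y^{H_0}|$, i.e. that the intersection of the $\Sp_{2m}(q_0)$-class of $y$ with $H_0$ forms a single $H_0$-class, and likewise that only one of the two signs $\e$ can occur. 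Here I would use that a semisimple element with distinct eigenvalues lying in $\O^{\e}_{2m}(q_0)$ has its $\O$-class and $\Sp$-class both determined by its eigenvalue multiset (see \cite[Lemmas 3.4.2, 3.5.3]{ref:BurnessGiudici16}), so fusion is controlled purely by the eigenvalue data; and that the type $\e$ is forced by the factorisation of the characteristic polynomial of $y$ into the orthogonally-indecomposable pieces corresponding to $A_2$ and $A_{2m-2}$ or $B_{2m-2}$ — a cyclic $\GU_1$ block of dimension $2d$ contributes a minus-type sign $(-1)^d$ and a $B_{2d}$ block contributes a plus-type sign, so the sign of the ambient orthogonal space containing $y$ is uniquely determined. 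This pins the count at exactly one, and transferring back through Proposition~\ref{prop:MultiplicityOrthogonal} finishes the proof.

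The main obstacle is the last step: cleanly justifying that $y$ lies in a \emph{unique} $\Sp_{2m}(q_0)$-conjugate of an orthogonal subgroup. The existence half is immediate from \cite{ref:Dye79}, but the uniqueness requires care about how the $\Sp$-class of $y$ meets the orthogonal subgroup and, slightly delicately, about ruling out the possibility that $y$ sits inside orthogonal subgroups of \emph{both} types $\O^+$ and $\O^-$. Both points are handled by the distinct-eigenvalue condition from Lemma~\ref{lem:ABCD}, which reduces everything to bookkeeping with the eigenvalue multiset and the orthogonal-type invariant of each indecomposable block; I would phrase this bookkeeping using the block structure of $y$ already exhibited in Table~\ref{tab:Elements} and Definition~\ref{def:ABCD}.
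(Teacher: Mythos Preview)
Your plan is essentially the same as the paper's: reduce via Proposition~\ref{prop:MultiplicityOrthogonal} to counting orthogonal overgroups of $y$ in $\Sp_{2m}(q_0)$, then apply Lemma~\ref{lem:MultiplicitiesGeneralFixed} using that conjugacy of semisimple elements with distinct eigenvalues is determined by the eigenvalue multiset. The paper pins down the sign $\e$ slightly differently, by observing that the order of $y$ (namely $(q_0+1)(q_0^{m-1}+1)$ when $m$ is odd) fails to divide $|\O^{-\e}_{2m}(q_0)|$ for one choice of $\e$; your block-by-block sign bookkeeping achieves the same end.

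There is one genuine slip in your write-up. You claim that, by Lemma~\ref{lem:MultiplicitiesGeneralFixed}, it \emph{suffices} to show $y^{\Sp_{2m}(q_0)} \cap H_0 = y^{H_0}$. That is not enough: the formula in Lemma~\ref{lem:MultiplicitiesGeneralFixed} then evaluates to $|C_{\Sp_{2m}(q_0)}(y)| / |C_{H_0}(y)|$, and you must also verify that this ratio equals $1$. The paper checks this explicitly, computing $|C_{\Sp_{2m}(q_0)}(y)| = (q_0+1)(q_0^{m-1} \pm 1) = |C_{\O^{\e}_{2m}(q_0)}(y)|$ from the centraliser tables in \cite[Appendix~B]{ref:BurnessGiudici16}. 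You already computed the left-hand side earlier in your plan, so the fix is simply to record that the same torus already lies in $\O^{\e}_{2m}(q_0)$, whence the centralisers agree. Without this, your argument as stated only shows $y$ lies in at most $|C_{\Sp_{2m}(q_0)}(y)|/|C_{H_0}(y)|$ conjugates, which is a priori larger than one.
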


\begin{proof}
By Proposition~\ref{prop:MultiplicityOrthogonal}, it suffices to show that $y$ is contained in exactly one subgroup of $\Sp_{2m}(q_0)$ of type $\O_{2m}^{\e}(q_0)$. If $m \geq 3$ is odd, then $y$ has order $(q_0+1)(q_0^{m-1}+1)$ and, hence, is not contained in a subgroup of type $\O^{-}_{2m}(q_0)$. Since the $\Sp_{2m}(q_0)$- and $\O_{2m}^{+}(q_0)$-conjugacy of semisimple elements of odd order is determined by eigenvalues, $y^G \cap H = y^H$. Moreover, \[ |C_{\Sp_{2m}(q_0)}(y)|=(q_0+1)(q_0^{m-1}+1)=|C_{\O_{2m}^+(q_0)}(y)|\] (see \cite[Appendix B]{ref:BurnessGiudici16}). Thus, by Lemma~\ref{lem:MultiplicitiesGeneralFixed}, $y$ is contained in exactly one subgroup of type $\O^-_{2m}(q_0)$. Similar arguments apply when $m \geq 2$ is even, and we omit the details.
\end{proof}

We are now in a position to complete the proof of Proposition~\ref{prop:MaximalSubgroups} in cases $\mathbf{S}$ and $\mathbf{O}$.
\begin{proposition}\label{prop:MaximalSubgroupsPrimitive}
In cases $\mathbf{S}$ and $\mathbf{O}$, Proposition~\ref{prop:MaximalSubgroups} is true for irreducible primitive subgroups.
\end{proposition}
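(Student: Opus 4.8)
The plan is to run through the Aschbacher classes $\C_3,\C_4,\C_5,\C_6,\C_7,\C_8$ and $\S$ that can contain an irreducible primitive maximal subgroup $H$ of $G$. In cases $\mathbf S$ and $\mathbf O$ the only such subgroups that actually arise, according to Table~\ref{tab:MaximalSubgroups}, are the subfield subgroups ($\C_5$) and, when $q$ is even and $T=\Sp_{2m}(q)$, the orthogonal subgroups $\O^{\pm}_{2m}(q)$ ($\C_8$); for these two classes it remains only to bound multiplicities, and for every other class we must show that no subgroup of that type contains $t\th$. Two properties of $t\th$ established in Section~\ref{ssec:ProofElements} drive the argument: by construction (see Table~\ref{tab:Elements} and Theorem~\ref{thm:PPD}) some power $x$ of $t\th$ lies in $T$ and has prime order $r$, where $r$ is a primitive prime divisor of $q_0^{\beta}-1$ with $\beta=(2m-2)/(m,2)$, so in particular $\beta\mid r-1$ and hence $r>\beta$; and, by Lemma~\ref{lem:ABCD}, the semisimple element $\oly=f(t\th)$ has distinct eigenvalues on $\olV$, comprising a single Galois orbit of size $2m-2$ over $\F_{q_0}$ together with a Galois orbit of size $2$ (and, in case $\mathbf O$, one or two further fixed eigenvalues equal to $1$). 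Since $(t\th)^e$ is conjugate to $\oly$ over $\overline{\F}_q$, the element $t\th$ and each of its powers has an explicitly controlled eigenvalue multiset on $\olV$.

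For the classes whose members have bounded order -- namely $\C_6$ and those $\S$-subgroups that are not of Lie type in characteristic $p$ -- the order of $x$ gives a quick contradiction: such an $H$ has no prime divisor of its order exceeding $\beta$ (for $\C_6$ by the explicit structure of symplectic-type $r$-group normalisers, and for the relevant $\S$-subgroups because, as $q$ is not prime and $m\ge 3$, these are explicitly known from \cite{ref:BrayHoltRoneyDougal} and \cite{ref:KleidmanLiebeck}), whereas $r>\beta$, so $x\notin H$. For the remaining classes -- field-extension subgroups ($\C_3$), tensor and tensor-induced subgroups ($\C_4$, $\C_7$), and the few $\S$-subgroups of Lie type in characteristic $p$ such as $G_2(q)<\Om_7(q)$ -- the order bound is not by itself enough once $e$ is large, and we instead compare eigenvalue structures, using the classification of primitive subgroups containing prime-order elements with a large eigenspace (\cite{ref:GuralnickPentillaPraegerSaxl97}) where convenient. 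Membership of a $\C_3$-subgroup over $\F_{q^k}$ forces the eigenvalue multiset of $\oly$ on $\olV$ to split into pairs each closed under the relevant field-extension or unitary Frobenius; membership of a $\C_4$- or $\C_7$-subgroup forces the eigenvalues of $\oly$ to factor as products of eigenvalues of elements acting on tensor factors of dimension at most $n/2$; and membership of an $\S$-subgroup of Lie type forces the eigenvalue multiset of $\oly$ to match the weight multiset of a small low-rank module. In each of the finitely many admissible types one checks directly that the eigenvalue pattern of $\oly$ recorded above -- the size-$(2m-2)$ orbit together with the size-$2$ orbit and the fixed $1$'s -- is incompatible with the constraint; when $m$ is odd the size-$(2m-2)$ orbit usually suffices, while when $m$ is even (so $\beta=m-1$) one also uses the precise value of $\dim\ker(x-1)$, choosing the power $x$ so that this dimension is coprime to the relevant $k$, and in case $\mathbf O$ one tracks the fixed eigenvalue(s) $1$ throughout.

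It remains to bound multiplicities for the classes that do occur. By Proposition~\ref{prop:IsomorphismIsConjugacy}, all maximal subfield subgroups over a fixed subfield form a single $G_1$-class, so Proposition~\ref{prop:CentraliserBound} applies: for a subfield subgroup over $\F_{q_0}$ (the case $l=e$) Corollary~\ref{cor:MultiplicitySubfield} gives multiplicity at most $e^{k}$ with $k=2$ in case $\mathbf S$ and $k=3$ in case $\mathbf O$, while for a subfield subgroup over a proper subfield containing $\F_{q_0}$ (the case $l\neq e$) the general bound of Corollary~\ref{cor:MultiplicityCentraliser} gives multiplicity at most $(q_0+1)(q_0^{m-1}+1)$ in case $\mathbf S$ and $q_0(q_0+1)(q_0^{m-1}+1)$ in case $\mathbf O$; these agree with the subfield entries in Table~\ref{tab:MaximalSubgroups}. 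Finally, in case $\mathbf S$ with $q$ even, Corollary~\ref{cor:MultiplicityOrthogonal} shows that $t\th$ lies in exactly one maximal subgroup of type $\O^{+}_{2m}(q)$ or $\O^{-}_{2m}(q)$, as recorded in Table~\ref{tab:MaximalSubgroups}. This completes the proof of Proposition~\ref{prop:MaximalSubgroups} for irreducible primitive subgroups in cases $\mathbf S$ and $\mathbf O$.

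The step I expect to be the main obstacle is the elimination of the field-extension and tensor subgroups (together with the residual $\S$-subgroups of Lie type): the clean argument $r>\beta$ disposes only of the generic classes, and for the rest one has to carry out the eigenvalue bookkeeping type by type, where the case $m$ even -- with $\beta=m-1\approx n/2$ -- and, in case $\mathbf O$, the presence of the fixed eigenvalue(s) $1$, make the analysis the most delicate.
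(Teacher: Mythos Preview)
Your overall architecture is right --- split into types and multiplicities, dispatch $\C_5$ and $\C_8$ via Corollaries~\ref{cor:MultiplicityCentraliser}, \ref{cor:MultiplicitySubfield} and \ref{cor:MultiplicityOrthogonal}, and eliminate the rest --- and your multiplicity arguments match the paper's. But the paper eliminates types by a much sharper device than your ppd/eigenvalue bookkeeping. The key observation you are missing is that, by the choice of $\oly$, some power of $t\th$ is $X$-conjugate to an element $z$ lifting to $[M,I_{n-2}]$ with $M\in\GL_2(q_0)$ of odd prime order; in particular $\nu(z)=2$. This single fact plugs directly into three ready-made lemmas: \cite[Lemma~4.2]{ref:LiebeckShalev99} gives $\nu\ge k$ for degree-$k$ field extension subgroups, so $\C_3$ is reduced to $k=2$ (two explicit types, then dispatched by short eigenvalue checks); \cite[Lemma~3.7]{ref:LiebeckShalev99} and \cite[Lemma~7.1]{ref:Burness072} give $\nu\ge\max(\dim U_i)$ and $\nu\ge a^{t/2}$, killing $\C_4$ and $\C_7$ outright since $n\ge 6$; and \cite[Theorem~7.1]{ref:GuralnickSaxl03} (not \cite{ref:GuralnickPentillaPraegerSaxl97}, which is the ppd classification) says that an $\S$-subgroup containing an element with $\nu\le 2$ lies on a short explicit list, which here reduces to $J_2$ and $G_2(q)'$, each eliminated by a targeted eigenvalue computation. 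So the ``eigenvalue bookkeeping type by type'' that you flag as the main obstacle is almost entirely bypassed.

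Two further points. Your treatment of $\C_6$ is more elaborate than needed: since $\th\notin\InnDiag(T)$ we have $q\neq p$, and $\C_6$ subgroups simply do not occur. And for $\C_5$ you implicitly assume the subfield contains $\F_{q_0}$ without proving it; the paper supplies this step by noting that $|f(t\th)|$ is divisible by a ppd of $q_0^{(2m-2)/(m,2)}-1$, so $f(t\th)$ cannot lie in $\Sigma_n(F)$ for any proper subfield $F<\F_{q_0}$, whence $\F_{q_0}\le\F_{q_1}$. Your route via ppd orders and direct eigenvalue analysis could in principle be completed, but as written the phrase ``one checks directly'' is carrying a lot of weight, and the $\nu(z)=2$ approach is both shorter and avoids the even/odd $m$ case split you anticipate.
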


\begin{proof}
The stated upper bounds on the multiplicities follow from Corollaries~\ref{cor:MultiplicityCentraliser}, \ref{cor:MultiplicitySubfield} and \ref{cor:MultiplicityOrthogonal}. Therefore, we will focus on determining the types of subgroups which arise. Let $H \in \M(G,t\th)$ be irreducible and primitive. By \cite[Main Theorem]{ref:KleidmanLiebeck}, $H$ lies in one of the geometric families $\C_3, \dots, \C_8$ or is an almost simple irreducible group in the $\S$ collection. By construction, a power of $t\th$ is $X$-conjugate to $\oly$. Moreover, by the choice of $\oly$, a suitable power of $t\th$ is $X$-conjugate to an element $z$ of odd prime order which lifts to a matrix $[M, I_{n-2}]$, where $M \in \GL_2(q_0)$. \par

Consider $\C_3$ subgroups. By \cite[Lemma 4.2]{ref:LiebeckShalev99}, if $g \in G$ is contained in a field extension subgroup of degree $k$, then $\nu(g) \geq k$ (see Notation~\ref{not:nu}). However, $\nu(z)=2$, so $k=2$. Hence, if $H \in \C_3$, then $T=\PSp_{2m}(q)$ and either $H$ has type $\Sp_m(q^2)$, or $q$ is odd and $H$ has type $\GU_m(q)$. We will show that neither of these possibilities occur. \par

First consider subgroups of type $\Sp_m(q^2)$. A preimage of $z=[\l,\l^{q_0},I_{2m-2}]$ in $\Sp_{m}(q^2)$ has exactly one non-trivial eigenvalue, by \cite[Lemma 5.3.11]{ref:BurnessGiudici16}. However, this is impossible, so $z$ is not contained in a subgroup of type $\Sp_m(q^2)$. Now consider subgroups of type $\GU_m(q)$. If $m$ is even, then over $\F_{q_0}$ a power of $y$ is $[I_2,B_{2m-2}]$, which, by \cite[Lemma 5.3.2]{ref:BurnessGiudici16}, is not contained in a subgroup of type $\GU_m(q)$ since $m-1$ is odd. If $e$ is even, then over $\F_{q_0}$ a power of $y$ is $[A_2,I_{2m-2}]$, which over $\F_{q}$ has the form $[B_2,I_{2m-2}]$, which, as above, is not contained in $H$. Finally, if $m$ and $e$ are odd, then over $\F_{q_0}$ a power of $y$ is $[I_2,A_{2m-2}]$, which over $\F_{q}$ has the form $[I_2,A_{2d_1}, \dots, A_{2d_k}]$, where $d_i$ is even since $m-1$ is even and $e$ is odd. By \cite[Lemma 5.3.2]{ref:BurnessGiudici16}, this element is not contained $H$ since $d_i$ is even. \par

Now let us turn to $\C_4$ subgroups. By \cite[Lemma 3.7]{ref:LiebeckShalev99}, if $g$ has prime order and preserves a tensor product decomposition $V=U_1 \otimes U_2$, then $\nu(g) \geq \max\{\dim U_1, \dim U_2\}$. However, $\nu(z) = 2$, so $\dim U_1, \dim U_2 \leq 2$. Hence, $n \leq 4$: a contradiction. Therefore, $H \not\in \C_4$. \par

Write $T = \Sigma_n(q)$ where $\Sigma \in \{ \PSp, \Om \}$. If $H \in \C_5$, then $H$ has type $\Sigma_n(q_1)$ with $q=q_1^l$ for a prime $l$. Since $f(t\th)$ has order divisible by a ppd of $q_0^{(2m-2)/(m,2)}-1$, $f(t\th) \not\in \Sigma_{n}(F)$ for any proper subfield $F$ of $\F_{\!q_0}$. However, $f(t\th) \in \Sigma_n(q_1) \cap \Sigma_n(q_0) \leq \Sigma_n(\F_{q_0} \cap \F_{q_1})$, so $\F_{\!q_0} \cap \F_{\!q_1} = \F_{\!q_0}$. That is, $\F_{\!q_0} \leq \F_{\!q_1} \leq \F_q$. So $q_1=q_0^d$ for some $d$. \par

Since $q$ is not prime, $H \not\in \C_6$. \par

We treat $\C_7$ similarly to $\C_4$. By \cite[Lemma 7.1]{ref:Burness072}, if $g$ has prime order and preserves $V=U_1 \otimes \cdots \otimes U_t$ where $\dim U_1 = \cdots = \dim U_t = a$, then $\nu(g) \geq a^{t/2}$. However, $\nu(z) = 2$, so $a =1$ or $(a,t)=(2,2)$. Hence, $n \leq 4$: a contradiction. Therefore, $H \not\in \C_7$. \par

If $H \in \C_8$ then $T=\Sp_{2m}(q)$, $q$ is even and $H$ has type $\O^{\e}_{2m}(q)$ for $\e \in \{+,-\}$. \par

It remains to consider the $\S$ family. By \cite[Theorem 7.1]{ref:GuralnickSaxl03}, since $n \geq 6$, if $H \in \S$, then $\nu(g) > 2$ for all $g \in H$ or $H$ belongs to a known list of exceptions (see \cite[Table 2.3]{ref:Burness074}, for a convenient list of the exceptions). Since $q$ is not prime, $H$ is not an alternating or symmetric group acting on the fully deleted permutation module. Therefore, since $\nu(z) = 2$, the possibilities are
\begin{enumerate}[(i)]
\item{$T=\PSp_6(q)$ and $q$ odd: $H=J_2$;}
\item{$T=\Sp_{6}(q)$ ($q$ even) or $T=\Om_7(q)$: $H=G_2(q)'$.}
\end{enumerate}
First consider case (i). The order of $y$ is $l=\mathrm{lcm}(q_0+1,q_0^2+1)$. If $q_0 \equiv 1 \mod{4}$, then $y^{l/2}=-I_6$ and $\oly$ has order $l/2 \geq \mathrm{lcm}(5+1,5^2+1)/2 =39$. Otherwise, $\oly$ has order $l \geq \mathrm{lcm}(3+1,3^2+1) = 20$. In either case, $\oly$ is not contained in a subgroup of type $J_2$ since the maximum order of an element of $J_2$ is 15 (see \cite{ref:ATLAS}). \par

Now consider case (ii). Assume that $T=\Om_7(q)$; a very similar argument applies to $\Sp_6(q)$. A suitable power of $t\th$ is an $X$-conjugate of $g = [\l_1, \l_1^{q_0}, \l_2, \l_2^{q_0}, \l_2^{q_0^2}, \l_2^{q_0^3}, 1]$ where $\l_1 \in \F_{q_0^2}$ has order $q_0+1$ and $\l_2 \in \F_{q_0^4}$ has order $q_0^2+1$. (Either $g=y$ or $g=y^{q-1}$, depending on $\th$; see Table~\ref{tab:Elements}.)  It is well-known that $\SL_3(q) \leq G_2(q)$ and that the restriction of $V$ to $\SL_3(q)$ is $U \oplus U^* \oplus 0$ where $U$ is the natural $\SL_3(q)$ module and $0$ is the trivial module. Therefore, $g = [\a_1, \a_2, \a_3, \a_1^{-1}, \a_2^{-1}, \a_3^{-1},1]$. By the orders of the eigenvalues, without loss of generality, let $\a_1 = \l_1$ and $\a_2 = \l_2$. Since $\l_2^{-1} = \l_2^{q_0^2}$ we must have either: (a) $\a_3 = \l_2^{q_0}$ or (b) $\a_3 = \l_2^{q_0^3}$. Since $[\a_1,\a_2,\a_3] \in \SL_3(q)$, $\a_1\a_2\a_3=1$. If (a) holds, then \[ \l_1^{q_0} = \l_1^{-1} = \a_1^{-1} = \a_2\a_3 = \l_2\l_2^{q_0} = \l_2^{1+q_0} \] \[ \l_1 = \a_1 = (\a_2\a_3)^{-1} = \l_2^{-1}\l_2^{-q_0} = \l_2^{q_0^2}\l_2^{q_0^3} = \l_2^{q_0^2+q_0^3}. \]  Therefore, $\l_1^{q_0^2} = (\l_1^{q_0})^{q_0} = \l_2^{q_0+q_0^2}$. Since $\l_1 \in \F_{q_0^2}$, $\l_1 = \l_1^{q_0^2}$, so \[ \l_1^{q_0} = (\l_1^{q_0^2})^{q_0} = (\l_2^{q_0+q_0^2})^{q_0} = \l_2^{q_0^2+q_0^3} = \l_1.\] Therefore, $\l_1 \in \F_{q_0}$: a contradiction. Case (b) is similar. Hence, $H$ is not $G_2(q)$. This completes the proof.
\end{proof}

For the case $\mathbf{S_4}$ we need two further results on subgroup multiplicities.
\begin{proposition}\label{prop:MultiplicitySuzuki}
Let $q$ be even, $T=\Sp_4(q)$ and $\th$ an involutory graph-field automorphism. Then the element $t\th \in G$, defined in Table~\ref{tab:Elements}, is contained in exactly one subgroup of $G$ of type $Sz(q)$.
\end{proposition}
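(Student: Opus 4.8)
The plan is to apply the general multiplicity formula of Lemma~\ref{lem:MultiplicitiesGeneralFixed} to $H = N_G(Sz(q))$, after first pinning down the local structure. Since $\th$ is an involutory graph-field automorphism we have $\th = \r^f$ with $f$ odd, so in the notation of Section~\ref{sssec:PrelimsShintaniGraphField} one has $e=1$ and $q_0 = q$; in particular $X_\s = Sz(q)$, which equals $C_T(\th)$ by \cite[(19.4)]{ref:AschbacherSeitz76}, and $G_1 = X_{\s^2}\sp\<\s\> = T\sp\<\th\> = G$. The Shintani map is $f\:\{(t\th)^T\} \to \{x^{Sz(q)}\}$ sending $t\th \mapsto a^{-1}(t\th)^2a$, and by Table~\ref{tab:Elements} the chosen element satisfies $f(t\th) = \oly$, where $\oly$ has order $r$, a primitive prime divisor of $q^4-1$; since $r \nmid q^2-1$ we get $r \mid q^2+1$. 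I would first record, from \cite[Table 8.14]{ref:BrayHoltRoneyDougal}, that $Sz(q)$ is maximal and self-normalising in $T$ and that there is a single $G$-class of subgroups of type $Sz(q)$; each such subgroup has the form $N_G(S) \cong S \times C_2$, the $C_2$ being generated by a graph-field involution centralising $S \cong Sz(q)$, and $S$ is recovered from $N_G(S)$ as its derived subgroup. Hence the subgroups of $G$ of type $Sz(q)$ containing $t\th$ biject with the $G$-conjugates of $H := N_G(Sz(q))$ containing $t\th$.

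Next, since $H$ is maximal (hence self-normalising) in $G$, Lemma~\ref{lem:MultiplicitiesGeneralFixed} gives that this number equals $\frac{|G|}{|H|}\cdot\frac{|(t\th)^G\cap H|}{|(t\th)^G|}$. Here $(t\th)^G = (t\th)^T$ by Remark~\ref{rem:ShintaniDescent}(i), so $|(t\th)^G| = |T|/|C_T(t\th)|$, and $|C_T(t\th)| = |C_{Sz(q)}(\oly)|$ by Theorem~\ref{thm:ShintaniDescent}(ii). Using $r \mid q^2+1 = (q+\sqrt{2q}+1)(q-\sqrt{2q}+1)$, the subgroup $\<\oly\>$ lies in a maximal torus $A$ of $Sz(q)$ of order $q\pm\sqrt{2q}+1$, and such tori are self-centralising in $Sz(q)$, so $C_{Sz(q)}(\oly) = A$. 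As $(t\th)^G \subseteq T\th$ and (taking $H = N_G(X_\s)$) $H \cap T\th = X_\s\th = Sz(q)\th$, we have $|(t\th)^G \cap H| = N_s$ where $N_s := |\{s \in Sz(q) \mid s\th \in (t\th)^T\}|$. Feeding in $|G| = 2|T|$ and $|H| = 2|Sz(q)|$, the count collapses to $N_s\,|A|/|Sz(q)|$, so it remains to show $N_s = |Sz(q)|/|A|$.

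For the count of $N_s$ I would use that, for $s \in Sz(q) = X_\s$, one has $(s\th)^2 = s^2$, and that the Shintani map restricted to the elements $s\th$ with $s \in X_\s$ agrees up to $Sz(q)$-conjugacy with squaring, i.e.\ $f(s\th) \sim_{Sz(q)} s^2$ (immediate from the explicit formula for $f$ in \cite[\SS 2.6]{ref:BurnessGuest13}). Since $f$ is a bijection on classes, $s\th \in (t\th)^T$ if and only if $s^2 \sim_{Sz(q)} \oly$. If this holds then $s^2$ has order $r$, so $s$ has order $r$ or $2r$; but $Sz(q)$ has no element of order $2r$ (its element orders are $1,2,4$ and the divisors of $q-1$ and of $q\pm\sqrt{2q}+1$, none a multiple of $2r$), so $|s| = r$ and $s = (s^2)^{(r+1)/2}$. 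Conversely $(\oly^{(r+1)/2})^2 = \oly$. Thus $\{s \in Sz(q) \mid s^2 \sim_{Sz(q)}\oly\} = (\oly^{(r+1)/2})^{Sz(q)}$, and since $\oly^{(r+1)/2}$ generates $\<\oly\>$ its centraliser is again $A$; so $N_s = |Sz(q)|/|A|$ and the count is exactly $1$.

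I expect the main obstacle to be the identification of $N_s$ in the last step: it rests both on the fact that Shintani descent degenerates to the $2e$-th power map on $X_\s$-classes and on several specific structural facts about $Sz(q)$ — namely that the tori of order $q\pm\sqrt{2q}+1$ are self-centralising and that $Sz(q)$ has no element of order $2r$. The rest is bookkeeping with Lemma~\ref{lem:MultiplicitiesGeneralFixed} and the relevant group orders.
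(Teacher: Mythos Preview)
Your argument is correct and follows the same architecture as the paper's: both identify $H=N_G(Sz(q))=C_T(\th)\times\langle\th\rangle$, invoke Lemma~\ref{lem:MultiplicitiesGeneralFixed}, compute $|C_T(t\th)|=|C_{Sz(q)}(\oly)|=|A|$ via Theorem~\ref{thm:ShintaniDescent}(ii) and Suzuki's description of tori, and then analyse $(t\th)^G\cap H$ inside the coset $Sz(q)\th$. The difference lies only in this last step. The paper shows $(t\th)^G\cap H=(t\th)^H$ by proving that no two $Sz(q)$-classes of order $r$ fuse in $T$ (via an explicit count: both $Sz(q)$ and $\Sp_4(q)$ have exactly $(r-1)/4$ classes of order~$r$). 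You instead compute $|(t\th)^G\cap H|=N_s$ directly from the Shintani bijection, which is slightly more streamlined.

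One point deserves more care. You assert that for $s\in X_\s=Sz(q)$ one has $f(s\th)\sim_{Sz(q)}s^2$, calling this ``immediate from the explicit formula''. The formula only gives $f(s\th)=a^{-1}s^2a$ for some $a\in X=\Sp_4(K)$ with $a(a^\th)^{-1}=s$; this is \emph{a priori} only $X$-conjugate to $s^2$, not $Sz(q)$-conjugate. What saves you is that in the application $s$ has order $r$ and is therefore regular semisimple, so $C_X(s)$ is a $\th$-stable maximal torus; applying Lang--Steinberg to this connected torus produces $a\in C_X(s)$ with $a(a^\th)^{-1}=s$, whence $f(s\th)=a^{-1}s^2a=s^2$ on the nose. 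With that sentence added, your proof is complete. (Note that your claim as stated --- for \emph{all} $s\in Sz(q)$ --- would need more, but you only use it for $s$ of order $r$.) The paper's non-fusion count is an alternative route to the same conclusion; your torus argument is shorter but relies on recognising that Lang--Steinberg can be applied inside $C_X(s)$.
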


\begin{proof}
By \cite[Table 8.14]{ref:BrayHoltRoneyDougal}, there is a unique $G$-class of subgroups of type $Sz(q)$. Let $H = C_G(\th) = C_T(\th) \times \< \th \> \cong Sz(q) \times \< \th \>$. We need to show that $t\th$ is contained in exactly one $G$-conjugate of $H$. Thus, if we assume that $t\th \in H$, by Lemma~\ref{lem:MultiplicitiesGeneralFixed}, it suffices to show that $|C_G(t\th)| = |C_H(t\th)|$ and $(t\th)^G \cap H = (t\th)^H$. \par

Let us first show that $|C_G(t\th)| = |C_H(t\th)|$. By Proposition~\ref{prop:SympShintaniEvenGraphField}, the Shintani map \[ f\: \{ (g\th)^T \mid t \in T \} \to \{ x^{Sz(q)} \mid x \in Sz(q) \} \] is defined as $f(g\th) = a^{-1}(g\th)^2a$ where $a^{-\th^{-1}}a=g$. By Theorem~\ref{thm:ShintaniDescent}(ii), \[ |C_G(t\th)| = 2|C_T(t\th)| = 2|C_{Sz(q)}(f(t\th))|. \] By construction, $f(t\th) \in Sz(q)$ has order a ppd $r$ of $q^4-1$. Since $r$ divides $q + \e \sqrt{2q} + 1$ for some $\e \in \{1,-1\}$, by \cite[Prop. 16]{ref:Suzuki62}, \[|C_{Sz(q)}(x)|=q + \e \sqrt{2q} + 1,\] for every element $x \in Sz(q)$ of order $r$. Since $t\th$ has order $2r$, $t$ has order $r$ and \[ 2|C_{Sz(q)}(f(t\th))| = 2(q + \e \sqrt{2q} + 1) = 2|C_{C_{T}(\th)}(t\th)| = |C_{H}(t\th)|. \]

We will now prove that $(t\th)^G \cap H = (t\th)^H$. Let $s\th \in H$ be $G$-conjugate to $t\th$. We will first show that $s$ and $t$ are $T$-conjugate. By Remark~\ref{rem:ShintaniDescent}(ii), $s\th$ and $t\th$ are $T$-conjugate. Therefore, $s^2=(s\th)^2$ and $t^2 = (t\th)^2$ are $T$-conjugate. Record that $s, t \in C_T(\th) \leq T$ have order $r$. Since $r$ is odd, the square map on $T$ permutes the $T$-classes of order $r$. Therefore, since $s^2$ and $t^2$ are $T$-conjugate, $s$ and $t$ are $T$-conjugate. \par

We will now verify that $s\th$ and $t\th$ are $C_T(\th)$-conjugate. Observe that it suffices to show that $s$ and $t$ are $C_T(\th)$-conjugate. Since $s$ and $t$ are $T$-conjugate it suffices to show that no two $C_{T}(\th)$-classes of elements of order $r$ are fused into one $T$-class. Since $r$ does not divide $|T:C_{T}(\th)| = q^2(q-1)(q^2-1)$, every element of $T$ of order $r$ is $T$-conjugate to an element of $C_T(\th)$. Hence, it suffices to verify that there are the same number of classes of elements of order $r$ in $C_{T}(\th) \cong Sz(q)$ and $T \cong \Sp_4(q)$. \par

First consider $Sz(q)$. Let $\A$ be the set of centralisers of elements of order $r$ in $Sz(q)$. By \cite[Prop. 16]{ref:Suzuki62}, for all $A \in \A$, $|A| = q + \e\sqrt{2q} + 1$ and $C_{Sz(q)}(A)=A$. In particular, two members of $\A$ are either equal or intersect trivially. Moreover, by \cite[Theorem 9]{ref:Suzuki62}, all members of $\A$ are $Sz(q)$-conjugate. Since $|N_{Sz(q)}(A)| = 4|A|$, for all $x \in A$, $|x^{Sz(q)} \cap A| = 4$. Therefore, there are $(r-1)/4$ conjugacy classes of elements of order $r$ in $Sz(q)$. Now consider $\Sp_4(q)$. The conjugacy classes of elements of order $r$ in $\Sp_4(q)$ are represented by the elements $[ \l, \l^q, \l^{q^2}, \l^{q^3} ]$ where $\l \in \F_{q^4}$ is a non-trivial $r^{th}$ root of unity. So there are $(r-1)/4$ conjugacy classes of elements of order $r$. This establishes that $(t\th)^G \cap H = (t\th)^H$ and, thus, proves the result.
\end{proof}

\begin{proposition}\label{prop:MultiplicityFieldExtension}
Let $T=\PSp_4(q)$ and assume that $\th$ is not a graph-field automorphism. Let $t\th \in G$ be the element defined in Table~\ref{tab:Elements}. If $t\th$ is contained in a subgroup of $G$ of type $\Sp_2(q^2)$, then $e$ is odd and $t\th$ is contained in at most one such subgroup.
\end{proposition}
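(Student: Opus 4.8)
The plan is to suppose that $t\th$ lies in a maximal subgroup $H$ of $G$ of type $\Sp_2(q^2)$ — necessarily a $\C_3$ subgroup — and to analyse a well-chosen power of $t\th$ of prime order acting on $V=\F_q^4$. Since $\th$ is not a graph-field automorphism, $\th = \p^i$ or $\th = \d\p^i$ with $\s = \p^i$ of order $e$, and by Table~\ref{tab:Elements}, $f(t\th) = \oly$ where $y = A_4$ if $\th = \p^i$ and $y = C_4$ if $\th = \d\p^i$. As $f(t\th) = a^{-1}(t\th)^e a$, the cyclic group $\<(t\th)^e\>$ is $X$-conjugate to $\<\oly\>$. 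Let $r$ be a primitive prime divisor of $q_0^4-1$, which exists by Theorem~\ref{thm:PPD}; then $r$ is an odd prime dividing neither $q_0-1$ nor $q_0+1$, so $r \mid q_0^2+1$, the multiplicative order of $q_0$ modulo $r$ is $4$, and $q_0^2 \equiv -1 \mod{r}$. By Lemma~\ref{lem:ABCD}(i), $r$ divides $|A_4| = q_0^2+1$, so $\<\oly\>$, hence $\<(t\th)^e\>$, has a unique subgroup of order $r$; let $g$ generate it. Then $g$ is a power of $t\th$ lying in $X_{\s^e}$, and since $g$ has odd order its image in $X_{\s^e}/T$ (a group of order $1$ or $2$) is trivial, so $g \in T$; moreover $g$ is $X$-conjugate to $\overline{A_4^c}$ for a suitable $c$ with $A_4^c \in \Sp_4(q_0)$ of order $r$. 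As $t\th \in H$, we have $g \in H \cap T$.

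Next I would prove that $e$ is odd. The subgroup $H \cap T$ has type $\Sp_2(q^2)$, so it has an index-$2$ normal subgroup, namely the image in $T$ of a subgroup $\Sp_2(q^2) = \SL_2(q^2) \leq \Sp_4(q)$ acting $\F_{q^2}$-linearly on $V \cong \F_{q^2}^2$; since $g$ has odd order it lies there, so $g$ lifts to $\hat{g} \in \SL_2(q^2)$ of order $r$. On the one hand, $\hat{g}$ has eigenvalues $\mu, \mu^{-1}$ on $\F_{q^2}^2$ and hence, by restriction of scalars, eigenvalues $\mu, \mu^{-1}, \mu^q, \mu^{-q}$ on $\olV$. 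On the other hand, as $g$ is $X$-conjugate to $\overline{A_4^c}$ and $\det \hat{g} = 1$ (which kills the scalar relating a lift of $g$ to $A_4^c$), the eigenvalues of $\hat{g}$ on $\olV$ are $\nu, \nu^{q_0}, \nu^{q_0^2}, \nu^{q_0^3} = \nu, \nu^{-1}, \nu^{q_0}, \nu^{-q_0}$ for some $\nu \in \overline{\F}_{q_0}$ of order $r$, and these are pairwise distinct by Lemma~\ref{lem:ABCD}(iii). Since $r \nmid q_0+1$, the only inverse-closed $2$-subsets here are $\{\nu,\nu^{-1}\}$ and $\{\nu^{q_0},\nu^{-q_0}\}$, so after replacing $\mu$ by $\mu^{-1}$ if needed, $\mu \in \{\nu,\nu^{q_0}\}$ and $\{\mu^q,\mu^{-q}\}$ is the remaining pair. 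In each case this forces $q_0^{e-1} \equiv \pm 1 \mod{r}$ or $q_0^{e+1} \equiv \pm 1 \mod{r}$, and, using $q_0^2 \equiv -1\mod{r}$ and that $q_0$ has order $4$ modulo $r$, a short check of the four possibilities shows that $e$ is odd.

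For the uniqueness claim, I would argue that since $e$ is odd, the exponents $1, e, 2e, 3e$ are a permutation of $0,1,2,3$ modulo $4$, so the four eigenvalues of $\hat{g}$ form a single orbit under $\alpha \mapsto \alpha^q$; hence the characteristic polynomial of $\hat{g}$ on $V$ is irreducible over $\F_q$, and $C_{\mathrm{End}_{\F_q}(V)}(\hat{g}) = \F_q[\hat{g}]$ is a field isomorphic to $\F_{q^4}$, containing a unique subfield isomorphic to $\F_{q^2}$. An $\F_{q^2}$-structure on $V$ extending the $\F_q$-structure is the image of a ring embedding $\F_{q^2} \hookrightarrow \mathrm{End}_{\F_q}(V)$, and for $\hat{g}$ to act $\F_{q^2}$-linearly this image must centralise $\hat{g}$; so there is exactly one such structure, it is the one preserved by the $\F_{q^2}$-linear part of $H \cap T$, and this part — together with the form it then determines — is uniquely pinned down. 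Hence $H \cap T$ is the unique subgroup of $T$ of type $\Sp_2(q^2)$ containing $g$. As $H$ is maximal in $G$ with $T \not\leq H$, we have $H = N_G(H \cap T)$, so $H$ is the unique maximal subgroup of $G$ of type $\Sp_2(q^2)$ containing $t\th$.

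The step I expect to be most delicate is the first: correctly locating $g$, verifying $g \in T$, and reconciling the two descriptions of the eigenvalues of $\hat{g}$ uniformly across $\th = \p^i$ and $\th = \d\p^i$ while controlling the scalar by which a lift of $g$ differs from $A_4^c$. The modular arithmetic forcing $e$ to be odd is elementary but needs the handful of cases to be checked.
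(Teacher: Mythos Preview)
Your argument is correct. The first two steps --- producing a prime-order power $g$ of $t\th$ with eigenvalues $\nu,\nu^{q_0},\nu^{q_0^2},\nu^{q_0^3}$ and comparing this with the restriction-of-scalars pattern $\mu,\mu^{-1},\mu^q,\mu^{-q}$ to force $e$ odd --- are essentially the paper's approach, though the paper dispatches the parity of $e$ by a direct case split on $e\bmod 4$ rather than your modular arithmetic with the ppd $r$.

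The uniqueness step is where you take a genuinely different route. The paper applies Lemma~\ref{lem:MultiplicitiesGeneralFixed}: writing $H_0 = H\cap L = B{:}\langle\psi\rangle$ with $B=\PSp_2(q^2)$, it observes that when $e$ is odd the two $B$-preimages $[\l,\l^{q^2}]$ and $[\l^q,\l^{q^3}]$ of $x$ are fused by $\psi$, so $x^L\cap H_0 = x^{H_0}$, and since $|C_L(x)| = q^2+1 = |C_{H_0}(x)|$ the counting lemma gives exactly one conjugate of $H$ through $x$. Your argument is structural rather than enumerative: the irreducibility of $\hat g$ forces $C_{\mathrm{End}_{\F_q}(V)}(\hat g)\cong\F_{q^4}$, which has a unique $\F_{q^2}$-subfield, so there is a unique field-extension structure (hence a unique $\C_3$ subgroup) that $\hat g$ can normalise. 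This is a cleaner explanation of \emph{why} the subgroup is unique and avoids tracking class fusion inside $H_0$; the paper's version, on the other hand, stays within the fixed-point-ratio machinery already in play and reuses Lemma~\ref{lem:MultiplicitiesGeneralFixed} as elsewhere in the section. Your remark that the delicate point is controlling the scalar between a lift of $g$ and $A_4^c$ is well placed: it is handled by taking the lift $\hat g$ of order $r$, so that $\hat g$ and $A_4^c$ both lie in $\Sp_4(K)$ with the same odd order, forcing the scalar to be trivial.
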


\begin{proof}
Let $H \leq G$ have type $\Sp_2(q^2)$. Write $L=G \cap \PGL(V)$ and let $H_0 = H \cap L = B\sp\<\psi\>$ where $B=\PSp_2(q^2)$ and $\psi$ is an involutory field automorphism of $B$. Suppose that $t\th \in H$. By construction, an $X$-conjugate of a power of $t\th$ lifts to a prime order element $x=[\l,\l^{q_0},\l^{q_0^2},\l^{q_0^3}]$ where $\l \in \F_{q_0^4}$ is not contained in a proper subfield of $\F_{q_0^4}$. \par

First suppose that $e$ is even. For $\mu = \l^{q_0}$, $x=[\l,\l^q, \mu, \mu^q]$ if $e \equiv 2 \mod{4}$, and $x=[\l,\l^{-1}, \mu, \mu^{-1}]$ if $e \equiv 0 \mod{4}$. Since $B$ embeds in $L$, modulo scalars, as $[\l_1,\l_2] \mapsto [\l_1,\l_1^q,\l_2,\l_2^q]$, neither of these possibilities for $x$ are images of elements of $B$. \par

Now suppose that $e$ is odd. Then $x=[\l,\l^q,\l^{q^2},\l^{q^3}]$. Let $h$ be a preimage of $x$ in $H$. Then $h \in B$ and $h$ lifts to either $[\l,\l^{q^2}]$ or $[\l^q,\l^{q^3}]$. However, $[\l,\l^{q^2}]$ and $[\l^q,\l^{q^3}]$ are $H_0$-conjugate (although not $B$-conjugate). Therefore, $|x^L \cap H_0| = |x^{H_0}|$. Moreover, $|C_L(x)| = q^2+1 = |C_{H_0}(x)|$ (see \cite[Appendix B]{ref:BurnessGiudici16}, for example). Hence, by Lemma~\ref{lem:MultiplicitiesGeneralFixed}, $x$, and hence $t\th$, is contained in at most one $G$-conjugate of $H$. This completes the proof.
\end{proof}

\begin{proposition}\label{prop:MaximalSubgroupsPrimitiveS4}
In case $\mathbf{S_4}$, Proposition~\ref{prop:MaximalSubgroups} is true for irreducible primitive subgroups.
\end{proposition}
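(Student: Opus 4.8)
The plan is to follow the structure of the proof of Proposition~\ref{prop:MaximalSubgroupsPrimitive}, taking advantage of the fact that $n=4$ is small enough that the complete list of maximal subgroups of $G$ is available in \cite[Tables 8.12--8.14]{ref:BrayHoltRoneyDougal}. Let $H \in \M(G,t\th)$ be irreducible and primitive; since $t\th \in H$ and $T \not\leq H$, by \cite[Main Theorem]{ref:KleidmanLiebeck} the subgroup $H$ lies in one of the geometric families $\C_3, \dots, \C_8$ or in the collection $\S$. Recall that $f(t\th)$ is $X$-conjugate to the element $y$ of Table~\ref{tab:Elements}, that a suitable prime-power $z$ of $t\th$ lies in $\PGL(V)$ and is $X$-conjugate to a power of $y$, and that $z$ has order a primitive prime divisor $r$ of $q_0^4-1$ (which exists by Theorem~\ref{thm:PPD}, since $q>2$); in particular the four eigenvalues of $z$ are distinct, so $\nu(z)=3$. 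Each multiplicity bound recorded for a primitive type in Table~\ref{tab:MaximalSubgroups} will follow from one of Corollary~\ref{cor:MultiplicityCentraliser}, Corollary~\ref{cor:MultiplicitySubfield}, Corollary~\ref{cor:MultiplicityOrthogonal}, Proposition~\ref{prop:MultiplicitySuzuki} or Proposition~\ref{prop:MultiplicityFieldExtension}; the substance of the proof is therefore to identify precisely which types of $H$ can occur.

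First I would clear away the families that make no contribution. Since $q$ is not prime (as $\th \not\in \InnDiag(T)$), the class $\C_6$ is empty, and the classes $\C_4$ and $\C_7$ are empty for $\Sp_4(q)$, since a tensor decomposition of a $4$-dimensional symplectic space with all factors of dimension at least $2$ is incompatible with the symplectic form (see \cite[Tables 8.12--8.14]{ref:BrayHoltRoneyDougal}). Every $\C_3$-subgroup of $\Sp_4(q)$ has degree $2$, so $H$ has type $\Sp_2(q^2)$ or, when $q$ is odd, $\GU_2(q)$; in the graph-field case there is in addition the field-extension type $\O_2^-(q^2)$. By Proposition~\ref{prop:MultiplicityFieldExtension} a subgroup of type $\Sp_2(q^2)$ can contain $t\th$ only if $e$ is odd, and then with multiplicity at most one, while the bounds for $\GU_2(q)$ and $\O_2^-(q^2)$ follow from Corollary~\ref{cor:MultiplicityCentraliser}. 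For $\C_5$, the subgroup $H$ has type $\Sp_4(q_1)$ with $q=q_1^l$ for a prime $l$; since $|f(t\th)|$ is divisible by a ppd $r$ of $q_0^4-1$, the element $f(t\th)$ is contained in no copy of $\Sp_4(F)$ over a proper subfield $F$ of $\F_{q_0}$, whence $\F_{q_0} \leq \F_{q_1}$, i.e.\ $q_1 = q_0^d$; the multiplicity is at most $e$ when $l=e$ (Corollary~\ref{cor:MultiplicitySubfield} with $k=1$, noting $e=l$ is then prime) and is bounded by the constant $N$ of Corollary~\ref{cor:MultiplicityCentraliser} otherwise. Finally, $\C_8$ is non-empty only when $q$ is even, in which case $H$ has type $\O_4^+(q)$ or $\O_4^-(q)$, and Corollary~\ref{cor:MultiplicityOrthogonal} (whose proof applies with $m=2$) shows that $t\th$ lies in exactly one such subgroup.

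It remains to handle the collection $\S$. By \cite[Tables 8.12--8.14]{ref:BrayHoltRoneyDougal}, for $q$ not prime the $\S$-maximal subgroups of $G$ are of type $Sz(q)$ (possible only when $q$ is even and $f$ is odd) or of type $\SL_2(q)$ acting irreducibly on $V$ (possible only when $q$ is odd), together with finitely many subgroups such as $2.A_6$, $2.A_7$ and $2.\PSL_2(7)$ occurring only for small $q$. For the two generic types the multiplicity bounds come from Corollary~\ref{cor:MultiplicityCentraliser}, except that when $\th$ is an involutory graph-field automorphism — equivalently, when $e=1$ — Proposition~\ref{prop:MultiplicitySuzuki} gives multiplicity one for the $Sz(q)$ subgroup, matching the entry in Table~\ref{tab:MaximalSubgroups}. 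For the sporadic subgroups, $|f(t\th)|$ is divisible by a primitive prime divisor of $q_0^4-1$, which exceeds the largest element order of any such group once $q$ is bounded below, so no $G$-conjugate contains $t\th$; the finitely many remaining configurations correspond to groups treated computationally in Table~\ref{tab:Comp}, so may be excluded. Assembling the analysis of $\C_3$--$\C_8$ and $\S$ yields the proposition. The main obstacle here is the $\S$ collection: in dimension $4$ the invariant $\nu(z)=3$ is too weak to exclude the irreducible almost simple subgroups outright, as it was in higher dimension, so one must lean on the explicit classification in \cite{ref:BrayHoltRoneyDougal} and on element-order estimates for $f(t\th)$, and take care that the small-field exceptions are exactly those covered by Table~\ref{tab:Comp}.
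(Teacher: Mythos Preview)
Your approach is essentially the paper's: read off the possible primitive types from \cite[Tables 8.12--8.14]{ref:BrayHoltRoneyDougal} and invoke the prepared multiplicity results. There is, however, one genuine omission in the graph-field case. The subgroups of type $\O_2^{+}(q) \wr S_2$ and $\O_2^{-}(q) \wr S_2$ appear in Table~\ref{tab:MaximalSubgroups} with the side condition $e \neq 1$, and you nowhere address them. You may have set them aside as imprimitive, but the proof of Proposition~\ref{prop:MaximalSubgroupsImprimitive} explicitly assumes that $\th$ is not a graph-field automorphism in case $\mathbf{S_4}$, so these subgroups are not covered there and fall to the present proposition. The missing step is short: when $e=1$ the order of $t\th$ is divisible by a primitive prime divisor $r$ of $q_0^4-1=q^4-1$, and $r$ divides $q^2+1$ but not $q^2-1$, hence $r$ does not divide $|\O_2^{\pm}(q) \wr S_2|$, so no such subgroup contains $t\th$. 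For $e \neq 1$ the bound $q_0+\sqrt{2q_0}+1$ is just Corollary~\ref{cor:MultiplicityCentraliser}.

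A smaller point: your discussion of sporadic $\S$-subgroups and the appeal to Table~\ref{tab:Comp} is unnecessary. Since $\th \notin \InnDiag(T)$ forces $q$ to be non-prime, \cite[Tables 8.12--8.14]{ref:BrayHoltRoneyDougal} already show that the only $\S$-types arising are $Sz(q)$ and $\SL_2(q)$; the paper simply cites this fact.
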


\begin{proof}
By \cite[Tables 8.12--8.14]{ref:BrayHoltRoneyDougal}, since $q \neq p$, the only types of irreducible primitive maximal subgroups which arise are those given in Table~\ref{tab:MaximalSubgroups}. The uniqueness of the subgroups of type $\O_4^{\e}(q)$ and $\Sp_2(q^2)$, when they occur, follows from Corollary~\ref{cor:MultiplicityOrthogonal} and Proposition~\ref{prop:MultiplicityFieldExtension}. If $q$ is even, $\th$ is a graph-field automorphism and $e=1$, then the uniqueness of the subgroup of type $Sz(q)$ follows from Proposition~\ref{prop:MultiplicitySuzuki}. Moreover, in this case, no subgroups of type $\O^{\e}_{2}(q) \wr S_2$ occur since the order of $t\th$ is divisible by a ppd of $q^2+1$, which does not divide the order of these groups. The remaining multiplicities follow by Corollaries~\ref{cor:MultiplicityCentraliser} and \ref{cor:MultiplicitySubfield}.
\end{proof}

We have now proved Proposition~\ref{prop:MaximalSubgroups}.

\subsection{Probabilistic method}\label{ssec:ProofProbMethod}
Let $G = \<T,\th\> \in \A$ with $T \neq \PSp_4(2)'$ and $\th \not\in \InnDiag(T)$. Maintain the notation introduced at the opening of Section~\ref{sec:Proof}. Fix $t\th$ as the element defined in Table~\ref{tab:Elements}. In this section, we will use probabilistic techniques to establish our main results on uniform spread. (Some asymptotic results will be proved in Section~\ref{ssec:ProofAsymptotic}.) \par

Let us begin by recalling the definition \[ P(x,s) = 1 - \frac{|\{z \in s^G \mid G = \< x,z \>\}|}{|s^G|}. \] We can now state the key lemma, which encapsulates our probabilistic method.
\begin{lemma}\label{lem:ProbMethod}
Let $G$ be a finite group and let $s \in G$.
\begin{enumerate}
\item{For $x \in G$, \[ P(x,s) \leq \sum_{H \in \M(G,s)}^{} \fpr(x,G/H).\]}
\item{If for all $k$-tuples $(x_1,\dots,x_k)$ of prime order elements of $G$ \[ \sum_{i=1}^{k}P(x_i,s) < 1,\] then $G$ has uniform spread $k$ with respect to the conjugacy class $s^G$.}
\end{enumerate}
\end{lemma}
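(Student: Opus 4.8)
Both assertions are mild restatements of \cite[Lemmas 2.1, 2.2]{ref:BurnessGuest13}, and I would reprove them here for completeness, since they underpin the entire argument of Section~\ref{sec:Proof}. For part (i) the starting point is the elementary observation that $P(x,s)$ may be rewritten as an average over the class $x^G$ rather than over $s^G$. Indeed, for $g \in G$ one has $\<x, s^g\> = G$ if and only if $\<x^{g^{-1}}, s\> = G$ (conjugate the first subgroup by $g$), and as $g$ ranges over $G$ the element $s^g$ ranges $|C_G(s)|$-to-one over $s^G$ while $x^{g^{-1}}$ ranges $|C_G(x)|$-to-one over $x^G$; hence
\[ P(x,s) = \frac{|\{y \in x^G \mid \<y,s\> \neq G\}|}{|x^G|}. \]

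Now $\<y,s\> \neq G$ precisely when $y$ and $s$ lie in a common maximal subgroup of $G$, that is, when $y$ lies in $\bigcup_{H \in \M(G,s)} H$. Applying the union bound, and recalling that $\fpr(x,G/H) = |x^G \cap H|/|x^G|$ (which follows from the orbit-counting identity $\fix(x,G/H) = |C_G(x)| \cdot |x^G \cap H|/|H|$), gives
\[ P(x,s) \leq \frac{1}{|x^G|}\sum_{H \in \M(G,s)} |x^G \cap H| = \sum_{H \in \M(G,s)} \fpr(x,G/H), \]
which is part (i).

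For part (ii), let $w_1,\dots,w_k$ be non-identity elements of $G$; we must produce $z \in s^G$ with $\<w_i,z\> = G$ for every $i$. For each $i$ let $x_i$ be a power of $w_i$ of prime order; then $x_i \in \<w_i\>$, so $\<x_i,z\> \leq \<w_i,z\>$ for all $z$, and it therefore suffices to find $z \in s^G$ with $\<x_i,z\> = G$ for all $i$. By the definition of $P(x_i,s)$, the number of $z \in s^G$ with $\<x_i,z\> \neq G$ equals $P(x_i,s)\,|s^G|$, so the number of $z \in s^G$ that fail for at least one index is at most $\bigl(\sum_{i=1}^k P(x_i,s)\bigr)|s^G|$, which is strictly less than $|s^G|$ by hypothesis. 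Any $z$ in the (non-empty) complement satisfies $\<x_i,z\> = G$, and hence $\<w_i,z\> = G$, for every $i$, so $G$ has uniform spread $k$ with respect to the class $s^G$. There is essentially no obstacle here: the only points requiring any care are the conjugation identity underlying the reduction to an average over $x^G$, and the direction of the inclusion $\<x_i,z\> \leq \<w_i,z\>$ in the reduction to prime-order elements.
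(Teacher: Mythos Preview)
Your argument is correct and is precisely the standard proof appearing in \cite[Lemmas~2.1 and 2.2]{ref:BurnessGuest13}; the paper itself does not reprove the lemma but simply cites that reference, so your write-up is a faithful unpacking of what the citation contains.
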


\begin{proof}
See \cite[Lemmas 2.1 \& 2.2]{ref:BurnessGuest13}.
\end{proof}

Let us introduce a piece of notation. For an integer $k$, define
\[
\pi_k =
\left\{ 
\begin{array}{ll}
1 & \text{if $k$ is even} \\
0 & \text{if $k$ is odd}  \\
\end{array}
\right.
\]

We will now consider the cases $\mathbf{S}$, $\mathbf{O}$ and $\mathbf{S_4}$ in turn.
\begin{proposition}\label{prop:ProbSymp}
Let $m \geq 3$ and $G = \< \PSp_{2m}(q), \th \>$ where $\th \in \Aut(\PSp_{2m}(q))$.
\begin{enumerate}[itemsep=3pt]
\item{If $q$ is even, then $u(G) \geq 2$.}
\item{If $q$ is odd, then $u(G) \geq 4$.}
\item{As $q \to \infty$, $u(G) \to \infty$.}
\item{If $m \geq 16$, then $u(G) \geq q-1$.}
\end{enumerate}
\end{proposition}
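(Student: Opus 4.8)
The plan is to apply Lemma~\ref{lem:ProbMethod} with $s=t\th$, the element fixed in Table~\ref{tab:Elements}, whose set of maximal overgroups $\M(G,t\th)$ is described, with multiplicities, by Proposition~\ref{prop:MaximalSubgroups} and Table~\ref{tab:MaximalSubgroups}. By Lemma~\ref{lem:ProbMethod} it suffices to bound $\sum_{H\in\M(G,t\th)}\fpr(x,G/H)$ for every element $x\in G$ of prime order: to obtain $u(G)\ge k$ it is enough that this sum is less than $1/k$ for all such $x$, so parts (i)--(iv) reduce to showing the sum is $<1/2$ (for $q$ even), $<1/4$ (for $q$ odd), $\to 0$ as $q\to\infty$, and $<1/(q-1)$ when $m\ge16$, respectively. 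I would bound each summand using Section~\ref{sec:FPRs}: Proposition~\ref{prop:FPRsSubspace}(i),(iv) for the parabolic $P_{m-1}$ and for the $\C_8$-subgroup of type $\O^{+}_{2m}(q)$ or $\O^{-}_{2m}(q)$ (which occurs only when $q$ is even); Proposition~\ref{prop:FPRs2Space} for the stabiliser $\Sp_2(q)\times\Sp_{2m-2}(q)$ of a non-degenerate $2$-space; Proposition~\ref{prop:FPRsTrans} for subfield subgroups in the case $\nu(x)=1$; and Proposition~\ref{prop:FPRs}(iii), via Table~\ref{tab:FPRsExtra}, for the remaining non-subspace subgroups $\Sp_2(q)\wr S_m$, $\GL_m(q).2$, $\Sp_m(q)\wr S_2$ and $\Sp_{2m}(q^{1/l})$. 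Alongside these I would record the crude bounds $\tfrac12\binom{m}{m/2}<2^{m-1}$, $2^{(m-1,e)}\le 2^e$, $m(H)\le\max\{e^2,2q^{m/2}\}$ for a subfield subgroup $H$, the facts that $e$ has at most $\log_2\log_2 q$ prime divisors and that $\th\notin\InnDiag(T)$ forces $q$ to be a proper prime power (so $q\ge4$, and $q\ge9$ if $q$ is odd), and the elementary inequality $(2q+2)^{1/2}\le\sqrt3\,q^{1/2}$.

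For parts (i)--(iii) the arithmetic is forgiving. The only fixed point ratio bound that is not $O(q^{-2})$ is the one for the $\C_8$-subgroup, which is at most $q^{-1}+q^{-m}$ and is only relevant when $q$ is even; every other summand, \emph{with its multiplicity included}, is $O(q^{-2})$, because for $m\ge3$ the denominators $q^{m-1}$, $q^{m-2}$, $q^m$ appearing in the fixed point ratio bounds dominate the largest multiplicities $2^{m-1}$ and $q^{m/2}$. Hence $\sum_H\fpr(x,G/H)<\tfrac12$ when $q\ge4$ is even, and $<\tfrac14$ when $q\ge9$ is odd (in which case the $\C_8$-subgroup is absent and the dominant term is the $q^{-2}$ coming from the non-degenerate $2$-space stabiliser). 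Lemma~\ref{lem:ProbMethod} then gives parts (i) and (ii), and since the resulting bounds are $O(q^{-1})\to0$, part (iii) follows too. (The cases $\th\in\InnDiag(T)$ excluded by the standing assumption of this section are covered by Proposition~\ref{prop:InnDiag}.)

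For part (iv) I would fix a prime order $x\in G$ and show $\sum_{H\in\M(G,t\th)}\fpr(x,G/H)<\tfrac{1}{q-1}$; summing over $q-1$ such elements and applying Lemma~\ref{lem:ProbMethod} then yields $u(G)\ge q-1$. If $x\in\PGL(V)$ with $\nu(x)\ge2$, or if $x\notin\PGL(V)$, then the $\C_8$-term drops to $q^{-2}+q^{-m}$, the non-degenerate $2$-space term to $q^{-4}+q^{-6}+\cdots$ or to $2q^{-(2m-1)}$, and by Table~\ref{tab:FPRsExtra} every non-subspace term (multiplicity included) has size of order $q^{-(m-O(1))}$ or $2^{-(m-O(1))}$, hence is $<q^{-3}$ for $m\ge16$; so the total is $<q^{-2}+q^{-3}<\tfrac{1}{q-1}$, with room to spare. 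The delicate case is $x\in\PGL(V)$ with $\nu(x)=1$. Here the sharp bounds are essential: I would use $\fpr(x,G/H)<q^{-2}+q^{-4}+q^{-(2m-2)}+q^{-(2m-1)}$ from Proposition~\ref{prop:FPRs2Space} for the non-degenerate $2$-space stabiliser (the weaker Proposition~\ref{prop:FPRsSubspace}(iii) would contribute an unaffordable $q^{-1}$ here), and $\fpr(x,G/H)<2q^{-m}$ from Proposition~\ref{prop:FPRsTrans} for each subfield subgroup; meanwhile $P_{m-1}$ (multiplicity $\le2$) contributes at most $2(3q^{-(m-1)}+q^{-m})$, the imprimitive subgroups $\Sp_2(q)\wr S_m$, $\GL_m(q).2$ and $\Sp_m(q)\wr S_2$ contribute, multiplicity included and using $q\ge4$, a total of size $O(2^{-(m-4)})+O(q^{-(m-5/2)})$, the subfield subgroups contribute $O(\log\log q)\cdot O(q^{-(m-O(1))})$, and the $\C_8$-subgroup (when $q$ is even) at most $q^{-1}+q^{-m}$. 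Adding everything, $\sum_H\fpr(x,G/H)<q^{-1}+q^{-2}+E$ where, for $m\ge16$ and $q\ge4$, $E$ is at most $q^{-4}$ plus a remainder that stays below $q^{-3}-q^{-4}$; since $\tfrac{1}{q-1}=q^{-1}+q^{-2}+q^{-3}+q^{-4}+\cdots$, this gives $\sum_H\fpr(x,G/H)<\tfrac{1}{q-1}$, as required.

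The main obstacle is exactly this last case when $q$ is even: the $\C_8$-subgroup $\O^{+}_{2m}(q)$ or $\O^{-}_{2m}(q)$ contributes a term essentially as large as $q^{-1}$, which consumes all but a $q^{-2}+q^{-3}+\cdots$ sliver of $\tfrac{1}{q-1}$, so one must verify that \emph{every} remaining contribution fits into that residual margin. The contributions to watch are those with the largest multiplicities — the $\Sp_m(q)\wr S_2$ subgroups, with multiplicity $\sim 2^{m-1}$, and the $\C_5$ subfield subgroups, with multiplicity $\sim q^{m/2}$ — and controlling them is precisely what $m\ge16$ delivers: for all $q\ge4$ it forces $2^{m-1}$ and $q^{m/2}$ to be much smaller than $q^{m-3}$, so after multiplication by the corresponding fixed point ratios (which are exponentially small in $m$) these terms fall well below the available $q^{-3}$. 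The only other points needing care are that there are just $O(\log\log q)$ distinct $\C_5$-classes to sum over, handled by a logarithmic factor, and that the refined Proposition~\ref{prop:FPRs2Space} — rather than Proposition~\ref{prop:FPRsSubspace}(iii) — must be invoked for the non-degenerate $2$-space stabiliser throughout the $\nu(x)=1$ analysis.
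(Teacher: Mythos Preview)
Your overall strategy matches the paper's exactly, and your treatment of parts~(iii) and~(iv) is essentially correct (indeed, for~(iv) the paper gets away without your case split on $\nu(x)$: the single crude bound $P(x,t\th)<q^{-1}+q^{-2}+q^{-m/2+5}+\cdots$ already drops below $1/(q-1)$ once $m\ge16$). The real gap is in your handling of parts~(i) and~(ii).

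Your assertion that for $m\ge3$ ``the arithmetic is forgiving'' and every non-$\C_8$ summand with multiplicity is $O(q^{-2})$ is false for small $m$. The subfield contribution is governed by a multiplicity of order $q_0^m\le q^{m/2}$ (from Corollary~\ref{cor:MultiplicityCentraliser}) times an fpr of order $q^{1/2}/q^{m-1}$ (from Proposition~\ref{prop:FPRs}), giving roughly $q^{-(m/2-3/2)}$; for $m=3$ this is $O(1)$, not $O(q^{-2})$. Similarly the $\Sp_m(q)\wr S_2$ subgroup has $\ell=2$ in Table~\ref{tab:FPRsIota}, so its fpr is only $\approx q^{1/2}/q^{m-2}$, which for $m=4$ and small $q$ is not negligible against the target $1/2$ or $1/4$. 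This is why the paper does \emph{not} argue uniformly: it separates the cases $e\ge5$, $e=4$, and $e\in\{2,3\}$, exploiting in each the sharper of the two subfield multiplicities in Table~\ref{tab:MaximalSubgroups} (namely $e^2$ when $l=e$). Even then the bound fails outright for $(m,q)\in\{(3,4),(4,4),(3,8),(3,9)\}$; for $(3,9)$ one uses that $N_{ti}=0$ and the refined Table~\ref{tab:FPRsExtra} bounds, for $(3,8)$ and $(4,4)$ one splits on whether $\nu(x)=1$ and invokes Proposition~\ref{prop:FPRsTrans}, and for $(3,4)$ the probabilistic bound only yields $P(x,t\th)<0.601$ when $\nu(x)=1$, so the paper finishes that case with a direct \textsc{Magma} verification. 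Your proposal omits all of this small-case analysis, and without it parts~(i) and~(ii) are not established.
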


\begin{proof}
We will apply Lemma~\ref{lem:ProbMethod} with $s=t\th$. Let $x \in G$ have prime order. Proposition~\ref{prop:MaximalSubgroups} gives a superset of $\M(G,t\th)$ and together with the fixed point ratios in Propositions~\ref{prop:FPRs}, \ref{prop:FPRsSubspace} and \ref{prop:FPRs2Space} we obtain 
\begin{align*}
P(x,t\th) < \left( \frac{1}{q^2} + \frac{1}{q^4} + \frac{2}{q^{2m-2}} + \frac{1}{q^{2m-1}} \right) &+ \pi_m \left( \frac{6}{q^{m-1}} + \frac{2}{q^m} \right) \\
&+ N \frac{(2q+2)^{1/2}}{q^{m-1}} + \pi_q \left( \frac{1}{q} + \frac{1}{q^m-1} \right),
\end{align*}
where \[ N = 1 + N_{nd} \cdot q + N_{ti} + N_{s} \] and $N_{nd}$, $N_{ti}$ and $N_{s}$ are the numbers of subgroups in $\M(G,t\th)$ of type $\Sp_m(q) \wr S_2$, $\GL_m(q).2$ and subfield subgroups, respectively. (The factor of $q$ associated with $N_{nd}$ is to account for the fact that, in this case, $\ell = 2$; see Proposition~\ref{prop:FPRs}.) \par

From Proposition~\ref{prop:MaximalSubgroups}, Corollary~\ref{cor:MultiplicityCentraliser} and the fact that $e$ has at most $2 + \log{\log{q}}$ distinct prime divisors,
\begin{equation}
N \leq 1 + (\pi_m \cdot q + \pi_{q+1} + (2 + \log\log{q}))(q^{m/2}+q^{(m-1)/2}+q^{1/2}+1). \label{eq:N5}
\end{equation} 
This yields
\begin{align*}
P(x,t\th) &< \frac{1}{q} + \frac{1}{q^{2}} + \frac{1}{q^{m/2-5}} + \left(\frac{1}{q^{m-5}} + \frac{2}{q^{m-3}} + \frac{6}{q^{m-1}} + \frac{1}{q^m-1} + \frac{2}{q^m} + \frac{3}{q^{2m-2}} \right).
\end{align*}
Therefore, as $q \to \infty$, $P(x,t\th) \to 0$ and, consequently, $u(G) \to \infty$. Moreover, if $m \geq 16$, then \[ P(x,t\th) \leq \frac{1}{q} + \frac{1}{q^2} + \frac{1}{q^3} + \frac{1}{q^4} < \frac{1}{q-1}\] and, consequently, $u(G) \geq q-1$. This proves (iii) and (iv). \par

Let us now prove (i) and (ii). We will consider various cases depending on $e$, $m$ and $q$. First suppose that $e \geq 5$. The upper bound in \eqref{eq:N5} decreases with $q$ and $m$. Therefore, considering $m=3$ and $q=2^5$ shows that $P(x,\th) < 1/2$ for $q$ even, and considering $m=3$ and $q=3^5$ shows that $P(x,t\th) < 1/4$ for $q$ odd. \par

Now suppose that $e=4$. Since $e$ has a unique prime divisor, by Proposition~\ref{prop:MaximalSubgroups},
\begin{equation*}
N \leq 1 + \pi_m \cdot \frac{1}{2}\binom{m}{\frac{m}{2}} \cdot q + \pi_{q+1} \cdot 2^{(m-1,e)} + (q^{m/4}+q^{(m-1)/4}+q^{1/4}+1)
\end{equation*} 
and with this bound the result can be verified. \par

Finally suppose that $e \in \{2, 3\}$. Since $e$ is prime, by Proposition~\ref{prop:MaximalSubgroups},
\begin{equation*}
N \leq 1 + \pi_m \cdot \frac{1}{2}\binom{m}{\frac{m}{2}} \cdot q + \pi_{q+1} \cdot 2^{(m-1,e)} + e^2.
\end{equation*} 
With this bound, the result follows unless $(m,q) \in \{ (3,4), (4,4), (3,8), (3,9) \}$. \par

Let $(m,q)=(3,9)$. Since $\frac{2m-2}{(2m-2,e)} = 2$ is even, $N_{ti}=0$ (see Table~\ref{tab:MaximalSubgroups}). Together with the refined bound from Proposition~\ref{prop:FPRs} (Table~\ref{tab:FPRsExtra}), we can verify that $P(x,t\th) < \frac{1}{4}$. \par

Now let $(m,q) \in \{ (3,8), (4,4) \}$. If $x \not\in \PGL(V)$ or $\nu(x) > 1$, then we have improved bounds for the $\C_1$ and $\C_8$ subgroups from Propositions~\ref{prop:FPRsSubspace} and \ref{prop:FPRs2Space} and we may use the refined bound in Proposition~\ref{prop:FPRs}. If $x \in \PGL(V)$ and $\nu(x)=1$, then we have specialised bounds for the subfield subgroups from Proposition~\ref{prop:FPRsTrans}. In both cases, $P(x,t\th) < \frac{1}{2}$. \par

Finally, let $(m,q) = (3,4)$. Arguing as above, if $x \not\in \PGL(V)$ or $\nu(x) > 1$, then $P(x,t\th) < 0.254$, and if $x \in \PGL(V)$ and $\nu(x) = 1$, then $P(x,t\th) < 0.601 < 1 - 0.254$. Therefore, for all $x_1, x_2 \in G$ of prime order there exists $g \in G$ such that $\< x_1, (t\th)^g \> = \< x_2, (t\th)^g \> = G$ unless $x_1, x_2 \in \PGL(V)$ and $\nu(x_1)=\nu(x_2)=1$. In this case, we can verify in \textsc{Magma} that there exists $g \in G$ such that $\< x_1, (t\th)^g \> = \< x_2, (t\th)^g \> = G$.
\end{proof}

\begin{proposition}\label{prop:ProbOrth}
Let $q$ be odd, $m \geq 3$ and $G = \< \Om_{2m+1}(q), \th \>$ where $\th \in \Aut(\Om_{2m+1}(q))$.
\begin{enumerate}[itemsep=3pt]
\item{For all $G$, $u(G) \geq 3$.}
\item{As $q \to \infty$, $u(G) \to \infty$.}
\item{If $m \geq 18$, then $u(G) \geq q-1$.}
\end{enumerate}
\end{proposition}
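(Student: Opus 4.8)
The plan is to apply Lemma~\ref{lem:ProbMethod} with $s=t\th$, mirroring the proof of Proposition~\ref{prop:ProbSymp}. Since $t\th$ is only defined when $\th\not\in\InnDiag(T)$, I first dispose of the diagonal case: for $\th\in\InnDiag(T)$, parts (i) and (ii) are Proposition~\ref{prop:InnDiag}(ii),(iv), and for (iii) one reruns the argument of Proposition~\ref{prop:InnDiag}, inserting the explicit fixed point ratio bounds of Section~\ref{sec:FPRs} in place of the cruder estimates used there. So from now on assume $\th\not\in\InnDiag(T)$ and fix a prime order element $x\in G$.

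The first step is to list $\M(G,t\th)$ using case $\mathbf{O}$ of Table~\ref{tab:MaximalSubgroups}: the only subgroups are of type $\O^\e_{2m}(q)$, $\O^\e_2(q)\times\O_{2m-1}(q)$ and $\O_3(q)\times\O^\e_{2m-2}(q)$, each of multiplicity $1$; the parabolic $P_{m-1}$, of multiplicity at most $2$ and only when $m$ is even; and subfield subgroups of type $\O_{2m+1}(q^{1/l})$ for the primes $l\mid e$, whose total number $N$ is controlled by Corollaries~\ref{cor:MultiplicityCentraliser} and \ref{cor:MultiplicitySubfield} (so $N\le e^3$ when $e$ is prime, and in general $N$ is at most a polynomial in $q_0=q^{1/e}$ times the number of prime divisors of $e$, which is at most $2+\log\log q$). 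The second step is to feed fixed point ratios into Lemma~\ref{lem:ProbMethod}(i). For the three reducible non-parabolic subgroups I would apply Proposition~\ref{prop:FPRsSubspace}(i),(ii) to whichever of a non-degenerate subspace or its orthogonal complement gives the better estimate; this leaves leading terms of order $q^{-1}$, $q^{-2}$ and $q^{-3}$ (for $m\ge 5$; for $m\in\{3,4\}$ the last is only $q^{-(m-2)}$) for the subgroups of type $\O^\e_{2m}(q)$, $\O^\e_2(q)\times\O_{2m-1}(q)$ and $\O_3(q)\times\O^\e_{2m-2}(q)$ respectively, together with error terms of size $O(q^{-(m-1)})$, while $P_{m-1}$ contributes $O(q^{-(m-1)})$. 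For the subfield subgroups I would use Proposition~\ref{prop:FPRs}(i), sharpened by Proposition~\ref{prop:FPRsTrans} to $\fpr(x,G/H)<2q^{-m}$ when $x\in\PGL(V)$ and $\nu(x)=1$. Summing, $P(x,t\th)$ is bounded by $q^{-1}+q^{-2}+q^{-3}$ plus a term $O(q^{-(m-1)})$ and a subfield contribution at most $N(4q+4)^{1/2}q^{-(m-1)}$.

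For (iii), when $m\ge 18$ both error contributions are swallowed by the tail $\sum_{j\ge 4}q^{-j}=q^{-3}(q-1)^{-1}$: the $O(q^{-(m-1)})$ term is immediate, and for the subfield term one uses $q_0\ge 3$ (since $q$ is odd and not prime) together with the multiplicity bounds — in the worst case $l\ne e$, where $N=O(q_0^{m+1})$, the factor $q_0^{m+1}\cdot q^{-(m-1)}=q_0^{m+1-e(m-1)}$ decays because $e\ge 2$. Hence $P(x,t\th)<(q-1)^{-1}$, and Lemma~\ref{lem:ProbMethod}(ii) gives $u(G)\ge q-1$. For (ii), the same estimates show $P(x,t\th)\to 0$ as $q\to\infty$: the leading terms visibly vanish, and $N$ grows with $q$ only when $l\ne e$, in which case $q_0=q^{1/l}$ is a proper root of $q$, so $N(4q+4)^{1/2}q^{-(m-1)}$ still tends to $0$. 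Thus $u(G)\to\infty$.

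The main obstacle is part (i) for small $q$. For $q$ large enough (ii) already yields $u(G)\ge 3$, but the bounds above are far too weak for the few groups with $m$ small and $q\in\{9,25,27,\dots\}$ — most critically $\Om_7(q)$ — where the $q^{-1}$ term from the $\O^\e_{2m}(q)$ subgroup and the subfield contribution are each of the same order as $1/3$. Here I would split on $\nu(x)$: if $x\not\in\PGL(V)$ or $\nu(x)\ge 2$ then Proposition~\ref{prop:FPRs}(i) supplies the extra factor $q^{-1/2}$ on subfield subgroups; if $\nu(x)=1$ then $x$ is a reflection $[-I_{2m},1]$, so $x^G\cap H=x^H$ for every $H\in\M(G,t\th)$ and the relevant fixed point ratios can be computed exactly from the centraliser and subgroup orders in \cite[Appendix B]{ref:BurnessGiudici16}, just as in the proof of Proposition~\ref{prop:FPRsTrans}. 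Any cases that still resist this analysis would be settled by the \textsc{Magma} computations of Table~\ref{tab:Comp}. Making this case division watertight — in particular verifying that the exact reflection estimates really do push $P(x,t\th)$ below $1/3$ for the small groups $\Om_7(q)$ — is where the bulk of the effort goes.
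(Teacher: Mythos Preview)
Your approach is essentially the paper's: list $\M(G,t\th)$ from Table~\ref{tab:MaximalSubgroups}, feed in the fixed point ratio bounds of Propositions~\ref{prop:FPRs}--\ref{prop:FPRsSubspace}, and sum to bound $P(x,t\th)$ by $q^{-1}+q^{-2}+q^{-3}$ plus lower-order terms and a subfield contribution $N(4q+4)^{1/2}q^{-(m-1)}$, from which (ii) and (iii) follow. For part~(i) the paper is terser than your proposal: it simply asserts that arguing as in Proposition~\ref{prop:ProbSymp} (i.e.\ splitting on $e$ and sharpening $N$ via Corollary~\ref{cor:MultiplicitySubfield} when $e$ is prime) gives $P(x,t\th)<\tfrac13$ in every case except $\soc(G)=\Om_7(9)$, and for that single group it computes the subspace fixed point ratios directly in \textsc{Magma}, rather than carrying out the $\nu(x)$ case split and exact reflection calculation you outline.
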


\begin{proof}
Let $x \in G$ have prime order. Proposition~\ref{prop:MaximalSubgroups} gives a superset of $\M(G,t\th)$ and together with the fixed point ratios in Propositions~\ref{prop:FPRs}--\ref{prop:FPRsSubspace} we obtain \[ P(x,t\th) < \frac{1}{q} + \frac{1}{q^2} + \frac{1}{q^3} + \frac{1}{q^{m-2}} + \frac{14}{q^{m-1}} + \frac{5}{q^m} + \frac{2}{q^{(m+1)/2}} + N\frac{(4q+4)^{1/2}}{q^{m-1}}  \] where $\M(G,t\th)$ contains $N$ subfield subgroups. Since $e$ has at most $2 + \log{\log{q}}$ distinct prime divisors,
\begin{align*}
N &\leq (2 + \log\log{q})(q^{(m+1)/2}+q^{m/2}+q+q^{1/2}).
\end{align*}
Therefore,
\begin{align*}
P(x,t\th) &< \frac{1}{q} + \frac{1}{q^2} + \frac{1}{q^3} + \frac{1}{q^{m/2-5}} + \frac{2}{q^{(m+1)/2}} + \frac{1}{q^{m-5}} + \frac{1}{q^{m-2}} + \frac{14}{q^{m-1}} + \frac{5}{q^m},
\end{align*}
so $P(x,t\th) \to 0$, and $u(G) \to \infty$, as $q \to \infty$ . Moreover, if $m \geq 18$, then $P(x,t\th) < \frac{1}{q-1}$, and $u(G) \geq q-1$. Finally, unless $\soc(G) = \Om_7(9)$, it is straightforward to show that $P(x,t\th) < \frac{1}{3}$, and $u(G) \geq 3$, by arguing as in the proof of Proposition~\ref{prop:ProbSymp}. In the case that $\soc(G)=\Om_7(9)$ we apply the same approach, but for the subspace subgroups we determine the fixed point ratios using $\textsc{Magma}$.
\end{proof}

\begin{proposition}\label{prop:ProbS4}
Let $G = \< \PSp_{4}(q), \th \>$ where $\th \in \Aut(\PSp_{4}(q))$.
\begin{enumerate}[itemsep=3pt]
\item{For all $G$, $u(G) \geq 2$.}
\item{As $q \to \infty$, $u(G) \to \infty$.}
\item{If $\th$ is an involutory graph-field automorphism, then $u(G) \geq q^2/18$.}
\end{enumerate}
\end{proposition}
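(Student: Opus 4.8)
The plan is to run the probabilistic method of Lemma~\ref{lem:ProbMethod} with $s=t\th$, the element constructed in Table~\ref{tab:Elements}, using Proposition~\ref{prop:MaximalSubgroups} to enumerate $\M(G,t\th)$ with multiplicities and the sharp estimates of Proposition~\ref{prop:FPRsSymp4} for the fixed point ratios. The case $\th\in\InnDiag(T)$ is handled by Proposition~\ref{prop:InnDiag}(i),(iv) together with the relevant rows of Table~\ref{tab:Comp}, so I would assume $\th\notin\InnDiag(T)$ throughout; then $q=p^f$ with $f>1$, so Proposition~\ref{prop:FPRsSymp4} applies, and every $H\in\M(G,t\th)$ is a non-subspace subgroup since a power of $t\th$ has order a primitive prime divisor of $q_0^4-1$ and so acts irreducibly on the natural module.

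First I would establish the general estimate behind (i) and (ii), in the case where $\th$ is not an involutory graph-field automorphism; the involutory graph-field case is subsumed by (iii) below, since $q^2/18\to\infty$ and exceeds $2$ for $q\geq8$. Here $e\geq2$, so $q_0=q^{1/e}\leq q^{1/2}$. By Lemma~\ref{lem:ProbMethod}(i), $P(x,t\th)\leq\sum_{H\in\M(G,t\th)}\fpr(x,G/H)$ for $x\in G$ of prime order. Each summand is at most $4/q(q-1)$ by Proposition~\ref{prop:FPRsSymp4}, except that a single subgroup of type $\Sp_2(q)\wr S_2$ or $\Sp_2(q^2)$ (and only one of these types can occur, depending on the parity of $e$) may contribute $q/(q^2-1)$ when $x$ is an $a_2$ or $t_2$ involution. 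Reading the multiplicities off Table~\ref{tab:MaximalSubgroups}, using $q_0\leq q^{1/2}$ and the fact that $e$ has at most $2+\log\log q$ distinct prime divisors, the number of subgroups in $\M(G,t\th)$ is $O\big((1+\log\log q)\,q\big)$, so
\[ P(x,t\th)\ \leq\ \frac{O\big((1+\log\log q)q\big)}{q(q-1)}+\frac{q}{q^2-1}\ \longrightarrow\ 0 \qquad(q\to\infty), \]
proving (ii). For (i) the same bound gives $P(x,t\th)<\tfrac12$, hence $u(G)\geq2$, once $q$ exceeds an explicit threshold; I would organise the finitely many remaining groups by the value of $e$ (as in the proof of Proposition~\ref{prop:ProbSymp}), since for $e$ prime the subfield-subgroup multiplicities in Table~\ref{tab:MaximalSubgroups} simplify. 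The groups over small fields that still survive are exactly those listed in Table~\ref{tab:Comp} with socle $\PSp_4(q)$, and for these $u(G)\geq2$ is verified in \textsc{Magma}.

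For (iii), the key observation is that an involutory graph-field automorphism $\th=\r^j$ satisfies $(\r^j)^2=\p^j=1$, forcing $j=f$ with $f$ odd, so $e=1$, $q_0=q$ and $q=2^f\geq8$. Then $\M(G,t\th)$ collapses: with $e=1$ there are no subfield subgroups and no subgroups of type $\O_2^{\pm}(q)\wr S_2$ (the latter require $e\neq1$), there is a unique subgroup of type $Sz(q)$ by Proposition~\ref{prop:MultiplicitySuzuki}, and the only remaining members of $\M(G,t\th)$ have type $\O_2^-(q^2)$, of which there are at most $q+\sqrt{2q}+1$ by Corollary~\ref{cor:MultiplicityCentraliser}. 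Applying the stronger bounds $\fpr(x,G/H)\leq q^{-2}$ for $H$ of type $Sz(q)$ and $\fpr(x,G/H)\leq 8/q^2(q-1)$ for $H$ of type $\O_2^-(q^2)$ from Proposition~\ref{prop:FPRsSymp4}(i),(ii), I obtain
\[ P(x,t\th)\ \leq\ \frac{1}{q^2}+\big(q+\sqrt{2q}+1\big)\frac{8}{q^2(q-1)}\ =\ \frac{1}{q^2}\left(1+\frac{8(q+\sqrt{2q}+1)}{q-1}\right) \]
for every prime-order $x\in G$. Since the bracketed factor is decreasing in $q$ and at $q=8$ already lies below $16<18$ (equivalently $8\sqrt{2q}<9q-25$ for $q\geq8$), Lemma~\ref{lem:ProbMethod}(ii) gives $u(G)\geq q^2/18$, the integer bookkeeping being routine; the tightest instance $q=8$ is in any case covered by Table~\ref{tab:Comp}.

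The genuine difficulty lies entirely upstream, in Propositions~\ref{prop:MaximalSubgroups} and~\ref{prop:FPRsSymp4}. In particular, the decisive input for (iii) is the $O(q^{-3})$ bound of Proposition~\ref{prop:FPRsSymp4}(ii) for the $\O_2^-(q^2)$ subgroups: it is exactly this that lets the $O(q)$ such subgroups contribute only $O(q^{-2})$ to $P(x,t\th)$, producing a quadratic lower bound for $u(G)$. Granting the upstream results, the only work left is arithmetical: pushing the general estimate far enough to clear the middle range of $q$ in parts (i) and (ii) without computation, and confirming that the constant $18$ is admissible in part (iii).
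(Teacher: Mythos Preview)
Your approach is essentially the paper's: apply Lemma~\ref{lem:ProbMethod} with $s=t\th$, read off $\M(G,t\th)$ from Proposition~\ref{prop:MaximalSubgroups}, and bound via Proposition~\ref{prop:FPRsSymp4}, treating the involutory graph-field case separately with the sharp bounds (i)--(ii) there. Part~(iii) is entirely correct and matches the paper's computation.

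There is one genuine slip in your general estimate. You assert that every $H\in\M(G,t\th)$ is a non-subspace subgroup because a power of $t\th$ acts irreducibly. Irreducibility only rules out $\C_1$; in the sense of \cite[Definition~1]{ref:Burness071}, the $\C_8$ subgroups $\O_4^\e(q)\leq\Sp_4(q)$ for $q$ even are also subspace subgroups, and Proposition~\ref{prop:FPRsSymp4} does not apply to them. By Table~\ref{tab:MaximalSubgroups}, when $q$ is even and $\th$ is a field automorphism, $\M(G,t\th)$ contains exactly one such subgroup, and its fixed point ratio can be as large as roughly $1/q$, exceeding your blanket bound $4/q(q-1)$. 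The paper handles this with a separate term $\frac{1}{q}+\frac{1}{q^2-1}$ coming from Proposition~\ref{prop:FPRsSubspace}(iv) (the underlying bound from \cite{ref:GuralnickKantor00} is valid here). Once you insert this extra $O(1/q)$ term, your estimate still gives $P(x,t\th)\to0$ and $P(x,t\th)<\tfrac12$ beyond the computational range, so the argument goes through; but as written the step ``each summand is at most $4/q(q-1)$'' is false in this case. Note that the graph-field rows of Table~\ref{tab:MaximalSubgroups} contain no $\O_4^\e(q)$ entry, so your treatment of (iii) is unaffected.
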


\begin{proof}
For $q \in \{4,8,9,16,25,27\}$ the result can be verified computationally in \textsc{Magma} (see Table~\ref{tab:Comp}). Therefore, suppose that $q \geq 32$. Let $x \in G$ have prime order. \par

First suppose that $\th$ is a field automorphism. Proposition~\ref{prop:MaximalSubgroups} gives a superset of $\M(G,t\th)$ and together with the fixed point ratios in Propositions~\ref{prop:FPRsSubspace} and \ref{prop:FPRsSymp4} we obtain 
\begin{align}
P(x,t\th) &\leq \frac{4(q_0^2 + 1)(3 + 2 + \log\log{q})}{q(q-1)} + \frac{q}{q^2-1} + \frac{1}{q} + \frac{1}{q^2-1} \label{eq:q0bound} \\
&\leq \frac{4(q+1)(3 + 2 + \log\log{q})}{q(q-1)} + \frac{q}{q^2-1} + \frac{1}{q} + \frac{1}{q^2-1}. \label{eq:qbound}
\end{align}
The asymptotic statement in (ii) now follows from \eqref{eq:qbound}. If $q \geq 64$, then $P(x,t\th) < \frac{1}{2}$ by \eqref{eq:q0bound}. If $q=32$, then $q_0 = 2$ and $P(x,t\th) < \frac{1}{2}$, by \eqref{eq:q0bound}. Therefore, $u(G) \geq 2$. \par

Now suppose that $\th$ is a graph-field automorphism. Therefore, $q$ is even and has a unique prime divisor. By Propositions~\ref{prop:FPRsSymp4} and \ref{prop:MaximalSubgroups}, 
\begin{align*}
P(x,t\th) &\leq \frac{4\cdot5(q_0+\sqrt{2q_0}+1)}{q(q-1)} \leq \frac{20(q+\sqrt{2q}+1)}{q(q-1)}
\end{align*}
and (ii) now follows. If $\th$ does not have order two, then $P(x,t\th) < \frac{1}{2}$ and (i) follows. (If $q=32$, then we use the observation that $q_0=2$ since $\th$ does not have order 2.) If $\th$ is an involutory graph-field automorphism, then, by Proposition~\ref{prop:MaximalSubgroups} (with $e=1$), and the refined bounds in Proposition~\ref{prop:FPRsSymp4}, 
\begin{align*}
P(x,t\th) &\leq \frac{8(q+\sqrt{2q}+1)}{q^2(q-1)} + \frac{1}{q^2} \leq \frac{16}{q^2} + \frac{1}{q^2} < \frac{18}{q^2}.
\end{align*}
Therefore, $u(G) \geq q^2/18$. This proves (i) and (iii), thus completing the proof.
\end{proof}

\subsection{Asymptotic results}\label{ssec:ProofAsymptotic}
Finally, let us turn to the remaining asymptotic results. Recall the notation for automorphisms which was introduced in Table~\ref{tab:Theta}. As intimated in Section~\ref{ssec:ProofDiagonal}, we now allow $\th \in \InnDiag(T)$. 
\begin{proposition}\label{prop:SympAsymN}
Let $q$ be odd and let $G = \< T, \th \>$ where $T=\PSp_{2m}(q)$ and $\th \in \{ \p^i, \d\p^i\}$. Then $u(G) \to \infty$ as $m \to \infty$.
\end{proposition}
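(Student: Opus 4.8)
The plan is to run the probabilistic method of Lemma~\ref{lem:ProbMethod} with respect to a different conjugacy class from the one used in Sections~\ref{ssec:ProofElements}--\ref{ssec:ProofProbMethod}. The element $t\th$ of Table~\ref{tab:Elements} lies in a subgroup of type $\Sp_2(q)\times\Sp_{2m-2}(q)$, which forces a term of order $q^{-2}$ into the bound for $P(x,t\th)$; this does not decay as $m\to\infty$, so it only gives $u(G)$ bounded below by a constant. To obtain $u(G)\to\infty$, I would instead pick $t\th$ so that its Shintani image $f(t\th)=\oly\in X_{\s}$ acts \emph{as irreducibly as possible} on the natural module, in the spirit of the element used in the proof of \cite[Prop. 4.1]{ref:GuralnickKantor00}, on which Proposition~\ref{prop:InnDiag}(v) (the $\th\in\InnDiag(T)$ case of the statement) is based.

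Concretely, recall the setup of Section~\ref{sssec:PrelimsShintaniField}: $\s=\p^i$ has order $e\ge 2$, $q=q_0^e$, and $X_{\s}=\PGSp_{2m}(q_0)$. Choose $m_1>m_2$ with $m_1+m_2=m$ and $m_1,m_2\to\infty$ (say $m_2=\lfloor(m-1)/2\rfloor$, so $m_2<m/2$). Let $\hat y=[A_{2m_1},A_{2m_2}]$ if $\th=\p^i$ and $\hat y=[C_{2m_1},C_{2m_2}]$ if $\th=\d\p^i$, a block-diagonal element (of $\Sp_{2m}(q_0)$ or $\GSp_{2m}(q_0)$) built from the elements of Definition~\ref{def:ABCD} acting on non-degenerate subspaces $V_1,V_2$ of $V=\F_{q_0}^{2m}$ of dimensions $2m_1,2m_2$; in the second case each $C_{2m_j}$ has $\t(C_{2m_j})=\a$, so $\t(\hat y)=\a$ is a non-square. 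Hence $\oly=\hat yZ$ lies in the relevant image coset ($\PSp_{2m}(q_0)$ when $\th=\p^i$, its complement when $\th=\d\p^i$) by Proposition~\ref{prop:SympShintaniOdd}, so there is $t\th\in G$ with $f(t\th)=\oly$. By Lemma~\ref{lem:ABCD} and Definition~\ref{def:ABCD}, $\oly$ is semisimple, acts irreducibly on $V_1$ and on $V_2$, has no invariant totally isotropic subspace and no proper non-zero invariant non-degenerate subspace other than $V_1,V_2$; moreover $|C_{X_{\s}}(\oly)|<4q_0^m$, and a power of $\oly$ has order a primitive prime divisor of $q_0^{2m_j}-1$ for $j=1,2$ (Theorem~\ref{thm:PPD}).

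Next I would determine $\M(G,t\th)$ along the lines of Section~\ref{ssec:ProofMaximalSubgroups}. By Proposition~\ref{prop:ShintaniTransfer} (applicable since $2m_2<m$), the reducible subgroups of $G$ containing $t\th$ correspond to the reducible subgroups of $X_{\s}$ containing $\oly$, and by the previous paragraph there is exactly one: the stabiliser of $V_1$, of type $\Sp_{2m_1}(q)\times\Sp_{2m_2}(q)$. Every other $H\in\M(G,t\th)$ is a non-subspace subgroup; by Propositions~\ref{prop:CentraliserBound} and \ref{prop:IsomorphismIsConjugacy} there are at most $|C_{X_{\s}}(\oly)|<4q_0^m$ of each type, with the imprimitive ($\C_2$), $\C_3$, $\C_4$, $\C_6$, $\C_7$ and $\S$ types eliminated or counted as in Propositions~\ref{prop:MaximalSubgroupsImprimitive} and \ref{prop:MaximalSubgroupsPrimitive}, now using that $|\oly|$ is divisible by primitive prime divisors of $q_0^{2m_1}-1$ and $q_0^{2m_2}-1$ with $2m_1>m$ (in place of the "$\nu(z)=2$'' argument available for the Table~\ref{tab:Elements} element), and the subfield types counted via Proposition~\ref{prop:MultiplicitiesSubfield}; the number of types is $O(m+\log\log q)$. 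Feeding this into Lemma~\ref{lem:ProbMethod}(i) together with the fixed point ratios of Propositions~\ref{prop:FPRs} and \ref{prop:FPRsSubspace} gives, for every $x\in G$ of prime order,
\[
P(x,t\th) < \Big(2q^{-(m-2)}+q^{-m}+q^{-m_2}+q^{-2m_1}\Big) + c\,(m+\log\log q)\,q_0^m\,\frac{(2q+2)^{1/2}}{q^{m-2}}
\]
for an absolute constant $c$. Since $q\ge 9$, $m_1,m_2\to\infty$, and the last factor is at most $2q_0^{m(1-e)+5e/2}$ up to a bounded constant, a routine estimate shows both summands tend to $0$ as $m\to\infty$, uniformly in $q$, in the choice of $\th$, and in $x$; so for every $k$ we have $u(G)\ge k$ once $m$ is large, i.e. $u(G)\to\infty$ as $m\to\infty$.

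The main obstacle will be the bookkeeping in the third step: re-deriving the restriction on the non-subspace (and especially $\S$-) overgroups of the new element — where the convenient small-$\nu$ argument must be replaced by primitive-prime-divisor arguments in the style of \cite{ref:GuralnickPentillaPraegerSaxl97} — and checking that the count of overgroups times the fixed point ratio decays \emph{uniformly}, the delicate point being fields $\F_q$ whose size grows rapidly with $m$; this is absorbed because the exponent $m(1-e)+5e/2$ is negative for all $m\ge 3$, $e\ge 2$, and is strictly decreasing in $e$ for each fixed $m$.
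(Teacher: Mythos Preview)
Your overall strategy is the same as the paper's---choose a new Shintani preimage $t\th$ whose image $\oly$ is a block-diagonal element on non-degenerate subspaces of dimensions $2m_1,2m_2$, then control $\M(G,t\th)$ via Propositions~\ref{prop:ShintaniTransfer} and~\ref{prop:CentraliserBound}---but the paper makes a crucially different choice of block sizes. It takes $m_2=d$ with $\sqrt{2m}/8<d<\sqrt{2m}/4$ and $(d,m-d)=1$, rather than $m_2\approx m/2$. The reason is exactly the obstacle you flag: with this asymmetric choice a suitable power of $y$ is $[A_{2d}^k,I_{2m-2d}]$ with $\nu\le 2d<\sqrt{2m}/2$, and then \cite[Theorem~7.1]{ref:GuralnickSaxl03} eliminates all $\S$-subgroups outright (for $2m>10$). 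The remaining non-subspace overgroups are simply \emph{counted}---at most $2m$ types across $\C_2\cup\C_3\cup\C_4\cup\C_7$ and at most $e$ types in $\C_5$, each with multiplicity at most $2q^{m/2}$---and no ppd arguments are needed at all. The cost is that the $\C_1$ contribution is only of order $q^{-d}\approx q^{-\sqrt{2m}/8}$ rather than your $q^{-m/2}$, but this still tends to $0$, which is all that is required.

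Your route, with $m_1\approx m_2\approx m/2$, genuinely loses the small-$\nu$ mechanism, and the ppd replacement you sketch does not close the gap. A ppd $r$ of $q_0^{2m_1}-1$ is only a ppd of $q^{k}-1$ for $k=2m_1/\gcd(2m_1,e)$; when $e$ is even one has $k\le m_1<m=n/2$, so the classification in \cite{ref:GuralnickPentillaPraegerSaxl97} no longer forces the $\S$-overgroups onto a short list. You would need either to choose $m_1,m_2$ adapted to $e$ (which varies with $G$) or to supply a separate uniform bound on the number of conjugacy classes of maximal $\S$-subgroups; as written, the claim that the total number of types is $O(m+\log\log q)$ is unjustified for the $\S$ family. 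The paper's asymmetric block choice is designed precisely to avoid this difficulty.
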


\begin{proof}
We will follow the probabilistic approach but with a different choice of element $t\th$. Assume that $m$ is large enough so that $m > 5$ and there exists $d \in \Nat$ for which $\sqrt{2m}/8 < d < \sqrt{2m}/4$ and $(d,m-d)=1$. If $\th = \p^i$ then let $y = [A_{2d},A_{2m-2d}] \in \Sp_{2m}(q_0)$, and if $\th=\d\p^i$ then let $y = [C_{2d},C_{2m-2d}] \in \GSp_{2m}(q_0)$. By Proposition~\ref{prop:SympShintaniOdd}, let $t\th \in T\th$ such that $f(t\th) = \oly$. Therefore, a power of $f(t\th)$ lifts to an $X$-conjugate of $y$. \par

Let us now consider $\M(G,t\th)$. By Proposition~\ref{prop:ShintaniTransfer}, the unique $\C_1$ subgroup of $G$ containing $t\th$ has type $\Sp_{2d} \times \Sp_{2m-2d}$. There are at most $2m$ types of subgroup in the families $\C_2$, $\C_3$, $\C_4$, $\C_7$, and at most $e$ types of $\C_5$ subgroups. In each of these cases, by Proposition~\ref{prop:CentraliserBound}, there are at most $(q_0^d+1)(q_0^{d-m}+1) \leq 2q^{m/2}$ subgroups of each type in $\M(G,t\th)$. There are no $\C_6$ or $\C_8$ subgroups in $\M(G,t\th)$. Since $(d,m-d)=1$, a suitable power of $y$ is $z = [A_{2d},I_{2m-2d}]$. Observe that $z$ has a 1-eigenspace of codimension $2d < \sqrt{2m}/{2}$. Therefore, since $2m > 10$, by \cite[Theorem 7.1]{ref:GuralnickSaxl03}, $z$, and hence $t\th$, is not contained in any subgroups in the $\S$ family. (Exceptions involving the fully deleted permutation module do not occur since $p \neq 2$; see \cite[Table 2.1]{ref:Burness074}, for example.) \par

Using the bounds from Propositions~\ref{prop:FPRs} and \ref{prop:FPRsSubspace}, if $x$ has prime order, then \[ P(x,t\th) \leq \frac{2}{q^{m-2}} + \frac{1}{q^{m}} + \frac{1}{q^{\sqrt{2m}/8}} + \frac{1}{q^{2m-\sqrt{2m}/2}} + \frac{2(8m+e)(2q+2)^{1/2}}{q^{m/2-1}} \to 0 \] as $m \to \infty$. Therefore, $u(G) \to \infty$ as $m \to \infty$.
\end{proof}

We now turn to upper bounds on spread. In \cite[Prop. 2.5]{ref:GuralnickShalev03}, Guralnick and Shalev prove the following.
\begin{theorem}\label{thm:GuralnickShalevUpperBounds}
Let $m \geq 2$. 
\begin{enumerate}
\item{If $q$ is even and $T=\PSp_{2m}(q)$, then $s(T) \leq q$.}
\item{If $T=\Om_{2m+1}(q)$, then $s(T) < \frac{q^2+q}{2}$.}
\end{enumerate}
\end{theorem}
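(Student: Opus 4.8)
The result is quoted from \cite[Prop. 2.5]{ref:GuralnickShalev03}, so I only outline the strategy one would use. To prove $s(G) \le k$ it suffices to exhibit $k+1$ non-identity elements $x_1,\dots,x_{k+1} \in G$ admitting no common \emph{mate}: no $g \in G$ with $\langle x_i,g\rangle = G$ for all $i$. Dually, this amounts to covering $G$ by proper subgroups: if $\mathcal{F}$ is a family of proper subgroups with $\bigcup_{M \in \mathcal{F}} M = G$, and the $x_i$ are chosen so that each $M \in \mathcal{F}$ contains some $x_i$, then any putative mate $g$ lies in some $M \in \mathcal{F}$, whence $\langle x_i,g\rangle \le M < G$ for the corresponding $i$. (Equivalently, one may take the $x_i$ to be conjugates of a single element $t$ and cover $G$ by conjugates of the maximal overgroups of $t$.) So the plan, in each case, is to find a small covering family of maximal subgroups admitting a common transversal of only $k+1$ elements.

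For (i), take $G = \Sp_{2m}(q) = \PSp_{2m}(q)$ with $q$ even. The covering family consists of the subgroups of type $\O^{+}_{2m}(q)$ and $\O^{-}_{2m}(q)$: by Dye's theorem \cite[Theorem 2]{ref:Dye79} (compare Corollary~\ref{cor:OrthogonalSubgroups}) every element of $G$ preserves a non-singular quadratic form polarising to the fixed symplectic form, so these subgroups cover $G$. Identifying them with the $q^{2m}$ such quadratic forms, a root element (transvection) $t_{v,\lambda}\colon x \mapsto x+\lambda(x,v)v$ fixes exactly the forms $Q$ with $Q(v) = \lambda^{-1}$. One then chooses $q+1$ elements — transvections, together with a bounded number of elements lying in parabolic subgroups to reach the forms missed by transvections — so that each of the $q^{2m}$ forms is fixed by at least one of them; the geometry of the restriction of a quadratic form to a fixed non-degenerate $2$-space is what controls why $q+1$ suffice. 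The four-dimensional case $m=2$ would be treated separately if needed.

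For (ii), take $G = \Om_{2m+1}(q)$ with $q$ odd. The natural covering family is the set of stabilisers of non-degenerate hyperplanes, which have type $\O^{\pm}_{2m}(q)$, supplemented by stabilisers of certain totally singular subspaces: since the eigenvalue multiset of an orthogonal transformation on an odd-dimensional space is inversion-closed and of odd size, every element has eigenvalue $+1$ or $-1$, and one analyses when the corresponding eigenspace fails to be non-degenerate (as happens for a regular unipotent element, whose fixed line is totally singular) to see exactly which parabolics must be added to complete the cover. One then selects $\frac{q^2+q}{2} = \binom{q+1}{2}$ suitable elements — reflection-like elements supported on a fixed non-degenerate $3$-space, indexed by the unordered pairs of points of a projective line — with the property that every member of the covering family contains one of them, giving $s(G) < \frac{q^2+q}{2}$.

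The main obstacle in both parts is the second ingredient: showing that the covering family admits a transversal of the claimed small size. This is a combinatorial estimate on how the chosen elements meet the members of the family — for the symplectic case, the covering of the affine space of quadratic forms by the fixed-point sets of transvections together with a few parabolic elements; for the orthogonal case, the simultaneous covering of the non-degenerate hyperplane stabilisers and of the extra parabolics. A secondary difficulty, particular to (ii), is verifying that the covering family genuinely covers $G$: unlike the symplectic case, where Dye's theorem settles this at once, not every element of $\Om_{2m+1}(q)$ stabilises a non-degenerate hyperplane, so one must identify and account for the exceptions.
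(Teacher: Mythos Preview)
Your overall framework --- cover $G$ by a family of proper subgroups and exhibit a small transversal of non-identity elements meeting every member --- is exactly right, and matches what can be read off from the paper's use of the Guralnick--Shalev argument in the proof of Proposition~\ref{prop:AsymNegative}.

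For (i) your outline is essentially the paper's: the covering family is the set of subgroups of type $\O^{\pm}_{2m}(q)$ (via Dye's theorem), and the transversal consists of transvections. One correction: no supplementary parabolic elements are needed. Since the symplectic form restricted to a fixed non-degenerate $2$-space $U$ is non-degenerate, no quadratic form $Q$ polarising to it can vanish identically on $U$; hence every $Q$ is preserved by some transvection based in $U$, and $q+1$ such transvections (one per point of $\mathbb{P}(U)$, with the scalars chosen appropriately) already suffice. Your ``forms missed by transvections'' do not occur.

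For (ii) your outline works but is more intricate than the original. The simplification you are missing is that every element of $\SO_{2m+1}(q)$, and hence of $T=\Om_{2m+1}(q)$, has eigenvalue $+1$ (not merely $\pm 1$): the eigenvalue multiset is inversion-closed of odd size, so an odd number of eigenvalues lie in $\{+1,-1\}$, and if all of these were $-1$ the determinant would be $-1$. Thus the covering family can be taken to be the stabilisers of \emph{all} nonzero vectors --- no separate analysis of the $-1$-eigenspace, and no need to identify ``exceptional'' elements requiring extra parabolics, since singular and non-singular point stabilisers are handled uniformly. The transversal then consists of $\ell = \frac{q^2+q}{2}$ reflection-type elements chosen so that every vector of $V$ is fixed by at least one of them; this is the combinatorial estimate you correctly flag as the heart of the matter. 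Your dual formulation via hyperplane stabilisers is equivalent for non-singular points but forces the case analysis that the direct eigenvalue argument avoids.
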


We now establish a generalisation of Theorem~\ref{thm:GuralnickShalevUpperBounds}.
\begin{proposition}\label{prop:AsymNegative}
Let $G = \< T, \th \> \in \A$.
\begin{enumerate}
\item{If $q$ is even, $T=\PSp_{2m}(q)$ and $\th$ is not a graph-field automorphism, then $s(G) \leq q$.}
\item{If $T=\Om_{2m+1}(q)$, then $s(G) < \frac{q^2+q}{2}$.}
\end{enumerate}
\end{proposition}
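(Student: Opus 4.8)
The plan is to bootstrap from Theorem~\ref{thm:GuralnickShalevUpperBounds} (the case $\th=1$, due to Guralnick and Shalev) using the Shintani descent machinery of Section~\ref{ssec:PrelimsShintaniApplications}. First I would dispose of $\th=1$ by quoting Theorem~\ref{thm:GuralnickShalevUpperBounds}, so that in part (i) Table~\ref{tab:Theta} forces $\th$ to be a field automorphism, hence $G = \Sp_{2m}(q)\sp\<\th\>$ (using $\PSp_{2m}(q)=\Sp_{2m}(q)$ for $q$ even, and recalling that $\PSp_4(2)'$ is excluded), while in part (ii) we have $\th \in \{\d,\p^i,\d\p^i\}$.

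Next I would recall the shape of the Guralnick--Shalev argument. For part (i) it exhibits a set $S$ of $q+1$ non-identity elements of $\Sp_{2m}(q)$ — for instance one built from a root subgroup of transvections $t_v^{\l}$ ($\l \in \F_q^{\times}$) for a fixed nonzero vector $v$, together with a couple of further elements — such that every $g \in \Sp_{2m}(q)$ lies in a maximal subgroup of type $\O^{+}_{2m}(q)$ or $\O^{-}_{2m}(q)$ that also contains a member of $S$; the bare existence of an orthogonal overgroup of each $g$ is Dye's theorem \cite{ref:Dye79}, and the key numerical point is that, in even characteristic, $t_v^{\l} \in \O(Q)$ precisely when $Q(v) = \l^{-1}$. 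For part (ii) it similarly produces a set $S$ of $(q^2+q)/2$ non-identity elements of $\Om_{2m+1}(q)$ (certain elements supported on a fixed non-degenerate subspace of small dimension) together with a covering of $\Om_{2m+1}(q)$ by maximal subspace stabilisers, each of which meets $S$. In either case $S$ witnesses the stated bound for $T$.

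The heart of the proof will be showing that the \emph{same} set $S$ witnesses the bound for $G$, i.e. that every $g \in G$ — crucially those in the outer cosets $T\th^{k}$ — lies in a proper subgroup of $G$ containing a member of $S$. For $g \in T$ this is Guralnick--Shalev. For $g \in G \setminus T$ I would choose $S$ to consist of $\F_{q_0}$-rational elements, write $g = h\s$ with $\s$ the appropriate power of the field automorphism, absorb any diagonal twist via Proposition~\ref{prop:SympShintaniOdd} or Proposition~\ref{prop:OrthShintani}, and pass to the Shintani image $f(g)$, which lies in $\Sp_{2m}(q_0)$ (respectively $\SO_{2m+1}(q_0)$). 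Applying the Guralnick--Shalev covering over $\F_{q_0}$ yields an orthogonal (respectively subspace-stabilising) subgroup of $\Sp_{2m}(q_0)$ containing both $f(g)$ and a member of $S$; this I would transfer back to $G$. In part (i) the transfer is exactly Proposition~\ref{prop:MultiplicityOrthogonal}, together with the $\F_q$/$\F_{q_0}$ correspondence of orthogonal subgroups set up in its proof. In part (ii) I would imitate that proof by lifting to $\<\SL_{2m+1}(q),\th\>$ and using the linear Shintani transfer, since the relevant subspace stabilisers are disconnected and Theorem~\ref{thm:ShintaniDescent}(iii) is not directly applicable; and when $\th = \d$ no descent is needed, since then $G \leq \SO_{2m+1}(q)$ and it suffices to check that the Guralnick--Shalev construction already covers the whole of $\SO_{2m+1}(q)$, not merely $\Om_{2m+1}(q)$.

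The main obstacle I anticipate is the compatibility of the Guralnick--Shalev covering with the Shintani correspondence: one must verify that the distinguished elements of $S$ and the particular orthogonal (or subspace) overgroups used over $\F_{q_0}$ match, under $f$, analogous data over $\F_q$, so that the subgroup correspondence of Proposition~\ref{prop:MultiplicityOrthogonal} preserves the membership conditions on which the argument turns — concretely, that the identity $Q(v)=\l^{-1}$ governing which $\O(Q)$ contain a given transvection survives descent. In part (ii) the analogous bookkeeping has to be routed through $\SL_{2m+1}$ and checked to be compatible with the orthogonal form, and one must also confirm that the Guralnick--Shalev argument is robust enough to cover the full general orthogonal group rather than just the simple group.
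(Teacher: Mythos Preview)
Your plan has the right pieces but routes them through an unnecessary obstacle --- the very compatibility problem you flag at the end --- which the paper sidesteps by reading the Guralnick--Shalev construction a little more sharply.

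For (i), the point is that the Guralnick--Shalev set $\mathcal{X}$ of $q+1$ transvections enjoys the \emph{stronger} property that every subgroup $H_0 \leq T$ of type $\O^{\pm}_{2m}(q)$ already contains some $x \in \mathcal{X}$; this is exactly what your identity $t_v^{\l} \in \O(Q) \Leftrightarrow Q(v)=\l^{-1}$ delivers, once a couple of further transvections are added to handle the forms with $Q(v)=0$. Given this, one only needs that every $g \in G$ lies in \emph{some} maximal subgroup $H \leq G$ of orthogonal type --- which is precisely Corollary~\ref{cor:OrthogonalSubgroups}. Then $H \cap T$ is such an $H_0$, so some $x \in \mathcal{X}$ lies in $H$, and $\<x,g\> \leq H < G$. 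No element of $\mathcal{X}$ is ever transported through the Shintani map, so the compatibility question never arises.

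For (ii), the paper uses no Shintani descent at all. The Guralnick--Shalev set $\mathcal{Y}$ of $(q^2+q)/2$ reflections has the property that every vector $v \in V$ is fixed by some $y \in \mathcal{Y}$. It therefore suffices to show that every $g \in G$ fixes a nonzero vector: if $g \in \InnDiag(T) \leq \SO_{2m+1}(q)$ the eigenvalues pair as $\l,\l^{-1}$, the dimension is odd and $\det g = 1$, forcing a $1$-eigenvector; if $g \notin \InnDiag(T)$ then $g$ is $G$-conjugate to the standard field automorphism, which fixes a basis pointwise. Then $\<y,g\>$ fixes $v$ and is proper. Your lift to $\<\SL_{2m+1}(q),\th\>$ and the attendant Shintani transfer are not needed.

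The obstacle you anticipate is genuine in your setup: the bijection of Proposition~\ref{prop:MultiplicityOrthogonal} between orthogonal overgroups of $g\s$ and of $f(g\s)$ passes through the conjugating element $a$ and the isomorphism $\psi$, and gives no reason why an $\F_{q_0}$-rational $x'$ lying in $H_0$ should also lie in the corresponding $H$ over $\F_q$. You offer no mechanism to close this, whereas the paper's reformulation makes it disappear.
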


\begin{proof}
First consider (i). In the proof of \cite[Prop. 2.5(ii)]{ref:GuralnickShalev03}, a set $\mathcal{X}$ of $q+1$ transvections in $T$ is constructed with the property that for all subgroups $H_0$ of $T$ with type $\O^+_{2m}(q)$ or $\O^-_{2m}(q)$ there exists $x \in \mathcal{X}$ such that $x \in H_0$. Let $g \in G$. By Corollary~\ref{cor:OrthogonalSubgroups}, $G$ has at least one subgroup $H$ of type $\O^+_{2m}(q)$ or $\O^-_{2m}(q)$ such that $g \in H$. Therefore, there exists $x \in \mathcal{X}$ such that $x \in H$. As a result, $\< x, g \> \neq G$ and $s(G) \leq q$. \par

Now consider (ii). Let $V = \F_q^{2m+1}$ and consider the semilinear action of $G$ on $V$. Write $\ell = (q^2+q)/2$. In the proof of \cite[Prop. 2.5(i)]{ref:GuralnickShalev03}, a set $\mathcal{Y}$ of $\ell$ reflections in $T$ is constructed such that for all vectors $v \in V$ there exists $y \in \mathcal{Y}$ such that $vy=v$. Let $g \in G$. We will show that $g$ fixes a vector $v \in V$. If $g \in \InnDiag(T)$, then the set of eigenvalues of $g$ is closed under taking inverses. Therefore, an odd number of eigenvalues are a square root of unity. If all such eigenvalues are $-1$, then $\det(g)=-1$, which is a contradiction. So $g$ has a 1-eigenvector. If $g \in G \setminus \InnDiag(T)$, then $g$ is $G$-conjugate to the standard field automorphism. Therefore, there is a basis for $V$ consisting of vectors fixed by $g$. Thus $g$ fixes a vector, so there exists $y \in \mathcal{Y}$ such that $\< g, y \> \neq G$. Hence, $s(G) < \ell$.
\end{proof}

\subsection{Proof of main theorems}\label{ssec:ProofMainResults}
We now prove the four main theorems.
\begin{proof}[Proof of Theorems~\ref{thm:MainResult}--\ref{thm:MainUpper}]
Theorems~\ref{thm:MainResult} and \ref{thm:MainSharper} follow from Proposition~\ref{prop:InnDiag} (if $\th \in \InnDiag(T)$) and Propositions~\ref{prop:ProbSymp}--\ref{prop:ProbS4} (if $\th \in \Aut(T) \setminus \InnDiag(T)$). Theorem~\ref{thm:MainUpper}, and hence the forward implication of Theorem~\ref{thm:MainAsymptotic}, is a consequence of Proposition~\ref{prop:AsymNegative}. Therefore, it remains to verify the reverse implication of Theorem~\ref{thm:MainAsymptotic}. \par

Let $(G_i)$ be a sequence of groups in $\A$ with $|G_i| \to \infty$. Suppose that $(G_i)$ has no subsequence of odd-dimensional orthogonal groups or even characteristic symplectic groups, over a field of fixed size. Then $(G_i)$ is the union of at most three sequences: symplectic groups in odd characteristic with $q \to \infty$ or $n \to \infty$; symplectic groups in even characteristic with $q \to \infty$; and odd-dimensional orthogonal groups with $q \to \infty$. By Propositions~\ref{prop:InnDiag}, \ref{prop:ProbSymp}, \ref{prop:ProbOrth} and \ref{prop:SympAsymN}, the uniform spread of these sequences, so of the sequence $(G_i)$, diverges to infinity. This completes the proof of Theorem~\ref{thm:MainAsymptotic}.
\end{proof}

%%%% REFERENCES %%%%%

\end{document}